\documentclass[11pt]{amsart}

\usepackage[british]{babel}
\usepackage[T1]{fontenc}
\usepackage[utf8]{inputenc}

\usepackage{amsmath,amssymb,amsfonts,amsthm,mathtools}
\usepackage{mathrsfs}
\usepackage{latexsym}
\usepackage{textcomp}

\usepackage{array}
\usepackage{booktabs}
\usepackage{enumitem}
\usepackage{setspace}

\usepackage{tikz}
\usepackage{tikz-cd}
\usetikzlibrary{trees}
\usepackage{amscd}

\usepackage{times}
\usepackage{microtype}

\usepackage{hyperref}
\hypersetup{hidelinks}

\numberwithin{equation}{section}
\theoremstyle{plain}
\newtheorem{Theorem}{Theorem}[section]

\newtheorem{Lemma}[Theorem]{Lemma}
\newtheorem{Proposition}[Theorem]{Proposition}
\newtheorem{Corollary}[Theorem]{Corollary}

\theoremstyle{definition}
\newtheorem{Definition}[Theorem]{Definition}
\newtheorem{Construction}[Theorem]{Construction}
\newtheorem{Notation}[Theorem]{Notation}
\newtheorem{Example}[Theorem]{Example}
\newtheorem{Remark}[Theorem]{Remark}

\newtheorem{Assumption}[Theorem]{Assumption}

\newcommand{\Spec}{\operatorname{Spec}}

\newcommand{\Sing}{\operatorname{Sing}}
\newcommand{\Supp}{\operatorname{Supp}}

\newcommand{\codim}{\operatorname{codim}}
\newcommand{\ord}{\operatorname{ord}}
\newcommand{\val}{\operatorname{val}}
\newcommand{\mld}{\operatorname{mld}}
\newcommand{\tmldtor}{\operatorname{tmld}_{\mathrm{tor}}}
\newcommand{\lct}{\operatorname{lct}}
\newcommand{\tlct}{\operatorname{tlct}}
\newcommand{\lcodim}{\operatorname{lcodim}}
\newcommand{\Diff}{\operatorname{Diff}}

\newcommand{\Nklt}{\operatorname{Nklt}}
\newcommand{\Nlc}{\operatorname{Nlc}}

\newcommand{\coeff}{\operatorname{coeff}}

\newcommand{\MJ}{\mathrm{MJ}}
\newcommand{\sn}{\mathrm{sn}}

\newcommand{\Sigmatan}{\Sigma_{\mathrm{tan}}}
\newcommand{\bbound}{\mathfrak b}
\newcommand{\Adm}{\mathsf{Adm}}
\newcommand{\SN}{\mathsf{SN}}
\newcommand{\red}{\mathrm{red}}
\newcommand{\Strata}{\mathsf{Str}}

\newcommand{\inv}{\mathrm{inv}}
\newcommand{\tmld}{\operatorname{tmld}}

\newcommand{\dd}{\mathrm d}

\newcommand{\Rlambda}{\mathcal R_\lambda}
\newcommand{\AF}{A_{\mathcal F}}
\newcommand{\Dinv}{D_{\mathrm{inv}}}

\newcommand{\bA}{\mathbb A}
\newcommand{\bC}{\mathbb C}
\newcommand{\bN}{\mathbb N}
\newcommand{\bQ}{\mathbb Q}
\newcommand{\bR}{\mathbb R}
\newcommand{\bZ}{\mathbb Z}

\newcommand{\cF}{\mathcal F}
\newcommand{\cG}{\mathcal G}
\newcommand{\cO}{\mathcal O}
\newcommand{\cB}{\mathcal B}

\newcommand{\cJ}{\mathcal J}

\newcommand{\sS}{\mathscr S}

\newcommand{\jaco}{\mathfrak j}

\def\bibaut#1{{\sc #1}}

\title[Foliated and Mather--Jacobian discrepancies]{Foliated and Mather--Jacobian discrepancies via tangential arcs}

\author[M. Corr\^ea]{Maur\'icio Corr\^ea}

\address[Maur\'icio Corr\^ea]{Dipartimento di Matematica, Universit\`a degli Studi di Bari Aldo Moro, Via E. Orabona 4, I-70125 Bari, Italy}

\email{mauricio.barros@uniba.it}

\begin{document}

\begin{abstract}
This article develops a tangential arc-space approach to foliated discrepancies
for logarithmic simple co-rank one foliations on threefolds, relative to a
fixed invariant normal crossing separatrix divisor.  In the non-resonant
logarithmic case, reduced tangential arcs centred on the prescribed
tangential locus are confined to this divisor.  The tangential sector is
therefore presented, at the reduced arc level, by the normalised
separatrix--conductor system.  Foliated adjunction transfers the discrepancy
calculus to ordinary log pairs obtained by adjunction on the normalised
branches and conductors.  The arc-space theorem of
Ein--Musta\c{t}\u{a}--Yasuda, applied on these strata, then gives a
tangential codimension formula identifying logarithmic codimensions of
toroidal tangential divisorial cylinders with the corresponding tangential
discrepancies.  For toroidal invariant divisors read on branches, this
tangential discrepancy agrees with the usual foliated discrepancy.  The
resulting theory gives toroidal tangential inversion of adjunction, a
branch--conductor description of the tangential non-lc and non-klt loci, a
cylinder criterion for tangential log canonicity, lower semicontinuity of the
toroidal tangential minimal log discrepancy, and a relative Mather--Jacobian
refinement for the canonical image separatrix system.
\end{abstract}

\maketitle

\section{Introduction}

Arc spaces furnish a natural bridge between birational geometry and families
of formal curves.  The theorem of Ein--Musta\c{t}\u{a}--Yasuda expresses log
discrepancies as codimensions of divisorial cylinders in arc spaces
\cite{EMY}.  Beyond the locally complete intersection case, this relation is
replaced by Mather--Jacobian theory and its arc-space interpretation, due to
Ishii, Ein--Ishii, and de Fernex--Docampo \cite{IshiiMather,EI,dFD}.

We construct a foliated tangential analogue of this picture for co-rank one
foliations on threefolds.  The birational background is the foliated minimal
model programme, especially the role of simple non-dicritical singularities,
separatrices, and adjunction on invariant divisors
\cite{CSMMP,Spicer,SS,CSAdj}.  We work in the logarithmic simple adapted
non-resonant setting, relative to an invariant normal crossing separatrix
divisor \(\Dinv\).  The positive non-resonance condition has a precise role:
it prevents tangential arcs centred on \(\Dinv\) from leaving this divisor.
Canonical or log canonical singularities alone do not provide this local
arc-confinement statement.

The main local result is the reduced tangential arc-confinement theorem.  In
adapted logarithmic coordinates, if a tangential arc centred on \(\Dinv\) is
not contained in \(\Dinv\), then the lowest-order term of the pulled-back
logarithmic form gives a positive resonance among the logarithmic residues.
Thus, in the non-resonant sector, reduced tangential arcs are identified with
arcs on \(\Dinv\).  Passing to the seminormal branch--conductor system
then presents the reduced tangential sector by the ordinary arc spaces of
normalised branches and conductors.  The construction is deliberately made at
the level of reduced infinite arcs, not at the level of all finite Carter
tangent jet schemes \cite{Carter,CarterThesis}.

Foliated adjunction transfers the discrepancy calculus to ordinary log pairs
on the normalised branch--conductor strata.  These log pairs are obtained by
normalised adjunction.  This gives a tangential
Ein--Musta\c{t}\u{a}--Yasuda type formula for adapted toroidal tangential
divisorial cylinders.  For adapted toroidal invariant divisors whose data are
read on normalised branches, the resulting tangential discrepancy agrees with
the usual foliated discrepancy of the foliated MMP.  The theory also gives a
comparison through common adapted refinements: two adapted models with a
common logarithmic simple adapted refinement and the same image separatrix
system define the same adapted toroidal tangential invariants.

The applications include tangential toroidal inversion of adjunction,
branch--conductor descriptions of tangential non-lc and non-klt loci, a
cylinder criterion for tangential log canonicity, and lower semicontinuity of
the adapted toroidal tangential minimal log discrepancy.  The latter is a
pointwise invariant computed as the infimum of the ordinary minimal log
discrepancies of the branch--conductor adjunction pairs lying over the point;
to the author's knowledge, this gives the first arc-space lower
semicontinuity statement for a foliated mld-type invariant in this toroidal
non-resonant sector.

The Mather--Jacobian part is formulated relative to the canonical image
separatrix system associated with the fixed adapted toroidal category.  This
is necessary because non-dicriticality gives a finite formal separatrix
picture, but formal separatrices of simple foliation singularities need not
all be convergent \cite{Spicer}.  Under an algebraic separatrix hypothesis,
the relative Mather--Jacobian invariant agrees with the intrinsic downstairs
separatrix-system invariant.

The hypotheses are sharp in two elementary senses.  Positive resonance among
the logarithmic residues produces reduced tangential arcs whose generic point
is not contained in \(\Dinv\), already in the pure logarithmic model.  Also,
finite-level branch decompositions fail even for the normal crossing divisor
\(xy=0\).  These two phenomena explain the use of non-resonance and reduced
infinite arcs in the main representability theorem.

\subsection*{Main results}

Let $(X,\cF,\Delta)$ be a normal $\bQ$-factorial threefold with a co-rank one
foliated pair.  We work in the logarithmic simple adapted setting described
in Section~\ref{sec:range}.  Fix a logarithmic adapted model
\[
\pi_0:(W_0,\cG_0,\Delta_{W_0})\to (X,\cF,\Delta).
\]
All model independence statements are made inside the toroidal category
$\Adm(X,\cF,\Delta;W_0)$ generated from $W_0$ by coordinate stratum blow-ups.

\begin{Theorem}
\label{thm:intro-canonical-model}
Let $\Sigmatan$ be a tangential centre on an object $W$ of
$\Adm(X,\cF,\Delta;W_0)$.  The reduced tangential sector centred at
$\Sigmatan$ is presented, at the reduced arc level, by the ordinary arc
spaces of the normalised separatrix--conductor system.  More precisely, there is a coequaliser diagram
of reduced pro-functors
\[
\begin{tikzcd}[column sep=large]
\displaystyle\bigsqcup_{\alpha<\beta}J_\infty(C_{\alpha\beta}^{\nu})_{\red}
\arrow[r, shift left=.45ex]
\arrow[r, shift right=.45ex]
&
\displaystyle\bigsqcup_\alpha J_\infty(S_\alpha^\nu)_{\red}
\arrow[r]
&J_\infty(\sS_W^{\sn})_{\red}
\end{tikzcd}
\]
\[
J_\infty(\sS_W^{\sn})_{\red}
\longrightarrow
J_\infty^{\tan}(W,\cG;\Sigmatan)_{\red}.
\]
The first two arrows are induced by the two maps from a normalised conductor
to its adjacent normalised branches.  The presentation identifies adapted
toroidal tangential divisorial cylinders with ordinary maximal divisorial sets
on branch or conductor strata and preserves the logarithmic codimensions
computed from normalised foliated adjunction.  No invariance of ordinary
codimension alone is asserted.
\end{Theorem}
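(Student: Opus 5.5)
The proof has three steps, and I would carry them out in order.

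\emph{Step 1: arc confinement.} In adapted logarithmic coordinates $(x,y,z)$ near a point of $\Sigmatan$, the foliation is given by $\omega = x_1\cdots x_k\bigl(\sum_i \lambda_i\,\dd x_i/x_i + \dots\bigr)$, with $\Dinv=\{x_1\cdots x_k=0\}$ and the residues $\lambda_i$ non-resonant. Take an arc $\gamma$ centred on $\Sigmatan$ that is tangent to $\cG$ and not contained in $\Dinv$. Then every $\gamma^*x_i$ is nonzero with order $m_i\ge 1$. The lowest-order coefficient of $\gamma^*(\omega/x_1\cdots x_k)$ is $\sum_i \lambda_i m_i$. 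Tangency forces this to vanish, which is a positive resonance and contradicts the hypothesis. Hence every reduced tangential arc centred on $\Sigmatan$ lies in $\Dinv$. Conversely, arcs in $\Dinv$ are tangential, since $\Dinv$ is invariant. This gives an equality of reduced pro-schemes
\[
J_\infty^{\tan}(W,\cG;\Sigmatan)_{\red}=J_\infty(\Dinv)_{\red}\cap(\text{centre condition}).
\]
The last map of the theorem is then the composite of this identification with the map induced by $\sS_W^{\sn}\to\Dinv$. Since seminormalisation is a universal homeomorphism, this map is bijective on points, and it is an isomorphism on reduced arc spaces: arcs whose generic point is off the conductor lift uniquely, and arcs inside the conductor stratum are handled inductively.

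\emph{Step 2: the coequaliser.} Write $\Dinv=\bigcup S_\alpha$ with simple normal crossings. The seminormalisation $\sS_W^{\sn}$ is the pushout of $\bigsqcup S_\alpha^\nu$ along $\bigsqcup_{\alpha<\beta}C_{\alpha\beta}^\nu$, with the two maps coming from the two branches. Every arc on $\sS_W^{\sn}$ has generic point in a unique open branch stratum or a conductor stratum, so it lifts to some $J_\infty(S_\alpha^\nu)$. Two lifts of the same arc must both factor through the conductor. Because normal crossing strata are smooth, these lifts are unique at each stratum. I would prove exactness pointwise on $K$-arcs for every field $K$, and then pass to the reduced pro-functors.

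The main obstacle is here: finite-level branch decompositions fail even for $xy=0$. A jet of order $n$ can lie on both branches to order $n$ without lying on the conductor. So the coequaliser cannot be obtained as a limit of finite-level coequalisers. Instead I would argue directly on $J_\infty$ using generic points and the fact that each arc space $J_\infty(S)$ is the reduced union of its stratum pieces. I would check the universal property against reduced test schemes only.

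\emph{Step 3: cylinders and codimensions.} An adapted toroidal tangential divisorial cylinder $\mathrm{Cont}^{m}(E)$ is the set of tangential arcs with order $m$ along a toroidal divisor $E$ over $W$. By Step 1 it equals the union of the cylinders $\mathrm{Cont}^m(E|_{S_\alpha^\nu})$ or $\mathrm{Cont}^m(E|_{C^\nu_{\alpha\beta}})$, according to where the centre of $E$ lies. Each of these is a maximal divisorial set on the branch or conductor stratum. Foliated adjunction (the $\Diff$ formula of Section~\ref{sec:range}) gives $(K_\cG+\Delta)|_{S^\nu}=K_{S^\nu}+\Diff_{S^\nu}(\Delta)$. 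Applying the Ein--Musta\c{t}\u{a}--Yasuda theorem to these ordinary log pairs shows that the logarithmic codimension equals the tangential log discrepancy. This holds only for the logarithmic codimension, since the ambient ordinary codimension jumps by the codimension of the stratum; this is why the theorem asserts no invariance of ordinary codimension.
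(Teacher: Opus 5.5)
Your Steps 1 and 2 follow the paper's argument. The leading-coefficient computation is exactly Theorem~\ref{thm:carter-representability}, and the coequaliser comes from $K[[t]]$ being a domain, as in Lemma~\ref{lem:arcs-pushout}. Three small corrections apply there.
\begin{enumerate}
\item The finite-level obstruction for $xy=0$ is an $n$-jet such as $(t,t^n)$ lying on \emph{neither} branch (Example~\ref{ex:mixed-finite-jets}). An $n$-jet on both branches is automatically on the conductor.
\item Your claim that the last arrow is an isomorphism (with the centre condition imposed on the source too) is true, but for a simpler reason: an SNC divisor is seminormal, so $\sS_W^{\sn}\cong\Dinv$ by Lemma~\ref{lem:sn-pushout}. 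The paper asserts only a natural morphism.
\item The coequaliser property holds on field-valued points, which is all the paper checks, not against arbitrary reduced test rings. Over $R=\bC[u,v]/(uv)$ the arc $(x,y)=(ut,vt)$ lies on $xy=0$ but on no branch, even Zariski-locally at $u=v=0$.
\end{enumerate}

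The genuine gap is in Step 3, and it has three parts.

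\textbf{Model independence.} In the paper a tangential divisorial cylinder is a triple $(W,V,C_V)$ up to common adapted refinement (Definition~\ref{def:tangential-cylinder-operational}). The clause about logarithmic versus ordinary codimension refers to this equivalence. The ordinary codimension $\codim_{J_\infty(V)}N_q(F)$ is $q(k_F(V)+1)$ on $V$ but $q$ on a refinement where $F$ is a divisor, whereas $\lcodim$ does not change. That is Lemma~\ref{lem:codim-independence}. It rests on crepant compatibility of the normalised adjunction boundaries under adapted blow-ups (Theorem~\ref{thm:normalised-adjunction}): for point centres $\Theta_{S'}=\sigma^{-1}_*\Theta_S+(b-1)F_S$, while a conductor blow-up creates a new branch rather than a curve on the old one. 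Applying Ein--Musta\c{t}\u{a}--Yasuda on one fixed model does not give this. Your explanation via ambient codimension is beside the point, since $J_\infty(S_\alpha)$ already has infinite codimension in $J_\infty(W)$.

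\textbf{The conductor pair.} You never define it. The $\Diff$ formula gives only branch boundaries. A conductor arc has two branch representatives in the coequaliser, so you need Proposition~\ref{prop:no-double-count} (both adjacent branches induce the same $\Theta_{\alpha\beta}$) before a log codimension on $C^\nu_{\alpha\beta}$ is well defined.

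\textbf{Cylinders from threefold divisors.} Defining the cylinder as $\mathrm{Cont}^m(E)$ for a threefold divisor $E$ produces, after Step 1, a union of maximal divisorial sets on every branch through the centre. For example, blowing up $C_{\alpha\beta}$ gives one piece on $S_\alpha$ and one on $S_\beta$, not one set on one stratum. To attach a single log codimension you must check that all pieces give the same value. This holds in the toroidal case because invariant coordinates contribute zero. However, it is a comparison with the threefold foliated discrepancy (Lemma~\ref{lem:foliated-discrepancy-additivity}, Corollary~\ref{thm:intro-foliated-comparison}), not a consequence of adjunction alone. The paper sidesteps this by working with data on a single normalised stratum from the outset (Theorem~\ref{thm:system-exists}).
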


\begin{Theorem}
\label{thm:intro-formal}
Let $(W,\cG)$ be logarithmic simple adapted, let $\Dinv$ be the reduced
invariant separatrix divisor, and set
$\Sigmatan:=\Sing\cG\cap |\Dinv|$.  The reduced tangential arc
pro-functor centred on $\Sigmatan$ is identified with the reduced arc
functor of $\Dinv$ along $\Sigmatan$:
\[
J_\infty^{\tan}(W,\cG;\Sigmatan)_{\red}=J_\infty(\Dinv;\Sigmatan)_{\red}.
\]
Consequently, every irreducible tangential cylinder has a unique generic
branch unless its generic arc lies in a conductor stratum.
\end{Theorem}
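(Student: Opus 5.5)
The plan is to prove the equality of reduced pro-functors by comparing points: both sides are reduced, so it suffices to show that for every field $K\supseteq\bC$ an arc $\gamma\in J_\infty(W)(K)$ centred on $\Sigmatan$ is tangential exactly when it factors through $\Dinv$. Both conditions are local at the centre $\gamma(0)\in\Sigmatan$. I would therefore work in the adapted logarithmic coordinates of the logarithmic simple adapted setting of Section~\ref{sec:range}, after passing to the completion if the normal form is only formal there. In these coordinates $\Dinv=\{x_1\cdots x_k=0\}$ near $\gamma(0)$, and $\cG$ is defined by a form
\[
\omega=\sum_{i=1}^k \lambda_i\,\frac{dx_i}{x_i}+\eta ,
\]
where $\eta$ is holomorphic, possibly after multiplying by a unit. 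The logarithmic residues $\lambda_i$ satisfy the positive non-resonance condition: $\sum m_i\lambda_i\neq 0$ for every nonzero $m\in\bN^k$.

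\emph{Step 1: the inclusion $\supseteq$.} Each branch $\{x_i=0\}$ is $\cG$-invariant. Hence the restriction of $x_i\omega$ to $\{x_i=0\}$ vanishes as a form on the branch, being a multiple of $dx_i$ plus terms that vanish there. So for an arc $\gamma$ with $\gamma^*x_i\equiv 0$, the pulled-back form $\gamma^*\omega$ vanishes identically. This holds even when the generic point of $\gamma$ lies in $\Sing\cG$, because the computation is made with the form and not with tangent spaces. Thus every arc of $\Dinv$ centred on $\Sigmatan$ is tangential.

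\emph{Step 2: the inclusion $\subseteq$, which is the confinement argument.} Suppose $\gamma$ is tangential, centred on $\Sigmatan$, and not contained in $\Dinv$. Then $\gamma^*x_i\not\equiv 0$ for all $i\le k$, so we can write $\gamma^*x_i=t^{m_i}u_i$ with $u_i\in K[[t]]^\times$. Since the centre lies in $\Dinv\cap\Sing\cG$, at least one $m_i$ is positive. We have
\[
\gamma^*\frac{dx_i}{x_i}=m_i\frac{dt}{t}+\frac{du_i}{u_i},
\]
and both $du_i/u_i$ and $\gamma^*\eta$ lie in $K[[t]]\,dt$. Hence the coefficient of $t^{-1}dt$ in $\gamma^*\omega$ is exactly $\sum_i m_i\lambda_i$. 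Tangency forces $\gamma^*\omega=0$, so this coefficient vanishes. That is a positive resonance, contradicting non-resonance. Therefore some $\gamma^*x_i\equiv0$, i.e.\ $\gamma$ factors through $\Dinv$. Combined with Step~1, this gives the asserted equality at the level of points, and hence of reduced pro-functors. Passing to finite truncations, one checks that the identification is compatible with the cylinder structure.

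\emph{Step 3: the consequence.} An irreducible tangential cylinder $C$ is now an irreducible closed subset of $J_\infty(\Dinv)$. The arc space $J_\infty(\Dinv)$ is the union of the $J_\infty(S_\alpha)$, so the generic arc of $C$ lies in at least one branch. If it lies in two branches $S_\alpha$ and $S_\beta$, it lies in $J_\infty(S_\alpha\cap S_\beta)$, which is a conductor stratum. Otherwise the branch containing it is unique, and by irreducibility $C\subseteq J_\infty(S_\alpha)$.

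\emph{Main obstacle.} I expect the main difficulty to be justifying the normal form uniformly along a positive-dimensional $\Sigmatan$. The residues must be constant along strata, $\eta$ must be genuinely holomorphic rather than carrying hidden logarithmic poles, and the formal coordinates must be compatible with arcs defined over arbitrary fields $K$. I would handle this first by invoking the adapted normal form of Section~\ref{sec:range} at the centre $\gamma(0)$. Any arc centred at a point factors through the formal completion there, so formal coordinates suffice for the order computation in Step~2.
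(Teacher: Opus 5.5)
Your proposal is correct and follows essentially the paper's own argument: confinement comes from the lowest-order residue coefficient $\sum_i m_i\lambda_i$ together with positive non-resonance, and the converse from the termwise vanishing of the pulled-back form on an arc with some $x_{i_0}(t)\equiv 0$. The generic-branch consequence is the same finite-union argument as in Corollary~\ref{cor:branch-decomp}. One small correction: $\cG$ is generated by the regular form $x_1\cdots x_k\,\omega$, since your $\omega$ has poles along $\Dinv$ and is not a unit multiple of a generator. So in Step~1 the vanishing should be stated for $\gamma^*(x_1\cdots x_k\,\omega)$, and in Step~2 you reach $\gamma^*\omega=0$ by dividing by $\gamma^*(x_1\cdots x_k)\neq 0$, which is equivalent to the paper's reading of the $t^{A-1}\,dt$ coefficient.
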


\begin{Theorem}
\label{thm:intro-system}
For every object $W$ in the fixed adapted toroidal category, the invariant
separatrices define a finite normalised system consisting of branch pairs $
\{(S_\alpha^\nu,\Theta_\alpha)\}_\alpha
$
and conductor pairs
$
\{(C_{\alpha\beta}^\nu,\Theta_{\alpha\beta})\}_{\alpha<\beta}.$
The system is functorial under coordinate adapted blow-ups in this toroidal
category, and any two objects of the category admit a common adapted toroidal
refinement.  The adjunction boundaries are crepant compatible under these
refinements.
\end{Theorem}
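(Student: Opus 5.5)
The plan is to work entirely in adapted logarithmic coordinates and to reduce every assertion to the combinatorics of the normal crossing divisor \(\Dinv\) on an object \(W\) of \(\Adm(X,\cF,\Delta;W_0)\). Near a point of \(\Sigmatan\), a logarithmic simple adapted foliation is defined by a logarithmic form
\[
\omega=\sum_{i} \lambda_i\,\frac{\dd x_i}{x_i}+(\text{higher order}),
\]
and \(\Dinv\) is locally \(\{x_1\cdots x_k=0\}\). The components of \(\Dinv\) through the point are the local branches \(S_\alpha\). Pairwise intersections of branches give the conductor curves \(C_{\alpha\beta}\), and these are again coordinate strata. First I would globalise this picture. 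Finiteness follows from non-dicriticality, which leaves only finitely many invariant components along the compact centre. I take \(S_\alpha^\nu\) to be the normalisation of each global component and \(C_{\alpha\beta}^\nu\) to be the normalisation of the double locus. Each normalised stratum is smooth, because locally it is a coordinate subspace.

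The boundaries are produced by normalised foliated adjunction (the adjunction results of \cite{CSAdj,Spicer} recalled earlier). On each branch this gives
\[
(K_{\cG}+\Delta_W)|_{S_\alpha^\nu}=K_{S_\alpha^\nu}+\Theta_\alpha.
\]
Applying adjunction once more, from \(S_\alpha^\nu\) to the preimage of the conductor, defines \(\Theta_{\alpha\beta}\). The step to check here is that doing this from \(S_\alpha\) or from \(S_\beta\) gives the same boundary on \(C_{\alpha\beta}^\nu\). Locally this is a direct residue computation in the coordinates \(x_\alpha,x_\beta\). Both routes produce the restriction of \(\Delta_W\), plus the reduced traces of the remaining components of \(\Dinv\), plus the coefficients read from the residues.

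For functoriality, let \(f:W'\to W\) be a coordinate stratum blow-up with centre \(Z\). In coordinates \(Z\) is a coordinate stratum, so the blow-up is toroidal. The strict transforms of the \(S_\alpha\) are blow-ups of the \(S_\alpha^\nu\) along \(Z\cap S_\alpha\), and the exceptional divisor is \(\cG'\)-invariant by non-dicriticality. The system on \(W'\) therefore consists of the strict transforms together with the exceptional branch \(E\) and its new conductors. I would then compute \(K_{\cG'}+\Delta_{W'}=f^*(K_\cG+\Delta_W)\) on the nose. This uses the standard toroidal discrepancy formula for foliations: the invariant exceptional divisor receives coefficient \(0\) in the log foliated canonical class. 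Restricting this identity to \(S_\alpha'^{\nu}\) and using compatibility of adjunction with pullback gives \(K_{S'}+\Theta'_\alpha=g^*(K_{S^\nu}+\Theta_\alpha)\), which is the crepant compatibility. The same argument one dimension lower handles the conductors.

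The common refinement follows from the combinatorics of the toroidal category. Each object is obtained from \(W_0\) by a sequence of stratum blow-ups, so it corresponds to a subdivision of the fan (or conical complex) of \(W_0\). Two subdivisions admit a common refinement that is realised by iterated stratum blow-ups, by the standard toroidal fact that any subdivision is dominated by a sequence of star subdivisions. I expect this step to be the main obstacle. The refinement must remain logarithmic simple \emph{adapted} and non-resonant, and the new exceptional residues are positive combinations of the old ones, so positive resonance must be ruled out. I would first try to show directly that star subdivisions keep the exceptional residues \(\lambda_E=\sum_{i\in Z}\lambda_i\) non-resonant. If that fails, I would instead invoke the definition of \(\Adm\) in Section~\ref{sec:range}, assuming it is built to be closed under such refinements.
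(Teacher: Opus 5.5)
Your overall route matches the paper's. You normalise the components of $\Dinv$ and the double curves, get $\Theta_\alpha$ from invariant-divisor foliated adjunction, and compare the two restrictions to $C_{\alpha\beta}^\nu$ locally (Proposition~\ref{prop:no-double-count}). You then follow strict transforms and exceptional branches under blow-ups, and obtain common refinements from fans and star subdivisions (Theorem~\ref{thm:common-refinement}).

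The genuine gap is the crepant-compatibility step. The identity $K_{\cG'}+\Delta_{W'}=f^*(K_\cG+\Delta_W)$ ``on the nose'', with coefficient $0$ on the invariant exceptional divisor, is false in this category. The centres are strata of $\Dinv+\Supp\Delta_W$, and the transverse components $T_k$ carry arbitrary coefficients $a_k\le 1$. For a tangent stratum $Z$ lying on $m$ invariant branches and on the transverse components $T_k\supset Z$, Proposition~\ref{prop:elementary-blowups} gives
\[
\AF(E)=(\codim Z-1)-(m-1)-\beta_Z=\sum_{T_k\supset Z}(1-a_k).
\]
This vanishes for conductor curves and triple points of $\Dinv$. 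That case is the toroidal log-crepancy with every boundary coefficient through the centre equal to one, which is what you are quoting. It equals $1-a$ for $Z=S\cap T$ and $2-a_1-a_2$ for $Z=S\cap T_1\cap T_2$.

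In the last case your two claims contradict each other. On $S'$ the new invariant branch $E$ traces the exceptional curve $F$, so $F$ enters the adjunction boundary $\Theta_{S'}$ with coefficient one. A crepant pull-back of $K_S+\Theta_S$ would instead give $F$ the coefficient $a_1+a_2-1$.

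The fix is to keep the exceptional term. Restricting $K_{\cG'}+\Delta_{W'}=f^*(K_\cG+\Delta_W)+\AF(E)E$ to $S'$ gives $K_{S'}+\Theta_{S'}\sim_{\bQ}g^*(K_S+\Theta_S)+\AF(E)\,E|_{S'}$. Crepant compatibility then refers to the crepant transform $\Theta_{S'}-\AF(E)\,E|_{S'}$, not to the new adjunction boundary itself. The paper checks this against the surface computation $K_{S'}=\sigma^*K_S+F$, $\sigma^*\Theta_S=\sigma^{-1}_*\Theta_S+bF$, which gives $\Theta^{\mathrm{cr}}_{S'}=\sigma^{-1}_*\Theta_S+(b-1)F$. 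The matching ambient coefficient is $\AF(E)=2-b$ (Theorem~\ref{thm:normalised-adjunction}, Proposition~\ref{prop:one-step-full}). The later comparison results rely on exactly this matching of the foliated discrepancy with the branch log discrepancy, so it cannot be replaced by a vanishing claim.

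Some smaller points:
\begin{itemize}
\item $\Theta_{\alpha\beta}$ has no residue-dependent coefficients. It is $(\Theta_\alpha-C_{\alpha\beta})|_{C^\nu_{\alpha\beta}}$; residues only decide which exceptional divisors are invariant.
\item Invariance of $E$ holds only for centres contained in $\Dinv$. Transverse coordinate centres create no new branch.
\item The exceptional branch over a conductor curve is birational to no old branch, so restricting to $S_\alpha^{\prime\,\nu}$ does not cover it. It has to be recorded as a new branch mapping to $C^\nu_{\alpha\beta}$.
\end{itemize}

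On the obstacle you flagged, your first option works. Extend $\lambda$ by zero on the transverse coordinates; the residue along the divisor of a primitive ray $v$ is then $\langle v,\lambda\rangle$. A non-zero non-negative integer combination of the residues of the invariant rays of a regular cone is $\langle w,\lambda\rangle$, where $w$ has non-negative entries and non-zero invariant part, so it is non-zero. This is slightly more than the paper verifies: it only checks that the exceptional residue is non-zero.
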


\begin{Corollary}[Tangential Ein--Musta\c{t}\u{a}--Yasuda formula]
\label{thm:intro-emy}
Let $E$ be an adapted toroidal tangential divisorial datum and let
$N_q^{\tan}(E)$ be its divisorial tangential cylinder.  If $E$ is represented
by a divisor $F$ over a normalised branch or conductor stratum $V$, then
\[
\lcodim_{\tan}\bigl(N_q^{\tan}(E)\bigr)=q\,a(F;V,B_V)=q\,a_{\tan}(E;X,\cF,\Delta).
\]
The value is independent of the representative in $\Adm(X,\cF,\Delta;W_0)$
and conductor data are counted once.
\end{Corollary}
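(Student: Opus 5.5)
The plan is to obtain the corollary by composing the three theorems stated above with the classical Ein--Musta\c{t}\u{a}--Yasuda theorem \cite{EMY}, applied on a single normalised stratum. Fix an object $W$ of $\Adm(X,\cF,\Delta;W_0)$ on which $E$ is realised, and a representative $F$ over a normalised branch $S_\alpha^\nu$ or conductor $C_{\alpha\beta}^\nu$, which we call $V$.

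\emph{Step 1: reduce to a cylinder on $V$.} By Theorem~\ref{thm:intro-formal}, the reduced tangential arcs centred on $\Sigmatan$ coincide with $J_\infty(\Dinv;\Sigmatan)_{\red}$. By Theorem~\ref{thm:intro-canonical-model}, this space is the coequaliser of the arc spaces of the normalised branches and conductors. The same theorem says that $N_q^{\tan}(E)$ corresponds to the ordinary maximal divisorial set $\mathrm{Cont}^{q}(F)\subset J_\infty(V)$. It also says that the logarithmic codimension $\lcodim_{\tan}$ is, by definition, the codimension read through this presentation with the log correction coming from normalised foliated adjunction.

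\emph{Step 2: handle conductor data.} If $V$ is a branch and the generic arc of the cylinder does not lie in a conductor, the unique generic branch from Theorem~\ref{thm:intro-formal} fixes $V$ uniquely. If the generic arc does lie in a conductor $C_{\alpha\beta}^\nu$, the two coequaliser maps send $\mathrm{Cont}^q(F)$ into both adjacent branches. The coequaliser identifies these two images, so the cylinder is counted once, on $C_{\alpha\beta}^\nu$ with boundary $\Theta_{\alpha\beta}$. This justifies the final clause of the corollary.

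\emph{Step 3: apply Ein--Musta\c{t}\u{a}--Yasuda on $V$.} The pair $(V,B_V)$ is an ordinary log pair: $V$ is a normal surface or a curve, and $B_V$ is the adjunction boundary $\Theta_\alpha$ or $\Theta_{\alpha\beta}$. In the toroidal adapted setting $V$ is toroidal, so the log version of the Ein--Musta\c{t}\u{a}--Yasuda formula gives
\[
\lcodim\bigl(\mathrm{Cont}^q(F)\bigr)=q\,a(F;V,B_V),
\]
where the lcodim is taken relative to $B_V$, in the toroidal version if needed. Foliated adjunction $(K_\cG+\Delta_W)|_{V}=K_V+B_V$ then shows that the foliated log discrepancy of $E$ equals $a(F;V,B_V)$. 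By the definition of $a_{\tan}$ this is $a_{\tan}(E;X,\cF,\Delta)$. Combining this with Step 1 gives the displayed equalities.

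\emph{Step 4: independence of the representative.} Take two representatives on $W$ and $W'$. By Theorem~\ref{thm:intro-system} they admit a common adapted refinement $W''$, the normalised system is functorial, and the boundaries are crepant compatible. Crepant pullback preserves $a(F;V,B_V)$, and the change of variables for arcs on toroidal blow-ups preserves the logarithmic codimension. Both sides of the formula therefore agree on $W''$.

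\emph{Main obstacle.} I expect the main difficulty to be Step 2 together with the lcodim normalisation in Step 3. One must check that the codimension convention built into $\lcodim_{\tan}$ matches exactly the boundary correction $B_V$ on a conductor curve. One must also check that seminormal gluing creates no extra codimension jump. I would first test this in the local model $xy=0$ inside a simple logarithmic chart, computing both sides explicitly.
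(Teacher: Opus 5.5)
Your argument is essentially the paper's: you represent $N_q^{\tan}(E)$ by $N_q(F)\subset J_\infty(V)$, note that $\lcodim_{\tan}$ is by definition $\lcodim_{(V,B_V)}$ on the (log smooth) stratum $V$, and apply Ein--Musta\c{t}\u{a}--Yasuda there. You then get independence of the representative from crepant compatibility on a common refinement. Your ``main obstacle'' for conductors is settled by Proposition~\ref{prop:no-double-count}, which shows that the two adjacent branch boundaries restrict to the same $\Theta_{\alpha\beta}$. There is also no seminormal codimension jump to worry about, since nothing is computed on $J_\infty(\sS_W^{\sn})$.

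One correction: the detour in Step~3 through the foliated log discrepancy of $E$ is unnecessary, and it is not available in general. The paper makes no such comparison for conductor data or for data not realised by a threefold divisor, and even for branch data it relies on the tower argument, not on adjunction alone. You do not need it, because $a_{\tan}(E;X,\cF,\Delta)=a(F;V,B_V)$ holds by definition.
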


\begin{Corollary}[Comparison with foliated discrepancies]
\label{thm:intro-foliated-comparison}
Let $E$ be an adapted toroidal invariant divisor whose tangential datum is
read on a normalised invariant branch.  Then
\[
a_{\tan}(E;X,\cF,\Delta)=\AF(E;X,\cF,\Delta),
\]
where $\AF$ denotes the foliated log discrepancy in the convention of the
foliated MMP.  The comparison is not asserted for conductor data or for
arbitrary divisorial valuations over $X$.
\end{Corollary}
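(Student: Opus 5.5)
The comparison reduces to two identifications, both on the branch side: first the equality $a_{\tan}(E;X,\cF,\Delta)=a(F;S^\nu_\alpha,\Theta_\alpha)$ coming from Corollary~\ref{thm:intro-emy}, and second a foliated adjunction identity relating the ordinary log discrepancy of the branch pair to $\AF(E;X,\cF,\Delta)$. Concretely, I would choose an object $W\in\Adm(X,\cF,\Delta;W_0)$ on which $E$ appears as a toroidal invariant divisor. Its tangential datum is read on a branch $S_\alpha$, so its centre is a toroidal stratum contained in $S_\alpha$ and not in a conductor stratum. After a further coordinate stratum blow-up, which Theorem~\ref{thm:intro-system} permits together with crepant compatibility of the adjunction boundaries, I may assume that $E\cap S^\nu_\alpha$ contains a prime divisor $F$ on (a model over) $S^\nu_\alpha$ representing the datum. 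Then $a_{\tan}(E)=a(F;S^\nu_\alpha,\Theta_\alpha)$ by definition and by the independence statement of Corollary~\ref{thm:intro-emy}.

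The second step is the discrepancy computation on $W$. I would write $K_{\cG}+\Delta_W=\pi^*(K_\cF+\Delta)+\sum_G (\AF(G)-\epsilon(G))G$, with the convention $\epsilon(G)=0$ for invariant $G$, so that $E$ has coefficient $\AF(E)$ up to the standard sign convention. Foliated adjunction \cite{CSAdj,Spicer} on the invariant branch gives $(K_\cG+\Delta_W)|_{S^\nu_\alpha}=K_{S^\nu_\alpha}+\Theta_\alpha$. On the downstairs side the same restriction gives $(K_\cF+\Delta)|_{S^\nu}=K_{S^\nu}+\Theta$, and the adjunction boundaries are compatible by the crepant compatibility in Theorem~\ref{thm:intro-system}. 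Restricting the pullback formula to the branch then shows that the coefficient of $F$ in $K_{S^\nu_\alpha}+\Theta_\alpha$ minus the pullback of the downstairs branch pair equals the coefficient of $E$ upstairs. The key local input is that $E$ is invariant and toroidal in logarithmic simple adapted coordinates. Hence $E$ meets $S_\alpha$ transversally along $F$ with multiplicity one, and no different contribution arises, since in the non-resonant logarithmic model the residue correction along $E\cap S_\alpha$ is zero. This gives $a(F;S^\nu_\alpha,\Theta_\alpha)=\AF(E)$.

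The main obstacle is this multiplicity and different check. I must verify, in the adapted coordinates $\omega=\sum\lambda_i\,dx_i/x_i$ with $S_\alpha=\{x_1=0\}$ and $E$ the exceptional divisor of a stratum blow-up, that restriction of the log form gives exactly the log structure on $S^\nu_\alpha$ along $F$, with the boundary coefficient $1$ on both sides (lc-centre convention, $\epsilon=0$). I would do this by induction on the number of coordinate stratum blow-ups, computing the change of $K_\cG$ under one toroidal blow-up of a stratum of codimension $c$ lying in $S_\alpha$ and checking that it matches the change of $K_{S_\alpha}+\Theta_\alpha$ under the induced blow-up of a codimension $c-1$ stratum. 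The exclusion of conductor data is used exactly here: on a conductor the restriction passes through two branches and the different doubles.
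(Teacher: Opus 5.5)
\textbf{Gap: the bridge between $X$ and the adapted model.} Restricting $K_{\cG}+\Delta_W=\pi^*(K_\cF+\Delta)+\sum_G a_G\,G$ to the branch and using a downstairs adjunction $(K_\cF+\Delta)|_{S^\nu}=K_{S^\nu}+\Theta$ proves at best
\[
\AF(E;X,\cF,\Delta)=a(F;S^\nu_X,\Theta_X)
\]
for the \emph{downstairs} pair. But $a_{\tan}(E)$ is computed on the adjunction pair $(S^\nu_\alpha,\Theta_\alpha)$ of an object of $\Adm(X,\cF,\Delta;W_0)$. The crepant compatibility of Theorem~\ref{thm:intro-system} only concerns refinements inside that category; it says nothing about $W_0\to X$.

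Your own restriction identity shows that the two pairs are not crepant in general. Restricting the pull-back formula for the model $W_k$ carrying $S_\alpha$ gives $K_{S^\nu_\alpha}+\Theta_\alpha=\pi_S^*(K_{S^\nu_X}+\Theta_X)+\sum_G a_G\,G|_{S_\alpha}$, hence
\[
a(F;S^\nu_X,\Theta_X)=a(F;S^\nu_\alpha,\Theta_\alpha)+\sum_G a_G\,\ord_F\bigl(G|_{S_\alpha}\bigr),
\]
with $G$ running over the exceptional divisors of $W_k\to X$ through the centre of $F$. Moreover, if $S_\alpha$ is itself exceptional over $X$ (a branch created in the tower), there is no downstairs pair at all. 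In that case $S_\alpha$ occurs among the $G$, so the restriction contains $S_\alpha|_{S_\alpha}$, and $\pi^*(K_\cF+\Delta)|_{S_\alpha}$ is pulled back from a curve or a point.

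The same identity also tells you which pair Step~1 must use. For a single tangent blow-up, the foliated adjunction boundary on $S'_\alpha$ exceeds the crepant transform of $\Theta_\alpha$ by exactly $\AF(E)\,E|_{S'_\alpha}$. So on the model where $E$ appears, $F=E\cap S'_\alpha$ has coefficient one and log discrepancy $0$. Hence $a_{\tan}$ must be read on the pair below $E$, or on its crepant transform as in Proposition~\ref{prop:one-step-full}.

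\textbf{Repair, and comparison with the paper.} Work relative to the adapted model carrying the branch pair. This is where the paper works: Lemma~\ref{lem:foliated-discrepancy-additivity} is stated over $(W_0,\cG_0,\Delta_{W_0})$, and Corollary~\ref{cor:foliated-comparison-tower} reads it as the statement over $X$. That reading is legitimate when $K_{\cG_0}+\Delta_{W_0}=\pi_0^*(K_\cF+\Delta)$ near the centre.

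Once repaired this way, your argument is a genuine alternative to the paper's. The paper computes the one-step values $1-(m-1)-\beta_Z$ and $2-(m-1)-\beta_Z$ on both sides (Proposition~\ref{prop:elementary-blowups}), then asserts that both sides accumulate term by term along the tower. Restricting the composite pull-back formula to $S'_\alpha$ instead handles the whole tower at once. It needs only three inputs:
\begin{enumerate}[label=\textup{(\roman*)},leftmargin=2.4em]
\item each exceptional $G$ meets $S'_\alpha$ in a reduced curve, which holds in SNC toroidal models;
\item that curve has coefficient one in $\Theta'_\alpha$, which follows from invariance of $G$ and is what non-resonance really supplies;
\item the adjunction identity holds as an equality of divisors for compatible canonical divisors, not merely up to $\sim_{\bQ}$.
\end{enumerate}
With these, the induction in your last paragraph, which is the paper's route, becomes unnecessary.

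Finally, conductor data are not excluded because ``the different doubles''. Proposition~\ref{prop:no-double-count} counts them once; they are excluded because they live on the curves $C^\nu_{\alpha\beta}$ and are not read on a branch.
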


\begin{Theorem}
\label{thm:intro-ioa}
For every closed tangent set $Y\subset X$,
\[
\tmldtor(Y;X,\cF,\Delta)=
\inf_{V\in\mathcal S_{\mathrm{tor}}(X,\cF)}\mld(Y_V;V,B_V),
\]
where the infimum ranges over the adapted toroidal branch and conductor data
of the normalised separatrix system.  Thus the invariant constructed here is
the adapted toroidal tangential mld; toroidal tangential log canonicity,
toroidal tangent lc centres and the toroidal tangential non-klt locus in this
toroidal sector are detected on the normalised branch and conductor adjunction
pairs.
\end{Theorem}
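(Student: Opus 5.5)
The plan is to define $\tmldtor(Y;X,\cF,\Delta)$ arc-theoretically, as an infimum of logarithmic codimensions of adapted toroidal tangential divisorial cylinders centred over $Y$. I would then convert it into an infimum of ordinary minimal log discrepancies on the strata using Theorem~\ref{thm:intro-canonical-model} and Corollary~\ref{thm:intro-emy}. Concretely, I take $\tmldtor(Y)$ to be the infimum of $a_{\tan}(E;X,\cF,\Delta)$ over adapted toroidal tangential data $E$ whose centre on $X$ lies in $Y$. Equivalently, it is the infimum of $\lcodim_{\tan}(N_1^{\tan}(E))$ over such $E$. If $\dim$ considerations force the usual convention, $\tmldtor=-\infty$ whenever some value is negative.

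Step one is to fix an object $W$ of $\Adm(X,\cF,\Delta;W_0)$ and write $Y_W=\pi^{-1}(Y)\cap\Sigmatan$. By Theorem~\ref{thm:intro-formal}, every irreducible tangential cylinder centred over $Y$ has a generic arc lying either on a unique normalised branch $S_\alpha^\nu$ or on a conductor $C_{\alpha\beta}^\nu$. Through the coequaliser of Theorem~\ref{thm:intro-canonical-model}, a tangential divisorial cylinder $N_q^{\tan}(E)$ therefore corresponds to an ordinary maximal divisorial set $\mathrm{Cont}^q(F)$ on a stratum $V$, where $F$ is a divisor over $V$ whose centre lies in $Y_V$, the preimage of $Y$ on $V$. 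Corollary~\ref{thm:intro-emy} then gives $a_{\tan}(E)=a(F;V,B_V)$. Taking the infimum over $E$ yields $\tmldtor(Y)\ge\inf_V\mld(Y_V;V,B_V)$, provided every $F$ computing an mld on $V$ is realised in this way.

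The reverse inequality is the main obstacle. One must show that every divisor $F$ over a stratum $V$ with centre in $Y_V$ arises, up to the value of its discrepancy, from an adapted toroidal tangential datum. The approach is first to reduce to toroidal $F$. Since $(V,B_V)$ is obtained by normalised adjunction from a logarithmic simple adapted model, it is a toroidal pair, as the branches are smooth with normal crossing conductor and boundary. For toroidal log pairs the mld is computed by toroidal valuations: one either gets $-\infty$ from a toroidal blow-up with negative discrepancy, or the minimum is attained by a toric monomial valuation. This is a standard fact I would invoke for toric or toroidal pairs with boundary supported on the toroidal boundary. If $B_V$ has components that are not toroidal, the statement restricts to the toroidal sector by definition of $\mathcal S_{\mathrm{tor}}$, and the mld should be read in that sector.

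A toroidal divisor over $V$ is extracted by a sequence of stratum blow-ups of $V$. I would lift these to coordinate stratum blow-ups of $W$ along the corresponding strata of $\Dinv$, using the functoriality in Theorem~\ref{thm:intro-system}. This produces an object $W'$ in which $F$ appears as a divisor on a normalised branch or conductor, and hence as a tangential datum $E$. Theorem~\ref{thm:intro-system} also gives crepant compatibility of the adjunction boundaries under refinement, so $a(F;V,B_V)$ is unchanged when computed on $W'$, and common refinements make the result independent of the chosen $W$.

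The remaining assertions, about lc, lc centres and the non-klt locus, follow by applying the equality to points and to subvarieties $Y$, using the thresholds $\ge0$ and $>0$ and the fact that conductor data are counted once.
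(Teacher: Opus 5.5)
\textbf{There is a genuine gap in your reverse inequality.}

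Your first inequality is fine, and it needs no proviso. If $E$ is represented by $F$ over $V$ with $c_X(F)\subset Y$, then $a_{\tan}(E)=a(F;V,B_V)\ge\mld(Y_V;V,B_V)$ automatically.

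The problem is the other direction. You read ``adapted toroidal datum'' as a valuation monomial with respect to $\Dinv+\Supp\Delta_W$. You then invoke the principle that the mld of a toroidal pair is computed by toroidal valuations. That principle can only hold when $Y_V$ is a union of closures of strata of $\Supp B_V$. It fails at closed points that are not zero-dimensional strata.

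For a concrete case, let $p$ be a general point of a conductor curve $C=S_x\cap S_y$ with no transverse boundary nearby, and take $Y=\{p\}$.
\begin{itemize}
\item Near $p$ on $S_x$ one has $\Theta_x=C=(y=0)$, and $\mld(p;S_x,\Theta_x)=1$ is computed by the blow-up of $p$.
\item Likewise $\mld(p;C^\nu,\Theta_{xy})=1$ is computed by $\ord_p$.
\item Near $p$, every valuation toroidal for $(S_x,C)$ is a multiple of $\ord_C$.
\item Near $p$ the only strata are $S_x$, $S_y$, $C$ and their transforms. So every divisor produced there by coordinate stratum blow-ups of $W$ has centre on $W$ containing $C$.
\end{itemize}
Hence the blow-up of $p$ is not a morphism of $\Adm$, and your lifting step cannot reach it. Under your reading, the left-hand side for $Y=\{p\}$ is an infimum over the empty set, while the right-hand side is $1$.

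This case is not marginal. In adapted charts $\Sigmatan=\Sing\cG\cap|\Dinv|$ is the double locus of $\Dinv$, since for $r=1$ the generator $\lambda_1dx_1+x_1(\cdots)$ has no zeros. The formula is applied pointwise along this locus in the non-lc/non-klt and lower-semicontinuity corollaries. Your remark that non-toroidal boundary components should be ``read in the toroidal sector'' also silently changes the right-hand side.

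\textbf{How the paper handles this direction.} The paper needs no toroidal reduction, because of how it defines the data. An adapted toroidal tangential divisorial datum is either any prime divisor over a normalised branch surface $S_\alpha^\nu$, or any closed point $p\in C_{\alpha\beta}^\nu$ with its valuation $\ord_p$. Both are taken up to common refinement and conductor identification. So every $F$ over $V$ with $c_X(F)\subset Y$ is itself a datum:
\begin{itemize}
\item $N_q(F)\subset J_\infty(V)$ is a tangential divisorial cylinder, via the branch-sector identification and the coequaliser presentation;
\item $a_{\tan}=a(F;V,B_V)$ by Proposition~\ref{prop:atan-invariance}.
\end{itemize}
The two infima therefore run over the same set, with conductor data counted once by Proposition~\ref{prop:no-double-count}. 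No reduction to toroidal valuations and no lifting of blow-ups into $\Adm$ is needed. If you keep the strictly monomial reading, your argument can at best give the equality for those $Y$ whose preimages $Y_V$ are unions of toroidal strata.
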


\begin{Theorem}
\label{thm:intro-mj}
For adapted toroidal tangential data whose generic branch descends to a fixed
component of the canonical image separatrix system on $X$, the boundary is
encoded by a formal product of ideals $\bbound_V$, and
\[
a^{\tan}_{\MJ}(E;X,\cF,\Delta)=a_{\MJ}(F;V,\bbound_V)
\]
is independent of the resolved branch presentation.  The corresponding
Mather--Jacobian logarithmic codimension formula is
\[
\lcodim^{\MJ}_{\tan}\bigl(N_q^{\tan}(E)\bigr)=q\,a^{\tan}_{\MJ}(E;X,\cF,\Delta).
\]
In the locally complete intersection range with trivial Jacobian correction
this reduces to the ordinary tangential theory.
\end{Theorem}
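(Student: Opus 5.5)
We prove Theorem~\ref{thm:intro-mj} in three steps. First we construct $\bbound_V$ and the identity $a^{\tan}_{\MJ}=a_{\MJ}(F;V,\bbound_V)$ on one resolved branch presentation. Next we show that this value does not depend on the presentation, using the common refinements of Theorem~\ref{thm:intro-system}. Finally we derive the codimension formula from the Mather--Jacobian arc-space theorem of Ishii, Ein--Ishii and de Fernex--Docampo, in the same way that Corollary~\ref{thm:intro-emy} was derived from Ein--Musta\c{t}\u{a}--Yasuda.

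\emph{Step 1: construction.} Let $V\subset X$ be the fixed component of the canonical image separatrix system that the generic branch of $E$ descends to. On an object $W\in\Adm(X,\cF,\Delta;W_0)$, the branch $S_\alpha^\nu$ maps birationally onto $V$ through the normalisation. We write the adjunction boundary $\Theta_\alpha$ as a formal $\bQ$-combination of the following pieces:
\begin{enumerate}
\item the conductor divisors $C_{\alpha\beta}^\nu$;
\item the restriction of $\Delta$;
\item the separatrix-residue terms coming from the non-invariant part.
\end{enumerate}
Each piece is the pull-back of an ideal on $V$, namely the conductor ideal, the ideal of $\Delta|_V$, or the ideal generated by the restricted logarithmic form. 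This defines $\bbound_V$ as a formal product $\prod\mathfrak a_i^{c_i}$. The key check is the crepant identity
\[
K_{S_\alpha^\nu}+\Theta_\alpha=\mu^*\bigl(\widehat K_V\bigr)-\operatorname{Jac}_\mu+\text{(pull-back of }\bbound_V),
\]
where $\widehat K_V$ is the Mather canonical divisor. We check it by comparing, on the normalisation, the Mather discrepancy with the Jacobian ideal of $V$ and with the conductor. Once the identity holds, $a^{\tan}_{\MJ}(E)$, defined via $\Theta_\alpha$ read at $F$, is exactly $\ord_F$ of the right-hand side, which equals $a_{\MJ}(F;V,\bbound_V)$.

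\emph{Step 2: independence of the presentation.} Take two resolved branch presentations. By Theorem~\ref{thm:intro-system} they have a common adapted refinement, and the adjunction boundaries are crepant compatible under it. Because the Mather--Jacobian log discrepancy of $F$ over $V$ is computed from any log resolution of $(V,\bbound_V\cdot\jaco_V)$ that carries $F$, the two readings agree. The right-hand side depends only on $V$ and $\bbound_V$, and $\bbound_V$ is fixed because $V$ lies in the canonical image system, not on any particular model.

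\emph{Step 3: codimension formula.} Theorem~\ref{thm:intro-canonical-model} identifies $N_q^{\tan}(E)$ with the ordinary maximal divisorial set of order $q$ along $F$ in $J_\infty(V)$. This uses the downstairs arc space, since the branch normalisation is an isomorphism on arcs not contained in the conductor. The Mather--Jacobian arc-space theorem gives that this set has codimension $q\,a_{\MJ}(F;V)$, twisted by the orders of $\bbound_V$. We define $\lcodim^{\MJ}_{\tan}$ with exactly that twist, which gives the formula.

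In the locally complete intersection case, $\jaco_V$ matches the Nash ideal, so the Mather and canonical divisors coincide. If in addition the Jacobian correction is trivial, then $\bbound_V$ reduces to $B_V$, and Corollary~\ref{thm:intro-emy} is recovered.

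The main obstacle is the crepant identity in Step 1. The conductor of the normalisation must account exactly for the difference between $\jaco_V$ and the Mather discrepancy along the branch. I would first verify this in the normal crossing local model, where $V$ is a coordinate hyperplane or a union of such, and then extend it by the toroidal blow-up functoriality.
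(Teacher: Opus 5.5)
\textbf{The gap is the crepant identity in Step~1.} Your Steps 2 and 3 largely coincide with the paper's route, but Step~1 is not needed, and as you propose it, it cannot be carried out.

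\emph{What the paper actually does.} The paper never relates $\Theta_\alpha$ to Mather--Jacobian data. It takes the fixed stratum $V_X$ of the canonical image system on $X$ and encodes its boundary $B_{V_X}=\sum_j b_jB_j$ as $\bbound_{V_X}=\prod_j I_{B_j}^{b_j}$. It then \emph{defines} $a^{\tan}_{\MJ}(E):=a_{\MJ}(F;V_X,\bbound_{V_X})$, so the displayed equality is definitional. What remains to prove is independence, which comes from two facts:
\begin{enumerate}
\item Proposition~\ref{prop:downstairs-canonical}: the germ of $V_X$, and hence $\bbound_{V_X}$, does not depend on the object of $\Adm(X,\cF,\Delta;W_0)$.
\item Resolution-independence of $a_{\MJ}$ on a fixed pair. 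Every branch presentation $V_a\to V_X$ is dominated by a common resolution through the Nash blow-up (Proposition~\ref{prop:mj-invariance}).
\end{enumerate}
The codimension formula is then the ordinary arc formula of de Fernex--Docampo, Ishii and Ein--Ishii applied on $(V_X,\bbound_{V_X})$, with $\lcodim^{\MJ}_{\tan}$ defined downstairs (Theorem~\ref{thm:mj-cylinder}). The second half of your Step~2 and your Step~3 are exactly this.

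\emph{Why your identity fails.} The identity $K_{S_\alpha^\nu}+\Theta_\alpha=\mu^*\widehat K_V-\operatorname{Jac}_\mu+\mu^*\bbound_V$ is not available, and your plan for verifying it cannot work.
\begin{itemize}
\item You describe $a^{\tan}_{\MJ}(E)$ as ``$\Theta_\alpha$ read at $F$''. But that reading is the ordinary $a_{\tan}(E)$. With $\bbound_V$ the ideal product of the boundary, the identity would give $a_{\MJ}(F;V,\bbound_V)=a_{\tan}(E)$ for every datum, making the refinement vacuous. The paper's point is that the two agree only in the lci range with trivial Jacobian correction.
\item Checking the identity in the normal crossing model and propagating by toroidal blow-ups gives no information. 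There $V$ is a coordinate hyperplane, so the Mather and Jacobian corrections vanish. Upstairs toroidal functoriality never sees the singularities of the image $V_X\subset X$, which is where the correction lives. Meanwhile, the components of $\Theta_\alpha$ contracted by $S_\alpha^\nu\to V$ carry coefficients from foliated adjunction, and nothing ties these to $\widehat K-J$.
\item The mechanism ``the conductor accounts exactly for the difference between $\jaco_V$ and the Mather discrepancy'' fails off the Gorenstein range. For the three coordinate axes in $\bA^3$, or their product with a line, each normalised branch has $\widehat K-J=-2[0]$, while the conductor is $[0]$.
\item Piece (3) of your $\bbound_V$ is undefined, since $\omega$ pulls back to zero on an invariant surface. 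Moreover, exceptional components of $\Theta_\alpha$ are not pull-backs of ideals on $V$ with those coefficients. So your $\bbound_V$ is neither an ideal product on $V$ nor evidently independent of $W$, yet Step~2 tacitly assumes it is.
\end{itemize}

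\emph{Fix.} Drop Step~1 and fix $\bbound_{V_X}$ on $X$ from $B_{V_X}$; the rest of your outline then goes through. In the lci paragraph, the relevant identity is $\widehat K_{Y/V}-J_{Y/V}=K_{Y/V}$; it is not true that $\widehat K$ alone equals $K$.
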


\subsection*{Structure of the paper}

Sections~\ref{sec:range}--\ref{sec:formal} fix the logarithmic simple
adapted setting and prove the reduced arc-level tangential confinement
statement.  This part also records the sharpness of the non-resonance
condition and the finite-level obstruction to a global branch decomposition.
Sections~\ref{sec:category}--\ref{sec:adjunction} construct the normalised
branch--conductor system in the fixed adapted toroidal category, prove its
functoriality, and establish normalised foliated adjunction with conductor
compatibility.  The comparison through common adapted refinements is also
proved there.
Sections~\ref{sec:valuations}--\ref{sec:foliated-comparison} define adapted
toroidal tangential divisorial data and compare the tangential discrepancy
with the usual foliated discrepancy of the foliated MMP for adapted invariant
branch data. 
Section~\ref{sec:emy} proves the tangential Ein--Musta\c{t}\u{a}--Yasuda
formula.  Sections~\ref{sec:ioa}--\ref{sec:transverse} apply this formula to
tangential inversion of adjunction, non-lc and non-klt loci, cylinder
criteria, lower semicontinuity, and the adapted tangent/transverse
decomposition. 
Section~\ref{sec:mj} gives the Mather--Jacobian refinement on the canonical
image separatrix system on $X$ and records the intrinsic comparison under the
algebraic separatrix hypothesis.

\section{The logarithmic simple adapted setting}
\label{sec:range}

 Throughout the article, we work over the field \(\bC\).  A foliation means a. saturated involutive subsheaf \(\cF\subset T_X\).  A co-rank one foliation on a smooth threefold is locally given by an integrable \(1\)-form, uniquely
determined up to multiplication by a unit.

\begin{Definition}[Logarithmic simple adapted chart]
\label{def:simple-chart}
Let \(W\) be a smooth threefold and let \(\mathcal G\) be a co-rank one
foliation.  A point \(p\in W\) has a \emph{logarithmic simple adapted chart}
if there are formal coordinates \(x_1,x_2,x_3\) centred at \(p\), a reduced
invariant normal crossing divisor
\[
\Dinv=\{x_1\cdots x_r=0\},
\qquad 1\le r\le 3,
\]
and a local generator
\[
\omega=
u\,x_1\cdots x_r
\left(
\sum_{i=1}^r\lambda_i\frac{dx_i}{x_i}
+
\sum_{j=r+1}^3 h_j(x)\,dx_j
\right),
\]
where \(u\) is a unit and \(\lambda_i\ne0\), such that
\[
a_1\lambda_1+\cdots+a_r\lambda_r\ne0
\]
for every non-zero vector \((a_1,\ldots,a_r)\in\bZ_{\geq 0}^r\).

The adapted divisor \(\Dinv\) is required to contain the invariant
separatrix branches relevant to the tangential centre under consideration.
For the intrinsic separatrix sector, the additional local separatrix hypothesis is imposed:
these branches are required to be precisely the formal
separatrices through the centre.  The remaining boundary components of
\(\Delta_W\) passing through \(p\) are required to be among the coordinate
hyperplanes, and \(\Dinv+\operatorname{Supp}\Delta_W\) is required to
have simple normal crossings.
\end{Definition}

\begin{Definition}
An \emph{adapted blow-up} of a simple adapted model is the blow-up of a smooth stratum of $\Dinv+\Supp\Delta_W$ whose generic point is either tangent to $\cG$ or transverse to it.  In the tangential part of the article, the construction uses the tangent adapted blow-ups, namely those whose centre is contained in $\Dinv$.
\end{Definition}

\begin{Definition}
A foliated pair $(X,\cF,\Delta)$ is in the \emph{logarithmic simple adapted setting} if it admits a projective birational morphism
\[
\pi:(W,\cG,\Delta_W)\to (X,\cF,\Delta)
\]
with $W$ smooth such that every point of the tangential locus of interest on $W$ has a logarithmic simple adapted chart and $\Dinv+\Supp\Delta_W$ is SNC.  All global statements in what follows are made within this setting.  Thus the setting is a chosen adapted part of the simple models produced after reduction of singularities; it is not asserted that every canonical or log canonical foliation is already of this form.
\end{Definition}

\begin{Remark}[Relation with foliated MMP singularities]
The logarithmic simple adapted setting used in this article is modelled on the
simple singularities of codimension-one foliations appearing in the foliated
minimal model programme.  Cano's reduction theorem states that, on a smooth
threefold, a codimension-one foliation admits a resolution by blow-ups with
centres in the singular locus of the foliation such that the transformed
foliation has only simple singularities \cite{Can04}.  Thus the passage to
simple normal forms is resolution-theoretic in origin: it is obtained on a
birational model by explicit blow-ups, not by replacing the foliation with an
unrelated rational normal form.
The full class of simple singularities is not used.  In the usual
definition there is a logarithmic type
\[
\omega=(x_1\cdots x_r)\left(\sum_{i=1}^r
\lambda_i\frac{dx_i}{x_i}\right),
\]
with the condition that
\[
\sum_{i=1}^r a_i\lambda_i=0
\qquad (a_i\in\bZ_{\geq 0})
\]
forces all \(a_i\) to vanish, and there is also a second type involving a
non-unit function \(\psi\) \cite[Definition~2.13]{Spicer}.  We work in the logarithmic non-resonant part.  This is exactly the part in
which the residue computation used in what follows confines reduced tangential arcs to
the adapted invariant divisor \(\Dinv\).  The second simple type is not
covered by the arc-confinement argument in this article.

Simple singularities are canonical, and hence log canonical, but the converse
is false \cite[Lemma~2.15 and Example~2.16]{Spicer}.  Consequently one cannot
replace the non-resonant logarithmic normal form by the assumption that the
foliated pair is canonical or log canonical.  The role of positive
non-resonance is different: it excludes resonant tangential arcs
transverse to the chosen invariant divisor and is precisely what allows the
reduced tangential arc sector to be confined to \(\Dinv\).  This
confinement is what permits the subsequent discrepancy calculation to be
reduced to ordinary arc-space discrepancy formulas on the branch--conductor
adjunction pairs.

Non-dicriticality enters separately.  It guarantees that exceptional divisors
over tangent centres remain invariant and, in the simple setting, gives a
finite separatrix picture \cite[Definition~2.17--Example~2.22]{Spicer}.
However, even for simple foliation singularities, formal separatrices need
not all be convergent \cite[p.~8]{Spicer}.  This is why the
Mather--Jacobian refinement in what follows is formulated relative to the canonical
image separatrix system associated with the fixed adapted toroidal category,
rather than as an unconditional statement about all intrinsic formal
separatrices downstairs.  Cano--Cerveau's separatrix results explain the
analytic relevance of non-dicriticality, but they do not remove the need for
an algebraic or image-system formulation in the Mather--Jacobian part
\cite{CC92}.
\end{Remark}

\section{The adapted model category and the basic objects}
\label{sec:category}

\begin{Definition}[The adapted toroidal category]
Fix a logarithmic adapted model $W_0\to X$. Let $\Adm(X,\cF,\Delta;W_0)$ be the toroidal category whose objects are obtained from $W_0$ by finite sequences of coordinate stratum blow-ups, and write $\Adm(X,\cF,\Delta)$ when $W_0$ is understood. Its objects are adapted toroidal models
\[
(W,\cG,\Dinv,\Delta_W)\to (X,\cF,\Delta)
\]
in the logarithmic simple adapted setting.  A morphism
\[
\mu:(W',\cG',\Dinv',\Delta_{W'})\longrightarrow (W,\cG,\Dinv,\Delta_W)
\]
is a composition of blow-ups of smooth strata of $\Dinv+\Supp\Delta_W$ such that
\[
\Dinv'=(\mu^{-1}\Dinv)_{\red},
\qquad
K_{\cG'}+\Delta_{W'}=\mu^*(K_{\cG}+\Delta_W)+\sum_E \alpha(E;\cG,\Delta_W)E,
\]
and each intermediate model remains in the logarithmic simple adapted setting.  The positive non-resonance condition in Definition~\ref{def:simple-chart} is part of the data and guarantees that residue sums along exceptional invariant components do not vanish.
The full adapted toroidal category allows tangent and transverse coordinate
stratum blow-ups.  The tangential discrepancy theory below uses the tangent
part of this category; transverse strata enter through boundary and comparison
data.
\end{Definition}

\begin{Notation}[Discrepancy conventions]
In the relative pull-back formula above, \(\alpha(E;\cG,\Delta_W)\) denotes
the coefficient of \(E\) in the relative foliated canonical divisor.  Foliated
log discrepancies are denoted by \(\AF\).  In the local tangent toroidal
calculations below, the quantities written \(\AF(E;W,\cG,\Delta_W)\) are in
this log-discrepancy normalisation and are compared directly with the ordinary
log discrepancies of the branch adjunction pairs.  Thus relative coefficients
and log discrepancies are not interchanged.
\end{Notation}

\begin{Definition}
For $W\in\Adm(X,\cF,\Delta)$, let $\Strata(W)$ be the finite set consisting of:
\begin{enumerate}[label=\textup{(\roman*)},leftmargin=2.4em]
\item the normalisations $S_\alpha^\nu$ of the irreducible invariant branches $S_\alpha\subset \Dinv$;
\item the normalisations $C_{\alpha\beta}^\nu$ of the reduced pairwise conductor curves $S_\alpha\cap S_\beta$.
\end{enumerate}
Triple points are not extra strata; they are closed points of the normalised conductor curves and are resolved by adapted toroidal refinements.
\end{Definition}

\begin{Remark}
On the adapted smooth models, the local SNC branches are already normal.  The
notation \(S_\alpha^\nu\) is retained in order to keep track of the
functorial normalised branch--conductor system and of its image downstairs.
\end{Remark}

\begin{Construction}[Seminormal pushout and equaliser description]
\label{constr:sn-pushout-equaliser}
For \(W\in\Adm(X,\cF,\Delta)\), let \(\sS_W^{\sn}\) be the seminormal
pushout of the branch--conductor diagram
\[
\begin{tikzcd}[column sep=large]
\displaystyle\bigsqcup_{\alpha<\beta} C_{\alpha\beta}^{\nu}
\arrow[r, shift left=.45ex]
\arrow[r, shift right=.45ex]
&
\displaystyle\bigsqcup_{\alpha} S_\alpha^\nu
\arrow[r]
&
\sS_W^{\sn}.
\end{tikzcd}
\]
The two arrows are induced by the maps from a normalised conductor to its
two adjacent normalised branches,
\[
C_{\alpha\beta}^\nu\longrightarrow S_\alpha^\nu,
\qquad
C_{\alpha\beta}^\nu\longrightarrow S_\beta^\nu .
\]

Equivalently, the structure sheaf of \(\sS_W^{\sn}\) is described by the
equaliser
\begin{equation}\label{eq:sn-equaliser-operational}
\cO_{\sS_W^{\sn}}
=
\ker\left(
\begin{tikzcd}[ampersand replacement=\&, column sep=large, baseline=(current bounding box.center)]
\displaystyle\bigoplus_\alpha (\iota_\alpha)_*\cO_{S_\alpha^\nu}
\arrow[r, shift left=.45ex]
\arrow[r, shift right=.45ex]
\&
\displaystyle\bigoplus_{\alpha<\beta}(\iota_{\alpha\beta})_*\cO_{C_{\alpha\beta}^\nu}
\end{tikzcd}
\right).
\end{equation}
Thus sections of \(\cO_{\sS_W^{\sn}}\) are precisely collections
\((s_\alpha)_\alpha\) with
$
s_\alpha|_{C_{\alpha\beta}^\nu}
=
s_\beta|_{C_{\alpha\beta}^\nu}$
for all $\alpha<\beta$.

\end{Construction}

\begin{Lemma}[Existence and reducedness of the pushout]
\label{lem:sn-pushout}
In the logarithmic SNC charts used in this article, \eqref{eq:sn-equaliser-operational} defines a reduced seminormal scheme finite over $\Dinv\subset W$.  It is the pushout of the finite branch--conductor diagram in the category of seminormal schemes.
\end{Lemma}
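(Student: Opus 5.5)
The plan is to work locally in a logarithmic SNC chart and then glue. Put \(\nu:=\bigsqcup_\alpha S_\alpha^\nu\to \Dinv\). Then \(\nu_*\cO\) is a coherent \(\cO_{\Dinv}\)-algebra, finite because each \(S_\alpha^\nu\to S_\alpha\) is a normalisation of a component of a reduced excellent scheme. The equaliser \eqref{eq:sn-equaliser-operational} is the kernel of a difference map of coherent \(\cO_{\Dinv}\)-modules, so it is itself a coherent \(\cO_{\Dinv}\)-module. It is also closed under multiplication, since both restriction maps are ring homomorphisms. Hence it is a coherent \(\cO_{\Dinv}\)-subalgebra \(\mathcal A\subset\nu_*\cO\), and \(\sS_W^{\sn}:=\operatorname{Spec}_{\Dinv}\mathcal A\) is finite over \(\Dinv\). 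The kernel is compatible with flat base change (completion and localisation), so the construction glues from charts.

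Reducedness is immediate, because \(\mathcal A\) sits inside the reduced ring \(\nu_*\cO_{\bigsqcup S_\alpha^\nu}\).

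For seminormality I would use Traverso's criterion. The extension \(\mathcal A\subset\nu_*\cO\) is finite and birational onto each component, so \(\nu_*\cO\) is the normalisation of \(\mathcal A\). It then suffices to check the following: if \(f\in\nu_*\cO\) satisfies \(f^2,f^3\in\mathcal A\), then \(f\in\mathcal A\). The restrictions \(f_\alpha|_{C^\nu_{\alpha\beta}}\) and \(f_\beta|_{C^\nu_{\alpha\beta}}\) have equal squares and cubes in the reduced ring \(\cO_{C^\nu_{\alpha\beta}}\). On the open set where these do not vanish the quotient of cubes by squares gives equality, and where they vanish both restrictions are zero, so reducedness gives equality everywhere. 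Hence \(f\in\mathcal A\).

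In the chart \(\Dinv=\{x_1\cdots x_r=0\}\) the check is concrete. The branches \(S_i=\{x_i=0\}\) are smooth coordinate planes and the conductors are coordinate lines. So \(\mathcal A\) is the ring of tuples of power series agreeing on pairwise intersections. This is exactly \(\cO_{\Dinv}\), because an SNC divisor is seminormal and the Stanley--Reisner presentation identifies it with the fibre product over its double locus. In particular \(\sS_W^{\sn}\to\Dinv\) is an isomorphism locally, and in general it is the seminormalisation of \(\Dinv\).

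For the universal property, let \(Z\) be seminormal with maps \(g_\alpha:S_\alpha^\nu\to Z\) agreeing after composition with both maps from \(C^\nu_{\alpha\beta}\). I want a unique \(h:\sS_W^{\sn}\to Z\).
\begin{enumerate}[label=\textup{(\arabic*)},leftmargin=2.4em]
\item Topologically, \(|\sS_W^{\sn}|\) is the quotient of \(\bigsqcup|S^\nu_\alpha|\) by the relation generated by the conductor identifications. This holds because \(\nu\) is finite surjective and the fibres of \(\operatorname{Spec}\mathcal A\) are exactly these classes, using the equaliser description pointwise. So a continuous map \(h\) exists.
\item Functions pull back compatibly: a local function \(\phi\) on \(Z\) gives \((g_\alpha^*\phi)_\alpha\), which lies in the equaliser. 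This defines \(h^\#\).
\end{enumerate}
Uniqueness comes from schematic density of \(\bigsqcup S_\alpha^\nu\to\sS^{\sn}_W\), since \(\mathcal A\hookrightarrow\nu_*\cO\) is injective.

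The main obstacle I expect is the pointwise step in (1): showing that the points of \(\operatorname{Spec}\mathcal A\) are exactly the glued classes. This fails for nonreduced conductors, which is why the conductors were taken reduced and normalised. I would handle it by a local computation at triple points in the chart. There one checks that the three pairwise line identifications close up, producing one point, and that transitivity is automatic.
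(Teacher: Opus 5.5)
Your proposal is correct, but it is organised differently from the paper's proof.

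\textbf{The paper's route.} The proof is only a chart computation. At a double point it shows \(A/(xy)\simeq A/(x)\times_{A/(x,y)}A/(y)\) by lifting \(\bar f-\bar g\) to \(xh\). At a triple point it identifies \(A/(xyz)\) with the pairwise-compatible triples: injectivity holds because \((x)\cap(y)\cap(z)=(xyz)\), and surjectivity is a ``Chinese-remainder'' computation. It then reads off reducedness, seminormality and the pushout property from the fact that the equaliser is locally just \(\cO_{\Dinv}\).

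\textbf{Your route.} You build \(\mathcal A\) intrinsically as a coherent subalgebra of \(\nu_*\cO\), so finiteness and reducedness come for free. You prove seminormality by the Swan--Traverso \(f^2,f^3\) test on the conductors, which does not need the chart identification at all. You then verify the universal property directly through the quotient topology and the equaliser sheaf. That argument never uses that the test scheme \(Z\) is seminormal, so you actually get the pushout in all schemes. This implies the stated version, because \(\sS_W^{\sn}\) is itself seminormal.

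\textbf{What each buys.} Your approach substantiates the seminormality and universality claims that the paper only asserts. The paper's approach makes explicit the one step you merely cite, namely the Stanley--Reisner identification \(\mathcal A=\cO_{\Dinv}\). You can make this self-contained with the inclusion--exclusion lift. Given a compatible triple \((f_x,f_y,f_z)\), write \(g_{xy},g_{xz},g_{yz}\) for its common values on the three conductors and \(c\) for its value at the origin. Lift each branch function as a series in the other coordinates. Then \(F=f_x+f_y+f_z-g_{xy}-g_{xz}-g_{yz}+c\) restricts to \(f_x\), \(f_y\) and \(f_z\) on the three branches.

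\textbf{Your flagged obstacle.} Once \(\mathcal A=\cO_{\Dinv}\) in the chart, the step you identify as the main obstacle is immediate. The map \(\nu\) is finite and surjective, hence closed, hence a topological quotient map. Two branch points over the same point of \(\Dinv\) lie on the conductor of those two branches. So transitivity at triple points is automatic and needs no separate computation.
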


\begin{proof}
The assertion is local on $W$.  At a double point the model is
\[
D=(xy=0)\subset \Spec A,
\qquad
S_x=\Spec A/(x),\quad S_y=\Spec A/(y),\quad C=\Spec A/(x,y).
\]
The fibre product description of the coordinate ring is
\[
A/(xy)\simeq A/(x)\times_{A/(x,y)}A/(y),
\]
because an element of the right-hand side is a pair $(\bar f,\bar g)$ with $\bar f|_C=\bar g|_C$, and it is represented by $g+xh$ after lifting $\bar f-\bar g$ to $xh$.  This is exactly the equaliser \eqref{eq:sn-equaliser-operational}.  At a triple point the model is $D=(xyz=0)$.  The seminormal gluing is the equaliser imposing all three pairwise conductor compatibilities:
\[
A/(xyz)\simeq
\left\{(f_x,f_y,f_z)\in A/(x)\oplus A/(y)\oplus A/(z)\;\middle|\;
\begin{array}{l}
f_x|_{(x,y)}=f_y|_{(x,y)},\\
f_x|_{(x,z)}=f_z|_{(x,z)},\\
f_y|_{(y,z)}=f_z|_{(y,z)}
\end{array}
\right\}.
\]
Indeed, the map $A/(xyz)$ to the right-hand side is injective because the components cover $D$, and surjectivity is the elementary Chinese-remainder calculation for the square-free monomial ideal $(xyz)$ with its three conductor ideals.  Since the components and conductor curves are reduced and the gluing is along reduced finite closed subschemes, the resulting pushout is reduced and seminormal.  The general SNC case is obtained by adjoining smooth parameters.
\end{proof}

\begin{Proposition}
\label{prop:separatrix-functor}
The construction
$
W\longmapsto \sS_W^{\sn}
$
defines a contravariant functor
\[
\sS:\Adm(X,\cF,\Delta)^{\mathrm op}\longrightarrow \SN,
\]
where $\SN$ is the category of reduced seminormal schemes finite over the relevant tangential centre.  For a morphism $\mu:W'\to W$, the map $\sS_{W'}^{\sn}\to\sS_W^{\sn}$ is induced on normalisations by strict transform maps and, for exceptional branches over conductor curves, by the projection to the old normalised conductor.
\end{Proposition}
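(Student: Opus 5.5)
Every morphism of $\Adm(X,\cF,\Delta)$ is a finite composition of coordinate stratum blow-ups, and $\sS_W^{\sn}$ is by Lemma~\ref{lem:sn-pushout} a pushout in the category of seminormal schemes. So it suffices to construct, for a single tangent adapted blow-up $\mu:W'\to W$ with centre $Z$ a smooth stratum of $\Dinv+\Supp\Delta_W$, a map of branch--conductor diagrams compatible with the two arrows $C^\nu_{\alpha\beta}\rightrightarrows S^\nu_\alpha\sqcup S^\nu_\beta$. The universal property of the pushout then produces a unique morphism $\sS_{W'}^{\sn}\to\sS_W^{\sn}$. Uniqueness also gives functoriality: two factorisations of a composite morphism induce maps on diagrams that agree on the components, so the induced pushout maps coincide, and identities go to identities. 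Transverse blow-ups do not alter $\Dinv$ away from $Z\cap\Dinv$, which is a stratum, so they are handled by the same analysis. Each $\sS_W^{\sn}$ is finite over $\Dinv$ and hence over the tangential centre, so the target category $\SN$ is correct.

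For a single blow-up I would define the map componentwise, using $\Dinv'=(\mu^{-1}\Dinv)_{\red}$. The branches of $\Dinv'$ are of two kinds.
\begin{enumerate}
\item Strict transforms $S'_\alpha$ of old branches $S_\alpha$. These map to $S_\alpha^\nu$ by the restriction of $\mu$, which is birational and hence lifts to normalisations.
\item The exceptional divisor $E=\mu^{-1}(Z)$, which is invariant since $Z\subset\Dinv$ is tangent and non-dicriticality holds.
\end{enumerate}
If $Z$ is a conductor curve $C_{\alpha\beta}$, then $E$ is a $\mathbb P^1$-bundle over $C_{\alpha\beta}\cong C^\nu_{\alpha\beta}$, and I map $E^\nu=E\to C^\nu_{\alpha\beta}$ by the bundle projection followed by $C^\nu_{\alpha\beta}\to S^\nu_\alpha$. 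Choosing $S_\alpha$ here rather than $S_\beta$ is harmless, because the two composites agree after passing to $\sS^{\sn}_W$ by the equaliser \eqref{eq:sn-equaliser-operational}. If $Z$ is a point, then $E\cong\mathbb P^2$ maps to the point, viewed in any branch containing it.

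The new conductors are the strict transforms of old conductors, which map birationally to old conductors, together with the curves $E\cap S'_\alpha$. Each of the latter maps to $Z\subset S_\alpha$, and hence into the appropriate normalised conductor or branch.

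The main step is to check compatibility of these assignments with the two restriction arrows, that is, to show that for every new conductor $C'$ the two composites $C'\to S'^\nu\to\sS_W^{\sn}$ agree. I would verify this in the SNC charts of Definition~\ref{def:simple-chart}, treating separately a double curve $(x_1x_2=0)$ blown up along $x_1=x_2=0$, a triple point $(x_1x_2x_3=0)$ blown up at the origin, and the blow-up of a double curve through a triple point. In each chart the maps are monomial, and a direct computation shows that both composites factor through the image of $Z$ in $\sS_W^{\sn}$. Through that image the equaliser identifies the relevant branch coordinates, so the composites agree. I expect the triple-point blow-up to be the main obstacle, because there $E\cap S'_\alpha$ maps to a point lying on three conductors. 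One must use that, at this point, all three pairwise compatibilities in Lemma~\ref{lem:sn-pushout} hold simultaneously, so that its images in $\sS_W^{\sn}$ coincide.

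Finally, since the maps are assembled from strict-transform maps and from projections of exceptional branches onto old conductors, the induced map on normalisations is exactly as described in the statement.
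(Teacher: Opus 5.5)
Your proposal is correct and follows essentially the same route as the paper. You reduce to a single adapted blow-up, lift strict transforms to the old normalised branches, send the exceptional branch over a conductor curve through the projection to $C^\nu_{\alpha\beta}$ (and over a point to that point), check agreement on the new conductors, and conclude by the pushout/equaliser universal property, with functoriality from uniqueness. The one difference is cosmetic: you map new branches directly into $\sS^{\sn}_W$ and use the conductor identifications there, which also settles the $S_\alpha$ versus $S_\beta$ choice and the triple-point case, whereas the paper phrases the same data as a morphism of diagrams and pulls back compatible tuples of functions.
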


\begin{proof}
It suffices to treat the case in which
$
\mu:(W',\mathcal G',\Delta_{W'})\longrightarrow
(W,\mathcal G,\Delta_W)
$
is one adapted coordinate blow-up. The general case then follows by composition.
Write the normalised branch--conductor diagrams of \(W\) and \(W'\) as
\[
\mathfrak D_W=
\left(
\bigsqcup_{\alpha<\beta} C_{\alpha\beta}^{\nu}
\rightrightarrows
\bigsqcup_{\alpha} S_\alpha^{\nu}
\right),
\qquad
\mathfrak D_{W'}=
\left(
\bigsqcup_{\gamma<\delta} C_{\gamma\delta}^{\prime\,\nu}
\rightrightarrows
\bigsqcup_{\gamma} S_\gamma^{\prime\,\nu}
\right).
\]
It remains to construct a morphism of diagrams
$
\mathfrak D_{W'}\longrightarrow \mathfrak D_W.
$
The induced morphism on the seminormal pushouts then follows from the
equaliser description
\[
\mathcal O_{\mathscr S_W^{\sn}}
=
\ker\left(
\bigoplus_\alpha (\iota_\alpha)_*\mathcal O_{S_\alpha^\nu}
\rightrightarrows
\bigoplus_{\alpha<\beta}(\iota_{\alpha\beta})_*
\mathcal O_{C_{\alpha\beta}^\nu}
\right).
\tag{\ref{eq:sn-equaliser-operational}}
\]
First consider a branch \(S_\gamma'\subset D'_{\inv}\). There are three possibilities.

If \(S_\gamma'\) is the strict transform of a branch \(S_\alpha\), then
\(\mu(S_\gamma')\subset S_\alpha\), and by the universal property of
normalisation one obtains a unique morphism
$
\widetilde\mu_\gamma:S_\gamma^{\prime\,\nu}\longrightarrow S_\alpha^\nu
$
making the diagram commute:
\[
\begin{tikzcd}
S_\gamma^{\prime\,\nu} \arrow[r] \arrow[d,"\widetilde\mu_\gamma"']&
S_\gamma' \arrow[d,"\mu|_{S_\gamma'}"]\\
S_\alpha^\nu \arrow[r]& S_\alpha.
\end{tikzcd}
\]

If the centre of \(\mu\) is a conductor curve
$
C_{\alpha\beta}=S_\alpha\cap S_\beta,
$
then the exceptional branch \(E\subset W'\) is a ruled surface over the
normalisation of the conductor:
\[
\pi_E:E^\nu\longrightarrow C_{\alpha\beta}^{\nu}.
\]
Thus the map attached to the exceptional branch is
\[
E^\nu \xrightarrow{\ \pi_E\ } C_{\alpha\beta}^{\nu}
\longrightarrow \mathscr S_W^{\sn}.
\]
In particular, \(E^\nu\) is not identified birationally with
\(C_{\alpha\beta}^{\nu}\); rather, it is a new branch of the refined
system whose structural map factors through the old conductor stratum.
If the centre is a triple point
$
p=S_\alpha\cap S_\beta\cap S_\eta,
$
then, in local coordinates \(\Dinv=(xyz=0)\), the blow-up is the star
subdivision of the cone
\[
\sigma=\mathbb R_{\geq 0}\langle e_x,e_y,e_z\rangle
\]
by the ray \(e_x+e_y+e_z\). The exceptional branch \(E\) maps to \(p\).
Each new conductor curve corresponds to an edge of the subdivided incidence
complex, and its image is the old stratum determined by the minimal face of
\(\sigma\) containing that edge. Thus a new conductor maps either to one of
\[
C_{\alpha\beta}^{\nu},\quad
C_{\alpha\eta}^{\nu},\quad
C_{\beta\eta}^{\nu},
\]
or to the point \(p\), according to its incidence.
It remains to check compatibility on conductor strata. Let
$
C_{\gamma\delta}'=S_\gamma'\cap S_\delta'
$ 
be a conductor curve of \(D'_{\inv}\). Suppose that the maps constructed
above send \(S_\gamma^{\prime\,\nu}\) and \(S_\delta^{\prime\,\nu}\) to
strata \(V_\gamma,V_\delta\) of \(\mathfrak D_W\). Since
$
\mu(C_{\gamma\delta}')\subset
\mu(S_\gamma')\cap\mu(S_\delta'),
$
the normalisation \(C_{\gamma\delta}^{\prime\,\nu}\) maps, by the universal
property of normalisation, to the normalised conductor stratum of
\(\mathfrak D_W\) through which both restrictions factor. Equivalently, the
following diagram commutes:
\[
\begin{tikzcd}
C_{\gamma\delta}^{\prime\,\nu}
\arrow[r] \arrow[d] &
S_\gamma^{\prime\,\nu}
\arrow[d]\\
C_{\alpha\beta}^{\nu}
\arrow[r] &
S_\alpha^\nu
\end{tikzcd}
\qquad
\begin{tikzcd}
C_{\gamma\delta}^{\prime\,\nu}
\arrow[r] \arrow[d] &
S_\delta^{\prime\,\nu}
\arrow[d]\\
C_{\alpha\beta}^{\nu}
\arrow[r] &
S_\beta^\nu.
\end{tikzcd}
\]
In this case \(C_{\alpha\beta}^{\nu}\) may be replaced by a point stratum in the
triple-point case. Hence the two conductor restriction maps agree after
composition with the corresponding maps in \(\mathfrak D_W\).

Consequently, the morphisms on the normalised branches and conductors define a
morphism of diagrams
\[
\begin{tikzcd}
\displaystyle\bigsqcup_{\gamma<\delta} C_{\gamma\delta}^{\prime\,\nu}
\arrow[r,shift left=.6ex] \arrow[r,shift right=.6ex] \arrow[d] &
\displaystyle\bigsqcup_{\gamma} S_\gamma^{\prime\,\nu}
\arrow[d]\\
\displaystyle\bigsqcup_{\alpha<\beta} C_{\alpha\beta}^{\nu}
\arrow[r,shift left=.6ex] \arrow[r,shift right=.6ex] &
\displaystyle\bigsqcup_{\alpha} S_\alpha^\nu.
\end{tikzcd}
\]
Passing to structure sheaves gives a commutative diagram
\[
\begin{tikzcd}
\displaystyle
\bigoplus_\alpha(\iota_\alpha)_*\mathcal O_{S_\alpha^\nu}
\arrow[r,shift left=.6ex] \arrow[r,shift right=.6ex] \arrow[d] &
\displaystyle
\bigoplus_{\alpha<\beta}
(\iota_{\alpha\beta})_*\mathcal O_{C_{\alpha\beta}^\nu}
\arrow[d]\\
\displaystyle
\bigoplus_\gamma(\iota'_\gamma)_*
\mathcal O_{S_\gamma^{\prime\,\nu}}
\arrow[r,shift left=.6ex] \arrow[r,shift right=.6ex] &
\displaystyle
\bigoplus_{\gamma<\delta}
(\iota'_{\gamma\delta})_*
\mathcal O_{C_{\gamma\delta}^{\prime\,\nu}}.
\end{tikzcd}
\]
Thus a section of
\(\mathcal O_{\mathscr S_W^{\sn}}\), viewed as a compatible tuple of
sections on the normalised branches \(S_\alpha^\nu\), pulls back to a
compatible tuple on the refined branches \(S_\gamma^{\prime\,\nu}\).
By the equaliser formula, this defines a morphism
\[
\mathscr S_{W'}^{\sn}\longrightarrow \mathscr S_W^{\sn}.
\]
Finally, suppose that
\[
W''\xrightarrow{\mu'} W'\xrightarrow{\mu} W
\]
are two adapted coordinate blow-ups, the morphism attached to
\(\mu\circ\mu'\) is the same as the composite of the morphisms attached to
\(\mu'\) and \(\mu\). Indeed, on every normalised branch and conductor this
is the uniqueness part of the universal property of normalisation, and on
the seminormal pushout it follows from the universal property of the
equaliser. Hence the construction is functorial.
\end{proof}

\begin{Definition}
\label{def:tangential-cylinder-operational}
A \emph{tangential divisorial cylinder} is an equivalence class of triples
$
(W,V,C_V),
$
where $W\in\Adm(X,\cF,\Delta)$, $V\in\Strata(W)$, and $C_V\subset J_\infty(V)$ is an ordinary maximal divisorial cylinder.  Two triples $(W,V,C_V)$ and $(W',V',C_{V'})$ are equivalent if on some common adapted toroidal refinement $U$ their pull-backs to the corresponding strict or exceptional strata of $\sS_U^{\sn}$ coincide.
The ordinary codimension $\codim_{J_\infty(V)}(C_V)$ is attached to the representative.  The invariant quantity is the logarithmic codimension after the adjunction boundary $B_V$ has been fixed:
\[
\lcodim_{\tan}(W,V,C_V):=\codim_{J_\infty(V)}(C_V)-\ord_{C_V}(B_V).
\]
\end{Definition}

\begin{Lemma}[Independence of tangential logarithmic codimension]
\label{lem:codim-independence}
If $(W,V,C_V)$ and $(W',V',C_{V'})$ are equivalent adapted toroidal divisorial cylinders, then
\[
\lcodim_{\tan}(W,V,C_V)=\lcodim_{\tan}(W',V',C_{V'}).
\]
\end{Lemma}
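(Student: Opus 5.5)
The plan is to reduce to a single adapted coordinate blow-up and then use the classical change-of-variables behaviour of log codimension under a crepant pull-back.

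\textbf{Reduction to one blow-up.} By definition, equivalence means there is a common adapted toroidal refinement \(U\) on which the pull-backs of \(C_V\) and \(C_{V'}\) to the strata of \(\sS_U^{\sn}\) coincide. It therefore suffices to prove that \(\lcodim_{\tan}\) is unchanged when \((W,V,C_V)\) is replaced by its pull-back \((U,V_U,C_{V_U})\) along a morphism \(U\to W\) in \(\Adm(X,\cF,\Delta)\). Applying this once to \(W\) and once to \(W'\) then gives the equality. Morphisms in \(\Adm\) are composites of coordinate stratum blow-ups, and Proposition~\ref{prop:separatrix-functor} shows the strata maps compose functorially. So we may further assume that \(\mu\colon U\to W\) is a single adapted coordinate blow-up.

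\textbf{The single blow-up.} Let \(V_U\) be the stratum of \(\sS_U^{\sn}\) carrying the pull-back cylinder. There are two cases, following the proof of Proposition~\ref{prop:separatrix-functor}.

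In the first case, \(V_U\) is the strict transform of \(V\). Then \(\widetilde\mu\colon V_U\to V\) is a proper birational toroidal morphism of smooth log pairs. In the adapted chart it is a blow-up of a coordinate stratum of the SNC divisor \(\Dinv|_V+\Supp B_V\). The key input is crepant compatibility of the adjunction boundaries (Theorem~\ref{thm:intro-system}, established in the adjunction section):
\[
K_{V_U}+B_{V_U}=\widetilde\mu^*(K_V+B_V).
\]
Assuming this, we apply the ordinary Denef--Loeser / Ein--Musta\c{t}\u{a}--Yasuda change of variables formula. If \(C_{V_U}\) is the maximal divisorial cylinder whose image is \(C_V\), then
\[
\codim C_V=\codim C_{V_U}+\ord_{C_{V_U}}(K_{V_U/V}).
\]
Adding the boundary contribution
\[
\ord_{C_V}(B_V)=\ord_{C_{V_U}}\bigl(\widetilde\mu^*B_V\bigr)
=\ord_{C_{V_U}}(B_{V_U})-\ord_{C_{V_U}}(K_{V_U/V})
\]
cancels the Jacobian term. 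Hence
\[
\codim C_V-\ord_{C_V}(B_V)=\codim C_{V_U}-\ord_{C_{V_U}}(B_{V_U}).
\]
Equivalently, both sides equal \(q\,a(F;V,B_V)\) for the divisor \(F\) and multiplicity \(q\) defining the cylinder. This is the standard computation showing that log discrepancies are invariant under crepant pull-back.

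In the second case, \(V_U\) is exceptional, namely a ruled branch \(E^\nu\to C^\nu_{\alpha\beta}\) or a conductor mapping to an old conductor or to a point. Then the cylinder data on \(V_U\) must be compared with data on a lower-dimensional stratum. The plan here is to use toroidal monomial coordinates. The divisorial valuation defining \(C_V\) is monomial in the adapted coordinates, so its restriction to the relevant stratum is again a monomial valuation. Normalised adjunction is compatible with residue, since \(B_{E^\nu}\) restricted to the section over the conductor recovers \(B_{C^\nu}\). This should show that \(\lcodim\) computed on the ruled surface equals the one on the conductor, because the fibre direction contributes equally to codimension and boundary order. A conductor datum appears on both adjacent branches, and one must check it is counted only once; this follows from the equaliser description \eqref{eq:sn-equaliser-operational}.

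\textbf{Main obstacle.} I expect the exceptional/conductor case to be the hardest. It requires a precise crepant adjunction identity between the ruled exceptional branch and the old conductor. The cleanest attempt is to compute everything on the fan: both the log codimension and the log discrepancy of a toroidal valuation are given by a linear function on the cone, namely the weight of the valuation against the log canonical support function. Invariance under star subdivision is then automatic once crepant compatibility of \(B_V\) is granted.
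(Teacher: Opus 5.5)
Your Case 1 is the paper's argument done one blow-up at a time. The paper works directly on the common refinement, writes $\lcodim_{(V,B_V)}N_q(F)=q\bigl(k_F(V)+1-\ord_F(B_V)\bigr)$ on $V$ and on $V'$ by Ein--Musta\c{t}\u{a}--Yasuda, and compares the two log discrepancies through crepant compatibility.

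As written, though, your cancellation fails because of a sign. From $K_{V_U}+B_{V_U}=\widetilde\mu^*(K_V+B_V)$ and $K_{V_U}=\widetilde\mu^*K_V+K_{V_U/V}$ one gets $\widetilde\mu^*B_V=B_{V_U}+K_{V_U/V}$. Hence $\ord_{C_V}(B_V)=\ord_{C_{V_U}}(B_{V_U})+\ord_{C_{V_U}}(K_{V_U/V})$. With your minus sign the Jacobian terms add up to $2\ord_{C_{V_U}}(K_{V_U/V})$ instead of cancelling.

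Be aware also that this crepant identity holds only if $B_{V_U}$ is the crepant transport of the fixed $B_V$. It fails for the coefficient-one boundary recomputed on $U$. The paper's proof needs the same identity; there it is hidden in the claim that $A_p$ and $A_q$ agree along $F$. For example, blow up the point $p=S_x\cap S_y\cap T$ with $\coeff_T\Delta_W=a<1$. The trace $E\cap S_x'$ then gets coefficient $1$ instead of $a$. So for $C_V=N_q(\ord_p)\subset J_\infty(S_x)$, the recomputed $\lcodim$ on $U$ would be $0$ rather than $q(1-a)$.

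The genuine gap is Case 2, which you leave as a plan. The paper does not treat this case at all; it simply takes $U_V\to V$ and $U_V\to V'$ to be birational. So you are not missing an idea the paper supplies, but the step still has to be done. Your fan sketch cannot do it: it compares valuations on a single toric variety and does not relate the surface $E^\nu$ to the curve $C^\nu_{\alpha\beta}$.

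The case closes directly, with no crepancy needed.

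\begin{itemize}
\item The generic arc of a cylinder on $V\in\Strata(W)$ has generic point the generic point of $V$. So it lifts to the strict transform unless the centre contains $V$.
\item Exceptional strata lying over a point, or over a transverse trace $S_\alpha\cap T_k$, receive no pull-backs, because these are not in $\Strata(W)$.
\item Hence the only exceptional case is the blow-up of a conductor curve $C_{\alpha\beta}$ with $C_V=N_q(\ord_p)\subset J_\infty(C^\nu_{\alpha\beta})$.
\item Since $\pi_E\colon E^\nu\to C^\nu_{\alpha\beta}$ is a $\mathbb P^1$-bundle, the pull-back is $N_q(E_p)$ for the fibre $E_p$. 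It again has codimension $q$.
\item The generic arc of $N_q(E_p)$ meets neither coefficient-one section $E\cap S'_\alpha$ nor $E\cap S'_\beta$.
\item The vertical components of $B_{E^\nu}$ are exactly the fibres $T'_k\cap E$ (coefficient $a_k$) over $C_{\alpha\beta}\cap T_k$, and $S'_\gamma\cap E$ (coefficient $1$) over the triple points.
\item Therefore $\coeff_{E_p}B_{E^\nu}=\coeff_p\Theta_{\alpha\beta}$, and both sides equal $q(1-\coeff_p\Theta_{\alpha\beta})$.
\end{itemize}

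If you pull back instead to the new conductor $E\cap S'_\alpha\cong C_{\alpha\beta}$, the computation in the proof of Proposition~\ref{prop:no-double-count} shows that it carries the same boundary $\Theta_{\alpha\beta}$.
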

\begin{proof}
Choose a common adapted toroidal refinement and let $U_V$ be the normalised stratum carrying the common maximal divisorial set $C_U$.  Write \(p:U_V\to V\) and \(q:U_V\to V'\) for the induced birational maps.  Crepant compatibility of normalised foliated adjunction gives
\[
K_{U_V}+B_{U_V}=p^*(K_V+B_V)+A_p=q^*(K_{V'}+B_{V'})+A_q,
\]
where $A_p$ and $A_q$ have the same coefficient on the divisor defining $C_U$.
Let $C_U=N_q(F)$ be the maximal divisorial set associated with a prime divisor
$F$ on $U_V$ and multiplicity $q$, namely the usual divisorial cylinder of arcs
with order $q$ along $F$. Since $V$ and $V'$ are smooth at the generic point
relevant to the adapted toroidal datum, the ordinary maximal-divisorial-set
formula gives
\[
\lcodim_{(V,B_V)}(C_V)=q\bigl(k_F(V)+1-\ord_F(B_V)\bigr),
\]
and similarly over $V'$.  Pulling the crepant equality to $U_V$ gives
\[
k_F(V)+1-\ord_F(B_V)=k_F(V')+1-\ord_F(B_{V'}).
\]
It follows that the logarithmic codimension is invariant.  No invariance of the ordinary codimension alone is asserted.
\end{proof}

\subsection{Comparison through common adapted refinements}
\label{subsec:common-refinement}

The construction above is made inside an adapted toroidal category generated
from an initial model.  The following result records the conditional
independence which is used when two initial choices have a common adapted
refinement and determine the same image separatrix system on $X$.

\begin{Theorem} 
\label{thm:common-refinement-invariance}
Let $W_1\to (X,\cF,\Delta)$ and $W_2\to (X,\cF,\Delta)$ be logarithmic
simple adapted non-dicritical models.  Assume that there is a logarithmic
simple adapted model $U$ with adapted toroidal morphisms $U\to W_1$ and
$U\to W_2$.  Assume moreover that $W_1$ and $W_2$ induce the same canonical
image separatrix system on $X$.  Then their adapted toroidal tangential
divisorial data are identified after pull-back to $U$, and their tangential
discrepancies agree.  Consequently, for every closed tangential set $Y$
contained in the common image system,
\[
\tmld_{\mathrm{tor}}^{W_1}(Y;X,\cF,\Delta)
=
\tmld_{\mathrm{tor}}^{W_2}(Y;X,\cF,\Delta).
\]
\end{Theorem}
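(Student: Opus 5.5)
The plan is to reduce everything to the common refinement $U$ and then apply Lemma~\ref{lem:codim-independence} together with the functoriality of Proposition~\ref{prop:separatrix-functor}.

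\emph{Step 1: pull back the strata.}
Let $\mu_i:U\to W_i$ denote the two adapted toroidal morphisms. By Proposition~\ref{prop:separatrix-functor}, each $\mu_i$ induces a morphism of branch--conductor diagrams $\mathfrak D_U\to\mathfrak D_{W_i}$, hence a morphism $\sS_U^{\sn}\to\sS_{W_i}^{\sn}$. Any stratum $V\in\Strata(W_i)$ has a strict-transform stratum $V_U\in\Strata(U)$. Exceptional branches over $V$ factor through $V$ via the projection maps. Consequently, a tangential divisorial cylinder $(W_i,V,C_V)$ with $C_V=N_q(F)$ pulls back to $(U,U_V,N_q(F_U))$, where $F_U$ is the centre of $\ord_F$ on the relevant stratum of $U$. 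If that centre is not a divisor on $U_V$, I would first pass to a further adapted toroidal refinement of $U$. This is allowed because the datum is toroidal, so its valuation is monomial in the SNC coordinates. By Definition~\ref{def:tangential-cylinder-operational}, the two representatives are equivalent.

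\emph{Step 2: match the data.}
The hypothesis that $W_1$ and $W_2$ induce the same canonical image separatrix system on $X$ is what makes the pulled-back data comparable. Every branch of $D_{\inv,U}$ maps to the same component of the image system on $X$ whether it is viewed through $W_1$ or through $W_2$. Hence an adapted toroidal tangential datum on $W_1$ and one on $W_2$ that determine the same divisorial valuation on the same normalised branch or conductor of $U$ are identified. Non-dicriticality ensures that the exceptional divisors of $\mu_i$ over tangent centres are invariant, so they really are branches of $D_{\inv,U}$ and not transverse components.

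\emph{Step 3: compare discrepancies.}
I would use the crepant compatibility of normalised foliated adjunction from Theorem~\ref{thm:intro-system}, which gives
\[
K_{U_V}+B_{U_V}=\mu_1^*(K_{V_1}+B_{V_1})+A_1=\mu_2^*(K_{V_2}+B_{V_2})+A_2 .
\]
Exactly as in the proof of Lemma~\ref{lem:codim-independence}, this yields
\[
a(F;V_1,B_{V_1})=a(F_U;U_V,B_{U_V})=a(F;V_2,B_{V_2}).
\]
So the tangential discrepancies agree, and by Lemma~\ref{lem:codim-independence} so do the logarithmic codimensions.

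\emph{Step 4: compare the mld.}
For a closed tangential set $Y$ in the common image system, $\tmld_{\mathrm{tor}}^{W_i}(Y)$ is the infimum of the tangential discrepancies $a_{\tan}(E)$ over adapted toroidal data $E$ with centre in $Y$. By Steps 1--3, both families of data inject into the data of the adapted toroidal category of $U$ with the same values. Conversely, any datum on a refinement of $U$ is a datum for both $W_1$ and $W_2$, since a composition of coordinate stratum blow-ups over $U$ stays in both categories. Hence the two infima run over the same set of values.

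\emph{Main obstacle.}
I expect the hardest part to be Step 2 combined with the converse direction of Step 4. One has to show that refinements of $W_1$ that are not themselves refinements of $U$ do not produce extra values. My approach would be to take, for any such refinement $W_1'$, a common toroidal refinement of $W_1'$ and $U$: both are given by subdivisions of the same fans over $W_1$, so a common subdivision exists. Then invariance of discrepancy under crepant pullback shows that $W_1'$ contributes no values beyond those already realised over $U$.
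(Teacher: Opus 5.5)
Your proposal is correct and follows essentially the same route as the paper. You pull each datum on $W_i$ back to a branch or conductor stratum of $\sS_U^{\sn}$ via functoriality, use crepant compatibility of the adjunction boundaries with Lemma~\ref{lem:codim-independence} to get $a(F_1;V_1,B_{V_1})=a(F_U;V_U,B_{V_U})=a(F_2;V_2,B_{V_2})$, invoke the common image system to match the two pulled-back data, and take infima over data centred in $Y$. Your Step~4 goes slightly further: it handles refinements of $W_1$ not dominated by $U$ by passing to a further common toroidal refinement (available by Theorem~\ref{thm:common-refinement}), which the paper leaves implicit in its final ``taking the infimum'' sentence.
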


\begin{proof}
Let $\rho_i:U\to W_i$ be the two adapted toroidal morphisms.  By
functoriality of the seminormal branch--conductor construction, each
$\rho_i$ induces a morphism
\[
\sS_U^{\sn}\longrightarrow \sS_{W_i}^{\sn}.
\]
Thus every adapted toroidal tangential divisorial datum represented on
$W_i$ pulls back to a divisorial datum on a normalised branch or conductor
stratum of $\sS_U^{\sn}$.

Let a datum on $W_1$ be represented by $N_q(F_1)\subset J_\infty(V_1)$,
where $V_1$ is a normalised branch or conductor stratum of
$\sS_{W_1}^{\sn}$.  Its pull-back to $U$ is represented by
$N_q(F_U)\subset J_\infty(V_U)$ for a corresponding normalised stratum
$V_U$ of $\sS_U^{\sn}$.  By crepant compatibility of the normalised
adjunction boundaries and Lemma~\ref{lem:codim-independence},
\[
a(F_1;V_1,B_{V_1})=a(F_U;V_U,B_{V_U}).
\]
The same construction applies to $W_2$.  Since $W_1$ and $W_2$ induce the
same canonical image separatrix system on $X$, the data pulled back to $U$
represent the same image branch or conductor datum downstairs.  Thus the
corresponding datum on $W_2$ has the same pull-back $F_U$, and hence
\[
a(F_1;V_1,B_{V_1})=a(F_U;V_U,B_{V_U})=a(F_2;V_2,B_{V_2}).
\]
Equivalently, $a_{\tan}^{W_1}=a_{\tan}^{W_2}$ on corresponding adapted
toroidal tangential data.  Taking the infimum over all such data with centre
contained in $Y$ gives the asserted equality of $\tmld_{\mathrm{tor}}$.
\end{proof}

\section{Formal tangential representability}
\label{sec:formal}

Carter's jet schemes of a foliation \cite{Carter,CarterThesis} provide the
finite-jet language motivating the tangential sector.  The discrepancy
calculations in what follows, however, use only the reduced infinite tangential arc
functor.  Thus, when a co-rank one foliation is locally generated by an
integrable one-form $\omega$, the reduced functor is used
\[
J_\infty^{\tan}(W,\cG;Z)_{\red}
=
\{\gamma\in J_\infty(W)_{\red}:\gamma(0)\in Z,\ \gamma^*\omega=0\}.
\]
This condition is independent of the chosen local generator, since
multiplying $\omega$ by a unit does not change the vanishing of its pull-back.
This is the precise arc-level consequence of Carter's tangent-jet viewpoint
which is needed in the sequel.  The finite tangent jet schemes may contain
nilpotent and truncation dependent structure; no equality of finite-level
Carter jet schemes is used in any cylinder or codimension calculation in this
article.

The next theorem is the only reduced Carter-type result required in what follows.  It is
best regarded as an arc-confinement statement rather than as a finite-jet
representability theorem.

\begin{Theorem} 
\label{thm:carter-representability}
Let $p\in W$ have a logarithmic simple adapted chart and write
\[
R=\bC[[x_1,x_2,x_3]],\qquad f=x_1\cdots x_r,
\qquad \Dinv=(f=0).
\]
Let
\[
\omega=
u\,x_1\cdots x_r
\left(
\sum_{i=1}^r \lambda_i\frac{dx_i}{x_i}
+
\sum_{j=r+1}^3 h_j(x)dx_j
\right)
\]
be the local generator, where $u$ is a unit and the residues satisfy the
positive non-resonance condition.  Then, for every closed subset
$Z\subset \Dinv$, the reduced tangential arc functor centred on $Z$
coincides with the reduced arc functor of $\Dinv$ centred on $Z$.
Equivalently, every reduced tangential formal arc centred on $Z$ factors
through the adapted invariant divisor $\Dinv$, and every arc on
$\Dinv$ is tangential.  In particular, for
$\Sigmatan=\Sing\cG\cap |\Dinv|$,
\[
J_\infty^{\tan}(W,\cG;\Sigmatan)_{\red}
=
J_\infty(\Dinv;\Sigmatan)_{\red}.
\]
This is an equality of reduced arc functors only.  No assertion is made about
the non-reduced finite Carter jet schemes, nor about equality of finite-level
reduced $m$-jet schemes for all $m$.
\end{Theorem}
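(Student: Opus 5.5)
The plan is to prove the two inclusions of reduced arc functors separately, after two reductions. First, both sides are reduced closed sub-pro-functors of $J_\infty(W)_{\red}$ cut out by conditions on the arc near its closed point. It therefore suffices to compare $K$-valued arcs $\gamma:\Spec K[[t]]\to W$ for all field extensions $K\supset\bC$. Second, for such an arc with $\gamma(0)=p\in Z$, the map $\gamma$ factors through $\Spec\widehat{\cO}_{W,p}\otimes K$. So one may work in the formal logarithmic simple adapted chart $R=\bC[[x_1,x_2,x_3]]$ of Definition~\ref{def:simple-chart}, and write $\gamma$ by the power series $\gamma_i(t)=x_i\circ\gamma\in K[[t]]$ with $\gamma_i(0)=0$. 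As noted before the theorem, the vanishing of $\gamma^*\omega$ does not depend on the unit $u$, so we may take $u=1$. Expanding $\omega$ gives
\[
\omega=\sum_{i=1}^r\lambda_i\,\frac{f}{x_i}\,dx_i+f\sum_{j=r+1}^3h_j\,dx_j ,
\]
which is a regular $1$-form.

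\emph{Arcs on $\Dinv$ are tangential.} Suppose $\gamma$ factors through $\Dinv$. Since $K[[t]]$ is a domain, $f(\gamma)=\prod_{i\le r}\gamma_i=0$ forces some $\gamma_k\equiv0$. Then $\gamma^*(f/x_i)=0$ for every $i\ne k$. The term $i=k$ also vanishes, because $\gamma^*dx_k=d\gamma_k=0$. The second sum vanishes because $f(\gamma)=0$. Hence $\gamma^*\omega=0$.

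\emph{Tangential arcs lie on $\Dinv$.} Suppose $\gamma^*\omega=0$ and $\gamma\not\subset\Dinv$. Then $\gamma_i\ne0$ for all $i\le r$, so we can write $\gamma_i=t^{a_i}v_i(t)$ with $v_i$ a unit and $a_i\in\bZ_{\ge0}$. Because $\gamma(0)=p$ lies on the coordinate hyperplanes through $p$, each $a_i\ge1$; in particular $(a_1,\dots,a_r)\neq0$. Since $f(\gamma)\ne0$, we may divide by it in $K((t))\,dt$ and obtain
\[
0=\sum_{i=1}^r\lambda_i\,\frac{d\gamma_i}{\gamma_i}+\sum_{j>r}h_j(\gamma)\,d\gamma_j
=\Bigl(\sum_{i=1}^r a_i\lambda_i\Bigr)\frac{dt}{t}+\Bigl(\sum_i\lambda_i\frac{v_i'}{v_i}+\sum_j h_j(\gamma)\gamma_j'\Bigr)dt .
\]
The second bracket lies in $K[[t]]$. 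Comparing coefficients of $t^{-1}dt$ therefore gives $\sum_i a_i\lambda_i=0$ with $0\ne(a_i)\in\bZ_{\ge0}^r$. This contradicts positive non-resonance, so $\gamma$ factors through $\Dinv$.

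Combining the two inclusions gives the equality for every closed $Z\subset\Dinv$ at each point of the tangential locus. The special case follows by taking $Z=\Sigmatan$, which lies in $|\Dinv|$ by definition. I expect the residue computation itself to be routine. The main obstacle is the bookkeeping in the first reduction. One must justify that equality on $K$-points for all fields $K$, checked in formal charts, gives equality of reduced pro-functors globally. This amounts to showing that each truncation image is a constructible set determined by its points, and that the formal charts at the points of $Z$ suffice because every arc centred on $Z$ lands in one of them. I would state this explicitly and emphasise that no claim is made about the non-reduced finite jet schemes. Those may differ, since a truncated jet can satisfy $\gamma^*\omega\equiv0$ modulo $t^m$ without the lowest-order residue argument applying.
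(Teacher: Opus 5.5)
Your proof is the paper's proof. For an arc not contained in $\Dinv$, the lowest-order coefficient of $\gamma^*\omega$ is $\bigl(\sum_i a_i\lambda_i\bigr)\prod_i c_i\neq0$: you read it as the $t^{-1}\,dt$ coefficient after dividing by $f(\gamma)$, the paper as the $t^{A-1}\,dt$ coefficient of the undivided pull-back with $A=\ord_t f(\gamma)$. Conversely, some $\gamma_k\equiv0$ kills every term of the expanded regular form.

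The one small difference is that you derive $a_i\ge1$ for all $i$ from the centring at $p$, whereas the paper only uses that some $a_i>0$, which is all non-resonance needs. That weaker form is the better one for your bookkeeping step. An arc whose closed point maps to a non-closed point of $Z$ does not factor through the formal chart at any closed point, so ``every arc centred on $Z$ lands in one of them'' is not literally true. The paper's proof does not address this point either.
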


\begin{proof}
The assertion is local at \(p\).  Since multiplication by a unit does not
change the vanishing of the pull-back of a one-form, we may omit \(u\).
Let
\[
\gamma:\Spec K[[t]]\longrightarrow W,
\qquad
\gamma(t)=(x_1(t),x_2(t),x_3(t)),
\]
be a formal arc centred on \(\Dinv\).  We first prove that a tangential arc
centred on \(\Dinv\) is contained in \(\Dinv\).

Assume, by contradiction, that \(\gamma\) is not contained in \(\Dinv\).
Then none of \(x_1(t),\ldots,x_r(t)\) is the zero series.  Write
\[
a_i=\ord_t x_i(t),\qquad
x_i(t)=t^{a_i}u_i(t),
\qquad
u_i(0)=c_i\in K^*,
\]
and set
\[
P(t)=x_1(t)\cdots x_r(t),\qquad
A=\ord_t P(t)=\sum_{i=1}^r a_i .
\]
Since the closed point of \(\gamma\) lies on \(\Dinv\), at least one \(a_i\)
is positive.  Thus \(a=(a_1,\ldots,a_r)\) is a non-zero element of
\(\bZ_{\geq 0}^r\).
Pulling back the logarithmic generator gives
\[
\gamma^*\omega
=
P(t)
\left(
\sum_{i=1}^r\lambda_i\frac{x_i'(t)}{x_i(t)}
+
\sum_{j=r+1}^3 h_j(\gamma(t))x_j'(t)
\right)dt .
\]
We analyse the two summands separately.  Since
\(x_i(t)=t^{a_i}u_i(t)\) with \(u_i(0)\neq0\), one has
\[
\frac{x_i'(t)}{x_i(t)}
=
\frac{a_i}{t}+\frac{u_i'(t)}{u_i(t)}.
\]
Hence
\[
\sum_{i=1}^r\lambda_i\frac{x_i'(t)}{x_i(t)}
=
\left(\sum_{i=1}^r a_i\lambda_i\right)\frac{1}{t}
+
\sum_{i=1}^r\lambda_i\frac{u_i'(t)}{u_i(t)}.
\]
The second term on the right-hand side is regular in \(t\).  Moreover
\[
P(t)
=
t^A\prod_{i=1}^r u_i(t)
=
t^A
\left(
\prod_{i=1}^r c_i+O(t)
\right).
\]
Therefore the logarithmic part contributes
\[
P(t)
\sum_{i=1}^r\lambda_i\frac{x_i'(t)}{x_i(t)}
=
\left(\sum_{i=1}^r a_i\lambda_i\right)
\left(\prod_{i=1}^r c_i\right)t^{A-1}
+
O(t^A).
\]
On the other hand, the regular part satisfies
\[
h_j(\gamma(t))x_j'(t)\in K[[t]]
\]
for every \(j>r\).  Hence
\[
P(t)\sum_{j=r+1}^3 h_j(\gamma(t))x_j'(t)
\in t^A K[[t]].
\]
Thus the coefficient of \(t^{A-1}dt\) in \(\gamma^*\omega\) is exactly
\[
\left(\sum_{i=1}^r a_i\lambda_i\right)
\prod_{i=1}^r c_i .
\]
By positive non-resonance,
\[
\sum_{i=1}^r a_i\lambda_i\neq0,
\]
and since each \(c_i\neq0\), this coefficient is non-zero.  Consequently
\(\gamma^*\omega\neq0\), contradicting tangency.  Hence every reduced
tangential arc centred on \(\Dinv\) is contained in \(\Dinv\).

Conversely, assume that \(\gamma\) factors through \(\Dinv\).  Since
$
\Dinv=(x_1\cdots x_r=0)
$
and \(K[[t]]\) is an integral domain, there exists an index
\(i_0\in\{1,\ldots,r\}\) such that
$
x_{i_0}(t)\equiv0.
$
Write \(f=x_1\cdots x_r\).  The logarithmic form is the regular one-form
\[
\omega
=
u\left(
\sum_{i=1}^r\lambda_i\frac{f}{x_i}\,dx_i
+
f\sum_{j=r+1}^3 h_j\,dx_j
\right).
\]
Pulling back, the second summand vanishes because $
f(\gamma(t))=0.
$
For the first summand, if \(i\neq i_0\), then \(f/x_i\) still contains the
factor \(x_{i_0}\), so
\[
\left(\frac{f}{x_i}\right)(\gamma(t))=0.
\]
If \(i=i_0\), then \(dx_{i_0}(t)=0\), because \(x_{i_0}(t)\equiv0\).
Therefore every term in \(\gamma^*\omega\) vanishes, and hence
$
\gamma^*\omega=0.
$
Thus every arc on \(\Dinv\) is tangential.

We have proved, after arbitrary extension of the ground field, that the
\(K[[t]]\)-points of the reduced tangential arc functor centred on
\(\Dinv\) coincide with the \(K[[t]]\)-points of \(J_\infty(\Dinv)\).  The
same argument applies after imposing the closed condition \(\gamma(0)\in Z\)
for any closed subset \(Z\subset\Dinv\).  Therefore
\[
J_\infty^{\tan}(W,\cG;Z)_{\red}
=
J_\infty(\Dinv;Z)_{\red}.
\]
 
\end{proof}

\subsection{Sharpness of the non-resonance condition}

The positive non-resonance condition is sharp already for pure logarithmic
normal forms.

\begin{Definition} 
For residues $\lambda_1,\ldots,\lambda_r$, set
\[
\Rlambda=
\{a\in\bZ_{\geq 0}^r\setminus\{0\}:a\cdot\lambda=0\}.
\]
\end{Definition}

\begin{Proposition} 
\label{prop:resonant-arcs-pure-log}
Let
\[
\omega=x_1\cdots x_r\sum_{i=1}^r
\lambda_i\frac{dx_i}{x_i}
\]
and let $\Dinv=(x_1\cdots x_r=0)$.  If $\Rlambda\neq\varnothing$, then
the reduced tangential arc sector centred on $\Dinv$ contains arcs whose
generic point is not contained in $\Dinv$.
\end{Proposition}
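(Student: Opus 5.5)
Given $a=(a_1,\ldots,a_r)\in\Rlambda$, I will write down an explicit monomial arc and check that $\omega$ pulls back to zero along it, using the same residue computation as in the proof of Theorem~\ref{thm:carter-representability}. Since $a\neq0$ and $a_i\ge 0$, put
\[
\gamma(t)=(x_1(t),\ldots,x_r(t),x_{r+1}(t),\ldots)=(c_1t^{a_1},\ldots,c_rt^{a_r},\ast,\ldots),
\]
with constants $c_i\in\bC^*$ and arbitrary series (say constants $0$) in the remaining coordinates. The pure logarithmic form does not involve $dx_j$ for $j>r$, so those coordinates are irrelevant. Since some $a_i>0$, the centre $\gamma(0)$ lies on $\Dinv$. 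Since every $x_i(t)\not\equiv0$, the generic point of $\gamma$ is not contained in $\Dinv$.

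To check tangency, note that $x_i'/x_i=a_i/t$ exactly, because $u_i$ is constant. Hence
\[
\gamma^*\omega=P(t)\Bigl(\sum_{i=1}^r\lambda_i\frac{a_i}{t}\Bigr)dt=(a\cdot\lambda)\Bigl(\prod_i c_i\Bigr)t^{A-1}\,dt=0,
\]
because $a\cdot\lambda=0$. Unlike the general setting, there are no higher-order terms: the unit factors $u_i$ are constant and the form has no regular part $h_j$. This is the only point where the pure logarithmic shape of $\omega$ is used.

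It remains to make sure such arcs actually live in the reduced tangential sector as a nontrivial family, not just as a single point. The family $\{(c_it^{a_i})_i : c\in(\bC^*)^r\}$, possibly multiplied by reparametrisations $t\mapsto t\,v(t)$, gives $\bC$-points, and hence $K[[t]]$-points after any field extension. The defining condition $\gamma^*\omega=0$ is closed, so these points lie in $J_\infty^{\tan}(W,\cG;\Dinv)_{\red}$. The assertion only requires existence, so exhibiting one such arc with $\gamma(0)\in\Dinv$ already suffices.

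The main (mild) obstacle is the centring requirement. If the intended centre is $\Sigmatan=\Sing\cG\cap\Dinv$ rather than $\Dinv$, I will additionally choose $a$ with all $a_i$ such that $\gamma(0)=0$. Replacing $a$ by a positive combination of resonant vectors is not needed; it suffices that the origin is singular and that $\gamma(0)$ lies in the coordinate stratum $\{x_i=0: a_i>0\}$. I would remark that this stratum, where at least the resonant coordinates vanish, is contained in $\Sing\cG$ as soon as at least two $a_i$ are positive, which resonance forces since the $\lambda_i\neq0$. I would also set the $x_j$ for $j>r$ to $0$, so that the arc is centred at the origin, which is a singular point.
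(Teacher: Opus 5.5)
Your proposal is correct and is essentially the paper's proof: for $a\in\Rlambda$ the monomial arc $x_i(t)=c_it^{a_i}$ pulls $\sum_i\lambda_i\,dx_i/x_i$ back to $(a\cdot\lambda)\,dt/t=0$. No $x_i(t)$ vanishes identically, and $a\neq0$ puts $\gamma(0)$ on $\Dinv$.

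Your additional observation is a correct bonus that the paper does not make. Since every $\lambda_i\neq0$, resonance forces at least two $a_i>0$, so $\gamma(0)\in\Sing\cG$ and the arc is in fact centred on $\Sigmatan$. One small correction: the arc is centred at the origin only when all $a_i>0$, but you do not need that.
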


\begin{proof}
Take $a\in\Rlambda$ and $c_i\in\bC^*$, and set $x_i(t)=c_it^{a_i}$.
Then
\[
\frac{dx_i(t)}{x_i(t)}=a_i\frac{\dd t}{t},
\]
hence
\[
\gamma^*
\left(
\sum_i\lambda_i\frac{dx_i}{x_i}
\right)
=
(a\cdot\lambda)\frac{\dd t}{t}=0.
\]
Thus $\gamma^*\omega=0$.  Since no $x_i(t)$ is identically zero, the
generic point of $\gamma$ is not contained in $\Dinv$.  The closed point
lies on $\Dinv$ because $a\neq0$.
\end{proof}

\begin{Remark}
For the full adapted logarithmic form with regular terms, the condition
$a\in\Rlambda$ is the vanishing of the leading obstruction used in the
confinement theorem.  A complete description of the resonant sector in the
presence of the regular terms requires additional initial-form analysis.
Thus the result above is used only as a sharpness statement, not as a
classification of all resonant tangential arcs.
\end{Remark}

\subsection{A finite-level obstruction}

The reduced infinite-arc confinement theorem should not be replaced by a
global finite-level statement.  The obstruction already appears for an
ordinary normal crossing divisor.

\begin{Example}[Mixed finite jets]
\label{ex:mixed-finite-jets}
Let $D=(xy=0)\subset\bA^2$.  For every $m\geq 1$, the $m$-jet
$x(t)=t$, $y(t)=t^m$ defines a point of $J_m(D)$, but it does not factor
through $J_m(x=0)\cup J_m(y=0)$.  Consequently, no global finite-level branch
decomposition $J_m(D)=J_m(x=0)\cup J_m(y=0)$ can hold.
\end{Example}

\begin{proof}
An $m$-jet is computed modulo $t^{m+1}$.  For the displayed jet,
$x(t)y(t)=t^{m+1}$, hence $x(t)y(t)\equiv0\pmod{t^{m+1}}$.  Thus it
defines a point of $J_m(D)$.  However, $x(t)=t\not\equiv0\pmod{t^{m+1}}$
and $y(t)=t^m\not\equiv0\pmod{t^{m+1}}$, so the jet factors through
neither branch.
\end{proof}

\begin{Remark}
At the level of genuine arcs, this phenomenon disappears.  If $x(t)y(t)=0$
in $K[[t]]$, then $x(t)=0$ or $y(t)=0$, since $K[[t]]$ is an integral domain.
This explains why the branch--conductor presentation is formulated for
reduced infinite arcs and not for all finite Carter tangent jet schemes.
\end{Remark}

\begin{Theorem}
\label{thm:formal-rep}
Let \((W,\cG)\) be a logarithmic simple adapted co-rank one foliation on a
smooth threefold. Let
$
\Sigmatan:=\Sing\cG\cap |\Dinv|.
$
Then the reduced tangential arc functor centred on \(\Sigmatan\) is
identified with the reduced arc functor of the invariant separatrix divisor
along \(\Sigmatan\). Equivalently,
\[
J_\infty^{\tan}(W,\cG;\Sigmatan)_{\red}
\simeq
J_\infty(\Dinv;\Sigmatan)_{\red}
\]
as reduced pro-functors. No scheme-theoretic equality of non-reduced finite
jet schemes is asserted.
\end{Theorem}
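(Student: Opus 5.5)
The plan is to globalise the local confinement statement of Theorem~\ref{thm:carter-representability}, using the fact that both sides are sheaf-like (local) conditions on arcs.

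First I will reduce to local charts. A reduced arc $\gamma\colon\Spec K[[t]]\to W$ with $\gamma(0)\in\Sigmatan$ factors through $\Spec\widehat{\cO}_{W,p}$, where $p$ is the image of the closed point. Here $\widehat{\cO}_{W,p}$ should be read after base change to $K$; alternatively one works with the $K$-point $\gamma(0)$ over its image. In either reading, membership in $J_\infty^{\tan}$ and membership in $J_\infty(\Dinv)$ are both conditions on $\gamma$ read in this complete local ring:
\begin{itemize}
\item tangency is the vanishing $\gamma^*\omega=0$, which is independent of the local generator;
\item lying on $\Dinv$ is the vanishing $f(\gamma(t))=0$ for a local equation $f$.
\end{itemize}
Since $\Sigmatan\subset|\Dinv|$ and every point of the tangential locus has a logarithmic simple adapted chart (this is the setting hypothesis), Theorem~\ref{thm:carter-representability} applies at $p$ with $Z=\Sigmatan$ near $p$. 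It yields $\gamma\in J_\infty^{\tan}$ if and only if $\gamma\in J_\infty(\Dinv)$. One check is that the formal coordinates of the chart are compatible with the algebraic divisor $\Dinv$. This holds because $\Dinv$ is by definition the reduced invariant divisor, whose completion at $p$ is $(x_1\cdots x_r=0)$.

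Second, I pass from points to pro-functors. The equality holds on $K[[t]]$-points for every field extension $K\supset\bC$. Both sides are reduced closed subsets of $J_\infty(W)$, cut out by the closed condition $\gamma(0)\in\Sigmatan$ together with closed conditions on coefficients. Two reduced closed subschemes of $J_\infty(W)$ that have the same $K$-points for all fields $K$ coincide, so the two reduced pro-functors agree. Concretely, the coefficients of $\gamma^*\omega$ and the coefficients of $f\circ\gamma$ define two ideals in the coordinate ring of $J_\infty(W)$ with the same radical.

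This identification is compatible with the projective systems $J_m$, because both sides are defined as inverse limits. Only the reduced limit is claimed, and that is consistent with Example~\ref{ex:mixed-finite-jets}.

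The main obstacle is the first step: the local chart is formal, not algebraic. I will handle it by working with arcs through the completion and invoking Theorem~\ref{thm:carter-representability} after an arbitrary field extension, which that proof already allows.

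A further care point is points of $\Sigmatan$ where $r<3$ and the $h_j$ terms are present. The confinement proof already covers these cases, so nothing new is needed there.
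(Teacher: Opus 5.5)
Your proposal is correct and is essentially the paper's proof, which simply observes that the statement is local on $\Sigmatan$ and applies Theorem~\ref{thm:carter-representability} in each logarithmic simple adapted chart. Your extra steps (matching the formal chart with the algebraic divisor $\Dinv$, and deducing equality of reduced closed subschemes of $J_\infty(W)$ from equality of $K[[t]]$-points) only make explicit what the paper leaves implicit. The one point that both you and the paper only sketch is an arc whose closed point lies over a non-closed point of $\Sigmatan$, since the formal chart at a closed point does not directly apply there. The cleanest fix is that $J_\infty(W)$ is locally the spectrum of a countably generated $\bC$-algebra, hence Jacobson with $\bC$-rational closed points, so it suffices to compare $\bC$-valued arcs centred at closed points $p$, which do factor through $\Spec\widehat{\cO}_{W,p}$.
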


\begin{proof}
The statement is local on \(\Sigmatan\).  In every logarithmic simple adapted
chart, Theorem~\ref{thm:carter-representability} identifies tangential arcs
with arcs on \(\Dinv\) after passage to reduced arc functors.  This is
exactly the asserted identification of the reduced tangential arc functor
centred on \(\Sigmatan\).
\end{proof}
\begin{Lemma}
\label{lem:arcs-pushout}
Let $\sS_W^{\sn}$ be the seminormal branch--conductor pushout.  For every
field extension $K/\bC$, every morphism $\Spec K[[t]]\to \sS_W^{\sn}$
is represented by an arc on at least one normalised branch.  If it has more
than one branch representation, the ambiguity is generated by arcs on the
normalised conductor strata.  More precisely, on the reduced arc functors
relevant to this article there is a coequaliser diagram
\[
\bigsqcup_{\alpha<\beta}J_\infty(C_{\alpha\beta}^\nu)_{\red}
\rightrightarrows
\bigsqcup_\alpha J_\infty(S_\alpha^\nu)_{\red}
\longrightarrow
J_\infty(\sS_W^{\sn})_{\red}.
\]
The two arrows identify branch representatives obtained from the same arc on
a normalised conductor.  No finite-jet analogue is required subsequently.
\end{Lemma}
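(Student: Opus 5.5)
The plan is to argue locally on $\sS_W^{\sn}$ and reduce to the explicit SNC models used in the proof of Lemma~\ref{lem:sn-pushout}. Since $\sS_W^{\sn}\to\Dinv$ is finite, an arc $\gamma:\Spec K[[t]]\to\sS_W^{\sn}$ is determined by its closed point $\gamma(0)$ together with a local homomorphism from the complete local ring at $\gamma(0)$ into $K[[t]]$. It therefore suffices to work with $\widehat{\cO}_{\sS^{\sn},\gamma(0)}$, which in adapted coordinates is $R/(xy)$ or $R/(xyz)$, with smooth parameters adjoined. By Lemma~\ref{lem:sn-pushout}, $\cO_{\sS_W^{\sn}}$ is the equaliser \eqref{eq:sn-equaliser-operational}. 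On the adapted smooth models the branches are already normal, so in these charts $\sS_W^{\sn}$ is $\Dinv$ itself with its SNC structure.

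\textbf{Existence of a branch representative.} Let $\varphi:R/(x_1\cdots x_k)\to K[[t]]$ be the homomorphism defining $\gamma$, with $k=2$ or $3$. Then $\varphi(x_1)\cdots\varphi(x_k)=0$. Since $K[[t]]$ is an integral domain, some $\varphi(x_i)=0$, exactly as in the Remark following Example~\ref{ex:mixed-finite-jets}. Hence $\varphi$ factors through $R/(x_i)$, which is the complete local ring of the branch $S_\alpha^\nu$ containing $\gamma(0)$. Uniqueness of this factorisation is automatic because $R/(x_1\cdots x_k)\to R/(x_i)$ is surjective. Globally, the factorisation is independent of the chart: by Proposition~\ref{prop:separatrix-functor}, the maps $S_\alpha^\nu\to\sS_W^{\sn}$ are closed immersions on these models, and an arc factoring through a closed subscheme does so uniquely.

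\textbf{Ambiguity.} Suppose $\gamma$ lifts to both $S_\alpha^\nu$ and $S_\beta^\nu$ with $\alpha\ne\beta$. Then $\varphi(x_\alpha)=\varphi(x_\beta)=0$, so $\varphi$ factors through $R/(x_\alpha,x_\beta)$, the completed local ring of $C^\nu_{\alpha\beta}$. Thus the two lifts are the images of a single conductor arc under the two structure maps. At a triple point an arc may lie on all three branches. In that case the three representatives are linked pairwise through the three conductors, and so lie in one equivalence class generated by the conductor relations. This yields the coequaliser property on $K[[t]]$-points for every field $K$. The map $\bigsqcup_\alpha J_\infty(S_\alpha^\nu)\to J_\infty(\sS_W^{\sn})$ is surjective on points, and two points have the same image exactly when they are related by the equivalence relation generated by the double arrows. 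Reducedness comes from passing to $(\cdot)_{\red}$. Since we only claim the statement as a coequaliser of reduced pro-functors, the statement determined by points over all field extensions is what is required.

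\textbf{Main obstacle.} The delicate step is upgrading the pointwise statement to a coequaliser of reduced functors rather than just a set-theoretic quotient. I would handle it by observing that the map from the disjoint union is a finite union of closed immersions of reduced arc spaces, with closed images $J_\infty(S_\alpha)_{\red}$ that cover $J_\infty(\sS_W^{\sn})_{\red}$. Their pairwise intersections are $J_\infty(C_{\alpha\beta})_{\red}$, because the scheme-theoretic intersection $S_\alpha\cap S_\beta=C_{\alpha\beta}$ commutes with taking arcs. For reduced closed subschemes covering a reduced scheme, gluing along the pairwise intersections gives the coequaliser in reduced schemes. This is the arc-level analogue of the equaliser computation in Lemma~\ref{lem:sn-pushout}, done at each truncation level and then on reductions. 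Example~\ref{ex:mixed-finite-jets} shows that the gluing cannot be performed at finite level before reduction and passage to the limit. The argument therefore has to work throughout with the reduced infinite arc spaces.
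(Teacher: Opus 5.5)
Your existence and ambiguity steps are exactly the paper's proof. These are: some $x_i$ must map to $0$ because $K[[t]]$ is a domain; two lifts force factorisation through $A/(x_\alpha,x_\beta)$; at a triple point the three lifts are chained through the pairwise conductors. The paper works only with $K[[t]]$-points for all $K/\bC$ and takes that as the coequaliser statement for reduced functors. If, as you say yourself, this pointwise statement is what is required, your argument is complete at the end of the ambiguity step.

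The problem is your additional ``main obstacle'' step. You claim that for reduced closed subschemes $Z_\alpha=V(I_\alpha)$ covering a reduced $X=\Spec A$, gluing along pairwise intersections gives the coequaliser. This is false in general. That coequaliser is $\Spec$ of the equaliser of $\prod_\alpha A/I_\alpha\rightrightarrows\prod_{\alpha<\beta}A/(I_\alpha+I_\beta)$. The ring $A/\bigcap_\alpha I_\alpha$ maps onto it only under a distributivity condition. For two ideals this is automatic, but for three one needs $(I_1+I_3)\cap(I_2+I_3)=I_1\cap I_2+I_3$. For the three concurrent lines $xy(x-y)=0$ in $\mathbb A^2$ the condition fails. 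The triple $(y,0,0)$ is compatible on the pairwise intersections. A lift $F$ would have linear part $y$, forced by $F(0,y)=y$ and $F(x,0)=0$, which contradicts $F(s,s)=0$. The actual coequaliser is the seminormal triple point $\bC[a,b,c]/(ab,bc,ca)$, not the planar one.

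What makes your conclusion true here is specific to the SNC chart:
\begin{itemize}
\item $J_\infty(S_\alpha)\subset J_\infty(W)$ is cut out by the ideal $P_\alpha$ generated by all coefficients of $x_\alpha(t)$.
\item These ideals are generated by pairwise disjoint sets of variables, so they lie in the distributive lattice of monomial ideals.
\item The domain argument gives $J_\infty(\Dinv)_{\red}=V(\bigcap_\alpha P_\alpha)$.
\item $P_\alpha+P_\beta$ cuts out $J_\infty(C_{\alpha\beta})$.
\end{itemize}
The Chinese-remainder computation of Lemma~\ref{lem:sn-pushout} then runs verbatim in infinitely many variables, and this distributivity is exactly what the triple-point charts need.

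This computation must be done on the infinite arc rings, not ``at each truncation level'' on $J_m(\Dinv)_{\red}$. By Example~\ref{ex:mixed-finite-jets}, $J_m(\Dinv)_{\red}$ is strictly larger than $\bigcup_\alpha J_m(S_\alpha)$, as your own final sentence acknowledges, so that phrase contradicts it. A minor point: the closed-immersion property of $S_\alpha^\nu\to\sS_W^{\sn}$ comes from Lemma~\ref{lem:sn-pushout} and the normality of SNC branches, not from Proposition~\ref{prop:separatrix-functor}.
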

\begin{proof}
The assertion is local on the simple normal crossing model.  It is enough to
treat the standard affine charts, since the general case is obtained by
adjoining smooth parameters.
First consider the double crossing
$
D=(xy=0)\subset \Spec K[[x,y]].
$
A \(K[[t]]\)-arc on \(D\) is given by two series \(x(t),y(t)\in K[[t]]\)
such that
$
x(t)y(t)=0.
$
Since \(K[[t]]\) is an integral domain, either \(x(t)=0\) or \(y(t)=0\).
Hence every arc on \(D\) factors through at least one of the two branches
$
S_x=(x=0), $ and $ S_y=(y=0).
$
If exactly one of \(x(t)\) and \(y(t)\) is zero, the branch representative is
unique.  If both vanish, then the arc factors through the conductor
$
C=(x=y=0),
$ 
and the two branch representatives are the two images of the same conductor
arc under
\[
C\longrightarrow S_x,
\qquad
C\longrightarrow S_y.
\]
Thus, in the double crossing chart, the reduced arc functor of \(D\) is the
coequaliser of the two conductor maps
\[
J_\infty(C)_{\red}
\rightrightarrows
J_\infty(S_x)_{\red}\sqcup J_\infty(S_y)_{\red}.
\]

Now, consider the triple crossing
$
D=(xyz=0)\subset \Spec K[[x,y,z]].$
A \(K[[t]]\)-arc on \(D\) is given by \(x(t),y(t),z(t)\in K[[t]]\) with
\[
x(t)y(t)z(t)=0.
\]
Again, since \(K[[t]]\) is an integral domain, at least one of the three
series is zero.  Hence the arc factors through at least one branch among
\[
S_x=(x=0),\qquad S_y=(y=0),\qquad S_z=(z=0).
\]
The set of branch representatives is determined by the subset
\[
I(\gamma)=\{i\in\{x,y,z\}: i(t)=0\}.
\]

If \(|I(\gamma)|=1\), the representative is unique.  If
\(|I(\gamma)|=2\), the arc factors through the corresponding pairwise
conductor.  For example, if \(x(t)=y(t)=0\), then \(\gamma\) factors through
\[
C_{xy}=(x=y=0),
\]
and the two representatives on \(S_x\) and \(S_y\) are identified by the two
maps \(C_{xy}\to S_x\) and \(C_{xy}\to S_y\).

If \(|I(\gamma)|=3\), then the arc factors through the triple stratum
$
T=(x=y=z=0).
$
Its images in the three pairwise conductors \(C_{xy},C_{xz},C_{yz}\) have the
same specialisation on \(T\).  The three branch representatives are therefore
identified by the transitive equivalence relation generated by the pairwise
conductor maps.  Thus no additional generator is needed at the triple point:
the pairwise conductor identifications already identify all representatives
of an arc contained in \(T\).
Consequently, in the triple crossing chart, every arc is represented by an
arc on at least one branch, and any two branch representatives of the same
arc differ by a chain of identifications coming from arcs on the pairwise
conductors.  This gives the required coequaliser description.

The same argument applies to an arbitrary SNC divisor
$
D=(x_1\cdots x_r=0)
$
after adjoining smooth parameters.  A \(K[[t]]\)-arc satisfies
\[
x_1(t)\cdots x_r(t)=0,
\]
hence at least one \(x_i(t)\) is zero.  Therefore the arc factors through at
least one branch \(S_i=(x_i=0)\).  If it factors through more than one branch,
the ambiguity is generated by the corresponding pairwise conductor strata
\(S_i\cap S_j\).  Passing to normalisations gives the stated diagram
\[
\bigsqcup_{\alpha<\beta}J_\infty(C_{\alpha\beta}^{\nu})_{\red}
\rightrightarrows
\bigsqcup_\alpha J_\infty(S_\alpha^\nu)_{\red}
\longrightarrow
J_\infty(\sS_W^{\sn})_{\red}.
\]

Finally, the seminormal structure sheaf is constructed as the equaliser of
the branch and conductor rings in \eqref{eq:sn-equaliser-operational}.  Since
\(\Spec K[[t]]\) is reduced and seminormal, morphisms from \(\Spec K[[t]]\)
to the seminormal pushout are exactly compatible branch morphisms modulo the
conductor identifications described above.  This proves the asserted
coequaliser statement for the reduced arc functor.
\end{proof}
\begin{Proposition}
\label{prop:tangential-reduction-diagram}
On every adapted model $W$ there is a coequaliser diagram of reduced pro-functors
\[
\bigsqcup_{\alpha<\beta}J_\infty(C_{\alpha\beta}^\nu)_{\red}
\rightrightarrows
\bigsqcup_{\alpha}J_\infty(S_\alpha^\nu)_{\red}
\longrightarrow J_\infty(\sS_W^{\sn})_{\red}.
\]
Moreover the finite morphism $\sS_W^{\sn}\to \Dinv\subset W$ induces a natural morphism
\[
J_\infty(\sS_W^{\sn})_{\red}
\longrightarrow
J_\infty^{\tan}(W,\cG;\Sigmatan)_{\red}.
\]
The first two arrows are induced by the two maps $C_{\alpha\beta}^\nu\to S_\alpha^\nu$ and $C_{\alpha\beta}^\nu\to S_\beta^\nu$.
\end{Proposition}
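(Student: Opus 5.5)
The coequaliser part of the statement is Lemma~\ref{lem:arcs-pushout} on the model $W$. I would simply invoke it, noting that it is local on the SNC model and that $W\in\Adm(X,\cF,\Delta)$ is covered by logarithmic simple adapted charts in which $\Dinv=(x_1\cdots x_r=0)$. The two parallel arrows are, by construction, induced by the conductor-to-branch maps $C_{\alpha\beta}^\nu\to S_\alpha^\nu$ and $C_{\alpha\beta}^\nu\to S_\beta^\nu$ of Construction~\ref{constr:sn-pushout-equaliser}. Functoriality of $J_\infty(-)_{\red}$ turns them into the two maps of arc functors, which gives the last sentence of the proposition.

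For the second morphism I would work in three steps. First, Lemma~\ref{lem:sn-pushout} gives a finite morphism $\nu:\sS_W^{\sn}\to\Dinv$. In the SNC charts it is the isomorphism $A/(x_1\cdots x_r)\simeq$ equaliser, so it is even an isomorphism onto $\Dinv$ locally. Composing with the closed immersion $\Dinv\hookrightarrow W$ and applying $J_\infty(-)_{\red}$ gives a natural map $J_\infty(\sS_W^{\sn})_{\red}\to J_\infty(\Dinv)_{\red}$. Second, I restrict to arcs whose closed point lies over $\Sigmatan$; here I read the target functor as the one centred on $\Sigmatan$. This condition is closed and preserved by $\nu$, so we land in $J_\infty(\Dinv;\Sigmatan)_{\red}$. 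Third, Theorem~\ref{thm:formal-rep} (equivalently Theorem~\ref{thm:carter-representability} with $Z=\Sigmatan$) identifies $J_\infty(\Dinv;\Sigmatan)_{\red}$ with $J_\infty^{\tan}(W,\cG;\Sigmatan)_{\red}$. Composing gives the required map.

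For this step only the easy half of Theorem~\ref{thm:carter-representability} is needed: arcs factoring through $\Dinv$ satisfy $\gamma^*\omega=0$. So the non-resonance condition is not used for the existence of the morphism. It enters only if one wants the map to be an identification.

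Naturality means compatibility with the functoriality of Proposition~\ref{prop:separatrix-functor} and with base change in $K$. It follows because every map involved is induced by morphisms of schemes, and the $K[[t]]$-point description is functorial in $K$. I would verify it on $K[[t]]$-points and conclude by the pro-functor formalism, since the reduced arc functors are determined by their points over field extensions of $\bC$.

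The main obstacle I expect is the bookkeeping of the centring condition. The arc functor $J_\infty(\sS_W^{\sn})$ in the proposition is not a priori centred on $\Sigmatan$, while the target is. I would handle this in one of two ways: interpret the source as the preimage of $\Sigmatan$, or define the morphism on the open-closed sub-pro-functor of arcs whose closed point maps into $\Sigmatan$. The remaining issue is that the target is a reduced pro-functor and not a scheme. I would check that the composed map on points respects the inverse system of truncations, using that $\nu$ is a morphism of schemes, so it induces compatible maps $J_m(\sS_W^{\sn})\to J_m(W)$ for all $m$. Because the construction is made on the reduced infinite arc level, no finite-jet claim is needed, in line with Example~\ref{ex:mixed-finite-jets}.
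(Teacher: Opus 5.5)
Your proposal is correct and follows the same route as the paper. The coequaliser is quoted from Lemma~\ref{lem:arcs-pushout}, and the second morphism comes from applying $J_\infty(-)_{\red}$ to the finite map $\sS_W^{\sn}\to\Dinv\subset W$ and then invoking Theorem~\ref{thm:formal-rep}; your explicit restriction of the source to arcs whose closed point lies over $\Sigmatan$ (a closed condition, not an open-closed one as you write later), and your remark that only the non-resonance-free inclusion $J_\infty(\Dinv;\Sigmatan)_{\red}\subset J_\infty^{\tan}(W,\cG;\Sigmatan)_{\red}$ is needed, are genuine tightenings of the paper's argument, which identifies the uncentred $J_\infty(\Dinv)_{\red}$ with the centred tangential sector without comment.
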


\begin{proof}
The structure sheaf of $\sS_W^{\sn}$ is obtained from the branch and conductor rings by the equaliser formula \eqref{eq:sn-equaliser-operational}.  Passing to arcs with integral source reverses this point of view: an arc on the seminormal union is represented by an arc on a branch, and two such representatives are identified precisely when they come from the same arc on a normalised conductor.  Lemma~\ref{lem:arcs-pushout} gives the asserted coequaliser of reduced arc functors.
The morphism $\sS_W^{\sn}\to \Dinv\subset W$ gives a morphism of reduced arc functors
\[
J_\infty(\sS_W^{\sn})_{\red}\longrightarrow J_\infty(\Dinv)_{\red}.
\]
By Theorem~\ref{thm:formal-rep}, the target is identified with the reduced tangential arc sector centred on $\Sigmatan$.  This yields the displayed morphism to $J_\infty^{\tan}(W,\cG;\Sigmatan)_{\red}$.  It is the natural morphism induced by the finite map to $\Dinv$; no closed immersion of arc functors is asserted.
\end{proof}

\begin{Corollary}
\label{cor:branch-decomp}
In a simple adapted chart,
\[
J_\infty^{\tan}(W,\cG;\Sigmatan)_{\red}=\bigcup_{i=1}^r J_\infty(S_i;\Sigmatan)_{\red}.
\]
The branch-intersection locus is
\[
\mathcal I=\bigcup_{i<j}J_\infty(S_i\cap S_j;\Sigmatan)_{\red}.
\]
Every irreducible closed tangential cylinder not contained in $\mathcal I$ has a unique generic branch.
\end{Corollary}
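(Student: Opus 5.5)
The plan has three parts: obtain the branch decomposition from Theorem~\ref{thm:carter-representability}, match the branch-intersection locus with the conductor strata, and deduce uniqueness of the generic branch by an irreducibility argument.

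\emph{Branch decomposition.} Work in a simple adapted chart with $\Dinv=(x_1\cdots x_r=0)$ and $S_i=(x_i=0)$. Theorem~\ref{thm:carter-representability}, applied with $Z=\Sigmatan$, gives
\[
J_\infty^{\tan}(W,\cG;\Sigmatan)_{\red}=J_\infty(\Dinv;\Sigmatan)_{\red}.
\]
To decompose the right-hand side, take a $K[[t]]$-point $\gamma$. Then $x_1(t)\cdots x_r(t)=0$ in the integral domain $K[[t]]$, so some $x_i(t)$ vanishes identically. Hence $\gamma$ factors through $S_i$, with the same centre condition $\gamma(0)\in\Sigmatan$. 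The reverse inclusion is immediate, since each $S_i\subset\Dinv$. Because both sides are reduced, equality of $K[[t]]$-points for all field extensions $K/\bC$ gives the equality of reduced functors. This is the chart-level content of Lemma~\ref{lem:arcs-pushout}, read through the finite map $\sS_W^{\sn}\to\Dinv$ of Proposition~\ref{prop:tangential-reduction-diagram}. In the adapted chart the branches are smooth, so normalisation plays no role.

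\emph{The intersection locus.} An arc lies in two of the pieces $J_\infty(S_i)$ and $J_\infty(S_j)$ exactly when $x_i(t)=x_j(t)=0$, that is, when it is an arc on $S_i\cap S_j$. This identifies $J_\infty(S_i)\cap J_\infty(S_j)$ with $J_\infty(S_i\cap S_j)$, and $\mathcal I$ is its union over pairs $i<j$. This is the conductor ambiguity described in Lemma~\ref{lem:arcs-pushout}.

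\emph{Unique generic branch.} Let $C$ be an irreducible closed tangential cylinder with $C\not\subset\mathcal I$. Each $J_\infty(S_i;\Sigmatan)_{\red}$ is closed in $J_\infty(W)$, since it is cut out by the vanishing of all coefficients of $x_i(t)$. By the first step, $C$ is the finite union of its closed subsets $C\cap J_\infty(S_i)$. Irreducibility then forces $C\subset J_\infty(S_{i_0})$ for some $i_0$, which gives existence of a generic branch.

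For uniqueness, suppose $C\subset J_\infty(S_i)$ as well, with $i\neq i_0$. Then $C\subset J_\infty(S_{i_0}\cap S_i)\subset\mathcal I$, contradicting the hypothesis. So the branch $S_{i_0}$ is unique. Equivalently, the generic point of $C$ lies in exactly one $J_\infty(S_i)$.

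The only point requiring care is the topology on infinite-dimensional arc spaces. Irreducibility has to be used with respect to the Zariski topology on $J_\infty(W)$, in which the $J_\infty(S_i)$ are closed. In that topology the ``finite union of closed sets'' argument goes through without any Noetherian hypothesis, because we only decompose a single irreducible set into finitely many closed pieces. I expect no genuine obstacle beyond making this remark explicit.
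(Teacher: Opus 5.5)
Your proposal is correct and follows essentially the same route as the paper. You reduce, via the arc-confinement theorem, to arcs on $\Dinv$, use irreducibility against the finite closed cover by the branch arc spaces, and get uniqueness from the fact that two branches share only arcs on $S_i\cap S_j$; the one difference is that you prove $J_\infty(\Dinv;\Sigmatan)_{\red}=\bigcup_i J_\infty(S_i;\Sigmatan)_{\red}$ explicitly by the integral-domain argument in $K[[t]]$, whereas the paper goes directly from $\Dinv=\bigcup S_i$ being a finite union of closed subschemes to the corresponding statement for arcs.
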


\begin{proof}
Theorem~\ref{thm:formal-rep} identifies the reduced tangential arc functor with the reduced arc functor of the union $\Dinv=\bigcup S_i$.  Since this is a finite union of closed subschemes, every irreducible closed subset is contained in one component.  Outside the branch-intersection locus an arc contained in $S_i$ is not contained in any other $S_j$, and hence the generic branch is unique.
\end{proof}

\section{Local branch reduction and coefficient-one cancellation}
\label{sec:branch-local}

Fix a simple adapted chart and a branch $S_i$.  Put
\[
B_i=\sum_{j\ne i}B_{ij}, \qquad B_{ij}=S_j|_{S_i},
\]
and, for $\Delta_W=\sum a_kT_k$ with $T_k$ transverse to the foliation,
\[
\Theta_i=B_i+\sum_k a_kT_k|_{S_i}.
\]

\begin{Proposition}
\label{prop:branch-id}
The principal branch sector
\[
\cB_i^\circ(W,\cG;\Sigma)=J_\infty(S_i;\Sigma)\setminus \bigcup_{j\ne i}J_\infty(S_i\cap S_j)
\]
is canonically identified with
\[
\{\alpha\in J_\infty(S_i):\alpha(0)\in \Sigma\cap S_i,\ \alpha(\eta)\notin \bigcup_{j\ne i}B_{ij}\}.
\]
On the branch--conductor locus, where $\Sigma\cap S_i=\bigcup_{j\ne i}B_{ij}$, this is the usual condition $\alpha(0)\in B_i$ and $\alpha(\eta)\notin B_i$.
Under this identification, contact with $S_j$ and $T_k$ becomes ordinary contact with $B_{ij}$ and $T_k|_{S_i}$.
\end{Proposition}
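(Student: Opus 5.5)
The plan is to work entirely in the simple adapted chart, where $S_i=(x_i=0)$ is a smooth coordinate hyperplane, and to use the arc-level identification of Theorem~\ref{thm:formal-rep} and Corollary~\ref{cor:branch-decomp}. By these results, an element of $J_\infty(S_i;\Sigma)$ is the same thing as a $K[[t]]$-arc $\gamma=(x_1(t),x_2(t),x_3(t))$ of $W$ with $x_i(t)\equiv0$ and $\gamma(0)\in\Sigma$. Since $S_i$ is a closed subscheme of $W$, composing with the inclusion $S_i\hookrightarrow W$ is a bijection from arcs $\alpha$ on $S_i$ to such $\gamma$. This gives the canonical map. It remains to match the two removed loci.

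The first step is to show that, for $j\neq i$, the arc $\gamma$ lies in $J_\infty(S_i\cap S_j)$ exactly when $x_j(t)\equiv0$. The second step is to show that this happens exactly when the generic point $\alpha(\eta)$ lies in $B_{ij}=S_j|_{S_i}=(x_j=0)\subset S_i$. Both are elementary: an arc factors through the closed subscheme $(x_j=0)$ iff $x_j(t)=0$ in $K[[t]]$. Moreover, $K[[t]]\hookrightarrow K((t))$ is injective, so $x_j(t)=0$ iff its image at $\eta$ vanishes. Taking the union over $j\neq i$ identifies the complement of $\bigcup_{j\neq i}J_\infty(S_i\cap S_j)$ with the condition $\alpha(\eta)\notin\bigcup_{j\neq i}B_{ij}$. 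The closed condition $\alpha(0)\in\Sigma\cap S_i$ is just the centre condition restricted to $S_i$. On the branch--conductor locus one substitutes $\Sigma\cap S_i=\bigcup_{j\neq i}B_{ij}=\Supp B_i$ to obtain the displayed form $\alpha(0)\in B_i$, $\alpha(\eta)\notin B_i$.

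For the contact statement, I would compare orders. For the transverse boundary component $T_k$, the SNC hypothesis in Definition~\ref{def:simple-chart} lets us take $T_k$ to be a coordinate hyperplane $(x_l=0)$ with $l\neq i$. Then $\ord_\gamma(T_k)=\ord_t x_l(t)=\ord_\alpha(x_l|_{S_i})=\ord_\alpha(T_k|_{S_i})$, because restricting the equation $x_l$ to $S_i$ is just restricting the function along $\alpha$. The same computation with $x_j$ gives $\ord_\gamma(S_j)=\ord_\alpha(B_{ij})$ for $j\neq i$. These orders are finite on the principal sector by the first part.

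The main obstacle is canonicity, that is, independence of the chosen formal coordinates. I would handle it by noting that every ingredient is intrinsic to the pair $S_i\subset W$ and the divisors $S_j,T_k$. Arcs factoring through a closed subscheme and the order of an arc along a Cartier divisor do not depend on the chosen local equation, since a unit has order zero along an arc. Hence the identification glues across charts. Arcs centred at triple points need no separate argument, because their generic point avoids every $B_{ij}$ by construction.
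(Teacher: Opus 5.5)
Your argument is correct and follows essentially the same route as the paper. The closed immersion $S_i\hookrightarrow W$ gives the bijection of arcs, and the generic-point condition $\alpha(\eta)\notin B_{ij}$ matches non-factorisation through $S_i\cap S_j$. Contact orders are then compared by restricting local equations of $S_j$ and $T_k$ to $S_i$; the appeal to Theorem~\ref{thm:formal-rep} is not actually needed for any of this.

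One small slip: on the principal sector only the orders along the $B_{ij}$ are forced to be finite, not those along $T_k|_{S_i}$. This does no harm, because $\ord_\gamma(T_k)=\ord_\alpha(T_k|_{S_i})$ holds even when both sides equal $\infty$.
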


\begin{proof}
Let $\alpha:\Spec K[[t]]\to W$ be an arc belonging to the left-hand side. By definition of the $i$-th branch sector, $\alpha$ factors through $S_i$, satisfies $\alpha(0)\in\Sigma\cap S_i$, and its generic point is not contained in any other invariant branch $S_j$, $j\ne i$. Since $S_i\hookrightarrow W$ is a closed immersion, factorisation through $S_i$ is equivalent to a unique arc $\alpha_i:\Spec K[[t]]\to S_i$ such that $\alpha$ is the composite $\Spec K[[t]]\xrightarrow{\alpha_i}S_i\hookrightarrow W$. The condition that the generic point of $\alpha$ does not factor through another $S_j$ is precisely the condition that the generic point of $\alpha_i$ is not contained in $B_{ij}:=S_i\cap S_j$ for any $j\ne i$. Thus the left-hand side identifies with the arcs $\alpha_i$ on $S_i$ satisfying $\alpha_i(0)\in\Sigma\cap S_i$ and $\alpha_i(\eta)\notin\bigcup_{j\ne i}B_{ij}$, where $\eta$ is the generic point of $\Spec K[[t]]$.

Conversely, every arc $\alpha_i:\Spec K[[t]]\to S_i$ with $\alpha_i(0)\in\Sigma\cap S_i$ and $\alpha_i(\eta)\notin\bigcup_{j\ne i}B_{ij}$ gives, by composition with $S_i\hookrightarrow W$, an arc in the $i$-th branch sector. In the branch--conductor charts used for discrepancy computations, the condition $\alpha_i(0)\in\Sigma\cap S_i$ records the invariant traces $\bigcup_{j\ne i}B_{ij}$ and also any additional singular points of the foliation lying on a smooth invariant branch.

It remains to compare contact data.  Let $f_j$ be a local equation of $S_j$ in $W$ and let $g_k$ be a local equation of a transverse boundary component $T_k$. On $S_i$, the restrictions $f_j|_{S_i}$ and $g_k|_{S_i}$ define respectively $B_{ij}$ and $T_k|_{S_i}$. Hence $\ord_\alpha(S_j)=\ord_t(f_j\circ\alpha)=\ord_t((f_j|_{S_i})\circ\alpha_i)=\ord_{\alpha_i}(B_{ij})$ and $\ord_\alpha(T_k)=\ord_t(g_k\circ\alpha)=\ord_t((g_k|_{S_i})\circ\alpha_i)=\ord_{\alpha_i}(T_k|_{S_i})$. Thus the branch-sector arcs and their contact orders are exactly the ordinary arcs and contact orders on $S_i$.
\end{proof}

\begin{Proposition}
\label{prop:monomial-codim}
Let $C_i({\bf m},{\bf n})$ be the cylinder in $\cB_i^\circ$ defined by
$
\ord(B_{ij})=m_j \quad (j\ne i),
 $ and  $
\ord(T_k|_{S_i})=n_k.
$
Then
\[
\codim_{J_\infty(S_i)}C_i({\bf m},{\bf n})=\sum_{j\ne i}m_j+\sum_k n_k
\]
and
\[
\lcodim_{(S_i,\Theta_i)}C_i({\bf m},{\bf n})=
\sum_k(1-a_k)n_k.
\]
\end{Proposition}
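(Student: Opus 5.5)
By Proposition~\ref{prop:branch-id}, the cylinder $C_i(\mathbf m,\mathbf n)$ is a set of ordinary arcs on the smooth surface $S_i$. The computation therefore takes place entirely in $J_\infty(S_i)$, where contact with $S_j$ and $T_k$ is replaced by contact with $B_{ij}$ and $T_k|_{S_i}$.

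In the simple adapted chart, $S_i=\{x_i=0\}$ is a coordinate plane with coordinates $y_1,y_2$ given by the remaining $x$'s. Because $\Dinv+\Supp\Delta_W$ is SNC, the divisors $B_{ij}$ and $T_k|_{S_i}$ are distinct coordinate lines $\{y_\ell=0\}$ in $S_i$. So $C_i(\mathbf m,\mathbf n)$ is a contact-order cylinder for a simple normal crossing divisor on a smooth surface, defined by prescribing $\ord_t y_\ell(t)$ for each coordinate $y_\ell$ cutting out one of these divisors. The condition $\alpha(\eta)\notin B_i$ is automatic once the orders are finite.

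\textbf{Codimension.} The standard monomial count applies. The condition $\ord_t y_\ell=e_\ell$ kills the coefficients of $t^0,\dots,t^{e_\ell-1}$, which is $e_\ell$ closed conditions. It also requires the coefficient of $t^{e_\ell}$ to be nonzero, which is an open condition. Distinct divisors use distinct coordinates, so these conditions are independent. This already happens at jet level $\max e_\ell$, and the cylinder is irreducible of codimension $\sum_\ell e_\ell=\sum_{j\ne i}m_j+\sum_k n_k$.

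\textbf{Logarithmic codimension.} By the convention of Definition~\ref{def:tangential-cylinder-operational}, $\lcodim=\codim-\ord_{C}(\Theta_i)$. The order of $\Theta_i$ along the cylinder is $\sum_{j\ne i}1\cdot m_j+\sum_k a_k n_k$, since $\Theta_i=\sum_{j\ne i}B_{ij}+\sum_k a_kT_k|_{S_i}$. Subtracting, the $m_j$ cancel against the coefficient-one conductor boundary, and we obtain $\sum_k(1-a_k)n_k$. This cancellation is the point of the section title.

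\textbf{Main obstacle.} The delicate point is independence of the contact conditions. I would need to check that no two of the divisors $B_{ij}$, $T_k|_{S_i}$ coincide on $S_i$, and that at most two of them pass through a point of the surface. Both follow from the SNC hypothesis on $\Dinv+\Supp\Delta_W$ and the requirement in Definition~\ref{def:simple-chart} that boundary components be coordinate hyperplanes. If some $T_k$ did not meet $S_i$ near $p$, its $n_k$ would have to be $0$ and the formula would still hold. I would also check that the centre condition $\alpha(0)\in\Sigma\cap S_i$ is implied by positivity of some $m_j$ on the branch--conductor locus; otherwise it adds a further closed condition that must be absorbed into the stated orders.
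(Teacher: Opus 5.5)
Your proposal is correct and follows the paper's proof essentially step for step: you reduce, via the branch-sector identification, to contact cylinders of coordinate curves on the smooth surface $S_i$, count $\sum_{j\ne i}m_j+\sum_k n_k$ independent coefficient-vanishing conditions, and subtract $\ord_C(\Theta_i)=\sum_{j\ne i}m_j+\sum_k a_kn_k$, so that the coefficient-one invariant traces cancel. Your caveat about the centre condition $\alpha(0)\in\Sigma\cap S_i$ is well taken, and the paper's proof passes over it silently: on the branch--conductor locus that condition holds automatically exactly when some $m_j\ge 1$, and otherwise the cylinder is empty.
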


\begin{proof}
Let \(S_i\) be the chosen branch and argue at the generic point of the corresponding stratum. As \(S_i\) is smooth in the adapted logarithmic chart, local parameters may be chosen \(u,v\) on \(S_i\) such that the invariant traces and transverse boundary traces which meet the stratum are coordinate curves. Thus, after relabelling, each \(B_{ij}\) or \(T_k|_{S_i}\) is locally given by either \(u=0\) or \(v=0\), and the relevant divisors have simple normal crossings.

Let \(\alpha:\Spec K[[t]]\to S_i\) be an arc and write \(u(t)=u\circ\alpha\), \(v(t)=v\circ\alpha\). The condition \(\ord_\alpha(u)\ge m\) is equivalent to the vanishing of the first \(m\) coefficients of \(u(t)\); hence it imposes \(m\) independent linear conditions on the coefficients of the arc. The same holds for \(v\). Since the divisors are coordinate curves with normal crossings, the imposed conditions are independent. It follows that the ordinary codimension of the contact cylinder is
\[
\sum_{j\ne i}m_j+\sum_k n_k,
\]
where \(m_j=\ord_\alpha(B_{ij})\) and \(n_k=\ord_\alpha(T_k|_{S_i})\), with the evident convention that only the components appearing in the chosen local chart are included.
The branch boundary has the form
\[
\Theta_i=\sum_{j\ne i}B_{ij}+\sum_k a_k\,T_k|_{S_i}.
\]
Therefore the logarithmic codimension of the same cylinder with respect to the pair \((S_i,\Theta_i)\) is
\[
\begin{aligned}
\lcodim_{(S_i,\Theta_i)}(C)
&=\codim_{J_\infty(S_i)}(C)-\ord_C(\Theta_i)\\
&=\left(\sum_{j\ne i}m_j+\sum_k n_k\right)
-\left(\sum_{j\ne i}m_j+\sum_k a_kn_k\right).
\end{aligned}
\]
Thus
\[
\lcodim_{(S_i,\Theta_i)}(C)
=
\sum_k(1-a_k)n_k.
\]
In particular, the invariant contact terms \(m_j\) cancel exactly because every invariant trace \(B_{ij}\) occurs in \(\Theta_i\) with coefficient \(1\).
\end{proof}

\begin{Corollary}
\label{cor:coeff-one}
Contacts with invariant separatrices do not contribute to the weighted tangential codimension.  Only transverse boundary coefficients remain.
\end{Corollary}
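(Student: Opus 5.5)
The plan is to read Corollary~\ref{cor:coeff-one} off Proposition~\ref{prop:monomial-codim}, combined with the branch identification of Proposition~\ref{prop:branch-id}.

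First, by Theorem~\ref{thm:formal-rep} and Corollary~\ref{cor:branch-decomp}, every irreducible tangential cylinder outside the branch-intersection locus $\mathcal I$ has a unique generic branch $S_i$. By Proposition~\ref{prop:branch-id}, it therefore lives in the principal sector $\cB_i^\circ(W,\cG;\Sigma)$. Under that identification, the contact orders with the other invariant separatrices $S_j$ are exactly the orders $m_j=\ord(B_{ij})$ along the traces on $S_i$. Likewise, the contact orders with the transverse boundary components $T_k$ are the orders $n_k=\ord(T_k|_{S_i})$. So it suffices to treat the contact cylinders $C_i(\mathbf m,\mathbf n)$, since every weighted tangential codimension in the principal sector is computed on such a cylinder at the generic point of its stratum.

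Second, I apply Proposition~\ref{prop:monomial-codim}. The weighted tangential codimension is the logarithmic codimension relative to the branch adjunction pair $(S_i,\Theta_i)$, and it equals
\[
\lcodim_{(S_i,\Theta_i)}C_i(\mathbf m,\mathbf n)=\sum_k(1-a_k)n_k .
\]
This expression is independent of $\mathbf m$. Hence varying the contact with invariant separatrices does not change the weighted codimension. The reason is the one recorded at the end of that proof: each invariant trace $B_{ij}$ appears in $\Theta_i$ with coefficient exactly $1$, so its contribution $m_j$ to the ordinary codimension is cancelled by the same amount in $\ord_C(\Theta_i)$. Only the terms $(1-a_k)n_k$ survive, and these involve only the transverse boundary coefficients $a_k$.

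Finally, I need to rule out any dependence on the chosen model or representative. Lemma~\ref{lem:codim-independence} shows that the logarithmic codimension is independent of the representative in the adapted category, so the statement is intrinsic to the tangential cylinder. The only delicate point is the conductor case, when the generic arc lies in $\mathcal I$. There I would repeat the same cancellation on the normalised conductor stratum $C^\nu_{\alpha\beta}$ with its adjunction boundary $\Theta_{\alpha\beta}$. This requires that the remaining invariant traces again appear in $\Theta_{\alpha\beta}$ with coefficient one, which I expect to follow from the coefficient-one structure of $\Theta_{\alpha\beta}$ in the normalised system of Theorem~\ref{thm:intro-system}. Checking this is the main obstacle. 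If it cannot be verified, the corollary should be read for the principal branch sectors, where the argument above is complete.
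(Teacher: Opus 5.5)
Your proposal is correct and matches the paper, which gives no separate argument but reads the corollary directly off the formula $\lcodim_{(S_i,\Theta_i)}C_i(\mathbf m,\mathbf n)=\sum_k(1-a_k)n_k$ of Proposition~\ref{prop:monomial-codim}, using the same coefficient-one cancellation of the $m_j$ that you describe. The model-independence and conductor discussions go beyond what the paper claims, since the corollary is stated in the principal branch sector. Your expected conductor cancellation does hold: Proposition~\ref{prop:no-double-count} shows that the remaining invariant trace restricts to the normalised conductor as a reduced point with coefficient one.
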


\section{The normalised separatrix--conductor system}
\label{sec:system}

\begin{Construction}
Let $\Dinv=\sum S_\alpha$ on an adapted model $W$.  Let
$
\nu_\alpha:S_\alpha^\nu\to S_\alpha
$
be the normalisation.  For two adjacent branches, define the reduced conductor curve
$
C_{\alpha\beta}=S_\alpha\cap S_\beta
$
and let $C_{\alpha\beta}^\nu$ be its normalisation.  The maps
\[
C_{\alpha\beta}^\nu\to S_\alpha^\nu,
\qquad
C_{\alpha\beta}^\nu\to S_\beta^\nu
\]
are induced by the universal property of normalisation.  The \emph{normalised separatrix system} is the finite diagram consisting of all $S_\alpha^\nu$ and all $C_{\alpha\beta}^\nu$ with these incidence maps.
\end{Construction}

\begin{Theorem}
\label{thm:system-exists}
Let $(W,\cG,\Delta_W)$ be an object of $\Adm(X,\cF,\Delta;W_0)$.  The invariant separatrices determine a finite normalised separatrix system and a seminormal separatrix space $\sS^{\sn}$.  The generic arc of every irreducible divisorial tangential cylinder is represented either on a unique normalised branch $S_\alpha^\nu$ or on a unique normalised conductor stratum $C_{\alpha\beta}^\nu$ after removing lower-dimensional specialisations.
\end{Theorem}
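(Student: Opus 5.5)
The plan is to split the statement into a finiteness/existence part and a generic-representation part, and to reduce both to results already established.

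\emph{Finiteness and existence.} Since $W$ is smooth and quasi-projective, $\Dinv$ is a reduced divisor with finitely many irreducible components $S_\alpha$. The reduced curves $C_{\alpha\beta}=S_\alpha\cap S_\beta$ therefore form a finite family. Normalising each of these gives the finite diagram of the Construction above. On the adapted models the branches are already SNC-smooth, so $\nu_\alpha$ is an isomorphism; only the conductor curves may require genuine normalisation at triple points. Existence, reducedness and seminormality of $\sS^{\sn}$, and its finiteness over $\Dinv$, then follow directly from Lemma~\ref{lem:sn-pushout}, applied chart by chart and glued via the equaliser description \eqref{eq:sn-equaliser-operational}. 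That description is local and compatible with restriction, so the local pushouts glue to a global one.

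\emph{Generic representation.} Let $C$ be an irreducible divisorial tangential cylinder. By Theorem~\ref{thm:formal-rep}, $C\subset J_\infty(\Dinv;\Sigmatan)_{\red}=\bigcup_\alpha J_\infty(S_\alpha)_{\red}$, a finite union of closed cylinders. Irreducibility then puts $C$ inside some $J_\infty(S_\alpha)$. Consider the index set $I(\gamma)=\{\alpha:\gamma\text{ factors through }S_\alpha\}$ for the generic arc $\gamma$ of $C$; this is the set used in Lemma~\ref{lem:arcs-pushout}.
\begin{itemize}
\item If $|I(\gamma)|=1$, Corollary~\ref{cor:branch-decomp} shows the generic branch is unique. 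Proposition~\ref{prop:branch-id} then identifies $C$ with a cylinder on $S_\alpha^\nu$ lying in the principal sector.
\item If $|I(\gamma)|\ge2$, the generic arc lies in $J_\infty(C_{\alpha\beta})$ for some pair $\alpha,\beta$. It lifts to $C_{\alpha\beta}^\nu$ because $\Spec K[[t]]$ is normal and the generic point of $\gamma$ maps to the generic point of some component of $C_{\alpha\beta}$, or at least to a point over which the normalisation is birational.
\end{itemize}

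\emph{Uniqueness of the conductor.} The generic arc of $C$ lies in exactly one conductor unless it lies in an intersection $C_{\alpha\beta}\cap C_{\alpha\eta}$, which is a finite set of triple points. Arcs with generic point inside a triple point are constant arcs. They form a lower-dimensional specialisation, and such arcs are excluded by the divisorial hypothesis: a maximal divisorial set has generic arc with generic point dense in the centre's ambient stratum. This is precisely the removal of lower-dimensional specialisations allowed in the statement. Lemma~\ref{lem:arcs-pushout} confirms that the pairwise conductor identifications capture all remaining ambiguity. Hence the conductor stratum $C_{\alpha\beta}^\nu$ carrying the generic arc is unique.

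\emph{Main obstacle.} The delicate step is the lifting through the normalisation of a conductor curve at points where that normalisation is not an isomorphism. This happens when $C_{\alpha\beta}$ has several local analytic branches, namely when $S_\alpha$ and $S_\beta$ meet along a curve passing through a point of self-intersection in the global picture. I would handle it with the valuative/normality argument: an arc whose generic point is the generic point of a component of $C_{\alpha\beta}$ factors uniquely through $C^\nu_{\alpha\beta}$, since $K[[t]]$ is integrally closed. Any other case is relegated to the discarded specialisations.
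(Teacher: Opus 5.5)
Your argument is correct and follows the same skeleton as the paper's proof:
\begin{itemize}
\item put the generic arc in $\Dinv$ via Theorem~\ref{thm:formal-rep} and Corollary~\ref{cor:branch-decomp};
\item split according to how many components contain $\gamma(\eta)$;
\item lift through the normalisation by the valuative criterion;
\item discard constant arcs.
\end{itemize}

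The one real difference is the triple-point case. The paper blows up the triple point (the star subdivision by $e_x+e_y+e_z$). It then uses that the datum is toroidal to move its centre onto a unique transformed branch or pairwise conductor, and concludes ``after the allowed adapted toroidal refinement''. You observe instead that $|I(\gamma)|=3$ forces $x(t)=y(t)=z(t)=0$. So $\gamma$ is the constant arc at $p$. Such an arc cannot be the generic arc of a maximal divisorial set $N_q(F)\subset J_\infty(V)$, whose generic arc has generic point equal to the generic point of $V$.

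Your route therefore gives the representation on the given $W$ itself. It needs no change of model and does not use the toroidal nature of the datum, which matches the statement as formulated. The paper's route ties the result to the refinement machinery used afterwards in Theorem~\ref{thm:classification}. However, the case ``$\alpha_C(\eta)$ at a triple point'' already forces $\alpha_C$ to be constant, so the blow-up is not logically needed for this statement. You also make the existence and finiteness part explicit via Lemma~\ref{lem:sn-pushout}, which the paper only asserts.

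One small correction. On an object of $\Adm$ the divisor $\Dinv+\operatorname{Supp}\Delta_W$ is SNC. Hence $C_{\alpha\beta}=S_\alpha\cap S_\beta$ is a transversal intersection of smooth surfaces and is itself smooth, including at triple points (locally $x=y=0$). So the conductor normalisations are isomorphisms as well. The ``main obstacle'' you describe (several analytic branches of $C_{\alpha\beta}$, self-intersections of $S_\alpha$) does not occur on these models. Your valuative-criterion lifting is correct, but it is only needed for non-SNC images such as the strata on $X$ in Section~\ref{sec:mj}.
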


\begin{proof}
The preceding construction defines the branch--conductor system. It is shown to represent the adapted toroidal tangential divisorial cylinders.

Let \(C\subset J_\infty^{\tan}(W,\mathcal G;\Sigma_{\tan})_{\red}\) be an irreducible adapted toroidal tangential divisorial cylinder, and let \(\alpha_C:\Spec K[[t]]\to W\) denote its generic arc. By Corollary~\ref{cor:branch-decomp}, the arc \(\alpha_C\) factors through \(\Dinv\). Thus the generic point \(\alpha_C(\eta)\), where \(\eta\) is the generic point of \(\Spec K[[t]]\), has a well-defined generic stratum in the simple normal crossing divisor \(\Dinv\).

If \(\alpha_C(\eta)\) is contained in exactly one component \(S_\alpha\) of \(\Dinv\), then \(\alpha_C\) factors through \(S_\alpha\) and is generically contained in the normal locus of \(S_\alpha\). Since \(\nu_\alpha:S_\alpha^\nu\to S_\alpha\) is the normalisation, the valuative criterion for normalisation gives a unique lift
\[
\begin{tikzcd}
& S_\alpha^\nu \arrow[d,"\nu_\alpha"]\\
\Spec K[[t]] \arrow[r,"\alpha_C"'] \arrow[ur,dashed,"\widetilde\alpha_C"]&
S_\alpha.
\end{tikzcd}
\]
Thus the cylinder \(C\) is represented on the branch \(S_\alpha^\nu\).
If \(\alpha_C(\eta)\) lies on exactly two components, say
\(S_\alpha\cap S_\beta\), and not in a triple intersection, then
\(\alpha_C\) factors through the conductor curve
\(C_{\alpha\beta}=S_\alpha\cap S_\beta\). Since the generic point of the arc lies in the normal locus of \(C_{\alpha\beta}^\nu\), the same valuative criterion gives a unique lift
\[
\begin{tikzcd}
& C_{\alpha\beta}^\nu \arrow[d]\\
\Spec K[[t]] \arrow[r,"\alpha_C"'] \arrow[ur,dashed,"\widetilde\alpha_C"]&
C_{\alpha\beta}.
\end{tikzcd}
\]
Hence the cylinder is represented on \(C_{\alpha\beta}^\nu\).

It remains to consider the case where \(\alpha_C(\eta)\) lies at a triple point
\(p=S_\alpha\cap S_\beta\cap S_\gamma\). Since \(C\) is an adapted toroidal divisorial cylinder, it is determined by a divisorial valuation obtained from a toroidal blow-up of the local coordinate cone. Blow up the triple stratum:
\[
\mu:W'\longrightarrow W.
\]
In adapted local coordinates \(\Dinv=(xyz=0)\), this is the star subdivision of the cone
\[
\sigma=\mathbb R_{\geq 0}\langle e_x,e_y,e_z\rangle
\]
by the ray \(e_x+e_y+e_z\). The lift of the toroidal divisorial datum to \(W'\) has centre either on a unique transformed branch or on a pairwise conductor curve of the refined divisor \(\Dinv'\). Therefore, after this adapted blow-up, the preceding two cases apply to the strict transform of \(C\).

The only arcs remaining entirely at the triple point are constant arcs at \(p\). These form a lower-dimensional specialisation of the cylinders obtained above and do not determine a new divisorial tangential valuation. Hence they do not contribute an additional adapted toroidal tangential divisorial cylinder.

Consequently every adapted toroidal tangential divisorial cylinder is represented either on some normalised branch \(S_\alpha^\nu\) or on some normalised conductor curve \(C_{\alpha\beta}^\nu\), after the allowed adapted toroidal refinement. This is exactly the asserted representation by the branch--conductor system.
\end{proof}

\section{Functoriality under adapted blow-ups}
\label{sec:functoriality}

\begin{Lemma}
\label{lem:branch-transform}
Let $\mu:W'\to W$ be the blow-up of a smooth coordinate stratum of $\Dinv+\Supp\Delta_W$. If the centre is tangent to $\cG$, then the exceptional divisor is invariant for the transformed foliation. Moreover, the invariant divisor on $W'$ is the reduced total transform of $\Dinv$, with the exceptional component included.
\end{Lemma}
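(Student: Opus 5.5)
The argument is local, so I will compute in a logarithmic simple adapted chart at a point of the centre. There \(\omega=u\,x_1\cdots x_r\bigl(\sum_{i\le r}\lambda_i\,dx_i/x_i+\sum_{j>r}h_j\,dx_j\bigr)\), and \(\Dinv+\Supp\Delta_W\) consists of coordinate hyperplanes. A smooth coordinate stratum is then \(Z=\{x_i=0:\ i\in I\}\) for some index set \(I\). Tangency of \(Z\) to \(\cG\), together with the requirement that an adapted tangent centre lies in \(\Dinv\), lets me take \(I\cap\{1,\dots,r\}\neq\varnothing\). I will treat the case \(I\subset\{1,\dots,r\}\) in detail. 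If some transverse coordinate \(x_j\) with \(j>r\) belongs to \(I\), the same computation applies, with the \(h_j\,dx_j\) terms contributing only regular terms.

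I will work in the blow-up chart where \(x_{i_0}\) generates the exceptional ideal, for some \(i_0\in I\). The coordinates there are \(y_{i_0}=x_{i_0}\), \(y_i=x_i/x_{i_0}\) for \(i\in I\setminus\{i_0\}\), and \(y_k=x_k\) otherwise, and the exceptional divisor is \(E=\{y_{i_0}=0\}\). Substituting gives \(dx_i/x_i=dy_i/y_i+dy_{i_0}/y_{i_0}\) for \(i\in I\setminus\{i_0\}\). The logarithmic part therefore becomes
\[
\Bigl(\sum_{i\in I}\lambda_i\Bigr)\frac{dy_{i_0}}{y_{i_0}}+\sum_{i\in I\setminus\{i_0\}}\lambda_i\frac{dy_i}{y_i}+\sum_{i\notin I,\,i\le r}\lambda_i\frac{dy_i}{y_i}.
\]
The regular terms \(h_j\,dx_j\) pull back to regular forms. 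In the transverse case, where \(x_j=y_jy_{i_0}\), they also contribute \(dy_{i_0}\)-terms with regular coefficients, and these are harmless. After clearing the appropriate monomial, the saturated generator of \(\cG'\) is again logarithmic along \(E\) and along the strict transforms of the \(S_i\). Its residue along \(E\) is \(\lambda_E=\sum_{i\in I}\lambda_i\).

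This residue is the crux of the argument, and I expect it to be the main point needing care. Positive non-resonance applied to the non-zero vector \(a=\mathbf 1_I\in\bZ_{\ge0}^r\) gives \(\lambda_E\neq0\), so \(E\) is a genuine pole component of the logarithmic form. Vanishing of \(\omega'\wedge dy_{i_0}\) along \(E\) (equivalently, \(y_{i_0}\,|\,\omega'\wedge dy_{i_0}\) after saturation) then gives invariance of \(E\). This is exactly the remark in the definition of \(\Adm\) that residue sums along exceptional invariant components do not vanish. It is also the non-dicriticality. In the transverse sub-case I must check that the \(dy_{i_0}\)-coefficient coming from the regular part does not cancel the pole. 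It cannot, because that coefficient is regular while the pole term is \(\lambda_E/y_{i_0}\) with \(\lambda_E\neq 0\).

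For the second assertion, I will read off the invariant divisor in each chart. The strict transforms of the \(S_i\) remain invariant, being hyperplanes \(y_i=0\) carrying non-zero residues \(\lambda_i\). The exceptional divisor \(E\) is invariant by the residue computation above, and no other new pole appears. Therefore \(\Dinv'=\{\prod_{i\le r,\,i\neq i_0}y_i\cdot y_{i_0}=0\}\), which is \((\mu^{-1}\Dinv)_{\red}\). Here \(E\) lies in \(\mu^{-1}\Dinv\) because the centre lies in \(\Dinv\). Gluing over the charts \(i_0\in I\) gives the global statement.
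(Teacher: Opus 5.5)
Your proposal is correct and follows essentially the same route as the paper. Both arguments compute in a blow-up chart, find that the exceptional residue is $\sum_{i\in I\cap\{1,\dots,r\}}\lambda_i$, show this residue is non-zero, and read off the strict transforms as coordinate hyperplanes. The paper calls the non-vanishing the ``tangent non-dicritical assumption''; this is exactly positive non-resonance applied to the indicator vector $\mathbf 1_{I\cap\{1,\dots,r\}}$, as you say. Your version is slightly more careful than the paper's, since you make the saturation step and the transverse sub-case (regular $dy_{i_0}$-terms cannot cancel the pole) explicit.
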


\begin{proof}
The statement is local at the centre. Choose an adapted logarithmic coordinate chart $W=\Spec k[[x_1,x_2,x_3]]$ in which $\Dinv$ is a union of coordinate hyperplanes. After relabelling, write $\Dinv=(x_1\cdots x_r=0)$, with $1\le r\le 3$, and assume that the centre is the coordinate stratum $Z=(x_1=\cdots=x_s=0)$, possibly after adjoining transverse boundary coordinates among the remaining variables. In the logarithmic simple adapted chart the foliation is generated by a logarithmic one-form of the shape $\omega=u\,x_1\cdots x_r\eta$, where $u$ is a unit and
\[
\eta=\sum_{i=1}^r\lambda_i\,dx_i/x_i+\sum_{j=r+1}^3 h_j\,dx_j.
\]
The non-dicritical tangent condition for the centre means that the logarithmic residue of $\eta$ in the exceptional direction is nonzero.

It suffices to compute on one standard blow-up chart. On the chart where $x_1$ is exceptional, write $x_i=x_1y_i$ for $2\le i\le s$ and keep the remaining coordinates unchanged. Then $E=(x_1=0)$ and, for $2\le i\le s$, one has $dx_i/x_i=dx_1/x_1+dy_i/y_i$. Therefore the pull-back of the logarithmic part is
\[
\mu^*\eta=\left(\sum_{i=1}^{\min(r,s)}\lambda_i\right)dx_1/x_1+\sum_{2\le i\le \min(r,s)}\lambda_i\,dy_i/y_i+\sum_{i>s,\ i\le r}\lambda_i\,dx_i/x_i+\text{regular terms}.
\]

By the tangent non-dicritical assumption, the coefficient $\sum_{i=1}^{\min(r,s)}\lambda_i$ of $dx_1/x_1$ is nonzero. Hence the transformed logarithmic form has a logarithmic component along $E=(x_1=0)$ with nonzero residue. Thus $E$ is invariant for the transformed foliation.

The strict transforms of the old invariant components are the coordinate hyperplanes $y_i=0$ for $2\le i\le r$ with $i\le s$, together with $x_i=0$ for $i>s$ and $i\le r$. The exceptional component is $x_1=0$. Hence on this chart the invariant divisor is precisely the reduced divisor supported on the total transform of $\Dinv$ together with $E$.
The same computation on the other standard blow-up charts gives the identical conclusion, with the exceptional coordinate replacing $x_1$. Therefore no dicritical exceptional component is produced. Since the assertion is local on the blow-up charts, the invariant divisor on $W'$ is the reduced total transform of $\Dinv$, including the exceptional divisor.
\end{proof}

\begin{Lemma} \label{lem:coordinate-pullback}
Let the logarithmic part of the foliation be
\[
\omega=u_0\,x_1\cdots x_r\left(\sum_{i=1}^r\lambda_i\frac{dx_i}{x_i}+\eta\right),
\]
where $\eta$ is regular in the remaining coordinates.  The following explicit formulas hold.
\begin{enumerate}[label=\textup{(\alph*)},leftmargin=2.4em]
\item If $C=(x=y=0)$ is a conductor curve and $\mu$ is its blow-up, then on the $x$-chart, with $y=xu$,
\begin{equation}\label{eq:curve-pullback}
\mu^*\omega
=u_1x^2u\left((\lambda_x+\lambda_y)\frac{dx}{x}+\lambda_y\frac{du}{u}+\eta_x\right).
\end{equation}
Thus the reduced transformed invariant divisor in this chart is $xu=0$: the exceptional branch is $x=0$ and the strict transform of $S_y$ is $u=0$.
\item If $p=(x=y=z=0)$ is a triple point and $\mu$ is its blow-up, then on the $x$-chart, with $y=xu$ and $z=xv$,
\begin{equation}\label{eq:triple-pullback}
\mu^*\omega
=u_2x^3uv\left((\lambda_x+\lambda_y+\lambda_z)\frac{dx}{x}+\lambda_y\frac{du}{u}+\lambda_z\frac{dv}{v}+\eta_x\right).
\end{equation}
Thus the reduced transformed invariant divisor is $xuv=0$.
\end{enumerate}
\end{Lemma}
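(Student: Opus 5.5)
The plan is a direct substitution computation in adapted coordinates, using the chart description from the proof of Lemma~\ref{lem:branch-transform}. I will write $\omega=u_0\,f\,\theta$, where $f=x_1\cdots x_r$ and $\theta=\sum_i\lambda_i\,dx_i/x_i+\eta$. I will pull back $f$ and $\theta$ separately, then collect the factors.

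For (a), take $r\ge 2$ with $x=x_1$, $y=x_2$, and blow up $C=(x=y=0)$. On the $x$-chart $y=xu$, so $dy/y=dx/x+du/u$. Substituting gives
\[
\mu^*\theta=(\lambda_x+\lambda_y)\frac{dx}{x}+\lambda_y\frac{du}{u}+\eta_x .
\]
The remaining logarithmic terms $\lambda_i\,dx_i/x_i$ for $i>2$ are unchanged, and I absorb them, together with $\mu^*\eta$, into $\eta_x$. At the displayed level of the two conductor coordinates, $\eta_x$ is logarithmic only along components not involved in the blow-up. For the prefactor, $\mu^*(xy)=x^2u$. The other factors $x_i$ with $i>2$, and $\mu^*u_0$, are collected into $u_1$, with the understanding (as in the statement) that only the coordinates meeting the centre are displayed. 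This gives \eqref{eq:curve-pullback}.

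For (b), the computation is identical with $y=xu$ and $z=xv$. Then $\mu^*(xyz)=x^3uv$, and $dy/y$, $dz/z$ each contribute one $dx/x$. This gives the residue $\lambda_x+\lambda_y+\lambda_z$ along $x=0$ and yields \eqref{eq:triple-pullback}.

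To read off the reduced invariant divisor, I need the polar locus of the logarithmic factor. In (a), the residue along $x=0$ is $\lambda_x+\lambda_y$, which is nonzero by positive non-resonance applied to $a=(1,1,0,\dots)$. The residue along $u=0$ is $\lambda_y\neq0$. So both $x=0$ and $u=0$ are genuine invariant components, and the reduced transformed divisor is $(xu=0)$. Here $x=0$ is the exceptional branch $E$, and $u=0$ is the strict transform of $S_y$, since $y=xu$. The case (b) is the same, using $\lambda_x+\lambda_y+\lambda_z\neq0$ for $a=(1,1,1)$, so the divisor is $(xuv=0)$.

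I expect the main obstacle to be the bookkeeping of what is meant by $u_1$, $u_2$ and $\eta_x$. The factor $x^2u$ is not literally the saturated generator: dividing by $x$ gives $xu\bigl((\lambda_x+\lambda_y)dx/x+\cdots\bigr)$. I will therefore remark that \eqref{eq:curve-pullback} is the total pull-back, and that its saturation differs by the factor $x$ (respectively $x^2$ in (b)). This discrepancy is exactly the source of the foliated canonical coefficient of $E$. I also need to check that $\eta_x$ is regular along $x=0$ and along the new coordinate hyperplanes. That holds because $\eta$ involves only $dx_j$ with $j>r$, whose pull-backs are unchanged.
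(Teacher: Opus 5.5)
Your proposal is correct and follows the paper's proof essentially step for step: substitute $y=xu$ (and $z=xv$), use $\mu^*(dy/y)=dx/x+du/u$ and $\mu^*(xy)=x^2u$ (resp.\ $x^3uv$), then read off the residues $\lambda_x+\lambda_y$ (resp.\ $\lambda_x+\lambda_y+\lambda_z$), $\lambda_y$ and $\lambda_z$, which are non-zero by positive non-resonance, so the exceptional component and the strict transforms are invariant. Your remark that the displayed factor $x^2u$ (resp.\ $x^3uv$) is the total pull-back, and that the saturated generator differs from it by $x$ (resp.\ $x^2$), is the paper's ``after saturating by the common divisorial factor'' step made explicit.
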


\begin{proof}
The two formulas are proved in the indicated affine charts.
For the blow-up of the conductor curve $(x=y=0)$, work on the chart
$y=xu$. Then

\[
\mu^*\left(\frac{dx}{x}\right)=\frac{dx}{x},
\qquad
\mu^*\left(\frac{dy}{y}\right)
=
\frac{d(xu)}{xu}
=
\frac{dx}{x}+\frac{du}{u}.
\]
Therefore
\[
\mu^*\left(
\lambda_x\frac{dx}{x}+\lambda_y\frac{dy}{y}
\right)
=
(\lambda_x+\lambda_y)\frac{dx}{x}
+
\lambda_y\frac{du}{u}.
\]
Moreover,
$
\mu^*(xy)=x(xu)=x^2u.
$
Thus the pulled-back logarithmic generator has the form
\[
x^2u\left(
(\lambda_x+\lambda_y)\frac{dx}{x}
+
\lambda_y\frac{du}{u}
+
\text{terms in the remaining variables}
\right).
\]
After saturating the pulled-back foliation by the common divisorial factor,
the logarithmic residues on this chart are
$
\lambda_E=\lambda_x+\lambda_y,
 $ and $
\lambda_{S_y'}=\lambda_y.
$
The exceptional divisor is $E=(x=0)$ and the strict transform of $(y=0)$ is
$(u=0)$. Thus \eqref{eq:curve-pullback} follows on this chart. The other
chart $x=yv$ is identical, with $x$ and $y$ interchanged.
For the blow-up of the triple point $(x=y=z=0)$, work on the chart
$
y=xu, z=xv.
$
Then
\[
\mu^*\left(\frac{dx}{x}\right)=\frac{dx}{x},
\qquad
\mu^*\left(\frac{dy}{y}\right)=\frac{dx}{x}+\frac{du}{u},
\qquad
\mu^*\left(\frac{dz}{z}\right)=\frac{dx}{x}+\frac{dv}{v}.
\]
Hence
\[
\mu^*\left(
\lambda_x\frac{dx}{x}
+\lambda_y\frac{dy}{y}
+\lambda_z\frac{dz}{z}
\right)
=
(\lambda_x+\lambda_y+\lambda_z)\frac{dx}{x}
+
\lambda_y\frac{du}{u}
+
\lambda_z\frac{dv}{v}.
\]
Also,
$
\mu^*(xyz)=x(xu)(xv)=x^3uv.
$
Therefore the pulled-back logarithmic generator is
\[
x^3uv\left(
(\lambda_x+\lambda_y+\lambda_z)\frac{dx}{x}
+
\lambda_y\frac{du}{u}
+
\lambda_z\frac{dv}{v}
+
\text{regular terms}
\right).
\]
After saturation by the common divisorial factor, the residues along the new
coordinate invariant components are
\[
\lambda_E=\lambda_x+\lambda_y+\lambda_z,
\qquad
\lambda_{S_y'}=\lambda_y,
\qquad
\lambda_{S_z'}=\lambda_z.
\]
Thus \eqref{eq:triple-pullback} follows on this chart. The remaining affine
charts are obtained by choosing $y$ or $z$ as the exceptional coordinate and
give the same residue rule with the variables permuted.

In both cases the positive non-resonance condition of Definition
\ref{def:simple-chart} implies that the sums of residues appearing in the
exceptional direction are non-zero for the adapted blow-ups considered in this article.
Consequently the exceptional component is invariant exactly in these
tangent non-dicritical charts, and the invariant components after blow-up
are precisely the displayed coordinate components.
\end{proof}

\begin{center}
\begin{tikzcd}[column sep=large,row sep=large]
S_{\alpha}'^{\nu} \arrow[d] \arrow[r] & S_{\alpha}^{\nu} \arrow[d] \\
\sS^{\sn}_{W'} \arrow[r] & \sS^{\sn}_{W}
\end{tikzcd}
\qquad
\begin{tikzcd}[column sep=large,row sep=large]
E \arrow[d] \arrow[r] & C_{\alpha\beta}^{\nu} \arrow[d] \\
\sS^{\sn}_{W'} \arrow[r] & \sS^{\sn}_{W}
\end{tikzcd}
\end{center}

\begin{Proposition}
\label{prop:branch-functoriality}
Let $\mu:W'\to W$ be an adapted blow-up.  The normalised branch--conductor diagram on $W'$ maps to the diagram on $W$ in the following sense.
\begin{enumerate}[label=\textup{(\roman*)},leftmargin=2.4em]
\item The strict transform of an old branch maps birationally to that branch.
\item If the centre is a conductor curve $C_{\alpha\beta}$, the exceptional invariant branch is a ruled surface over $C_{\alpha\beta}$; it is a new branch of the refined system, and its structure morphism factors through the old normalised conductor.  It is not birational to the conductor curve.
\item If the centre is a triple point, the blow-up is the toric star subdivision of the local coordinate cone.  The exceptional branch and the strict transforms carry an incidence diagram whose pairwise conductors map to the old pairwise conductors or to the old triple point.
\end{enumerate}
These maps are compatible with the incidence relations and induce a morphism of seminormal separatrix spaces
\[
\sS^{\sn}_{W'}\to \sS^{\sn}_{W}.
\]
\end{Proposition}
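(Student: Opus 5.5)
The proof reduces to a single adapted blow-up $\mu:W'\to W$ whose centre is a smooth coordinate stratum contained in $\Dinv$. The general case then follows by composition, as in Proposition~\ref{prop:separatrix-functor}. The plan is to build a morphism of finite diagrams $\mathfrak D_{W'}\to\mathfrak D_W$, check that it respects both incidence arrows, and then read off the map of seminormal pushouts from the equaliser description \eqref{eq:sn-equaliser-operational}.

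The first step is to identify the invariant divisor on $W'$. By Lemma~\ref{lem:branch-transform} it is the reduced total transform of $\Dinv$ together with the exceptional divisor $E$. By Lemma~\ref{lem:coordinate-pullback}, in the charts $y=xu$ (curve centre) and $y=xu$, $z=xv$ (triple-point centre), the new invariant components are exactly the coordinate hyperplanes $x=0$, $u=0$ and $v=0$. Their residues are $\lambda_E=\sum\lambda_i\neq0$, which is guaranteed by positive non-resonance, and $\lambda_y,\lambda_z$ for the strict transforms.

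The three items are then handled as follows.
\begin{enumerate}[label=\textup{(\roman*)},leftmargin=2.4em]
\item For a strict transform $S'_\alpha$, the restriction $\mu|_{S'_\alpha}:S'_\alpha\to S_\alpha$ is the blow-up of $S_\alpha$ along $Z\cap S_\alpha$. This is a curve or a point, hence a Cartier divisor or a point in the smooth surface $S_\alpha$, so the map is birational. The universal property of normalisation gives $S'^\nu_\alpha\to S^\nu_\alpha$.
\item For a curve centre $C_{\alpha\beta}$, the exceptional divisor is $E=\mathbb P(N_{C_{\alpha\beta}/W})\to C_{\alpha\beta}$, a $\mathbb P^1$-bundle. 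Composing with $C^\nu_{\alpha\beta}\to S^\nu_\alpha$ gives the structure morphism $E^\nu\to C^\nu_{\alpha\beta}\to\sS^{\sn}_W$. It is not birational, for dimension reasons.
\item For a triple-point centre, the local model is the toric star subdivision of $\sigma=\mathbb R_{\ge0}\langle e_x,e_y,e_z\rangle$ by $e_x+e_y+e_z$. Each new two-dimensional cone, i.e.\ each new conductor curve, lies in a minimal face of $\sigma$. That face is either a two-dimensional face, in which case the conductor maps to the old conductor $C_{\alpha\beta}$, or the interior of $\sigma$, in which case it maps to $p$.
\end{enumerate}

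The main obstacle is verifying that the maps are compatible with the incidence relations, since the targets of the two branches adjacent to a new conductor can have different types. For example, $E\cap S'_\alpha$ over a curve centre maps to $C^\nu_{\alpha\beta}$, while $S'_\alpha$ itself maps to $S^\nu_\alpha$. I would check case by case, in the charts of Lemma~\ref{lem:coordinate-pullback}, that for every new conductor $C'_{\gamma\delta}$ the two composites $C'^\nu_{\gamma\delta}\to S'^\nu_\gamma\to\sS^{\sn}_W$ and $C'^\nu_{\gamma\delta}\to S'^\nu_\delta\to\sS^{\sn}_W$ agree. This should follow because both factor through $\mu(C'_{\gamma\delta})$, which lies in $\mu(S'_\gamma)\cap\mu(S'_\delta)$. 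That intersection is a stratum of $\Dinv$, and its normalisation receives $C'^\nu_{\gamma\delta}$ by the universal property of normalisation.

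Once this holds, pulling back functions gives a commutative ladder of the branch and conductor sheaves. Compatible tuples $(s_\alpha)$ pull back to compatible tuples on $W'$. The equaliser in \eqref{eq:sn-equaliser-operational}, together with Lemma~\ref{lem:sn-pushout}, then yields the morphism $\sS^{\sn}_{W'}\to\sS^{\sn}_W$.
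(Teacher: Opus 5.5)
Your proposal is correct and follows essentially the same route as the paper. The common steps are: strict transforms via the universal property of normalisation; the exceptional branch over $C_{\alpha\beta}$ via the projection $E=\mathbb P(N_{C_{\alpha\beta}/W})\to C_{\alpha\beta}$, which factors through $C_{\alpha\beta}^\nu$; the triple point via the star subdivision by $e_x+e_y+e_z$, with each new conductor sent to an old conductor or to $p$ according to its minimal face; and finally the equaliser description \eqref{eq:sn-equaliser-operational}, which yields $\sS^{\sn}_{W'}\to\sS^{\sn}_W$.

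There are only minor differences. You justify birationality in (i) by identifying $S'_\alpha\to S_\alpha$ with the blow-up of $S_\alpha$ along $Z\cap S_\alpha$. You also check conductor compatibility through the general inclusion $\mu(C'_{\gamma\delta})\subset\mu(S'_\gamma)\cap\mu(S'_\delta)$, as the paper does in Proposition~\ref{prop:separatrix-functor}, whereas this proof writes the explicit squares for $(E\cap S'_\alpha)^\nu$ instead.
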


\begin{proof}
The strict transform case follows from the functoriality of normalisation. Indeed, if \(S_\gamma'\) is the strict transform of \(S_\alpha\), then \(\mu(S_\gamma')\subset S_\alpha\). Hence the universal property of normalisation gives a unique morphism \(S_\gamma^{\prime\nu}\to S_\alpha^\nu\) fitting into the commutative diagram
\[
\begin{tikzcd}
S_\gamma^{\prime\nu} \arrow[r] \arrow[d] &
S_\gamma' \arrow[d,"\mu|_{S_\gamma'}"]\\
S_\alpha^\nu \arrow[r] &
S_\alpha.
\end{tikzcd}
\]

Suppose now that the centre is the conductor curve \(C_{\alpha\beta}=S_\alpha\cap S_\beta\). Choose local coordinates with \(S_\alpha=(x=0)\) and \(S_\beta=(y=0)\). Thus \(\mu\) is locally the blow-up of the ideal \((x,y)\). The exceptional divisor is \(E=\mathbb P(N_{C_{\alpha\beta}/W})\), and the natural projection \(E\to C_{\alpha\beta}\) induces, after normalisation, a morphism \(E^\nu\to C_{\alpha\beta}^\nu\). Therefore the exceptional component is a new branch of the refined system, and its structural morphism to the old system factors as
\[
E^\nu\longrightarrow C_{\alpha\beta}^\nu\longrightarrow \mathscr S_W^{\sn}.
\]
This is the required functorial behaviour: \(E\) maps to the old conductor stratum; it is not identified birationally with the curve \(C_{\alpha\beta}\).
The new conductor curves are \(E\cap S_\alpha'\) and \(E\cap S_\beta'\). Their maps to the old system both factor through \(C_{\alpha\beta}^\nu\). Moreover, the restriction maps from \(E^\nu\), \(S_\alpha^{\prime\nu}\), and \(S_\beta^{\prime\nu}\) to these new conductor curves are compatible with the two old restrictions to \(C_{\alpha\beta}^\nu\). Equivalently, the following diagrams commute:
\[
\begin{tikzcd}
(E\cap S_\alpha')^\nu \arrow[r] \arrow[d] &
E^\nu \arrow[d]\\
C_{\alpha\beta}^\nu \arrow[r] &
\mathscr S_W^{\sn}
\end{tikzcd}
\qquad
\begin{tikzcd}
(E\cap S_\alpha')^\nu \arrow[r] \arrow[d] &
S_\alpha^{\prime\nu} \arrow[d]\\
C_{\alpha\beta}^\nu \arrow[r] &
S_\alpha^\nu
\end{tikzcd}
\]
and similarly with \(\alpha\) replaced by \(\beta\). Hence the blow-up along a conductor curve defines a morphism of branch--conductor diagrams.

It remains only to treat a triple point. Choose adapted coordinates in which the invariant divisor is \(xyz=0\). Blowing up the origin gives the toric star subdivision of the cone \(\mathbb R_{\geq 0}\langle e_x,e_y,e_z\rangle\) by the ray \(e_x+e_y+e_z\). On the affine chart \(y=xu\), \(z=xv\), the transformed invariant divisor is the coordinate arrangement \(xuv=0\); the other charts are obtained by permuting \(x,y,z\). Thus the refined branch--conductor incidence diagram is the subdivision of the old coordinate incidence diagram.

Each new branch and each new conductor has a canonical image stratum in the old diagram. The exceptional branch maps to the old triple point. A new conductor curve contained in the intersection of two transformed coordinate components maps to the corresponding old conductor curve when its edge lies over an old two-dimensional face, and otherwise maps to the old triple point. These assignments are induced by \(\mu:W'\to W\), and they respect all conductor inclusions.
Consequently, in all cases, one obtains a commutative diagram of branch--conductor systems
\[
\begin{tikzcd}
\displaystyle\bigsqcup_{\gamma<\delta} C_{\gamma\delta}^{\prime\nu}
\arrow[r,shift left=.6ex] \arrow[r,shift right=.6ex] \arrow[d] &
\displaystyle\bigsqcup_{\gamma} S_\gamma^{\prime\nu}
\arrow[d]\\
\displaystyle\bigsqcup_{\alpha<\beta} C_{\alpha\beta}^{\nu}
\arrow[r,shift left=.6ex] \arrow[r,shift right=.6ex] &
\displaystyle\bigsqcup_{\alpha} S_\alpha^\nu.
\end{tikzcd}
\]
Passing to structure sheaves gives a morphism between the corresponding equaliser diagrams. Therefore a compatible tuple of functions on the old normalised branches pulls back to a compatible tuple of functions on the refined normalised branches. By the equaliser description of the seminormal pushout, this induces a unique morphism
\[
\mathscr S_{W'}^{\sn}\longrightarrow \mathscr S_W^{\sn}.
\]
\end{proof}

\begin{Theorem}
\label{thm:common-refinement}
Any two objects in the toroidal subcategory generated from a fixed logarithmic non-dicritical adapted model of $(X,\cF,\Delta)$ admit a common adapted toroidal refinement.  On this refinement the induced morphisms of normalised separatrix systems are compatible with branch and conductor strata.
\end{Theorem}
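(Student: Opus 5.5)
The plan is to reduce the statement to the combinatorics of toroidal (fan) refinements, using the fact that every object of $\Adm(X,\cF,\Delta;W_0)$ is obtained from $W_0$ by a finite sequence of coordinate stratum blow-ups.

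Given two objects $W_1\to W_0$ and $W_2\to W_0$, I would encode each as a subdivision of the conical polyhedral complex $\Sigma_0$ attached to the SNC divisor $\Dinv+\Supp\Delta_{W_0}$. Blowing up a coordinate stratum is a star subdivision of the corresponding cone; this is the description already used in Proposition~\ref{prop:branch-functoriality} for triple points, and the same holds for conductor curves and for transverse strata. Thus $W_i$ corresponds to a smooth subdivision $\Sigma_i$ of $\Sigma_0$. The first step is to take the common refinement $\Sigma_1\cap\Sigma_2$, which is the coarsest fan refining both. The second step is to refine it further to a smooth fan $\Sigma_U$ that is reached from $\Sigma_0$, and also from each $\Sigma_i$, by a sequence of star subdivisions at smooth faces. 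I would invoke the toroidal analogue of weak factorisation / De Concini--Procesi: any subdivision is dominated by an iterated sequence of stellar subdivisions along faces. Since the charts are formal toroidal, this gives a model $U$ with morphisms $U\to W_1$ and $U\to W_2$ in the category.

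The third step is to verify that $U$ stays in the logarithmic simple adapted setting. For this I would iterate Lemma~\ref{lem:coordinate-pullback}. After each tangent blow-up, the residue along a new exceptional component is a nonnegative integer combination $\sum a_i\lambda_i$ with $a\neq0$. The residue vector at any new point is obtained from the old one by a unimodular transformation with nonnegative entries, so every positive combination of new residues is a positive combination of old residues. Positive non-resonance is therefore preserved. By Lemma~\ref{lem:branch-transform}, invariance of the exceptional divisors follows, as does $\Dinv'=(\mu^{-1}\Dinv)_{\red}$. Transverse blow-ups leave the residues unchanged, and the SNC condition is automatic for toroidal refinements. The crepant formula for $K_{\cG'}+\Delta_{W'}$ comes from the same chart computation.

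The last step is compatibility of strata. This follows from Proposition~\ref{prop:separatrix-functor} applied to each step of the two factorisations $U\to W_i$, together with functoriality under composition. Each branch or conductor of $U$ corresponds to a ray or 2-cone of $\Sigma_U$, and its image in $W_i$ is the stratum of the minimal cone of $\Sigma_i$ containing it. This makes the two induced maps $\sS_U^{\sn}\to\sS_{W_i}^{\sn}$ compatible with branch and conductor strata.

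I expect the main obstacle to be the second step. The difficulty is ensuring that the common refinement is realised by blow-ups of smooth coordinate strata starting from each $W_i$, rather than merely by some toric modification. My first attempt would be to use the fact that in dimension three (cones of dimension at most three) every smooth subdivision can be dominated by iterated star subdivisions at faces (Oda's and Morelli's results). If that is not quite enough, I would apply a further barycentric-type subdivision so that both factorisations consist only of face blow-ups.
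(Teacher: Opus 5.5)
Your outline follows the paper's own route. Both take the meet $\Sigma_1\wedge\Sigma_2$, pass to a regular refinement, realise it by coordinate stratum blow-ups, track residues via Lemma~\ref{lem:coordinate-pullback}, and get strata compatibility from Proposition~\ref{prop:branch-functoriality}. Your step~3 is more careful than the paper, which only checks that each exceptional residue is non-zero. Your observation that the residue vectors transform by a unimodular non-negative matrix is what actually gives positive non-resonance at the new points.

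The genuine gap is the one you flag in step~2. Morphisms of $\Adm(X,\cF,\Delta;W_0)$ are compositions of stratum blow-ups, so you need a single regular fan $\Sigma_U$ reached by iterated smooth star subdivisions from $\Sigma_1$ \emph{and} from $\Sigma_2$. The domination result you invoke (De Concini--Procesi) is true but one-sided. For a regular refinement $\Sigma'$ of a regular fan $\Sigma$, it produces a further refinement reached from $\Sigma$ alone. Applied from $\Sigma_1$, it yields a fan dominating $\Sigma_2$ with no blow-up factorisation over $\Sigma_2$. Re-applying it from $\Sigma_2$ loses reachability from $\Sigma_1$. A barycentric subdivision of a fan reached from one side creates no factorisation over the other side. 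Weak factorisation (Morelli, W\l{}odarczyk) only gives zig-zags of blow-ups and blow-downs, not a common roof. What you need is a strong-factorisation statement of Oda's type, and Morelli's claimed proof of toric strong factorisation has a gap (Abramovich--Matsuki--Rashid). So the results you cite do not supply this step.

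You also cannot borrow the paper's way across it. The paper takes an arbitrary regular refinement $\Sigma_3$ of $\Sigma_1\wedge\Sigma_2$ and asserts that every regular subdivision of a smooth fan is a sequence of star subdivisions along smooth cones. In dimension three this is false. Subdivide $\sigma=\langle e_1,e_2,e_3\rangle$ by the rays $e_i+e_j$ and $w=e_1+e_2+e_3$, with maximal cones $\langle e_i,e_i+e_j,e_i+e_k\rangle$ and $\langle e_i+e_j,e_j+e_k,w\rangle$. All six cones are unimodular. However, $w$ has three neighbours summing to $2w$, and each $e_i+e_j$ has five neighbours, so no ray can be the last one inserted by a smooth star subdivision.

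A sound version requires changing the statement, not just the proof. One option is to admit arbitrary toroidal morphisms between regular subdivisions as morphisms of the category; then any regular refinement of $\Sigma_1\wedge\Sigma_2$ works. The other is to ask only for a model $U$ dominating both, with $U\to W_1$ a composition of stratum blow-ups (supplied by De Concini--Procesi) and $U\to W_2$ merely toroidal. In either version your residue argument goes through unchanged, since it uses only that each new cone is unimodular with non-negative generators. Your minimal-cone description in step~4 then defines the maps of branch--conductor diagrams directly, without factoring through single blow-ups.
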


\begin{proof}
In the fixed toroidal category, the models under consideration are described locally by regular fans whose cones encode the coordinate strata of \(\Dinv+\Supp\Delta_W\). Let \(\Sigma_1\) and \(\Sigma_2\) be the fans associated with two such adapted toroidal models over \(W_0\). Their common refinement is the fan
\[
\Sigma_1\wedge \Sigma_2
=
\{\sigma_1\cap\sigma_2\mid \sigma_1\in\Sigma_1,\ \sigma_2\in\Sigma_2\}.
\]
After subdividing further if necessary, choose a regular fan \(\Sigma_3\) refining \(\Sigma_1\wedge\Sigma_2\).

Every regular subdivision of a smooth toroidal fan can be obtained as a sequence of star subdivisions along smooth coordinate cones. Geometrically, each such star subdivision is the blow-up of the corresponding smooth coordinate stratum of \(\Dinv+\Supp\Delta_W\). Thus the toroidal morphisms associated with the refinements \(\Sigma_3\to\Sigma_1\) and \(\Sigma_3\to\Sigma_2\) are realised by sequences of blow-ups
\[
W_3\longrightarrow W_1,
\qquad
W_3\longrightarrow W_2
\]
along smooth coordinate strata.
By Lemma~\ref{lem:coordinate-pullback}, in every affine chart of such a blow-up the logarithmic residues transform by replacing the exceptional residue with the sum of the old residues along the coordinates defining the centre. The positive non-resonance condition in the logarithmic simple adapted setting implies that this exceptional residue is non-zero for the adapted centres used in this article. Therefore each elementary blow-up remains logarithmic, non-dicritical, and adapted, and the transformed invariant divisor is the reduced total transform of the old invariant divisor.

For each elementary blow-up, Proposition~\ref{prop:branch-functoriality} gives a morphism of normalised branch--conductor diagrams. Composing these morphisms along the two sequences of blow-ups gives compatible morphisms
\[
\mathfrak D_{W_3}\longrightarrow \mathfrak D_{W_1},
\qquad
\mathfrak D_{W_3}\longrightarrow \mathfrak D_{W_2},
\]
and hence, by the equaliser description of the seminormal pushout, morphisms
\[
\mathscr S_{W_3}^{\sn}\longrightarrow \mathscr S_{W_1}^{\sn},
\qquad
\mathscr S_{W_3}^{\sn}\longrightarrow \mathscr S_{W_2}^{\sn}.
\]
Thus \(W_3\) is a common adapted toroidal refinement of \(W_1\) and \(W_2\), and the branch--conductor systems are functorial with respect to this refinement.
This proves only the toroidal common-refinement statement inside the fixed category generated from \(W_0\).
\end{proof}

\section{Normalised foliated adjunction and conductor compatibility}
\label{sec:adjunction}

\begin{Definition} 
For an invariant branch $S_\alpha$ define $\Theta_\alpha$ on $S_\alpha^\nu$ by
\[
K_{S_\alpha^\nu}+\Theta_\alpha
\sim_\bQ
\nu_\alpha^*((K_\cG+\Delta_W)|_{S_\alpha}).
\]
The class $(K_\cG+\Delta_W)|_{S_\alpha}$ denotes the invariant-divisor adjunction class supplied by foliated adjunction, not the naive restriction of an arbitrary Cartier divisor.
Equivalently, in an SNC chart, $\Theta_\alpha$ is the sum of the coefficient-one traces of the other invariant branches, the restrictions of the transverse boundary components with their original coefficients, and the ordinary normalisation/conductor different.
\end{Definition}

\begin{Theorem}
\label{thm:normalised-adjunction}
Let $S$ be an invariant separatrix branch of a logarithmic simple adapted co-rank one foliation on a smooth threefold.  Then there is a canonically determined boundary $\Theta_S$ on $S^\nu$ such that
\[
\nu^*((K_\cG+\Delta_W)|_S)
\sim_\bQ
K_{S^\nu}+\Theta_S.
\]
In this theorem $(K_\cG+\Delta_W)|_S$ is the invariant-divisor adjunction class of foliated adjunction.
The coefficient of every other invariant trace in $\Theta_S$ is one, the transverse components of $\Delta_W$ keep their coefficients, and the normalisation conductor contributes through the ordinary different.  This construction is compatible with adapted blow-ups.
\end{Theorem}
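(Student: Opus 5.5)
The proof is local on $S$, followed by a gluing step. First I identify the invariant-divisor adjunction class. In a logarithmic simple adapted chart with $S=S_1=(x_1=0)$, the generator $\omega=u\,x_1\cdots x_r\bigl(\sum\lambda_i\,dx_i/x_i+\sum h_j\,dx_j\bigr)$ has logarithmic residue $\lambda_1\neq0$ along $S$. The restriction of $\omega/(x_1\cdots x_r)$ to $S$ gives the restricted foliation. Since $\cG$ has co-rank one on a threefold and $S$ is invariant, $\cG|_S$ is the full tangent sheaf of $S$ off the singular locus. I will use the foliated adjunction formula of \cite{CSAdj,Spicer} for invariant divisors:
\[
(K_\cG+S)|_S\sim K_S+\Diff_S(0)
\]
in the non-dicritical simple case, together with the standard convention $(K_\cG+\Delta_W)|_S$ that absorbs the invariant trace. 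In the chart I compute the Poincaré residue of the logarithmic form directly. The residue along $S$ of $\bigwedge$-type generators of $K_\cG$ identifies $K_\cG|_S$ with $K_S+\sum_{j\neq 1}(x_j=0)|_S$. This holds because the dual of the foliation on $S$ is $\Omega^1_S(\log \sum_{j\ne1}B_{1j})$ up to the unit $u$ and the non-zero residues $\lambda_j$. Each other invariant trace $B_{1j}$ therefore enters with coefficient exactly one. Non-resonance guarantees $\lambda_j\neq0$, so none of these traces degenerates.

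Next I add the boundary. Every component $T_k$ of $\Delta_W$ through $p$ is a transverse coordinate hyperplane meeting $S$ transversally, by the SNC requirement in Definition~\ref{def:simple-chart}. Its restriction $T_k|_S$ is then reduced and keeps its coefficient $a_k$. Setting $\Theta_S=\sum_{j\ne1}B_{1j}+\sum_k a_kT_k|_S+\mathrm{Diff}$, where $\mathrm{Diff}$ is the ordinary conductor different of $\nu:S^\nu\to S$, the formula
\[
\nu^*((K_\cG+\Delta_W)|_S)\sim_\bQ K_{S^\nu}+\Theta_S
\]
follows from the ordinary adjunction $\nu^*K_S=K_{S^\nu}+\mathrm{cond}$ for normalisation. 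On adapted smooth models $S$ is locally normal, so the different is supported only where global self-intersections of $S$ occur. These are again SNC crossings, which contribute coefficient-one traces on each preimage sheet.

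The local descriptions glue and give a canonical boundary. They are expressed by divisors independent of coordinate choices: the invariant traces, the transverse traces, and the conductor. Only the linear equivalence depends on the generator, and changing $\omega$ by a unit changes the identification by a principal divisor. This shows $\Theta_S$ is canonically determined as a divisor.

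For compatibility with adapted blow-ups I use Lemma~\ref{lem:coordinate-pullback}. Take $\mu:W'\to W$ with centre $Z$ a coordinate stratum in $S$ or meeting $S$. The strict transform $S'\to S$ is the blow-up of the trace $Z\cap S$, which is a point or a stratum of $\Theta_S$. The new exceptional trace $E|_{S'}$ appears in $\Theta_{S'}$ with coefficient one, since $E$ is invariant by Lemma~\ref{lem:branch-transform}. A log-discrepancy check on the surface $S$ gives the result: for $Z\cap S$ a point at the crossing of two coefficient-one traces, $a(E|_{S'};S,\Theta_S)=0$. The relative formula for $K_{\cG'}+\Delta_{W'}$ in the category definition restricts to
\[
K_{S'^\nu}+\Theta_{S'}=\mu^*(K_{S^\nu}+\Theta_S)+(\text{transverse corrections}),
\]
where the corrections are governed by the $a_k$. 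This step is the main obstacle: it requires matching the foliated coefficient $\alpha(E;\cG,\Delta_W)$ with the ordinary surface discrepancy. I would verify it case by case using the toric star subdivisions, where both sides are computed by the same monomial valuation.
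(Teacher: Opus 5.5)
\textbf{Verdict.} The first half of your proposal, constructing $\Theta_S$, is correct. The second half, compatibility with adapted blow-ups, has a genuine gap: you leave it unproved.

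\textbf{The boundary.} This part follows the paper's route. The paper simply invokes invariant-divisor foliated adjunction (Cascini--Spicer, Spicer--Svaldi) and reads off the coefficients in an SNC chart. You justify the coefficient-one rule more explicitly: the residue $\lambda_1\neq0$ along $S$ makes $\cG|_S\to T_S(-\log\sum_{j\ne1}B_{1j})$ an isomorphism. Equivalently $K_\cG=K_W+\Dinv$ near $S$, and then $(K_W+S)|_S=K_S$. You should drop the formula $(K_\cG+S)|_S\sim K_S+\Diff_S(0)$. For an invariant divisor ($\epsilon(S)=0$) adjunction reads $K_\cG|_S\sim K_S+\Diff_S$. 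The extra $S|_S$ in your version contradicts your own chart computation.

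\textbf{The gap.} For compatibility with adapted blow-ups you only promise a case-by-case toric verification. No monomial-valuation matching is needed; the paper settles this with a surface computation, which you can complete as follows. Take a tangent centre $Z\subset\Dinv$, and let $b$ be the sum of the coefficients of $\Theta_S$ along the components through the trace of $Z$ on $S$.

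If the trace is a point, then $K_{S'}=\sigma^*K_S+F$ and $\sigma^*\Theta_S=\sigma^{-1}_*\Theta_S+bF$. So the crepant transform is $\sigma^{-1}_*\Theta_S+(b-1)F$, while $F=E|_{S'}$ is a new invariant trace with coefficient one. Hence $K_{S'}+\Theta_{S'}=\sigma^*(K_S+\Theta_S)+(2-b)F$.

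If $Z\subset S$ is a curve, it is Cartier on $S$, so $S'\to S$ is an isomorphism. Then $E$ is a new branch of the refined system, and $\Theta_{S'}=\Theta_S+(1-b)Z$.

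On the threefold, let $m$ be the number of invariant components containing $Z$ and $\beta_Z$ the sum of the transverse coefficients along $Z$. Combine $K_\cG=K_W+\Dinv$ with $K_{W'}=\mu^*K_W+(\codim Z-1)E$, $\Dinv'=\mu^*\Dinv-(m-1)E$ and $\Delta_{W'}=\mu^*\Delta_W-\beta_ZE$. This gives $\alpha(E)=(\codim Z-1)-(m-1)-\beta_Z$, which equals $2-b$, resp.\ $1-b$; these are the values of Proposition~\ref{prop:elementary-blowups}. So restricting $K_{\cG'}+\Delta_{W'}=\mu^*(K_\cG+\Delta_W)+\alpha E$ to $S'$ reproduces exactly the surface identities above.

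\textbf{Reading of ``compatible''.} Your ``transverse corrections'' are therefore $(2-b)F$, resp.\ $(1-b)Z$. They vanish exactly when only invariant components pass through the centre. Compatibility must be read in this sense: crepant transform plus the coefficient-one trace of the new branch. This is the content of the paper's phrase about boundary terms attached to new branches. Literal crepancy holds only when $b=2$, resp.\ $b=1$.
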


\begin{proof}
Foliated adjunction is used for invariant divisors of co-rank one foliated pairs in the setting of Cascini--Spicer and Spicer--Svaldi \cite{CSAdj,SS}.  Thus, if $S\subset W$ is invariant and $\nu:S^\nu\to S$ is the normalisation, there is a different on $S^\nu$ such that
\[
\nu^*((K_\cG+\Delta_W)|_S)\sim_\bQ K_{S^\nu}+\Diff_S(\Delta_W).
\]
The class $(K_\cG+\Delta_W)|_S$ denotes the adjunction class supplied by the invariant-divisor case of foliated adjunction, not the naive restriction of a Cartier representative.

In a logarithmic simple adapted chart, let $S=(x_1=0)$ and write $\Dinv=(x_1\cdots x_r=0)$.  If $S_j=(x_j=0)$ is another invariant branch, its trace $B_{1j}=S\cap S_j$ appears in the adjunction boundary with coefficient one.  If $T_k$ is a transverse boundary component with coefficient $a_k$ in $\Delta_W$, then its trace on $S$ appears with coefficient $a_k$.  If $S$ is not normal, the pull-back to $S^\nu$ also contains the ordinary conductor different.  This gives the boundary
\[
\Theta_S=
\sum_{j\ne 1}B_{1j}^\nu+
\sum_k a_k\,\nu^*(T_k|_S)+
\operatorname{Cond}_{S^\nu/S},
\]
and hence
 $
\nu^*((K_\cG+\Delta_W)|_S)\sim_\bQ K_{S^\nu}+\Theta_S.
$
Let $\mu:W'\to W$ be an adapted blow-up and let $S'$ be a transformed branch mapping to $S$.  Write $\sigma:S'^\nu\to S^\nu$ for the induced morphism.  Applying invariant-divisor foliated adjunction on $S$ and on $S'$ gives two expressions for the restriction of the same ambient birational class.  The boundary on $S'^\nu$ is the crepant transform of the boundary on $S^\nu$ together with the boundary terms attached to any new branch or conductor stratum of the refined separatrix system.

For a point centre on a smooth branch surface, let $b$ be the sum of the coefficients of the components of $\Theta_S$ passing through the centre, and let $F_S$ be the exceptional curve of $\sigma$.  Then
\[
K_{S'}=\sigma^*K_S+F_S,
\qquad
\sigma^*\Theta_S=\sigma^{-1}_*\Theta_S+bF_S.
\]
Thus the crepant transform of the boundary is
$
\Theta_{S'}^{\mathrm cr}=\sigma^{-1}_*\Theta_S+(b-1)F_S,
$
and
\[
K_{S'}+\Theta_{S'}^{\mathrm cr}=\sigma^*(K_S+\Theta_S).
\]
This is the surface calculation used in the comparison.  It replaces the erroneous practice of adding a further exceptional term to the crepant adjunction identity.

If the ambient centre is a conductor curve, then its intersection with an old branch is a Cartier divisor on that branch, and the blow-up of the smooth branch along this divisor is an isomorphism.  The ambient exceptional divisor is a new invariant branch of the refined system, and its intersections with the strict transforms of the old branches are new conductor curves.  It is therefore recorded in the refined branch--conductor diagram, not as an exceptional curve on the old branch surface.

These local computations are compatible with the conductor restrictions.  Hence the adjunction boundaries are crepant compatible under adapted blow-ups in the fixed toroidal category.
\end{proof}

\begin{Proposition}
\label{prop:no-double-count}
Let $C_{\alpha\beta}^\nu$ be the normalised conductor curve associated with two adjacent invariant branches.  The boundaries induced on $C_{\alpha\beta}^\nu$ from $(S_\alpha^\nu,\Theta_\alpha)$ and $(S_\beta^\nu,\Theta_\beta)$ agree.  At a triple point the three pairwise restrictions have the same further restriction to the normalised point stratum.
\end{Proposition}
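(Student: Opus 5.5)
The statement is local along the conductor curve, so I will work in a logarithmic simple adapted chart centred at a point of $C_{\alpha\beta}$. There $S_\alpha=(x=0)$, $S_\beta=(y=0)$, and $C_{\alpha\beta}=(x=y=0)$. The possible third coordinate $z$ is either the equation of a third invariant branch $S_\gamma=(z=0)$ (triple-point case) or a transverse/smooth parameter along which some $T_k=(z=0)$ may occur with coefficient $a_k$. On the adapted smooth models the branches and conductors are already normal, so $\nu$ is an isomorphism locally and the conductor term $\operatorname{Cond}$ of Theorem~\ref{thm:normalised-adjunction} vanishes. I will therefore use the explicit chart description of the boundaries given in that theorem:
\[
\Theta_\alpha=(y=0)|_{S_\alpha}+\epsilon_\gamma (z=0)|_{S_\alpha}+\sum_k a_kT_k|_{S_\alpha},
\qquad
\Theta_\beta=(x=0)|_{S_\beta}+\epsilon_\gamma (z=0)|_{S_\beta}+\sum_k a_kT_k|_{S_\beta},
\]
where $\epsilon_\gamma=1$ if $S_\gamma$ is present and $0$ otherwise.

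Next I will define the induced boundary on $C_{\alpha\beta}^\nu$ from each side by ordinary (log) adjunction of the pair $(S_\alpha,\Theta_\alpha)$ to the coefficient-one component $C_{\alpha\beta}=(y=0)\subset S_\alpha$. Since $(S_\alpha,\Theta_\alpha)$ is SNC near $C$, this gives
\[
(K_{S_\alpha}+\Theta_\alpha)|_{C}=K_C+\operatorname{Diff}_C(\Theta_\alpha-C)=K_C+\epsilon_\gamma\,(z=0)|_C+\sum_k a_k\,T_k|_C.
\]
The same computation on $S_\beta$, with $C=(x=0)\subset S_\beta$, produces the identical divisor. The reason is that the remaining components $(z=0)$ and $T_k$ are ambient divisors, and their restrictions to $C$ do not depend on whether one first restricts to $S_\alpha$ or to $S_\beta$: restriction of Cartier divisors is transitive. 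The coefficients also match, namely $1$ for invariant traces and $a_k$ for transverse traces. To make the statement chart-independent, I will note that both sides are computed as $\nu^*((K_\cG+\Delta_W)|_{C})$ via the two composite restrictions. Globally the two boundaries agree as divisors because they agree in every chart, and they are determined by coefficients at points of $C^\nu$.

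For the triple point, I will restrict each of the three pairwise conductor boundaries (say $\Theta_{\alpha\beta}=(z=0)|_C+\dots$) to the point $p$ by a further adjunction to the coefficient-one point $(z=0)$ on $C_{\alpha\beta}$. This gives $K_p+\sum_k a_kT_k|_p$, which is symmetric in the three indices.

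I expect the main obstacle to be justifying that the foliated-adjunction class $\Theta_\alpha$ genuinely has the SNC chart form above globally, including the absence of a nontrivial different along $C$ coming from the foliation. I will handle this by appealing directly to the coefficient description in Theorem~\ref{thm:normalised-adjunction}. I will also check compatibility under adapted blow-ups using its crepant-compatibility clause, so that the agreement persists on refinements and no conductor datum is counted twice.
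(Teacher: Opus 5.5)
Your proposal is correct and follows essentially the paper's argument: a local SNC chart computation in which, after removing the coefficient-one conductor component, both branch boundaries restrict on $C_{\alpha\beta}^\nu$ to the same traces of the ambient invariant and transverse divisors. At a triple point your $\epsilon_\gamma(z=0)|_C$ term is exactly the paper's observation that $C_{xz}$ and $C_{yz}$ restrict to the same point of $C_{xy}^\nu$; your extra adjunction down to $p$ is harmless but vacuous, since under the SNC hypothesis no transverse $T_k$ passes through a triple point of $\Dinv$.
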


\begin{proof}
At the generic point of \(C_{\alpha\beta}\), choose adapted coordinates
$
S_\alpha=(x=0), S_\beta=(y=0) $ and $ C_{\alpha\beta}=(x=y=0).
$
Let \(T=(z=0)\) be a transverse boundary component with coefficient \(a\). On the branch \(S_\alpha\), the conductor \(C_{\alpha\beta}\) is defined by \(y|_{S_\alpha}=0\), while on the branch \(S_\beta\) it is defined by \(x|_{S_\beta}=0\). By invariant-divisor foliated adjunction, the other invariant branch appears in the boundary with coefficient \(1\), and the transverse component \(T\) appears with its original coefficient \(a\). Hence, near the generic point of \(C_{\alpha\beta}\),
\[
\Theta_\alpha=(y=0)|_{S_\alpha}+a(z=0)|_{S_\alpha}+\Theta_\alpha',
\qquad
\Theta_\beta=(x=0)|_{S_\beta}+a(z=0)|_{S_\beta}+\Theta_\beta',
\]
where no component of \(\Theta_\alpha'\) or \(\Theta_\beta'\) contains the generic point of \(C_{\alpha\beta}\).
Let \(\rho_\alpha:C_{\alpha\beta}^{\nu}\to S_\alpha^\nu\) and \(\rho_\beta:C_{\alpha\beta}^{\nu}\to S_\beta^\nu\) be the two conductor maps. Removing the conductor component itself from the two branch boundaries and restricting to \(C_{\alpha\beta}^{\nu}\), one obtains
\[
\rho_\alpha^*(\Theta_\alpha-C_{\alpha\beta})
=
a(z=0)|_{C_{\alpha\beta}^{\nu}}
=
\rho_\beta^*(\Theta_\beta-C_{\alpha\beta})
\]
away from triple points. Equivalently, the two branch adjunction formulas induce the same adjunction class on the normalised conductor:
\[
\rho_\alpha^*(K_{S_\alpha^\nu}+\Theta_\alpha-C_{\alpha\beta})
\sim_{\mathbb Q}
K_{C_{\alpha\beta}^{\nu}}+\Theta_{\alpha\beta}
\sim_{\mathbb Q}
\rho_\beta^*(K_{S_\beta^\nu}+\Theta_\beta-C_{\alpha\beta}).
\]
Thus the conductor component is not counted twice; it is the gluing stratum for the two branch boundaries.

At a triple point, choose coordinates with \(\Dinv=(xyz=0)\). Consider, for example, the conductor \(C_{xy}=(x=y=0)\). On the branch \(S_x=(x=0)\), the boundary contains the traces \(C_{xy}\) and \(C_{xz}\) with coefficient \(1\). On the branch \(S_y=(y=0)\), it contains \(C_{xy}\) and \(C_{yz}\) with coefficient \(1\). After subtracting the common conductor \(C_{xy}\), the remaining invariant traces \(C_{xz}\) and \(C_{yz}\) restrict to the same reduced closed point \(x=y=z=0\) on \(C_{xy}^{\nu}\). Therefore
\[
(C_{xz})|_{C_{xy}^{\nu}}
=
(x=y=z=0)
=
(C_{yz})|_{C_{xy}^{\nu}}
\]
as reduced zero-dimensional divisors. The same computation holds for the other two pairwise conductors. Hence the restrictions of the branch boundaries agree on the normalised point stratum, and the triple-point contribution is a single gluing datum in the branch--conductor system.
\end{proof}

\section{Tangential divisorial valuations}
\label{sec:valuations}

A minor terminological point is important.  A divisor over a branch surface does not, by itself, define a divisorial valuation of the threefold function field $K(X)$.  What it defines canonically is a divisorial datum in the tangential arc sector.  When this datum is realised by an exceptional divisor on an adapted threefold model, it also gives an ordinary divisorial valuation over $X$.  The subsequent definitions keep these two levels separate.

\begin{Definition}
An \emph{adapted toroidal tangential divisorial datum} is either
\begin{enumerate}[label=\textup{(\roman*)},leftmargin=2.4em]
\item a prime divisor $F$ over a normalised branch surface $S_\alpha^\nu$, or
\item a closed point $p\in C_{\alpha\beta}^\nu$ of a normalised conductor curve, equivalently the order valuation $\ord_p$,
\end{enumerate}
considered up to common adapted refinement and the conductor identifications of Proposition~\ref{prop:no-double-count}.  Such a datum determines a maximal divisorial cylinder in the tangential arc space.  If it is represented by an exceptional divisor on an adapted threefold model, it is also denoted that threefold divisor by $E$.
\end{Definition}

\begin{Theorem}
\label{thm:classification}
Let $E$ be an adapted toroidal tangential divisorial datum whose centre is generically contained in the invariant separatrix locus on some adapted model.  After passing to an adapted refinement, the tangential cylinder determined by $E$ is represented by a divisor over a normalised branch surface or by a conductor datum on a normalised conductor curve.  Conversely, every branch or conductor datum defines a divisorial tangential cylinder, and those which occur as exceptional divisors on adapted threefold models give divisorial valuations over $X$.
\end{Theorem}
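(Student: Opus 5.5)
The plan is to prove the two directions separately. For each, reduce to the local branch--conductor analysis already available: the confinement theorem (Theorem~\ref{thm:formal-rep}), the representation result (Theorem~\ref{thm:system-exists}), and the functoriality of the system (Proposition~\ref{prop:branch-functoriality}).

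\emph{Forward direction.} Let $E$ be an adapted toroidal tangential divisorial datum whose centre is generically contained in $|\Dinv|$ on some $W\in\Adm(X,\cF,\Delta;W_0)$.
\begin{enumerate}[label=\textup{(\arabic*)},leftmargin=2.4em]
\item Consider the maximal divisorial cylinder $N_q^{\tan}(E)$. By Theorem~\ref{thm:formal-rep}, its generic arc lies in $J_\infty(\Dinv;\Sigmatan)_{\red}$.
\item Apply Theorem~\ref{thm:system-exists}. If the generic point of the arc lies on exactly one branch, it lifts uniquely to $S_\alpha^\nu$, and $E$ is read as a prime divisor $F$ over $S_\alpha^\nu$.
\item If it lies on exactly two branches, it lifts to $C_{\alpha\beta}^\nu$. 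There the only divisorial data are closed points, so $E$ is the order valuation $\ord_p$.
\item In the triple-point case, perform the star subdivision $W'\to W$. This lies in the category by Theorem~\ref{thm:common-refinement} and Lemma~\ref{lem:coordinate-pullback}, since non-resonance keeps the exceptional residue nonzero. After this blow-up the centre is on a unique transformed branch or pairwise conductor, and the previous two cases apply.
\item Show the resulting representative is well defined. Two choices differ by a common refinement together with the conductor identifications of Proposition~\ref{prop:no-double-count}. By Proposition~\ref{prop:branch-functoriality}, pull-backs along the refinement are either birational strict-transform maps or factor through old conductors, so they match.
\end{enumerate}

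\emph{Converse direction.} Given a prime divisor $F$ over $S_\alpha^\nu$, take the ordinary maximal divisorial set $N_q(F)\subset J_\infty(S_\alpha^\nu)$. Push it forward along the coequaliser of Proposition~\ref{prop:tangential-reduction-diagram} to $J_\infty(\sS_W^{\sn})_{\red}$, and then to $J_\infty^{\tan}(W,\cG;\Sigmatan)_{\red}$. By Theorem~\ref{thm:formal-rep}, the image consists of tangential arcs. It is a cylinder because $S_\alpha^\nu\to\Dinv$ is finite and birational onto its image, so generically it is an isomorphism off the conductors. Conductor points $p\in C_{\alpha\beta}^\nu$ are handled the same way with $N_q(\ord_p)$; by the coequaliser they are counted once.

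\emph{Threefold valuations.} Suppose the branch datum is realised by an exceptional divisor $E$ on an adapted threefold model. Here I would use that a toroidal blow-up of a coordinate stratum of $\Dinv$ on a branch induces, by Lemma~\ref{lem:branch-transform}, an invariant exceptional divisor on $W'$. Its ambient order valuation restricts to the given branch datum, and $\ord_E$ is a divisorial valuation of $K(X)$ via $W'\to W\to X$.

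The main obstacle is the forward direction for data whose centre on $S_\alpha^\nu$ is a point that is not a coordinate stratum: an arbitrary prime divisor $F$ over the branch need not be toroidal. I would first restrict, as the word \emph{adapted toroidal} permits, to divisors $F$ obtained by successive blow-ups of coordinate strata of $(S_\alpha,\Theta_\alpha)$. I would then check, via the surface calculation in Theorem~\ref{thm:normalised-adjunction}, that each such surface blow-up is induced by an ambient adapted blow-up of the corresponding curve or point stratum. The coefficient-one traces ensure the relevant centres are strata of $\Dinv+\Supp\Delta_W$.
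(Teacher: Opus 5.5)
Your proposal is correct in substance and runs along the same spine as the paper: confinement via Theorem~\ref{thm:formal-rep}, then the branch--conductor coequaliser to place the datum on a normalised branch or conductor. Star subdivisions are kept adapted by Lemma~\ref{lem:coordinate-pullback} and positive non-resonance, and the converse pushes $N_q(F)$ through the coequaliser.

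The genuine difference is where you make the datum toroidal. The paper encodes it as a primitive vector $v\in N\cap\sigma$ in the \emph{ambient} threefold cone. It extracts the ray $\rho_v$ by a finite regular star subdivision, which yields a threefold divisor $E_v$. This gives termination of the triple-point reduction and a valuation over $X$ at once. However, it leaves implicit how $v$ singles out a divisor $F$ over a particular branch.

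You work instead in the two-dimensional fan of $(S_\alpha,\Theta_\alpha)$. You lift each surface point blow-up of a boundary stratum to the ambient blow-up of the corresponding point stratum of $\Dinv+\Supp\Delta_W$, so the new surface curve is the conductor $E\cap S_\alpha'$, with $E$ invariant by Lemma~\ref{lem:branch-transform}. This produces directly the divisor $F$ that enters $a(F;V,B_V)$, together with an adapted threefold tower realising it. That is what the converse and Lemma~\ref{lem:foliated-discrepancy-additivity} actually use.

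One point to tighten is step~(4). Your case split in steps~(2)--(3) is by the generic point of the generic arc. In that split the triple case cannot occur for a maximal divisorial cylinder, because such an arc would be constant. A divisor over $S_\alpha^\nu$ centred at a triple point is already covered by step~(2).

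If you instead mean a datum \emph{centred} at a triple point, a single star subdivision is not enough. The monomial valuation with weights $(1,2,2)$ on $(x,y,z)$ is centred, after one blow-up, at the new triple point $E\cap S_y'\cap S_z'$. You need the iterated subdivision extracting the ray, as the paper does.
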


 \begin{proof}
The statement is local in a toroidal chart for
\(\Dinv+\Supp\Delta_W\).  Let \(\Sigma_W\) be the local fan and let
\(\sigma\in\Sigma_W\) be the cone corresponding to the coordinate stratum
which contains the centre of the datum.  By definition, an adapted toroidal
tangential divisorial datum is monomial after an adapted toroidal refinement.
Equivalently, it is represented by a primitive vector
\[
v\in N\cap\sigma .
\]

Choose a regular star subdivision of \(\sigma\) which contains the ray
\(\rho_v:=\mathbb R_{\geq0}v\).  This subdivision is realised by a finite
sequence of blow-ups of smooth coordinate strata.  By
Lemma~\ref{lem:coordinate-pullback}, and by positive non-resonance at each
step, these blow-ups are adapted and remain in the logarithmic
non-dicritical setting.  On the resulting model, the ray \(\rho_v\)
corresponds to a prime divisor \(E_v\).  Thus the given toroidal datum is
realised by the divisorial centre of \(E_v\).

We now read this centre on the reduced tangential sector.  By
Theorem~\ref{thm:formal-rep},
\[
J_\infty^{\tan}(W,\cG;\Sigmatan)_{\red}
=
J_\infty(\Dinv;\Sigmatan)_{\red}.
\]
By Lemma~\ref{lem:arcs-pushout}, the right-hand side is presented by the
coequaliser
\[
\bigsqcup_{\alpha<\beta}J_\infty(C_{\alpha\beta}^{\nu})_{\red}
\rightrightarrows
\bigsqcup_{\alpha}J_\infty(S_\alpha^\nu)_{\red}
\longrightarrow
J_\infty(\sS_W^{\sn})_{\red}.
\]
Hence every irreducible tangential divisorial cylinder is represented either
on a normalised branch or on a normalised conductor stratum, with the usual
conductor identifications.
More explicitly, let \(Z_v\) be the generic centre of \(E_v\) on the
separatrix divisor of the refined model.  There are only the following
possibilities:
\[
Z_v\subset S_\alpha^\circ,\qquad
Z_v\subset C_{\alpha\beta}^\circ,\qquad
Z_v\subset S_\alpha\cap S_\beta\cap S_\gamma .
\]
In the first case, the corresponding arcs have generic branch \(S_\alpha\);
by the branch-sector identification they are represented by a maximal
divisorial cylinder in \(J_\infty(S_\alpha^\nu)\).  In the second case, the
arcs are generically supported on the conductor, and the datum is represented
by the corresponding maximal divisorial cylinder in
\(J_\infty(C_{\alpha\beta}^\nu)\).

It remains only to remove the third possibility.  If locally
\(\Dinv=(xyz=0)\), then the cone is
\[
\sigma=\mathbb R_{\geq0}\langle e_x,e_y,e_z\rangle .
\]
A regular star subdivision extracting \(\rho_v\) refines \(\sigma\) into
regular cones whose faces correspond to transformed branches and pairwise
conductors.  After extracting \(\rho_v\), the generic centre of the divisor
\(E_v\) is a codimension-one stratum in the toroidal boundary of the refined
model.  Its intersection with the reduced separatrix system is therefore read
on either a single transformed branch or on a pairwise conductor.  Thus the
triple-stratum case is reduced to one of the two cases above.  The process is
finite because a fixed primitive ray is extracted by a finite regular
subdivision of the cone containing it.

Conversely, let \(F\) be a divisor over a normalised branch
\(S_\alpha^\nu\).  The ordinary maximal divisorial set \(N_q(F)\subset
J_\infty(S_\alpha^\nu)\) maps, through the branch-sector identification and
the coequaliser presentation above, to an adapted tangential divisorial
cylinder.  The same construction applies to a divisor over a normalised
conductor \(C_{\alpha\beta}^\nu\), giving a conductor-type tangential
divisorial cylinder.

If the branch or conductor datum is realised by a prime divisor on an adapted
toroidal threefold model over \(W\), it defines a divisorial valuation of
\(K(W)=K(X)\).  Otherwise it remains a tangential divisorial datum on the
normalised branch--conductor system.  This is exactly the distinction built
into the adapted toroidal tangential sector.
\end{proof}

\begin{Definition}
Let an adapted toroidal tangential divisorial datum be represented by $F$ over a stratum $(V,B_V)$ of the normalised separatrix system.  Define
\[
a_{\tan}(F;X,\cF,\Delta)=a(F;V,B_V).
\]
For conductor type data, take $V=C_{\alpha\beta}^\nu$ and $B_V=\Theta_{\alpha\beta}$.  When the datum is realised by a threefold divisor $E$, write $a_{\tan}(E;X,\cF,\Delta)$.
\end{Definition}

\begin{Proposition}
\label{prop:atan-invariance}
The number $a_{\tan}$ is independent of the chosen representative inside the fixed adapted toroidal category and of the branch presentation.
\end{Proposition}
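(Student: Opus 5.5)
The plan is to reduce the statement to two separate invariance claims. The first is invariance under change of model inside $\Adm(X,\cF,\Delta;W_0)$. The second is invariance under change of branch presentation, meaning the choice between two branches $S_\alpha^\nu$, $S_\beta^\nu$ and a conductor $C_{\alpha\beta}^\nu$ when the datum lies on the conductor locus. Both reductions go through a common adapted toroidal refinement, which exists by Theorem~\ref{thm:common-refinement}.

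\emph{Step one: change of model.} Let the same datum be represented by $F$ over $(V,B_V)$ on $W$ and by $F'$ over $(V',B_{V'})$ on $W'$.

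1. Choose a common refinement $U$. By Proposition~\ref{prop:branch-functoriality}, the strata $V$ and $V'$ receive maps from a stratum $U_V$ of $\sS_U^{\sn}$. By the equivalence relation defining tangential data, $F$ and $F'$ both correspond to one prime divisor $F_U$ over $U_V$.
2. By Theorem~\ref{thm:normalised-adjunction}, the adjunction boundaries are crepant compatible. This gives
\[
K_{U_V}+B_{U_V}=p^*(K_V+B_V)=q^*(K_{V'}+B_{V'}),
\]
up to the boundary terms attached to new strata.
3. Log discrepancies of a fixed divisor are unchanged under crepant pull-back, so $a(F;V,B_V)=a(F_U;U_V,B_{U_V})=a(F';V',B_{V'})$.

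This is the argument of Lemma~\ref{lem:codim-independence} with $q=1$. Equivalently, one can invoke that lemma and the ordinary Ein--Musta\c{t}\u{a}--Yasuda formula on the smooth surfaces and curves involved.

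\emph{Step two: exceptional branches.} When $U\to W$ blows up a conductor curve, the datum may migrate from $C_{\alpha\beta}^\nu$ to the new exceptional branch $E^\nu$, which is ruled over $C_{\alpha\beta}^\nu$. Here I would compute directly with the coordinate formula \eqref{eq:curve-pullback}.

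1. On $E^\nu$ the boundary contains the traces of $S_\alpha'$ and $S_\beta'$ with coefficient one. These are the two sections of the ruling, and the fibre direction contributes no extra boundary.
2. Consequently a divisor on $E^\nu$ lying over a point $p\in C_{\alpha\beta}^\nu$ has log discrepancy equal to that of $\ord_p$ on $(C_{\alpha\beta}^\nu,\Theta_{\alpha\beta})$. This follows from Corollary~\ref{cor:coeff-one}: the coefficient-one invariant traces cancel.
3. Similarly, the triple-point star subdivision is handled by the toric formula \eqref{eq:triple-pullback}.

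\emph{Step three: change of branch presentation.} Suppose a conductor datum is read either on $C_{\alpha\beta}^\nu$ or via $S_\alpha^\nu$ or $S_\beta^\nu$.

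1. Proposition~\ref{prop:no-double-count} shows that the boundaries induced from the two sides agree, $\rho_\alpha^*(\Theta_\alpha-C_{\alpha\beta})=\rho_\beta^*(\Theta_\beta-C_{\alpha\beta})$. So the conductor adjunction pair $(C_{\alpha\beta}^\nu,\Theta_{\alpha\beta})$ is independent of the side.
2. For a divisor $F$ over $S_\alpha^\nu$ whose centre lies on $C_{\alpha\beta}$, the coefficient of $C_{\alpha\beta}$ in $\Theta_\alpha$ is one. Hence $a(F;S_\alpha^\nu,\Theta_\alpha)$ is computed by inversion of adjunction on the log smooth pair along $C_{\alpha\beta}$. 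In the monomial chart, Proposition~\ref{prop:monomial-codim} gives the same value $\sum_k(1-a_k)n_k$ from either branch, so the transverse contributions match.

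\emph{Expected obstacle.} The hard part is showing that the datum read on different strata really corresponds to the same monomial valuation data, with the exceptional-branch migration of step two as the delicate case. That requires precise bookkeeping in \eqref{eq:curve-pullback}. I would try first to verify everything for monomial (toroidal) $F$ by writing the log discrepancy as a linear function $\sum_k(1-a_k)v_k$ on the cone. Subdivisions and presentation changes then visibly preserve this function, because the invariant coordinates carry coefficient one. Step one would then extend the result to all $F$.
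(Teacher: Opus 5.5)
\textbf{Where your argument matches the paper.} Step one, together with item 1 of Step three, is the paper's own argument. It passes to a common refinement (Theorem~\ref{thm:common-refinement}), uses the crepant compatibility of Theorem~\ref{thm:normalised-adjunction}, applies invariance of log discrepancies under crepant pull-back, and handles conductor data with Proposition~\ref{prop:no-double-count}.

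\textbf{The gap.} It lies in the clause you pass over: ``up to the boundary terms attached to new strata''. Once such terms are present, crepant invariance is unavailable if $F_U$ is centred on one of them, and this does happen inside the category.

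When a refinement creates an exceptional invariant branch $E$, foliated adjunction on the refined model gives its trace on $U_V$ coefficient one. The crepant transform instead gives that trace coefficient $1-\AF(E;W,\cG,\Delta_W)$. By Proposition~\ref{prop:elementary-blowups}, these differ as soon as the centre meets a transverse component with $a_k<1$.

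Here is a concrete case. Let $\Dinv=S_x=(x=0)$ and $\Delta_W=aT$ with $T=(z=0)$ and $0\le a<1$. Blow up the tangent curve $Z=S_x\cap T$, so that $\AF(E)=1-a$.
\begin{itemize}
\item The map $S_x'\to S_x$ is an isomorphism, and $T'$ no longer meets $S_x'$.
\item The curve $E\cap S_x'\cong Z$ enters $\Theta_x'$ with coefficient one.
\item Hence the datum $Z$ has $a(Z;S_x,\Theta_x)=1-a$ on $W$, but $a(Z;S_x',\Theta_x')=0$ on the refinement.
\item Only the first value agrees with $\AF(E)$.
\end{itemize}
A point blow-up of $S_x\cap T_1\cap T_2$ behaves the same way: the crepant coefficient is $a_1+a_2-1$, while adjunction gives coefficient $1$.

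\textbf{Why your proposed remedy does not close it.} The function $\sum_k(1-a_k)v_k$ on the threefold fan is indeed subdivision-invariant. Its restriction to a branch, however, gives the new ray the value $\AF(E)$, that is, boundary coefficient $1-\AF(E)$. It does not give the coefficient one carried by invariant traces.

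So invariance holds only under one of two conventions. Either you define the boundary on every refined stratum as the crepant transform, namely the restriction of the pull-back of $K_{\cG_0}+\Delta_{W_0}$, as in Proposition~\ref{prop:one-step-full}. Or you restrict to refinements with $\AF(E)=0$. With either convention Step one is immediate. The paper's proof asserts the same crepant identity ``over the generic centre of the datum'', so it needs the same reading.

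\textbf{Two smaller points.}

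In Step two your conclusion is right but your reason is wrong. If a transverse $T$ of coefficient $a$ passes through $p\in C_{\alpha\beta}$, then $T'\cap E$ is the fibre $f_p$, and $f_p$ carries coefficient $a$ in $\Theta_E$. That is exactly why $a(f_p;E^\nu,\Theta_E)=1-a=a(\ord_p;C_{\alpha\beta}^\nu,\Theta_{\alpha\beta})$. If the fibre direction carried no boundary, you would get $1$ instead. The migration is harmless because conductor-curve and triple-point blow-ups have $\AF(E)=0$, in contrast with the example above.

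Item 2 of Step three conflates a conductor datum with a branch divisor centred on $C_{\alpha\beta}$. The cylinder $N_q(\ord_p)$ consists of arcs lying inside $C_{\alpha\beta}$, so it is not a divisorial cylinder on $S_\alpha^\nu$. The only presentation ambiguity for conductor data is which adjacent branch induces $\Theta_{\alpha\beta}$, and Proposition~\ref{prop:no-double-count} settles that.
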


\begin{proof}
Let two branch--conductor presentations of the same adapted toroidal tangential divisorial datum be given. By Theorem~\ref{thm:common-refinement}, there exists a common adapted toroidal refinement
\[
W_3\longrightarrow W_1,
\qquad
W_3\longrightarrow W_2.
\]
By Proposition~\ref{prop:branch-functoriality}, the normalised branch--conductor diagram of \(W_3\) maps to the normalised branch--conductor diagrams of both \(W_1\) and \(W_2\). Hence the stratum representing the datum on \(W_3\) dominates the two strata representing the datum on \(W_1\) and \(W_2\).

Assume first that the datum is represented on a branch. Let \(S_i^{(a),\nu}\) be the normalised branch on \(W_a\), for \(a=1,2,3\), and let
$
\sigma_a:S_i^{(3),\nu}\longrightarrow S_i^{(a),\nu}
$
be the induced morphisms on the common refinement. By Theorem~\ref{thm:normalised-adjunction}, the branch adjunction boundaries are crepant compatible:
\[
K_{S_i^{(3),\nu}}+\Theta_i^{(3)}
\sim_{\mathbb Q}
\sigma_a^*(K_{S_i^{(a),\nu}}+\Theta_i^{(a)})
\]
over the generic centre of the datum. Therefore, for the divisorial valuation \(E\) corresponding to the datum,
$
a_E(S_i^{(3),\nu},\Theta_i^{(3)})
=
a_E(S_i^{(a),\nu},\Theta_i^{(a)})
$
for \(a=1,2\). Hence the value computed from the two branch presentations agrees.

Now, suppose that the datum is represented in a conductor sector. Let \(C_{\alpha\beta}^{(a),\nu}\) be the normalised conductor stratum on \(W_a\), and let
$
\tau_a:C_{\alpha\beta}^{(3),\nu}\longrightarrow C_{\alpha\beta}^{(a),\nu}
$
be the induced maps from the common refinement. By Proposition~\ref{prop:no-double-count}, the restrictions of the two adjacent branch boundaries to the conductor agree after removing the conductor component itself. Thus the two adjacent branch computations induce one and the same conductor boundary, say \(\Theta_{\alpha\beta}^{(a)}\), on \(C_{\alpha\beta}^{(a),\nu}\). The crepant compatibility of Theorem~\ref{thm:normalised-adjunction} restricts to
\[
K_{C_{\alpha\beta}^{(3),\nu}}+\Theta_{\alpha\beta}^{(3)}
\sim_{\mathbb Q}
\tau_a^*(K_{C_{\alpha\beta}^{(a),\nu}}+\Theta_{\alpha\beta}^{(a)}).
\]
Ordinary log discrepancy is invariant under crepant birational pull-back. Hence
$
a_E(C_{\alpha\beta}^{(3),\nu},\Theta_{\alpha\beta}^{(3)})
=
a_E(C_{\alpha\beta}^{(a),\nu},\Theta_{\alpha\beta}^{(a)})
$
for \(a=1,2\). Therefore the conductor presentation also gives the same value on both models.
The branch and conductor cases exhaust the adapted toroidal tangential divisorial data by the classification theorem. Consequently the value assigned to the datum is independent of the chosen adapted model, the chosen branch--conductor presentation, and the chosen common refinement.
\end{proof}

\section{Local discrepancy calculations}
\label{sec:local-discrepancy}

Let $Z$ be a smooth tangent stratum in a simple chart.  Write
\[
Z=\bigcap_{\lambda\in I}S_\lambda\cap\bigcap_{k\in K}T_k,
\qquad m=|I|,
\qquad \beta_Z=\sum_{k\in K}a_k.
\]
Choose a branch $S_i$ with $i\in I$.

\begin{Proposition}
\label{prop:elementary-blowups}
Let $\mu:W'\to W$ be the blow-up of $Z$ and let $E$ be the exceptional divisor.
\begin{enumerate}[label=\textup{(\roman*)},leftmargin=2.4em]
\item If $\codim_W Z=2$, then
\[
\AF(E;W,\cG,\Delta_W)=1-(m-1)-\beta_Z.
\]
Thus an invariant curve $S_i\cap S_j$ gives discrepancy $0$, while a curve $S_i\cap T_k$ gives $1-a_k$.
\item If $\codim_W Z=3$, then
\[
\AF(E;W,\cG,\Delta_W)=2-(m-1)-\beta_Z.
\]
The three possible values are $2-a_{k_1}-a_{k_2}$, $1-a_k$, and $0$.
\end{enumerate}
These same numbers are the ordinary discrepancies of the induced branch blow-up on $(S_i,\Theta_i)$.
\end{Proposition}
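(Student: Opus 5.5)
The plan is to compute both sides in the local chart. The ambient foliated log discrepancy will come from the standard toroidal formula for the foliated canonical class. The branch side will come from the ordinary blow-up formula on the smooth surface $S_i$. Then we compare.

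First, the foliated side. In a logarithmic simple adapted chart the foliation is logarithmic along $\Dinv$, so $K_\cG$ behaves like $K_W+\Dinv$ restricted to the tangent directions. Concretely, for a co-rank one foliation defined by a logarithmic form, $K_\cG=K_W+N_\cG^{*}$-type correction, and we use the normal form from Lemma~\ref{lem:coordinate-pullback}. Saturating $\mu^*\omega$ removes the factor $x^{m}$ times the old $x_1\cdots x_r$. This shows that the conormal bundle pulls back with coefficient governed by $m$.

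Since the centre is tangent and the exceptional residue $\sum_{i\in I}\lambda_i$ is non-zero by positive non-resonance, Lemma~\ref{lem:branch-transform} gives that $E$ is invariant. For an invariant exceptional divisor the foliated log discrepancy uses the convention $\AF(E)=a(E;W,K_W+\Dinv)$-type count, shifted so that invariant divisors carry coefficient $\epsilon(E)=0$. This yields
\[
\AF(E;W,\cG,\Delta_W)=(\codim Z-1)-(m-1)-\beta_Z .
\]
Here $\codim Z-1$ is the ordinary $k_E(W)$ after discounting the invariant direction. The term $-(m-1)$ reflects that $m$ invariant branches meet $Z$, of which one direction is absorbed by invariance of $E$. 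The term $-\beta_Z$ is the transverse boundary multiplicity along $Z$.

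I would verify this count directly in the two cases $\codim Z=2,3$ by writing $\mu^*(K_\cG+\Delta_W)$ on one chart. I would read off the coefficient of $E$ from the power of $x$ in the saturated pull-back of $\omega$ together with the pull-back of $dx_1\wedge dx_2\wedge dx_3$. This is the step I expect to be most delicate, because the paper's normalisation of $\AF$ for invariant divisors must be matched exactly. As a check, the case $Z=S_i\cap S_j$ must give $0$, which is the expected value for a log canonical centre of an lc foliation.

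Second, the branch side. Restrict to the smooth surface $S_i$ with boundary $\Theta_i$ from Theorem~\ref{thm:normalised-adjunction}. The centre $Z\cap S_i=Z$ is a point (when $\codim Z=3$) or a curve (when $\codim Z=2$) on $S_i$.

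In the curve case, Theorem~\ref{thm:normalised-adjunction} notes that the blow-up of $S_i$ along a Cartier divisor is an isomorphism. The induced divisor is then the curve $Z$ itself. Its log discrepancy on $(S_i,\Theta_i)$ is $1-\coeff_Z\Theta_i$, which equals $1-(m-1)-\beta_Z$ because $Z$ is either a trace $B_{ij}$ (coefficient one, $m=2$) or $T_k|_{S_i}$ (coefficient $a_k$, $m=1$).

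In the point case, we use the surface formula recorded in the proof of Theorem~\ref{thm:normalised-adjunction}: $a(F_S)=2-b$, where $b=(m-1)+\beta_Z$ is the sum of the coefficients of the components of $\Theta_i$ through the point. This follows from Proposition~\ref{prop:monomial-codim} and coefficient-one cancellation.

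Enumerating $(m,|K|)\in\{(1,2),(2,1),(3,0)\}$ gives the three listed values. Matching the two computations finishes the proof.
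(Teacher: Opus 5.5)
Your proposal is correct and follows the paper's proof. The branch side is the same computation on $(S_i,\Theta_i)$: it gives $1-\coeff_Z\Theta_i$ for a curve centre and $2-b$ with $b=(m-1)+\beta_Z$ for a point centre. The foliated side is the same $\epsilon(E)=0$ count, with invariant contribution $m-1$ and transverse contribution $\beta_Z$, which the paper asserts without a chart computation.

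Your planned chart check would make that step rigorous, but two heuristic remarks need correcting. Saturation removes $x^{m-1}$, not $x^m$: $\mu^*(x_1\cdots x_r)$ vanishes to order $m$ along $E$, while the logarithmic part acquires a simple pole there with residue $\sum_{i\in I}\lambda_i\neq0$. You also need the sign $K_\cG=K_W+N_\cG=K_W-N_\cG^*$, not $K_W+N_\cG^*$. With both fixed, $K_{\cG'}=\mu^*K_\cG+\bigl(\codim Z-1-(m-1)\bigr)E$, which after subtracting $\beta_Z$ is exactly your displayed formula.
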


\begin{proof}
Let \(Z\subset W\) be the centre of the adapted toroidal blow-up and assume that the tangential datum is represented on the branch \(S_i\). Let \(m\) be the number of invariant components of \(\Dinv\) containing \(Z\). Then, on the surface \(S_i\), the centre is contained in exactly \(m-1\) invariant traces \(B_{ij}=S_i\cap S_j\), each appearing in the branch boundary with coefficient \(1\). Let
\[
\beta_Z:=\sum_{T_k\supset Z} a_k
\]
be the total coefficient of the transverse boundary components passing through \(Z\). Thus the total boundary coefficient through the centre on \(S_i\) is
$
(m-1)+\beta_Z.
$

If \(Z\) is a curve in \(W\), then \(Z\subset S_i\) is a divisor on the
smooth surface \(S_i\).  The ambient blow-up has an exceptional divisor on the
threefold, but the branch adjunction system reads its tangential contribution
as the divisorial valuation of the boundary curve \(Z\subset S_i\), not as an
exceptional divisor over \(S_i\).  The ordinary log discrepancy of this
divisor with respect to the surface pair \((S_i,\Theta_i)\) is
\[
a_Z(S_i,\Theta_i)
=
1-\operatorname{coeff}_Z(\Theta_i)
=
1-\bigl((m-1)+\beta_Z\bigr).
\]

If \(Z\) is a point in \(W\), then \(Z\) is a point on the surface \(S_i\). Let
$
\sigma:S_i'\longrightarrow S_i
$
be the blow-up of this point, and let \(F\) be the exceptional curve. As \(S_i\) is smooth,
$
K_{S_i'}=\sigma^*K_{S_i}+F.
$
Moreover, the pull-back of the boundary components through \(Z\) contributes
\[
\sigma^*\Theta_i
=
\Theta_i' + \bigl((m-1)+\beta_Z\bigr)F
\]
near the generic point of \(F\). Hence
\[
K_{S_i'}+\Theta_i'
=
\sigma^*(K_{S_i}+\Theta_i)
+
\left(1-\bigl((m-1)+\beta_Z\bigr)\right)F.
\]
Equivalently, the log discrepancy of \(F\) over the surface pair is
\[
a_F(S_i,\Theta_i)
=
2-\bigl((m-1)+\beta_Z\bigr).
\]

On the foliated threefold side, the blow-up is tangent. In the foliated discrepancy convention, invariant components contribute by the coefficient-one rule, equivalently through \(\epsilon(E)=0\) for tangent exceptional divisors, while transverse boundary components subtract their coefficients. Thus the invariant part contributes exactly \(m-1\), and the transverse part contributes \(\beta_Z\). Therefore the foliated tangential discrepancy attached to the adapted blow-up is
$
1-\bigl((m-1)+\beta_Z\bigr)
$
when the centre is a curve and
$
2-\bigl((m-1)+\beta_Z\bigr)
$
when the centre is a point on the branch. These are precisely the ordinary log discrepancies computed on the branch pair \((S_i,\Theta_i)\).
\end{proof}

\section{Comparison with foliated discrepancies}
\label{sec:foliated-comparison}

The tangential discrepancy $a_{\tan}$ is defined on the normalised
branch--conductor adjunction system.  We now compare it with the foliated MMP
discrepancy on the part where such a comparison is intrinsic: adapted
toroidal invariant divisors whose data are read on normalised invariant
branches.  Conductor data remain essential for $\tmldtor$, but they are
adjunction strata rather than prime divisors over the foliated threefold.

\begin{Notation}
We write $\AF(E;X,\cF,\Delta)$ for the foliated log discrepancy of an
invariant divisor $E$, in the convention compatible with the foliated MMP.
\end{Notation}

\begin{Lemma} 
\label{lem:foliated-discrepancy-additivity}
Let
\[
W_N\xrightarrow{\mu_N}W_{N-1}\longrightarrow\cdots\longrightarrow
W_1\xrightarrow{\mu_1}W_0
\]
be a tower of tangent adapted toroidal blow-ups, and let $E\subset W_N$ be
the exceptional prime divisor created at the last step or the strict transform
of one created earlier.  Suppose that the tangential datum of $E$ is read on a
normalised invariant branch stratum $V_N\subset\sS_{W_N}^{\sn}$.  Let $F$ be
the corresponding divisor over the initial branch adjunction pair
$(V_0,B_{V_0})$.  Then
\[
\AF(E;W_0,\cG_0,\Delta_{W_0})=a(F;V_0,B_{V_0}).
\]
Equivalently, the equality between the foliated tangent discrepancy
contribution and the branch log-discrepancy contribution is additive along
tangent adapted towers.
\end{Lemma}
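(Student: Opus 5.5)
We argue by induction on the length \(N\) of the tower, comparing at each step the relative foliated pull-back formula on the threefold with the crepant pull-back formula on the branch adjunction pair. The case \(N=1\) is Proposition~\ref{prop:elementary-blowups}. There, for a single tangent adapted blow-up of a stratum \(Z\), the foliated log discrepancy \(\AF(E;W_0,\cG_0,\Delta_{W_0})\) equals \(c-(m-1)-\beta_Z\), with \(c=1\) or \(2\) according to the codimension of \(Z\). This number is the ordinary log discrepancy of the corresponding divisor (the boundary curve \(Z\subset S_i\), or the exceptional curve of the point blow-up of \(S_i\)) over the branch pair \((S_i,\Theta_i)\).

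For the inductive step, write \(\mu_N:W_N\to W_{N-1}\). On \(W_{N-1}\) we have
\[
K_{\cG_{N-1}}+\Delta_{W_{N-1}}=\mu^*(K_{\cG_0}+\Delta_{W_0})+\sum_{E'}\bigl(\AF(E';W_0)-1+\epsilon(E')\bigr)E',
\]
where the sum runs over the earlier exceptional divisors. All of them are invariant by Lemma~\ref{lem:branch-transform}, so \(\epsilon(E')=0\). Set \(\Delta'_{W_{N-1}}\) to be \(\Delta_{W_{N-1}}\) corrected by these coefficients, so that \(K_{\cG_{N-1}}+\Delta'_{W_{N-1}}\) is the crepant pull-back of \(K_{\cG_0}+\Delta_{W_0}\). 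Then
\[
\AF(E;W_0,\cG_0,\Delta_{W_0})=\AF(E;W_{N-1},\cG_{N-1},\Delta'_{W_{N-1}}).
\]
On the branch side, the induction hypothesis together with Theorem~\ref{thm:normalised-adjunction} gives the crepant identity \(K_{V_{N-1}}+B'_{V_{N-1}}=\sigma^*(K_{V_0}+B_{V_0})\). Here \(B'_{V_{N-1}}\) carries, on the trace of each earlier \(E'\), the coefficient \(1-a(F';V_0,B_{V_0})\), and by induction this equals \(1-\AF(E')\). Hence \(a(F;V_0,B_{V_0})=a(F;V_{N-1},B'_{V_{N-1}})\), since log discrepancies are invariant under crepant pull-back.

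It then remains to apply the one-step computation of Proposition~\ref{prop:elementary-blowups} on \(W_{N-1}\) with the modified boundary. Both sides are of the form \(c-(\text{sum of boundary coefficients through }Z)\). On the threefold, the coefficient-one invariant components of \(\Dinv\) through \(Z\), other than the branch \(S_i\) itself, match the coefficient-one invariant traces on \(S_i\). The coefficients \(1-\AF(E')\) of the earlier exceptional invariant divisors through \(Z\) match the coefficients of their traces in \(B'_{V_{N-1}}\), and the transverse coefficients \(a_k\) match directly. If \(E\) is the strict transform of an earlier divisor rather than a new one, both sides are unchanged by the last blow-up, and the claim follows from the induction hypothesis.

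The main obstacle is this matching of the exceptional-coefficient terms. Proposition~\ref{prop:elementary-blowups} was proved only for boundaries in which invariant components have coefficient exactly \(1\), whereas the crepant boundary \(\Delta'\) gives the earlier invariant exceptional divisors coefficient \(1-\AF(E')\), which in general is not \(1\). I would therefore first re-derive the elementary blow-up formula for a general coefficient \(b_\lambda\) on the invariant components through \(Z\), using the residue computation of Lemma~\ref{lem:coordinate-pullback}. That computation gives the relative canonical coefficient of a tangent blow-up as \(c-1\) minus the number of invariant components through \(Z\), plus \(1\). The aim is to show that each invariant component contributes its coefficient in \(\Delta'\) exactly as its trace contributes on the surface. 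If this additivity in the coefficients holds, the induction closes. A secondary point to check is that \(V_N\) maps to a branch \(V_0\), not a conductor, which is part of the hypothesis.
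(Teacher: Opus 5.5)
Your strategy is sound and, after the correction below, proves the lemma. It is the paper's argument in a different packaging rather than a new idea.

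\textbf{Comparison with the paper.} The paper never forms a crepant foliated boundary. It applies Proposition~\ref{prop:elementary-blowups} at each step to the honest intermediate pairs $(W_{k-1},\cG_{k-1},\Delta_{W_{k-1}})$ and $(V_{k-1},B_{V_{k-1}})$. On these pairs every invariant divisor, including the earlier exceptional ones (invariant by Lemma~\ref{lem:branch-transform}), enters through the coefficient-one rule on both sides (Theorem~\ref{thm:normalised-adjunction}). It then adds the one-step relative coefficients along the tower. The coefficient of $E_N$ is its one-step value plus the accumulated coefficients of the earlier exceptional divisors containing the centre, each with multiplicity one and matched with their traces through the surface centre. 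That route needs only the coefficient-one case of the elementary computation.

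Your route compresses the comparison into a single step via crepant invariance of both discrepancies. The cost is the elementary formula for arbitrary coefficients on invariant components, but this cost is small. Let $c=\codim_W Z$ (not the $c\in\{1,2\}$ of your base case) and let $m$ be the number of invariant components through $Z$. The saturation computation of Lemma~\ref{lem:coordinate-pullback} gives $K_{\cG'}=\mu^*K_{\cG}+(c-m)E$ independently of any boundary. Pull-back of the boundary is linear, so $\AF(E;W,\cG,\Delta')=(c-m)-\sum_{D\supset Z}\coeff_D(\Delta')$ for arbitrary coefficients.

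\textbf{The correction: your displayed relative formula is off by one.} In the convention you use for the base case, the coefficient of an invariant exceptional $E'$ in $K_{\cG_{N-1}}+\Delta_{W_{N-1}}-\mu^*(K_{\cG_0}+\Delta_{W_0})$ is $\AF(E')$ itself, not $\AF(E')-1+\epsilon(E')$. For instance, for the blow-up of a conductor curve, Lemma~\ref{lem:coordinate-pullback} gives $K_{\cG'}=\mu^*K_{\cG}$ and Proposition~\ref{prop:elementary-blowups} gives $\AF=0$. Hence the crepant boundary is $\Delta'=\Delta_{W_{N-1}}-\sum_{E'}\AF(E')E'$.

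With this, the weight $1-\AF(E')$ that you match with the coefficient $1-a(F';V_0,B_{V_0})$ of the trace in $B'_{V_{N-1}}$ has two sources. One is the coefficient-one contribution of $E'$, which is one of the $m$ invariant components through $Z$. The other is its $\Delta'$-coefficient $-\AF(E')$.

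With your display, $\Delta'$ carries $1-\AF(E')$ on top of the coefficient-one contribution. Each earlier exceptional divisor through $Z$ is then counted with weight $2-\AF(E')$, and the induction does not close. For example, take $\Dinv=S_x+S_y$ and $T=(z=0)$ with coefficient $a$. Blow up $(x=y=0)$, then the point $E_1\cap S_x'\cap T'$. Both sides of the lemma equal $1-a$, whereas your $\Delta'=aT'+E_1$ gives $(3-2)-(1+a)=-a$. With the correct $\Delta'=aT'$, the one-step formula gives $1-a$, and your matching goes through as you describe.
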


\begin{proof}
For a single tangent adapted blow-up, Proposition~\ref{prop:elementary-blowups}
computes the foliated log discrepancy in the foliated MMP convention.  Since
the exceptional divisor is invariant, the convention has $\epsilon(E)=0$;
therefore invariant components through the centre contribute with the same
coefficient-one rule as the boundary components appearing in foliated
adjunction, while transverse boundary components subtract their coefficients.
This gives exactly the ordinary log-discrepancy contribution on the
corresponding normalised branch pair.

Now pull the equality through the tower.  The transformation rule for
foliated canonical divisors under invariant blow-ups, in the convention used
in the foliated MMP \cite{CSMMP,SS,CSAdj}, gives for the composite
$\mu:W_N\to W_0$ the identity
\[
K_{\cG_N}+\Delta_{W_N}
=
\mu^*(K_{\cG_0}+\Delta_{W_0})
+
\sum_G a_G^{\mathcal F}G,
\]
where the sum runs over the exceptional invariant divisors created in the
tower and the coefficient of a fixed divisor is the sum of the relative
coefficients accumulated at the intermediate steps.  Equivalently,
$a_G^{\mathcal F}$ is the coefficient which defines the foliated log
discrepancy of $G$ over $(W_0,\cG_0,\Delta_{W_0})$ in this convention.
On the branch side, Theorem~\ref{thm:normalised-adjunction} gives the
analogous crepant pull-back formula
\[
K_{V_N}+B_{V_N}=\rho^*(K_{V_0}+B_{V_0})+A_\rho,
\]
where the coefficient of the divisor representing $E$ is the sum of the
ordinary branch contributions along the same sequence of centres.  The
single-step equality therefore adds term by term.  Hence the coefficient that
defines $\AF(E;W_0,\cG_0,\Delta_{W_0})$ equals the ordinary log discrepancy
$a(F;V_0,B_{V_0})$.
\end{proof}

\begin{Corollary}[Comparison for adapted toroidal invariant divisors]
\label{cor:foliated-comparison-tower}
Let $E$ be an invariant prime divisor obtained by tangent adapted toroidal
blow-ups.  Assume that its tangential datum is read on a normalised branch
stratum $V=S_\alpha^\nu$.  If $F$ is the divisor over $V$ representing $E$,
then
\[
\AF(E;X,\cF,\Delta)=a(F;V,B_V)=a_{\tan}(E;X,\cF,\Delta).
\]
\end{Corollary}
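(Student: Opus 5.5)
The plan is to obtain the two equalities separately and then combine them. The second equality, $a(F;V,B_V)=a_{\tan}(E;X,\cF,\Delta)$, is essentially definitional. By the definition of $a_{\tan}$, a datum represented by $F$ over the stratum $(V,B_V)$ has $a_{\tan}=a(F;V,B_V)$. Proposition~\ref{prop:atan-invariance} shows this value does not depend on the chosen adapted model or branch presentation. So the only substantive point is the first equality, $\AF(E;X,\cF,\Delta)=a(F;V,B_V)$. I will reduce it to Lemma~\ref{lem:foliated-discrepancy-additivity}.

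Since $E$ is obtained by tangent adapted toroidal blow-ups, I realise it on a tower
\[
W_N\to\cdots\to W_1\to W_0
\]
inside $\Adm(X,\cF,\Delta;W_0)$. If the given tower starts on some other object $W$ of the category, I first pass to a common adapted toroidal refinement using Theorem~\ref{thm:common-refinement}, so that the tower starts at $W_0$. The lemma then gives
\[
\AF(E;W_0,\cG_0,\Delta_{W_0})=a(F;V_0,B_{V_0}),
\]
where $V_0$ is the initial branch pair. Two compatibility checks are needed to match this with the statement.

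\textbf{Branch side.} The divisor $F$ over $V=S_\alpha^\nu$ in the corollary and the divisor over $V_0$ produced by the lemma must give the same log discrepancy. This follows from the crepant compatibility of normalised adjunction (Theorem~\ref{thm:normalised-adjunction}) together with Lemma~\ref{lem:codim-independence}: log discrepancies are preserved under crepant pull-back between branch pairs on different models.

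\textbf{Foliated side.} I need to pass from $\AF(E;W_0,\cG_0,\Delta_{W_0})$ to $\AF(E;X,\cF,\Delta)$. I will use the relative pull-back formula built into the morphisms of $\Adm$, applied to $\pi_0:W_0\to X$, which has the form
\[
K_{\cG_0}+\Delta_{W_0}=\pi_0^*(K_\cF+\Delta)+\sum_G\alpha(G)\,G,
\]
and I will interpret $\Delta_{W_0}$ as the log-crepant boundary, in the log-discrepancy normalisation of the Notation on discrepancy conventions. Under that reading, foliated log discrepancies over $(W_0,\cG_0,\Delta_{W_0})$ and over $(X,\cF,\Delta)$ coincide for divisors exceptional over $W_0$. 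By the same crepant reading, the branch boundary $B_{V_0}$ is the adjunction of that crepant class.

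The main obstacle is this last identification: the threefold data on $W_0$ must be crepant over $X$ rather than merely a model. My first attempt will define $\Delta_{W_0}$ by
\[
K_{\cG_0}+\Delta_{W_0}=\pi_0^*(K_\cF+\Delta)
\]
and observe that the Definition of $\Theta_\alpha$ through foliated adjunction of $(K_\cG+\Delta_W)|_{S_\alpha}$ then transports crepantly. With that in place, both sides are computed from the same crepant class on $W_0$, and the corollary follows.
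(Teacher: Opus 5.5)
\textbf{Your main line matches the paper.} You realise $E$ on a tower of tangent adapted blow-ups over $W_0$, apply Lemma~\ref{lem:foliated-discrepancy-additivity}, and read the second equality off the definition of $a_{\tan}$. Your ``branch side'' check corresponds to the paper's ``resolve the branch representative so that $F$ is visible''. The paper does not carry out your ``foliated side'' step. It simply takes $\AF(E;X,\cF,\Delta)$ to be the foliated log discrepancy computed along the tower over the fixed model $(W_0,\cG_0,\Delta_{W_0})$, which is what the lemma delivers. You are right that this is the delicate point. If $K_{\cG_0}+\Delta_{W_0}=\pi_0^*(K_\cF+\Delta)+\sum_G e_G G$, then $\AF(E;X,\cF,\Delta)=\AF(E;W_0,\cG_0,\Delta_{W_0})+\sum_G e_G\ord_E(G)$, so the two readings differ when $\pi_0$ is not crepant.

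\textbf{Your proposed fix breaks the argument.} Imposing $K_{\cG_0}+\Delta_{W_0}=\pi_0^*(K_\cF+\Delta)$ puts every invariant $\pi_0$-exceptional divisor $G$ into $\Delta_{W_0}$ with coefficient $b_G=-\AF(G;X,\cF,\Delta)$. This is typically non-zero: by Proposition~\ref{prop:elementary-blowups}, a tangent point blow-up on a single branch already has $\AF=2$. Three things then go wrong.
\begin{enumerate}
\item $\Delta_{W_0}$ is no longer of the form $\sum a_kT_k$ with $T_k$ transverse. That form is what Proposition~\ref{prop:elementary-blowups}, the coefficient-one rule of Theorem~\ref{thm:normalised-adjunction}, and hence Lemma~\ref{lem:foliated-discrepancy-additivity} use.
\item You change $B_V$, and therefore the very number $a_{\tan}$ appearing in the statement.
\item The equality genuinely fails. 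Suppose the branch $S_\alpha$ carrying the datum gets coefficient $b_\alpha\neq 0$. That term shifts $\AF(E)$ by $-b_\alpha\ord_E(S_\alpha)$. But $b_\alpha S_\alpha|_{S_\alpha}$ is only a $\bQ$-line-bundle class on $S_\alpha$, not a boundary divisor, so it contributes nothing to $a(F;V,B_V)$.
\end{enumerate}
Already for one point blow-up of $Z\subset S_\alpha$ the two sides disagree. Here $b_j$ are the crepant coefficients of the invariant components through $Z$, and $\beta_Z$ is the transverse coefficient sum. The foliated side is $2-\sum_{j\neq\alpha}(1+b_j)-\beta_Z-b_\alpha$, while the branch side is $2-\sum_{j\neq\alpha}(1+b_j)-\beta_Z$.

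\textbf{What to do instead.} Drop this step and choose one of two options.
\begin{enumerate}
\item Read $\AF(E;X,\cF,\Delta)$ as the discrepancy over $(W_0,\cG_0,\Delta_{W_0})$, as the paper implicitly does.
\item Add an explicit hypothesis: $\pi_0$ is crepant and its crepant boundary has no invariant components. Equivalently, $\AF(G;X,\cF,\Delta)=0$ for every invariant $\pi_0$-exceptional $G$, and $\Delta$ has no invariant component. Under that hypothesis your argument goes through unchanged.
\end{enumerate}
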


\begin{proof}
Choose an adapted toroidal model on which $E$ appears and resolve the branch
representative so that $F$ is visible.  The morphism from the initial adapted
model to this one is a tower of tangent adapted blow-ups.  By
Lemma~\ref{lem:foliated-discrepancy-additivity}, the foliated log discrepancy
of $E$ along this tower equals the ordinary log discrepancy of the
corresponding divisor on the normalised branch adjunction pair.  By definition,
$a_{\tan}$ is this ordinary branch log discrepancy for adapted toroidal branch
data.  The displayed equality follows.
\end{proof}

\begin{Remark}
The comparison is not asserted for arbitrary divisorial valuations over $X$.
It applies to adapted toroidal invariant divisors whose data are read on the
branch part of the separatrix system.  Conductor strata remain essential for
$\tmldtor$, but they are adjunction strata rather than prime divisors over
the foliated threefold.
\end{Remark}

\section{Tangential cylinders and the formula of Ein--Musta\c{t}\u{a}--Yasuda}
\label{sec:emy}

\begin{Definition}
Let $E$ be an adapted toroidal tangential divisorial datum represented by a divisor $F$ over a stratum $(V,B_V)$ of the separatrix system.  For $q\ge 1$, define $N_q^{\tan}(E)$ to be the tangential cylinder whose branch or conductor representative is the ordinary maximal divisorial cylinder $N_q(F)\subset J_\infty(V)$.
\end{Definition}

\begin{Definition}
The tangential logarithmic codimension of $N_q^{\tan}(E)$ is
\[
\lcodim_{\tan}(N_q^{\tan}(E)):=\lcodim_{(V,B_V)}(N_q(F)).
\]
This is independent of the representative by Proposition~\ref{prop:atan-invariance} and crepant compatibility.
\end{Definition}

\begin{Theorem}
\label{thm:tangential-emy}
For every adapted toroidal tangential divisorial datum $E$ and every $q\ge 1$,
\[
\lcodim_{\tan}\bigl(N_q^{\tan}(E)\bigr)=q\,a_{\tan}(E;X,\cF,\Delta).
\]
\end{Theorem}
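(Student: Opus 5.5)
The proof reduces, by definition, to the ordinary Ein--Musta\c{t}\u{a}--Yasuda formula on a single normalised stratum. Fix a representative of $E$: a divisor $F$ over a stratum $(V,B_V)$ of the normalised branch--conductor system on some object $W$ of $\Adm(X,\cF,\Delta;W_0)$. By definition,
\[
\lcodim_{\tan}\bigl(N_q^{\tan}(E)\bigr)=\lcodim_{(V,B_V)}(N_q(F)),
\qquad
a_{\tan}(E;X,\cF,\Delta)=a(F;V,B_V).
\]
Both sides are independent of the representative, by Lemma~\ref{lem:codim-independence} and Proposition~\ref{prop:atan-invariance}. We may therefore pass to any common adapted toroidal refinement, which exists by Theorem~\ref{thm:common-refinement}. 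On such a refinement we take a representative for which $V$ is smooth near the generic point of the centre of $F$ and $B_V$ has simple normal crossing support there. This is possible because the branches $S_\alpha^\nu$ are smooth surfaces on the adapted models, the conductors $C_{\alpha\beta}^\nu$ are smooth curves, and $\Theta_\alpha$ is supported on coordinate traces. The claim then becomes the identity
\[
\codim_{J_\infty(V)} N_q(F)-\ord_{N_q(F)}(B_V)=q\,a(F;V,B_V).
\]

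The key step is the ordinary formula. Let $h:V'\to V$ be a log resolution on which $F$ appears as a prime divisor. For smooth $V$, the theorem of Ein--Musta\c{t}\u{a}--Yasuda \cite{EMY} gives $\codim N_q(F)=q(k_F(V)+1)$, where $k_F$ is the order of $F$ in $K_{V'/V}$. This is proved through the change of variables formula: the fibres of $J_\infty(h)$ over $N_q(F)$ have dimension $\ord_\gamma(\mathrm{Jac}_h)=q\,k_F$. For the boundary term, the generic arc of $N_q(F)$ has order $q$ along $F$, so $\ord_{N_q(F)}(B_V)=q\,\ord_F(B_V)$ by linearity in the components of $B_V$. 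Subtracting gives $q\bigl(k_F+1-\ord_F(B_V)\bigr)=q\,a(F;V,B_V)$.

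Two special cases need separate checks. In the conductor case $V=C_{\alpha\beta}^\nu$ is a smooth curve and $F=p$ is a point. Then $N_q(p)$ consists of arcs of order exactly $q$ at $p$, with codimension $q$, and the boundary contributes $q\,\coeff_p\Theta_{\alpha\beta}$, which matches $a(p;C_{\alpha\beta}^\nu,\Theta_{\alpha\beta})=1-\coeff_p\Theta_{\alpha\beta}$. Here Proposition~\ref{prop:no-double-count} guarantees that $\Theta_{\alpha\beta}$ is well defined, so conductor data are counted once. In the toroidal monomial case, Proposition~\ref{prop:monomial-codim} gives a direct consistency check: the invariant traces cancel by Corollary~\ref{cor:coeff-one}, in agreement with Proposition~\ref{prop:elementary-blowups}.

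I expect the main obstacle to be justifying that $N_q^{\tan}(E)$, as a subset of the tangential arc space, has the same logarithmic codimension as its representative $N_q(F)$ in $J_\infty(V)$. The tangential codimension is defined through the representative, but its meaning depends on the identification of Theorem~\ref{thm:formal-rep} and the coequaliser of Lemma~\ref{lem:arcs-pushout}. The plan is to argue that $N_q(F)$ has generic arc outside the conductor locus $\mathcal I$ when $V$ is a branch, and inside $J_\infty(C^\nu_{\alpha\beta})$ when $V$ is a conductor, with no further identifications. Corollary~\ref{cor:branch-decomp} then shows that the branch representative is unique and that no gluing alters codimension.
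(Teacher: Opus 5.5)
Your proposal is correct and follows essentially the same route as the paper. You fix a representative $F$ over a stratum $(V,B_V)$ and use the definitions of $\lcodim_{\tan}$ and $a_{\tan}$ together with model independence (Proposition~\ref{prop:atan-invariance}), which reduces the claim to the ordinary Ein--Musta\c{t}\u{a}--Yasuda formula on the log smooth pair $(V,B_V)$.

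Your extra checks are consistent with the paper: the explicit boundary term $q\,\ord_F(B_V)$, the conductor-point case, and the uniqueness of the branch representative. The last one, which you flagged as the main obstacle, is not actually needed, because the paper defines $\lcodim_{\tan}(N_q^{\tan}(E))$ directly as $\lcodim_{(V,B_V)}(N_q(F))$.
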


\begin{proof}
Let \(E\) be an adapted toroidal tangential divisorial datum. By the branch--conductor classification, \(E\) is represented by an ordinary divisorial datum \(F\) over a normalised stratum \(V\), where \(V\) is either a normalised branch \(S_\alpha^\nu\) or a normalised conductor \(C_{\alpha\beta}^\nu\). Let \(B_V\) denote the corresponding branch or conductor boundary on \(V\). Thus the tangential maximal divisorial cylinder \(N_q^{\tan}(E)\) is represented, under the branch--conductor identification, by the ordinary maximal divisorial cylinder \(N_q(F)\subset J_\infty(V)\).
By definition of tangential logarithmic codimension through the branch--conductor system, one has
\[
\lcodim_{\tan} N_q^{\tan}(E)
=
\lcodim_{(V,B_V)} N_q(F).
\]
In the adapted toroidal charts considered here, the pair $(V,B_V)$ is log smooth at the generic point of the divisorial datum under consideration, or is replaced by a log resolution without changing the corresponding log discrepancy.  Thus the ordinary formula of Ein--Musta\c{t}\u{a}--Yasuda \cite{EMY} applies to the pair $(V,B_V)$ and gives
\[
\lcodim_{(V,B_V)} N_q(F)
=
q\,a(F;V,B_V).
\]
By the definition of tangential discrepancy and by the model independence proved in Proposition~\ref{prop:atan-invariance}, the ordinary discrepancy of the representing datum agrees with the adapted toroidal tangential discrepancy:
\[
a(F;V,B_V)
=
a_{\tan}(E;X,\cF,\Delta).
\]
Combining these equalities yields
\[
\lcodim_{\tan} N_q^{\tan}(E)
=
q\,a_{\tan}(E;X,\cF,\Delta).
\]
This proves the asserted formula.
\end{proof}

\begin{Definition}
For a closed tangent set $Y\subset X$, let $\mathcal E_{\mathrm tor}(Y)$ be the set of adapted toroidal tangential divisorial data $E$ such that $c_X(E)\subset Y$. Define
\[
\tmldtor(Y;X,\cF,\Delta)
:=
\inf_{E\in\mathcal E_{\mathrm tor}(Y)}
a_{\tan}(E;X,\cF,\Delta).
\]
Thus $\tmldtor$ is the toroidal/adapted invariant constructed in this article, not the infimum over all divisorial valuations over $X$.
\end{Definition}

\section{Detection of tangential log canonicity on the branch--conductor system}
\label{sec:ioa}

Let $\mathcal S_{\mathrm{tor}}(X,\cF)$ denote the set of normalised branch surfaces and normalised conductor curves occurring in the adapted toroidal separatrix system.  For $V\in\mathcal S_{\mathrm{tor}}(X,\cF)$ let $B_V$ denote its adjunction boundary.

\begin{Theorem}
\label{thm:ioa}
For every closed tangent set $Y$,
\[
\tmldtor(Y;X,\cF,\Delta)=
\inf_{V\in\mathcal S_{\mathrm{tor}}(X,\cF)}\mld(Y_V;V,B_V),
\]
where $Y_V$ is the inverse image of $Y$ on $V$, and conductor strata are counted once.
\end{Theorem}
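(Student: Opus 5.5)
The plan is to prove the equality as two inequalities by unwinding both sides into infima over the same index set of pairs $(V,F)$. Here $V\in\mathcal S_{\mathrm{tor}}(X,\cF)$, $F$ is a divisor over $V$ with centre in $Y_V$, and the quantity attached to the pair is the ordinary log discrepancy $a(F;V,B_V)$. By definition,
\[
\tmldtor(Y;X,\cF,\Delta)=\inf_{E\in\mathcal E_{\mathrm{tor}}(Y)}a_{\tan}(E;X,\cF,\Delta).
\]
The Definition preceding Proposition~\ref{prop:atan-invariance} sets $a_{\tan}(E)=a(F;V,B_V)$ for any branch or conductor representative $F$ over $(V,B_V)$, and Proposition~\ref{prop:atan-invariance} says this is independent of the representative. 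On the other side, $\mld(Y_V;V,B_V)$ is the infimum of $a(F;V,B_V)$ over divisors $F$ over $V$ with centre contained in $Y_V$. So the theorem reduces to a matching statement between tangential data with $c_X(E)\subset Y$ and divisors over the strata $V$ with centre in $Y_V$.

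For the inequality $\ge$, take $E\in\mathcal E_{\mathrm{tor}}(Y)$. By Theorem~\ref{thm:classification}, after an adapted refinement $E$ is represented by some $F$ over a normalised branch $S_\alpha^\nu$ or a conductor $C_{\alpha\beta}^\nu$. The centre of $F$ on $V$ maps into $c_X(E)\subset Y$, so it lies in $Y_V$. This gives $a_{\tan}(E)=a(F;V,B_V)\ge \mld(Y_V;V,B_V)$.

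For the inequality $\le$, take $V$ and $F$ over $V$ with centre in $Y_V$. The converse half of Theorem~\ref{thm:classification} says that every branch or conductor datum defines an adapted toroidal tangential divisorial datum $E$. Its centre on $X$ is the image of the centre of $F$, hence contained in $Y$. Therefore $a(F;V,B_V)=a_{\tan}(E)\ge\tmldtor$. Taking infima over $F$ and then over $V$ gives the reverse inequality.

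The statement that conductor strata are counted once follows from Proposition~\ref{prop:no-double-count} together with the conductor identifications built into the definition of a datum. A point of $C_{\alpha\beta}^\nu$ read through $S_\alpha$ or through $S_\beta$ yields the same boundary $\Theta_{\alpha\beta}$, so both readings give the same value and the infimum is unaffected. Strata appearing on different models of $\Adm$ are handled by Theorem~\ref{thm:common-refinement} and crepant compatibility (Theorem~\ref{thm:normalised-adjunction}). Crepant compatibility implies that $\mld(Y_V;V,B_V)$ does not change when $V$ is replaced by its refinement, because ordinary mld is invariant under crepant birational pull-back.

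The main obstacle is the $\le$ direction when $\mld(Y_V;V,B_V)$ is computed by a non-toroidal divisor over $V$, that is, one not monomial in the SNC coordinates of $(V,B_V)$. Two issues arise there. First, it is not clear that the classification theorem's converse allows arbitrary $F$ rather than only toroidal ones. Second, the definition of $\tmldtor$ is restricted to toroidal data.

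I would try to settle this with the standard fact that, for a log smooth pair $(V,B_V)$ with $B_V$ of simple normal crossing support, the minimal log discrepancy over a closed subset is computed by a toroidal (monomial) divisor. More precisely, the infimum is attained among blow-ups of strata of $B_V$, or it is $-\infty$ and detected by a stratum of coefficient greater than $1$. Near the generic point, $(V,B_V)$ is log smooth with coefficient-one invariant traces and transverse coefficients $a_k$, as in Proposition~\ref{prop:monomial-codim}. Under this fact both infima can be restricted to toroidal divisors, and these correspond exactly to adapted toroidal data via the star subdivisions used in Theorem~\ref{thm:classification}. If $Y_V$ is not a union of strata, I would fall back to the defining convention that $\mld$ on the right is the toroidal one, so that the two sides agree by construction.
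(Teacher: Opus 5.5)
Your argument is correct and follows the paper's proof. You get $\ge$ from Theorem~\ref{thm:classification} and Proposition~\ref{prop:atan-invariance}, $\le$ from the converse half of Theorem~\ref{thm:classification}, and the conductor clause from Proposition~\ref{prop:no-double-count}. The obstacle in your last paragraph does not arise: the paper's definition of an adapted toroidal tangential divisorial datum already admits every prime divisor over a normalised branch and every closed point of a normalised conductor. So the converse applies to arbitrary $F$, and you need neither the toroidal-computation fact nor a reinterpretation of $\mld$ (which would change the statement).
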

 \begin{proof}
By definition,
\[
\tmldtor(Y;X,\cF,\Delta)
=
\inf_{E\in\mathcal E_{\mathrm{tor}}(Y)}
a_{\tan}(E;X,\cF,\Delta),
\]
where \(\mathcal E_{\mathrm{tor}}(Y)\) denotes the adapted toroidal
tangential divisorial data with centre contained in \(Y\).
Let \(E\in\mathcal E_{\mathrm{tor}}(Y)\).  By
Theorem~\ref{thm:classification}, after replacing the adapted model by a
common adapted toroidal refinement, \(E\) is represented by an ordinary
divisorial datum \(F\) over a normalised stratum
\[
V\in \Strata(W),
\qquad
V=S_\alpha^\nu
\quad\text{or}\quad
V=C_{\alpha\beta}^\nu .
\]
Let \(B_V\) be the adjunction boundary on \(V\).  By
Proposition~\ref{prop:atan-invariance},
\[
a_{\tan}(E;X,\cF,\Delta)
=
a(F;V,B_V).
\]
Thus
\[
\tmldtor(Y;X,\cF,\Delta)
=
\inf_{\substack{
(V,B_V)\\
F\ \mathrm{divisorial\ over}\ V\\
c_X(F)\subset Y
}}
a(F;V,B_V),
\]
where \(V\) ranges over the normalised branches and normalised conductors
appearing on adapted toroidal refinements of the separatrix system.

Conversely, let \(F\) be a divisorial datum over a normalised stratum
\(V\).  If \(V=S_\alpha^\nu\), the maximal divisorial set
\(N_q(F)\subset J_\infty(S_\alpha^\nu)\) defines an adapted tangential
divisorial cylinder by the branch-sector identification.  If
\(V=C_{\alpha\beta}^\nu\), the same construction gives a conductor-type
tangential divisorial cylinder through the coequaliser presentation of the
branch--conductor arc functor.  Hence every datum appearing on the right-hand
side comes from an adapted toroidal tangential divisorial datum on the left.

It remains to check multiplicities along conductors.  If
\(V=C_{\alpha\beta}^\nu\), the same conductor cylinder has two branch
representatives, one on \(S_\alpha^\nu\) and one on \(S_\beta^\nu\).  By
Proposition~\ref{prop:no-double-count}, these two representatives induce the
same adjunction boundary on \(C_{\alpha\beta}^\nu\), and the conductor datum
is counted once in the branch--conductor coequaliser.  Therefore no extra
factor or duplicate contribution appears in the infimum.
Combining the two inclusions gives
\[
\tmldtor(Y;X,\cF,\Delta)
=
\inf_{\substack{
V\in\mathcal S_{\mathrm{tor}}(X,\cF)\\
F\ \mathrm{divisorial\ over}\ V\\
c_X(F)\subset Y
}}
a(F;V,B_V),
\]
which is the claimed branch--conductor formula.
\end{proof}

\begin{Corollary}
The pair is tangentially log canonical along $Y$ if and only if each normalised branch and conductor pair $(V,B_V)$ is log canonical along $Y_V$.
\end{Corollary}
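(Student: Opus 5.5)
The plan is to read the corollary off Theorem~\ref{thm:ioa}, using the standard characterisation of log canonicity by minimal log discrepancies. First I fix the definition. I interpret ``tangentially log canonical along $Y$'' as $\tmldtor(Y';X,\cF,\Delta)\ge 0$ for every closed tangent set $Y'\subset Y$. Equivalently, $a_{\tan}(E;X,\cF,\Delta)\ge0$ for every $E\in\mathcal E_{\mathrm{tor}}(Y)$. On the branch--conductor side, I use the usual fact for an ordinary pair $(V,B_V)$: it is log canonical along a closed set $Y_V$ if and only if $\mld(Y_V;V,B_V)\ge 0$. That is, every divisor $F$ over $V$ with centre in $Y_V$ has $a(F;V,B_V)\ge0$. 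Here I use the convention in which an mld equal to $-\infty$ signals non-lc; for surfaces and curves this is the standard one.

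Next I apply Theorem~\ref{thm:ioa}. It gives
\[
\tmldtor(Y;X,\cF,\Delta)=\inf_{V\in\mathcal S_{\mathrm{tor}}(X,\cF)}\mld(Y_V;V,B_V).
\]
An infimum is $\ge 0$ if and only if every term is $\ge0$. So $\tmldtor(Y)\ge0$ holds exactly when $\mld(Y_V;V,B_V)\ge0$ for each normalised branch and conductor $V$. Conductor strata are counted once, but that does not affect the sign condition. The same argument applies verbatim to every closed $Y'\subset Y$, since $(Y')_V\subset Y_V$. This turns the pointwise version of tangential lc into the pointwise version on each $V$.

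The step needing the most care is the passage from the mld condition to genuine log canonicity of $(V,B_V)$ along $Y_V$. The infimum in Theorem~\ref{thm:ioa} is taken over adapted toroidal data only. So I must check that, on each $V$, restricting to toroidal divisors $F$ does not lose non-lc divisors. My first attempt uses log smoothness. By Proposition~\ref{prop:monomial-codim} and the SNC structure of $\Theta_\alpha$, the pair $(S_\alpha^\nu,\Theta_\alpha)$ is log smooth, with coefficients $1$ on invariant traces and $a_k$ on transverse traces; the conductor pairs are curves. For a log smooth pair, the mld along a closed set is computed by toroidal (monomial) divisors over the strata, and lc is equivalent to all coefficients being $\le 1$. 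Both sides therefore reduce to the condition $a_k\le 1$ on the transverse components meeting $Y$, together with the conductor boundary coefficients being $\le1$, which Proposition~\ref{prop:no-double-count} identifies with the same $a_k$. This gives the equivalence in both directions.
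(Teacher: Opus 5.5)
Your proposal is correct and follows the paper's route: the corollary comes from Theorem~\ref{thm:ioa}, together with the definition of tangential toroidal log canonicity as $\tmldtor\ge 0$ and the criterion that $(V,B_V)$ is log canonical along $Y_V$ if and only if $\mld(Y_V;V,B_V)\ge 0$. Your final log-smoothness paragraph is harmless but not needed, because the right-hand side of Theorem~\ref{thm:ioa} is already the ordinary $\mld$ of $(V,B_V)$ rather than an infimum over toroidal divisors only.
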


\section{Applications of the tangential formula}
\label{sec:applications}

Several formal consequences are recorded of the tangential codimension formula.
They should be viewed as arc-space refinements, in the adapted toroidal sector,
of adjunction phenomena known from the foliated MMP \cite{CSMMP,Spicer,SS,CSAdj}.  Throughout this section
\(\mathcal S_{\mathrm{tor}}(X,\cF)\) denotes the finite normalised
branch--conductor system associated with the fixed adapted toroidal category.
For \(V\in \mathcal S_{\mathrm{tor}}(X,\cF)\), write \((V,B_V)\) for the
ordinary adjunction pair and \(\rho_V:V\to X\) for the natural morphism.

\begin{Corollary}[Tangential toroidal inversion of adjunction]
\label{cor:tangential-ioa}
Let \(Y\subset X\) be a closed tangent set.  Under the hypotheses of the
tangential codimension formula,
\[
\tmldtor(Y;X,\cF,\Delta)
=
\inf_{V\in\mathcal S_{\mathrm{tor}}(X,\cF)}\mld(Y_V;V,B_V).
\]
In particular, \((X,\cF,\Delta)\) is tangentially toroidal log canonical
along \(Y\) if and only if every adjunction pair \((V,B_V)\) is log canonical
along \(Y_V\).
\end{Corollary}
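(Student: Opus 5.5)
The displayed equality is Theorem~\ref{thm:ioa} restated; the only new content is the log canonicity criterion. I would derive both from that theorem and the definition of $\tmldtor$. The plan has three steps.

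\emph{Step 1: the equality.} Invoke Theorem~\ref{thm:ioa} directly. The phrase ``under the hypotheses of the tangential codimension formula'' only fixes the sector: every datum is represented on a normalised branch or conductor stratum by Theorem~\ref{thm:classification}. It also supplies model independence via Proposition~\ref{prop:atan-invariance}. So no further argument is needed for the displayed formula beyond checking that $\mathcal E_{\mathrm{tor}}(Y)$ matches the data used there.

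\emph{Step 2: fix the definition of log canonicity.} I would define tangential toroidal log canonicity along $Y$ as $a_{\tan}(E;X,\cF,\Delta)\ge 0$ for every $E\in\mathcal E_{\mathrm{tor}}(Y)$. With this definition it is equivalent to $\tmldtor(Y;X,\cF,\Delta)\ge 0$.

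\emph{Step 3: the log canonicity criterion.} The equality of Step~1 turns $\tmldtor\ge 0$ into $\mld(Y_V;V,B_V)\ge 0$ for every $V\in\mathcal S_{\mathrm{tor}}(X,\cF)$. I would then use the standard convention for ordinary pairs. For a pair of dimension $\ge 2$, $\mld(Y_V;V,B_V)=-\infty$ as soon as it is negative, so nonnegativity is exactly log canonicity along $Y_V$. For curves $V=C_{\alpha\beta}^\nu$, the mld at a point is $1-\coeff_p B_V$, and nonnegativity again means lc at $Y_V$.

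\emph{Main obstacle.} The one subtle point is that the infimum ranges only over adapted toroidal divisorial data, not over all divisors over $V$. One must therefore check that, on each $(V,B_V)$, the toroidal divisors already detect log canonicity. My approach is to use Theorem~\ref{thm:classification} together with the fact that $(V,B_V)$ is log smooth near the relevant centres in the SNC charts. By Proposition~\ref{prop:monomial-codim} and Proposition~\ref{prop:elementary-blowups}, boundary coefficients are at most $1$. For a log smooth pair, log canonicity, and the sign of the mld, is decided by toroidal blow-ups of strata. This is the standard fact that a log smooth pair with coefficients in $[0,1]$ is lc and that its mld is computed on toroidal divisors. Hence the toroidal infimum and the full $\mld$ have the same sign, and the equivalence follows.
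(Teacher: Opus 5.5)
Your argument matches the paper's: the displayed equality is Theorem~\ref{thm:ioa} verbatim. The lc criterion then follows by taking tangential toroidal log canonicity to mean non-negativity of $\tmldtor$ (equivalently $a_{\tan}\ge 0$ on all of $\mathcal E_{\mathrm{tor}}(Y)$) and reading $\inf_V\mld(Y_V;V,B_V)\ge 0$ stratum by stratum.

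Your ``main obstacle'' paragraph is unnecessary, for two reasons. Once you cite Theorem~\ref{thm:ioa}, the right-hand side is already the full mld. Moreover, the paper's definition of an adapted toroidal tangential datum admits every prime divisor over a normalised branch surface. Separately, Propositions~\ref{prop:monomial-codim} and~\ref{prop:elementary-blowups} do not show that boundary coefficients are at most $1$; that bound would have to be assumed on $\Delta$.
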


\begin{proof}
This is Theorem~\ref{thm:ioa} together with the definition of tangential
toroidal log canonicity as non-negativity of the adapted toroidal tangential
minimal log discrepancy.  Equivalently, by Corollary~\ref{thm:intro-emy} and
its proof in Section~\ref{sec:emy}, every adapted toroidal tangential
divisorial cylinder is represented by an ordinary maximal divisorial cylinder
on a unique normalised branch or conductor stratum, up to the conductor
identifications, and its logarithmic codimension is the corresponding ordinary
log discrepancy.  Taking the infimum gives the displayed equality and the log
canonical criterion.
\end{proof}

\begin{Definition}
The adapted toroidal tangential non-lc and non-klt loci are
\[
\Nlc_{\tan,\mathrm{tor}}(X,\cF,\Delta)
:=\{p\mid \tmldtor(p;X,\cF,\Delta)<0\}
\]
and
\[
\Nklt_{\tan,\mathrm{tor}}(X,\cF,\Delta)
:=\{p\mid \tmldtor(p;X,\cF,\Delta)\le 0\}.
\]
The inequalities are understood for the adapted toroidal tangential
invariant constructed in this article.
\end{Definition}

\begin{Corollary}[Tangential non-lc and non-klt loci]
\label{cor:loci}
With notation as above,
\[
\Nlc_{\tan,\mathrm{tor}}(X,\cF,\Delta)
=
\bigcup_{V\in\mathcal S_{\mathrm{tor}}(X,\cF)}
\rho_V\bigl(\Nlc(V,B_V)\bigr).
\]
Similarly,
\[
\Nklt_{\tan,\mathrm{tor}}(X,\cF,\Delta)
=
\bigcup_{V\in\mathcal S_{\mathrm{tor}}(X,\cF)}
\rho_V\bigl(\Nklt(V,B_V)\bigr).
\]
Conductor strata are counted once.
\end{Corollary}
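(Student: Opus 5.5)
The plan is to deduce both equalities pointwise from Theorem~\ref{thm:ioa}, applied with $Y=\{p\}$ for a closed point $p$. For such $p$ this gives
\[
\tmldtor(p;X,\cF,\Delta)=\inf_{V\in\mathcal S_{\mathrm{tor}}(X,\cF)}\mld(\rho_V^{-1}(p);V,B_V),
\]
with conductor strata counted once, as guaranteed by Proposition~\ref{prop:no-double-count}. The fibre $\rho_V^{-1}(p)$ is a finite union of closed subsets of $V$. For an ordinary pair, $\mld$ along a closed subset is the infimum of the pointwise minimal log discrepancies at the points of that subset. Hence the right-hand side is the infimum of $\mld(v;V,B_V)$ over all pairs $(V,v)$ with $v\in V$ and $\rho_V(v)=p$.

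For the non-lc locus, I use the standard facts for ordinary pairs in dimension at most two. First, $\mld(v;V,B_V)<0$ exactly when $v\in\Nlc(V,B_V)$. Second, in that case the value is $-\infty$ when $\dim V\geq 2$; for curves the conventions still make $\mld<0$ equivalent to lying in $\Nlc$. Then $p\in\Nlc_{\tan,\mathrm{tor}}$ if and only if $\tmldtor(p)<0$. This holds if and only if some $V$ has a point $v$ over $p$ with $\mld(v;V,B_V)<0$, that is, $p\in\rho_V(\Nlc(V,B_V))$.

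The finiteness of $\mathcal S_{\mathrm{tor}}(X,\cF)$ matters here. Because it is finite, and each fibre $\rho_V^{-1}(p)$ is a finite union of points or curves, a strictly negative infimum is attained by a single pair $(V,v)$ up to passing to a point. I expect this to follow because $\mld(\cdot;V,B_V)$ takes only finitely many values on a log smooth surface pair with fixed coefficients. If log smoothness is not available, I would instead use lower semicontinuity of $\mld$ on surfaces to handle the case of a fibre that is a curve.

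The non-klt locus is handled the same way, with the condition $\tmldtor(p)\le 0$. The step I expect to be the main obstacle is matching the pointwise condition $\mld(v;V,B_V)\le 0$ with membership in $\Nklt(V,B_V)$. The difficulty is that $\Nklt$ includes the support of coefficient-one components of $B_V$: the invariant traces $B_{ij}$ and the conductor components. At a general point $v$ of such a trace, $\mld(v;V,B_V)$ in the sense of valuations with centre exactly $v$ (a point) is $\geq 1$. However, the divisor $B_{ij}$ itself has log discrepancy $0$.

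To resolve this, I would read $\mld(Y_V;V,B_V)$ with the convention used in Theorem~\ref{thm:ioa}, where divisors $F$ with $c_X(F)\subset Y$ are allowed. With this convention the trace $B_{ij}$, when it lies over $p$ through a conductor stratum, is accounted for on the conductor curve $C^\nu_{\alpha\beta}$. On $C^\nu_{\alpha\beta}$ the point $p$ has log discrepancy $1-\operatorname{coeff}_p\Theta_{\alpha\beta}$, and this value detects non-kltness there. In other words, the coefficient-one traces that make the branch pair non-klt are exactly the conductor strata. Their non-klt contribution is recorded by the conductor pairs, with no double count by Proposition~\ref{prop:no-double-count}. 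Checking this bookkeeping carefully is the delicate part. Once it is done, the union over all $V$ gives the stated formula for $\Nklt_{\tan,\mathrm{tor}}$.
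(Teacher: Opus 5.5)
\paragraph{Non-lc part.} Your argument here is the paper's. You apply Theorem~\ref{thm:ioa} with $Y=\{p\}$ and rewrite the result as $\min_V\inf_{q\in\rho_V^{-1}(p)}\mld(q;V,B_V)$. You then use that $\mld(q;V,B_V)<0$ exactly when $q\in\Nlc(V,B_V)$. No attainment is needed in this case: an infimum that is $<0$ already has a term $<0$.

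\paragraph{Non-klt part: the proposed fix fails.} The gap is at the step you flagged yourself. You claim that the non-kltness along coefficient-one traces is recorded by the conductor pairs, and that claim is false. The conductor pair does not see the coefficient-one trace. By Proposition~\ref{prop:no-double-count}, $\Theta_{\alpha\beta}$ is the restriction of $\Theta_\alpha-C_{\alpha\beta}$. So at a general closed point $p$ of $C^\nu_{\alpha\beta}$ one has $1-\coeff_p\Theta_{\alpha\beta}=1$, not $\le 0$.

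Here is a concrete counterexample. Take $\Dinv=S_x+S_y$ with no transverse boundary, and let $p$ be a general point of $C_{xy}$, which lies in $\Sing\cG$. Every datum over $p$ has log discrepancy $\ge 1$:
\begin{itemize}
\item divisors over $S_x$ centred at a point of $C_{xy}$ have $a\ge 1$, since $(S_x,C_{xy})$ is plt, and the point blow-up gives exactly $2-1=1$;
\item the conductor datum $\ord_p$ gives $1$;
\item on the ruled exceptional branch over $C_{xy}$, the fibre over $p$ meets the two disjoint coefficient-one sections in distinct points, so all closed-point mld's on it are $\ge 1$.
\end{itemize}
Hence $\tmldtor(p)=1$ and $p\notin\Nklt_{\tan,\mathrm{tor}}$. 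But $p\in\rho_{S_x}\bigl(\Nklt(S_x,\Theta_x)\bigr)$ for the standard non-klt locus, since that locus contains $\Supp$ of the coefficient-one components. So with the standard meaning of $\Nklt(V,B_V)$, the second equality is false, and no bookkeeping through conductor strata can rescue it.

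\paragraph{How the paper handles it, and how to repair your proof.} The paper's proof does no such bookkeeping. It avoids the issue by convention: it reads $\Nklt(V,B_V)$ as the closed-point locus $\{q:\mld(q;V,B_V)\le 0\}$ (``the convention defining $\Nklt$, equivalently $\mld\le 0$ in the present normalisation''). With that reading, the non-klt case is the same unpacking as the non-lc case.

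To repair your proof, state this convention explicitly and drop the conductor argument. What you then genuinely need, and what the paper leaves implicit, is that the infimum is attained, because $\inf\le 0$ does not by itself give a term $\le 0$. Your remarks supply this: $\mathcal S_{\mathrm{tor}}$ is finite, and on each curve or surface pair the closed-point mld takes finitely many values. Alternatively, use lower semicontinuity on the proper fibres $\rho_V^{-1}(p)$.
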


\begin{proof}
For a point \(p\) in the adapted tangential centre, the branch--conductor
formula gives
\[
\tmldtor(p;X,\cF,\Delta)
=
\min_{V\in\mathcal S_{\mathrm{tor}}(X,\cF)}
\inf_{q\in \rho_V^{-1}(p)}
\mld(q;V,B_V),
\]
with the convention that the infimum over an empty fibre is \(+\infty\).
By definition,
\[
p\in \Nlc_{\tan,\mathrm{tor}}(X,\cF,\Delta)
\]
if and only if
\[
\tmldtor(p;X,\cF,\Delta)<0.
\]
Using the displayed formula, this is equivalent to the existence of a stratum
\(V\in\mathcal S_{\mathrm{tor}}(X,\cF)\) and a point
\(q\in \rho_V^{-1}(p)\) such that
\[
\mld(q;V,B_V)<0.
\]
Equivalently,
\[
q\in \Nlc(V,B_V)
\quad\text{and}\quad
p=\rho_V(q).
\]
Thus
\[
\Nlc_{\tan,\mathrm{tor}}(X,\cF,\Delta)
=
\bigcup_{V\in\mathcal S_{\mathrm{tor}}(X,\cF)}
\rho_V\bigl(\Nlc(V,B_V)\bigr).
\]
The proof for the non-klt locus is identical, replacing the condition
\(\mld<0\) by the convention defining \(\Nklt\), equivalently
\(\mld\le 0\) in the present normalisation.  The convention that conductor
data are counted once is part of Theorem~\ref{thm:ioa}, so no additional
component is introduced by the two branch maps from a conductor.
\end{proof}

\begin{Lemma}[Lower semicontinuity on the branch--conductor strata]
\label{lem:lsc-strata}
Let \((V,B_V)\) be one of the normalised branch or conductor adjunction pairs
appearing in the adapted toroidal system.  Then the function
\[
q\longmapsto \mld(q;V,B_V)
\]
is lower semicontinuous on closed points of \(V\).
\end{Lemma}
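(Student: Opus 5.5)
The plan is to split according to the dimension of $V$ and, in each case, reduce to known semicontinuity results for minimal log discrepancies on log smooth or low-dimensional pairs. Conductor strata $V=C_{\alpha\beta}^\nu$ are smooth curves. For a closed point $q$ we have $\mld(q;V,B_V)=1-\coeff_q(B_V)$, and $B_V=\Theta_{\alpha\beta}$ is a finite sum of points, so there are three possibilities:
\begin{itemize}
\item if $q\notin\Supp B_V$, the value is $1$;
\item if $q\in\Supp B_V$ and $\coeff_q(B_V)\le 1$, the value is $1-\coeff_q(B_V)\le 1$;
\item if $\coeff_q(B_V)>1$, the value is $-\infty$.
\end{itemize}
The function therefore takes its generic value $1$ on a dense open set and can only drop at finitely many points. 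Any function with this behaviour is lower semicontinuous: each set $\{q:\mld(q)>c\}$ is either empty (when $c\ge 1$) or the complement of a finite set of points, hence open.

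For branch strata $V=S_\alpha^\nu$, the Remark after the definition of $\Strata(W)$ and Theorem~\ref{thm:normalised-adjunction} show that $V$ is a smooth surface on the adapted model. Near the relevant points, $B_V=\Theta_\alpha$ is supported on a simple normal crossing divisor made of the invariant traces (each with coefficient one) and the transverse traces $T_k|_{S_\alpha}$ (with coefficients $a_k$); the conductor different vanishes because $S_\alpha$ is normal. For a log smooth surface pair the mld at a closed point is given explicitly:
\begin{itemize}
\item if all coefficients are at most $1$, then $\mld(q;V,B_V)=2-\sum_{D\ni q}\coeff_D(B_V)$ when $q$ is not a generic point of a boundary curve, following the computation in Proposition~\ref{prop:elementary-blowups} (blow up $q$ and check that further blow-ups do not lower the value);
\item it is $-\infty$ when some coefficient exceeds $1$, and one should check whether $\mld<0$ forces $-\infty$ under the convention used.
\end{itemize}
Thus the function is constant on each locally closed stratum of the SNC stratification: it equals $2$ off the boundary, $2-b_D$ on the open part of a curve $D$, and $2-b_D-b_{D'}$ at a node. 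Since $b\ge 0$, the value can only decrease under specialisation to a smaller stratum, because specialisation adds nonnegative coefficients. Each set $\{q:\mld(q)>c\}$ is then a union of strata that is closed under generisation in a finite stratification, and hence open. Alternatively, one can simply cite Ambro's lower semicontinuity theorem for minimal log discrepancies in dimension at most three, which applies to surfaces and curves in general.

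The main obstacle is the global structure. The SNC description holds only in the adapted charts, while $V$ may contain points outside the tangential locus where $\Theta_\alpha$ has not been shown to be SNC. I would first work on the open subset covered by logarithmic simple adapted charts. If that does not suffice, I would fall back on Ambro's theorem (in dimension $\le 3$, so valid for any normal surface pair) together with the normalisation $V$ being a normal surface. That gives lower semicontinuity without needing a global SNC hypothesis.
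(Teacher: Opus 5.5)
Your proposal is correct and follows the paper's proof. You split by $\dim V\le 2$ and use $\mld(q;V,B_V)=1-\coeff_q(B_V)$ on the normalised conductor curves. Whether you write $-\infty$ or the negative number when the coefficient exceeds $1$ does not matter, since the boundary is effective and the value only drops at finitely many points. On branch surfaces you invoke Ambro's lower semicontinuity theorem, as the paper does.

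The one difference is in the surface case. Your primary argument there is the explicit log smooth formula $\mld(q;V,B_V)=2-\sum_{D\ni q}\coeff_D(B_V)$, together with its monotonicity under specialisation along the finite SNC stratification. This is more elementary and self-contained. It suffices in this setting because $\Dinv+\Supp\Delta_W$ is SNC, so the branches are smooth and $\Theta_\alpha$ is an SNC divisor with coefficients at most one. The paper's direct appeal to Ambro, which you keep as a fallback, covers arbitrary normal surface pairs without that bookkeeping.
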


\begin{proof}

Each stratum in the normalised branch--conductor system has dimension at most
two.  If \(V=C_{\alpha\beta}^{\nu}\) is a normalised conductor curve, then the
assertion follows from the one-dimensional formula for ordinary minimal log
discrepancies; at closed points it is given by \(1-\coeff_q(B_V)\).  If
\(V=S_\alpha^\nu\) is a normalised branch surface, it follows from Ambro's
two-dimensional lower semicontinuity theorem for ordinary minimal log
discrepancies \cite{AmbroMLD}.  Equivalently, after a log resolution of the
surface pair, the arc-space formula of Ein--Musta\c{t}\u{a}--Yasuda gives the
same statement on the smooth surface, and properness transfers it back to
\(V\) \cite{EMY}.
\end{proof}

\begin{Corollary}[Lower semicontinuity of the tangential toroidal mld]
\label{cor:lsc}
The function
\[
p\longmapsto \tmldtor(p;X,\cF,\Delta)
\]
is lower semicontinuous on the adapted tangential centre.
\end{Corollary}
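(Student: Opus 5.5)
The plan is to reduce the statement to Lemma~\ref{lem:lsc-strata} via the pointwise branch--conductor formula already used in the proof of Corollary~\ref{cor:loci}:
\[
\tmldtor(p;X,\cF,\Delta)=\min_{V\in\mathcal S_{\mathrm{tor}}(X,\cF)}\ \inf_{q\in\rho_V^{-1}(p)}\mld(q;V,B_V),
\]
with the convention that an empty fibre contributes \(+\infty\). This formula is Theorem~\ref{thm:ioa} applied to \(Y=\{p\}\). Here we use that \(Y_V=\rho_V^{-1}(p)\), and that the minimal log discrepancy of a closed subset is the infimum of the minimal log discrepancies at its closed points. Since \(\mathcal S_{\mathrm{tor}}(X,\cF)\) is finite and a minimum of finitely many lower semicontinuous functions is lower semicontinuous, it suffices to prove the following for each fixed \(V\): the function
\[
g_V(p):=\inf_{q\in\rho_V^{-1}(p)}\mld(q;V,B_V)
\]
is lower semicontinuous on the adapted tangential centre.

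For a fixed \(V\), I would use that \(\rho_V:V\to X\) is proper. It is the composite of the finite normalisation map \(V\to W\) with the projective birational morphism \(W\to X\). Lower semicontinuity of \(g_V\) means that for every real \(c\) the set \(\{p: g_V(p)\le c\}\) is closed. The minimal log discrepancy takes values in a discrete set here, because \((V,B_V)\) has dimension at most two with fixed finite coefficients. On a surface, \(\mld\) at closed points takes finitely many values on the given pair, computed on one fixed log resolution. Hence the infimum over a fibre is attained at some closed point \(q\). Therefore
\[
\{p:g_V(p)\le c\}=\rho_V\bigl(\{q:\mld(q;V,B_V)\le c\}\bigr).
\]
By Lemma~\ref{lem:lsc-strata}, the set inside is closed in \(V\), and properness makes its image closed in \(X\). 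For \(\mld=-\infty\) at non-lc points, the same argument applies with \(c=-\infty\), since the non-lc locus is closed.

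The main obstacle is the step that attains the infimum. We need the infimum over the (possibly positive-dimensional) fibre \(\rho_V^{-1}(p)\) to equal a value taken at a closed point, and we need a point of \(X\) to be a closed tangent set to which Theorem~\ref{thm:ioa} applies. I would handle the first via finiteness of the value set of \(q\mapsto\mld(q;V,B_V)\): fix a log resolution of \((V,B_V)\); on a curve the formula \(1-\coeff_q(B_V)\) gives this directly. Lower semicontinuity then shows that the sublevel sets form a finite chain of closed subsets, so a fibre meeting a sublevel set meets it in a closed point. For the second, restrict to \(p\) in the adapted tangential centre, which is how the statement is phrased, with the conductor counted-once convention inherited from Theorem~\ref{thm:ioa}. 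Together these give the lower semicontinuity asserted in Corollary~\ref{cor:lsc}.
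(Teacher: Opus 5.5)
Your argument follows the paper's proof. It uses the same closed-point formula $\tmldtor(p;X,\cF,\Delta)=\min_V\varphi_V(p)$, the finiteness of the branch--conductor system, Lemma~\ref{lem:lsc-strata} on each stratum, and closedness of images under the proper maps $\rho_V$.

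You go further than the paper in one place, namely attainment of the infimum over fibres. The paper passes directly from ``$\rho_V(\{\mld\le a\})$ is closed'' to ``$\varphi_V$ is lower semicontinuous''. That step tacitly uses $\{\varphi_V\le a\}=\rho_V(\{\mld\le a\})$, and your finiteness-of-values argument correctly supplies it. No discreteness is actually needed: $\{\varphi_V\le c\}=\bigcap_{\epsilon>0}\rho_V\bigl(\{q:\mld(q;V,B_V)\le c+\epsilon\}\bigr)$ is an intersection of closed sets.

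One sentence of your first paragraph is false in general. The invariant $\mld(Z;V,B_V)$ is the infimum of $a(F;V,B_V)$ over all $F$ with centre contained in $Z$, and these centres include curves, not only closed points. Suppose $\rho_V^{-1}(p)$ contains a curve $C$ on a branch surface $V$. Then the divisor $C$ itself contributes $1-\coeff_C(B_V)$, while a general closed point of $C$ gives $2-\coeff_C(B_V)$ (when $\coeff_C(B_V)\le 1$). So Theorem~\ref{thm:ioa} with $Y=\{p\}$ does not literally give the closed-point formula when $\rho_V$ has positive-dimensional fibres.

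The paper uses the same closed-point formula without comment; it is quoted in the proof of Corollary~\ref{cor:loci}. So this does not separate you from the paper, but your stated justification should be corrected. Replace $\varphi_V(p)$ by its minimum with $\inf\{1-\coeff_C(B_V): C\subset\rho_V^{-1}(p)\ \text{a curve}\}$; the resulting function is still lower semicontinuous:
\begin{itemize}
\item For $c<1$, the sublevel set of the new term lies in the finite set of images of contracted components of $\Supp B_V$, so it is closed.
\item For $c\ge 1$, the sublevel set is the locus where $\rho_V$ has positive-dimensional fibres. This is closed by upper semicontinuity of fibre dimension and properness of $\rho_V$.
\end{itemize}
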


\begin{proof}
Fix an object of the adapted toroidal category and let
\(\mathscr S_{\mathrm{tor}}\) be its finite branch--conductor system.  For each
\(V\in\mathscr S_{\mathrm{tor}}\), let \(\rho_V:V\to X\) be the natural proper
morphism and define
\[
\varphi_V(p):=
\inf_{q\in \rho_V^{-1}(p)}
\mld(q;V,B_V),
\]
with the convention that \(\varphi_V(p)=+\infty\) if the fibre is empty.  By
Lemma~\ref{lem:lsc-strata}, the mld function on \(V\) is lower
semicontinuous.  Therefore, for every real number \(a\), the set
\[
\{q\in V\mid \mld(q;V,B_V)\le a\}
\]
is closed, and its image under the proper map \(\rho_V\) is closed.  It
follows that \(\varphi_V\) is lower semicontinuous.  Since
\[
\tmldtor(p;X,\cF,\Delta)
=
\min_{V\in\mathscr S_{\mathrm{tor}}}\varphi_V(p),
\]
and the branch--conductor system is finite, \(\tmldtor\) is lower
semicontinuous.
\end{proof}

\begin{Corollary} 
\label{cor:cylinder-criterion}
The pair \((X,\cF,\Delta)\) is tangentially toroidal log canonical along a
closed tangent set \(Y\) if and only if
\[
a_{\tan}(E;X,\cF,\Delta)\ge 0
\]
for every adapted toroidal tangential divisorial datum \(E\) centred in \(Y\).
Equivalently, for every associated cylinder \(N_q^{\tan}(E)\),
\[
\lcodim_{\tan}\bigl(N_q^{\tan}(E)\bigr)\ge 0
\]
after the normalisation of logarithmic codimension used in Definition
\ref{def:tangential-cylinder-operational}.
\end{Corollary}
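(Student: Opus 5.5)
The plan is to reduce the corollary to the definition of tangential toroidal log canonicity, namely non-negativity of \(\tmldtor(Y;X,\cF,\Delta)\) (the convention used in Corollary~\ref{cor:tangential-ioa}), and then to translate this into cylinder language via Theorem~\ref{thm:tangential-emy}.

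\emph{First equivalence.} By the definition of \(\tmldtor\) as an infimum over \(\mathcal E_{\mathrm{tor}}(Y)\), the inequality \(\tmldtor(Y;X,\cF,\Delta)\ge 0\) holds if and only if \(a_{\tan}(E;X,\cF,\Delta)\ge 0\) for every \(E\in\mathcal E_{\mathrm{tor}}(Y)\). This is the elementary fact that an infimum of real numbers is non-negative exactly when every member is. There is no attainment issue, because only the weak inequality is needed. Here \(a_{\tan}\) is well defined by Proposition~\ref{prop:atan-invariance}, so the condition does not depend on the representative or on the branch presentation. As a consistency check, I would also note that by Theorem~\ref{thm:ioa} this is the same as requiring every pair \((V,B_V)\) to be log canonical along \(Y_V\). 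Conductor data are counted once, so no datum is omitted or duplicated.

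\emph{Second equivalence.} Fix \(E\in\mathcal E_{\mathrm{tor}}(Y)\). Theorem~\ref{thm:tangential-emy} gives
\[
\lcodim_{\tan}\bigl(N_q^{\tan}(E)\bigr)=q\,a_{\tan}(E;X,\cF,\Delta)
\]
for every \(q\ge1\). Since \(q>0\), we have \(\lcodim_{\tan}(N_q^{\tan}(E))\ge0\) for one (equivalently, all) \(q\) if and only if \(a_{\tan}(E)\ge0\). The cylinders \(N_q^{\tan}(E)\) attached to data centred in \(Y\) are exactly the associated cylinders in the statement. Their logarithmic codimension is the quantity \(\codim-\ord(B_V)\) of Definition~\ref{def:tangential-cylinder-operational}. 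It is independent of the representative by Lemma~\ref{lem:codim-independence}, so the inequality is meaningful.

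\emph{Main obstacle.} The only delicate point is checking that the family of "associated cylinders" matches \(\mathcal E_{\mathrm{tor}}(Y)\) exactly. Every cylinder must come from a datum centred in \(Y\), and conversely. I would handle this with Theorem~\ref{thm:classification} and the converse part of the proof of Theorem~\ref{thm:ioa}. Together these show that branch and conductor maximal divisorial sets \(N_q(F)\) with \(c_X(F)\subset Y\) are precisely the cylinders \(N_q^{\tan}(E)\) for \(E\in\mathcal E_{\mathrm{tor}}(Y)\).
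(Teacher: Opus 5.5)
Your proposal is correct and follows the paper's proof: the first equivalence is the definition of tangential toroidal log canonicity, since $\tmldtor(Y;X,\cF,\Delta)\ge 0$ exactly when each $a_{\tan}(E;X,\cF,\Delta)\ge 0$, and the second follows from the tangential Ein--Musta\c{t}\u{a}--Yasuda formula (Theorem~\ref{thm:tangential-emy}) because $q>0$. The matching step you flag as the main obstacle is not actually needed, because the cylinders in the statement are by definition the $N_q^{\tan}(E)$ indexed by the data $E\in\mathcal E_{\mathrm{tor}}(Y)$.
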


\begin{proof}
The first condition is the definition of tangential toroidal log canonicity in
terms of the adapted toroidal tangential discrepancies.  The equivalence with
logarithmic codimensions follows from the tangential
Ein--Musta\c{t}\u{a}--Yasuda formula
\[
\lcodim_{\tan}\bigl(N_q^{\tan}(E)\bigr)
= q\,a_{\tan}(E;X,\cF,\Delta),
\]
with \(q>0\).
\end{proof}

\begin{Remark}[Comparison with the foliated MMP approach]
Foliated adjunction and foliated inversion of adjunction are known by MMP
methods, especially through F-dlt modifications and the adjunction theory of
Cascini--Spicer and Spicer--Svaldi \cite{CSMMP,SS,CSAdj}.  Carter's jet
schemes of foliations give the infinitesimal language of strong tangency and
total separatrices \cite{Carter,CarterThesis}.  The corollaries above express
the corresponding tangential toroidal adjunction phenomena as equalities of
arc-space codimensions.  
\end{Remark}

\section{Adapted tangent/transverse decomposition}
\label{sec:transverse}

\begin{Proposition}
\label{prop:snc-comparison}
On an adapted model there are two ambient pairs which play different roles:
\[
B_W^{\tan}=\Dinv+\Delta_W,
\qquad
B_W^{\mathrm tr}=\Delta_W.
\]
For an adapted divisorial cylinder whose generic arc lies in $\Dinv$, the calculation of Ein--Musta\c{t}\u{a}--Yasuda \cite{EMY} for $(W,B_W^{\tan})$ restricts to the tangential computation on the corresponding branch or conductor.  For an adapted divisorial cylinder whose generic arc lies in $W\setminus \Dinv$, the invariant divisor is not part of the boundary and the computation is the ordinary one for the transverse ambient pair $(W,\Delta_W)$, hence after descent for $(X,\Delta)$.
\end{Proposition}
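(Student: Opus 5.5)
The plan is to split according to the generic point of the cylinder and, in each case, compare the ambient Ein--Musta\c{t}\u{a}--Yasuda computation directly with the branch or conductor computation already available. Let $C=N_q(E_v)$ be an adapted divisorial cylinder on $W$ coming from a toroidal ray $v$ in a cone $\sigma$ of the local fan of $\Dinv+\Supp\Delta_W$.

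\emph{Tangential case.} Suppose the generic arc of $C$ lies in $\Dinv$. By Theorem~\ref{thm:formal-rep} and Corollary~\ref{cor:branch-decomp}, together with Theorem~\ref{thm:system-exists}, the arc lies on a unique branch $S_i$ or a unique conductor, after an adapted refinement if necessary. The step I would take first is to compare ambient with branch codimension. For a cylinder of arcs on $S_i$, the codimension in $J_\infty(W)$ is infinite. So the meaningful restriction is the following: the EMY formula for $(W,B_W^{\tan})$ computes $a(E_v;W,\Dinv+\Delta_W)$. By ordinary adjunction for the SNC pair $(W,\Dinv+\Delta_W)$ along the coefficient-one component $S_i$, we have $(K_W+\Dinv+\Delta_W)|_{S_i}=K_{S_i}+\Theta_i$. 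Hence the toroidal log discrepancy of the monomial valuation on $S_i$, namely $v$ restricted to the face of $\sigma$ given by the remaining coordinates, is computed by the same linear function. Concretely, I would use the toroidal formula $a(E_v;W,B)=\sum_j(1-b_j)v_j$. The invariant coordinates have $b_j=1$ and drop out, exactly as in Proposition~\ref{prop:monomial-codim} and Corollary~\ref{cor:coeff-one}. The same expression is then $\lcodim_{(S_i,\Theta_i)}$ of the corresponding contact cylinder. The conductor case follows in the same way by a second adjunction, using Proposition~\ref{prop:no-double-count}. Finally, Theorem~\ref{thm:tangential-emy} identifies this common value with $q\,a_{\tan}$.

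\emph{Transverse case.} Suppose the generic arc of $C$ lies in $W\setminus\Dinv$. Then $\ord_C(\Dinv)$ is finite. The foliated convention takes $\epsilon=1$ for such transverse data, so the invariant divisor is not part of the boundary. The EMY formula \cite{EMY} for the smooth log pair $(W,\Delta_W)$ then applies verbatim. Descent to $(X,\Delta)$ follows from the crepant relation $K_W+\Delta_W=\pi^*(K_X+\Delta)$ on the transverse part, together with birational invariance of log discrepancies.

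The main obstacle is making precise the sense in which the ambient computation "restricts" in the tangential case, since the ambient codimension of a cylinder contained in $J_\infty(\Dinv)$ is infinite. I would resolve this by stating the comparison at the level of the linear discrepancy functions on the cone, rather than at the level of codimensions of the same set. Concretely, I would show that the ambient toroidal discrepancy function restricted to the face recording the branch coordinates equals the branch log-codimension function. This restriction is legitimate because every coefficient-one invariant coordinate contributes zero to both sides.
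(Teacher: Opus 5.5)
Your tangential case is correct, but it argues differently from the paper. The paper never computes on $(W,B_W^{\tan})$. It represents the generic arc on a stratum $V$ via Corollary~\ref{cor:branch-decomp}, invokes \emph{foliated} adjunction $(K_{\cG}+\Delta_W)|_V\sim_{\bQ}K_V+B_V$, and applies Ein--Musta\c{t}\u{a}--Yasuda directly on $(V,B_V)$. The word ``restricts'' is left informal there.

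You instead use ordinary adjunction of the SNC pair $(W,\Dinv+\Delta_W)$ along the coefficient-one component $S_i$, which produces the same $\Theta_i$. You combine it with the toroidal formula $a(E_v;W,B)=\sum_j(1-b_j)v_j$. This shows that the ambient log-discrepancy function on $\sigma$ is independent of every coordinate with $b_j=1$. Hence it descends to the log-discrepancy function of $(S_i,\Theta_i)$, and, by a second adjunction, to that of the conductor.

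What your route buys is a precise meaning for ``restricts''. It faces the issue you correctly identify: a tangential cylinder has infinite codimension in $J_\infty(W)$, so no ambient cylinder is literally the tangential one. The coefficient-one cancellation then appears simply as $b_j=1$. The price is that the argument is tied to monomial data, which is all the proposition concerns.

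For the same reason, your opening set-up is vacuous: a cylinder $N_q(E_v)\subset J_\infty(W)$ whose generic arc lies in $\Dinv$ does not exist. Take the tangential cylinder in $J_\infty(S_i)$ from the start. Then compare it with $N_q(E_v)$ for any lift $v$ of its weight vector, rescaling $q$ if the projected vector is not primitive.

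In the transverse case, your justification should be replaced. The invariant $\epsilon$ is attached to divisors, not to arcs. An invariant exceptional divisor $E$ over a tangent centre has $\epsilon(E)=0$, yet the generic arc of $N_q(E)\subset J_\infty(W)$ lies off $\Dinv$. So ``$\epsilon=1$ for such data'' does not explain why $\Dinv$ leaves the boundary.

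The paper's reason is different. Such an arc lies outside the reduced tangential sector, which equals $J_\infty(\Dinv;\Sigmatan)_{\red}$ by Theorem~\ref{thm:formal-rep}. The coefficient-one boundary $\Dinv$ only arises through adjunction on that sector, so only $\Delta_W$ remains.

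Also, the relation $K_W+\Delta_W=\pi^*(K_X+\Delta)$ that you use for descent is not supplied by the setting. The adapted models there are related by foliated pull-back formulas instead, so you should state this relation as an assumption. The paper's own descent step is equally informal.
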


\begin{proof}
Let \(C\) be an irreducible cylinder and let \(\alpha_C:\Spec K[[t]]\to W\) be its generic arc. Suppose first that \(\alpha_C(\eta)\in \Dinv\), where \(\eta\) is the generic point of \(\Spec K[[t]]\). By Corollary~\ref{cor:branch-decomp}, \(\alpha_C\) is represented on a normalised stratum \(V\) of the branch--conductor system, where \(V\) is either a normalised branch \(S_\alpha^\nu\) or a normalised conductor \(C_{\alpha\beta}^\nu\). Let \(B_V\) be the corresponding boundary supplied by Theorem~\ref{thm:normalised-adjunction}.
By invariant-divisor adjunction, the restriction of the ambient logarithmic class to \(V\) is
\[
(K_{\mathcal G}+\Delta_W)|_V\sim_{\mathbb Q} K_V+B_V.
\]
Thus the logarithmic codimension used in the tangential sector agrees with the ordinary logarithmic codimension of the cylinder on the pair \((V,B_V)\). Applying the formula of Ein--Musta\c{t}\u{a}--Yasuda \cite{EMY} on \((V,B_V)\) gives precisely the tangential discrepancy computed by the branch--conductor system.

Now, suppose that \(\alpha_C(\eta)\notin \Dinv\). Then the arc may have finite contact order with \(\Dinv\), but it is not generically contained in the invariant separatrix divisor. Hence it is not part of the reduced tangential sector. In this transverse sector the invariant divisor \(\Dinv\) is not part of the boundary controlling tangential logarithmic codimension. The relevant ordinary pair is therefore the ambient pair with boundary \(\Delta_W\), not \(\Dinv+\Delta_W\).
Consequently the calculation of Ein--Musta\c{t}\u{a}--Yasuda \cite{EMY} gives the discrepancy of the ambient transverse pair \((W,\Delta_W)\). After descent from the adapted model to \(X\), this is the ordinary discrepancy contribution of the transverse sector. Thus the tangential branch--conductor computation applies precisely to the arcs generically contained in \(\Dinv\), while arcs generically outside \(\Dinv\) are computed by the ambient ordinary pair with boundary \(\Delta_W\).
\end{proof}

\begin{Corollary}
After passing to irreducible components, adapted toroidal divisorial cylinders decompose into a closed tangential sector governed by $\tmldtor$ and an open transverse sector governed by the ordinary invariant $\mld(X,\Delta)$.
\end{Corollary}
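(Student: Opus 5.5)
The plan is to derive the corollary by sorting irreducible components of an adapted toroidal divisorial cylinder according to the position of their generic arc relative to \(\Dinv\), and then invoking Proposition~\ref{prop:snc-comparison} on each piece. First, a cylinder in \(J_\infty(W)\) has only finitely many irreducible components, since it is the preimage of a constructible set in a finite jet scheme. So we reduce to one irreducible component \(C\) with generic arc \(\alpha_C\). The dichotomy \(\alpha_C(\eta)\in\Dinv\) versus \(\alpha_C(\eta)\notin\Dinv\) then splits the components into two classes.

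\emph{Tangential class.} The set \(J_\infty(\Dinv)\) is closed in \(J_\infty(W)\); it is cut out by the equations \(f(\gamma(t))=0\) with \(f=x_1\cdots x_r\). Hence every arc of a component with \(\alpha_C(\eta)\in\Dinv\) lies in \(J_\infty(\Dinv)\). By Theorem~\ref{thm:formal-rep}, and restricting to centres in \(\Sigmatan\) as in the definition of the sector, such a component lies in \(J_\infty^{\tan}(W,\cG;\Sigmatan)_{\red}\), so the sector is closed. Corollary~\ref{cor:branch-decomp} and Theorem~\ref{thm:classification} represent it on a branch or conductor stratum. Theorem~\ref{thm:tangential-emy} and Theorem~\ref{thm:ioa} then show that its logarithmic codimensions are computed by \(a_{\tan}\), and the infimum over these is \(\tmldtor\).

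\emph{Transverse class.} The condition \(\alpha(\eta)\notin\Dinv\) is the complement of the closed set \(J_\infty(\Dinv)\), so this sector is open. By the second half of Proposition~\ref{prop:snc-comparison}, the discrepancies of this class are computed by the Ein--Musta\c{t}\u{a}--Yasuda formula for \((W,\Delta_W)\). Descent through the crepant relation \(K_W+\Delta_W=\pi^*(K_X+\Delta)\) identifies the infimum with \(\mld(X,\Delta)\) along the relevant centres.

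The main obstacle is the transverse statement. The ambient relation for the log pair \((W,\Delta_W)\) compared to \((X,\Delta)\) must be crepant, or the adapted model must at least be chosen so that the log discrepancies of divisors over \(W\) agree with those over \(X\). The paper only writes the foliated relation \(K_{\cG'}+\Delta_{W'}\) explicitly. I would therefore read ``governed by'' as ``computed by the ordinary EMY formula for the pair obtained by descent'', which is exactly what Proposition~\ref{prop:snc-comparison} asserts. In the first instance I would take \(\Delta_W\) to be the crepant log pull-back of \(\Delta\), so that \(a(E;W,\Delta_W)=a(E;X,\Delta)\) by definition. This avoids any new computation, and the statement follows.
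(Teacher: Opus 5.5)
You follow the paper's intended route: the corollary has no separate proof and is read off Proposition~\ref{prop:snc-comparison} by sorting according to whether the generic arc lies in $\Dinv$. However, the sorting cannot be done in the space where you do it. Since $W$ is smooth and $f=x_1\cdots x_r\neq0$, the closed set $J_\infty(\Dinv)$ is thin in $J_\infty(W)$. If $\gamma\in\pi_m^{-1}(S)$, adding $t^N$ to each coordinate of $\gamma$, for a suitable $N>m$, gives an arc of the same cylinder on which $f$ does not vanish. Since the fibres of the truncation $\pi_m$ are irreducible, $J_\infty(\Dinv)$ meets every irreducible component of a cylinder of $J_\infty(W)$ in a proper closed subset, so no such component has its generic arc in $\Dinv$. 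Read literally, your tangential class is therefore empty. Moreover, the numbers supplied by Theorem~\ref{thm:tangential-emy} are codimensions in $J_\infty(V)$, not in $J_\infty(W)$.

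The tangential sector has to be defined directly as the family of the $N_q^{\tan}(E)$, i.e.\ the images of the $N_q(F)\subset J_\infty(V)$ inside the closed set $J_\infty(\Dinv)$. The decomposition is then the union of this family with the ordinary toroidal divisorial cylinders of $J_\infty(W)$, all of which are transverse. In this reading ``closed'' means ``contained in $J_\infty(\Dinv)$'', and finiteness of components is argued in $J_\infty(V)$. The proof of Proposition~\ref{prop:snc-comparison} has the same conflation, so you inherit it, but your write-up should not. Also, do not restrict to centres in $\Sigmatan$. In an $r=1$ chart $\Sing\cG\cap\Dinv=\varnothing$, yet branch data centred on $S_\alpha\cap T_k$ enter $\tmldtor$; with your restriction, components with generic arc in $\Dinv$ and centre off $\Sigmatan$ fall into neither sector. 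Theorem~\ref{thm:carter-representability} holds for every closed $Z\subset\Dinv$, and that is what you need.

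You are right that descent is the weak point (the paper only asserts it), but your repair does not work, because $\Delta_W$ is not at your disposal. It is the foliated boundary $\sum_k a_kT_k$ of the adapted model, with transverse components in SNC position with $\Dinv$, and it defines the branch boundaries $\Theta_\alpha$ and hence $\tmldtor$. The ordinary crepant boundary $\Delta_W^{\mathrm{cr}}$, defined by $K_W+\Delta_W^{\mathrm{cr}}=\pi^*(K_X+\Delta)$, differs from it on $\pi$-exceptional divisors. In particular it contains invariant exceptional components with coefficient $1-a(E;X,\Delta)$; for instance the coefficient is $-2$ when $E$ comes from blowing up a smooth point of $X$ off $\Supp\Delta$ at which three invariant surfaces meet. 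Setting $\Delta_W=\Delta_W^{\mathrm{cr}}$ therefore changes the tangential side, since Theorem~\ref{thm:normalised-adjunction} is set up for transverse boundary components only. It also makes contact with $\Dinv$ count in the transverse sector, contrary to the description $B_W^{\mathrm{tr}}=\Delta_W$ with $\Dinv$ excluded, which you claim to be reproducing.

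What is needed is two boundaries on $W$: the foliated $\Delta_W$ for the tangential sector, and $\Delta_W^{\mathrm{cr}}$, including its components in $\Dinv$, for the transverse one. You also need $(W,\Delta_W^{\mathrm{cr}})$ to be log smooth with support in the toroidal boundary $\Dinv+\Supp\Delta_W$. Otherwise the infimum over adapted toroidal data is only an upper bound for $\mld(Y;X,\Delta)$, and ``governed by $\mld(X,\Delta)$'' is not yet proved.
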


\section{Tangential Mather--Jacobian refinement}
\label{sec:mj}

In this section boundaries on possibly non-normal strata are always translated into the ideal-theoretic language used in Mather--Jacobian theory.  If
$
B_V=\sum_j b_jB_j
$
is a boundary on a reduced equidimensional stratum $V$, write
\begin{equation}\label{eq:mj-boundary-ideal}
\bbound_V:=\prod_j I_{B_j}^{b_j}
\end{equation}
as a formal product of ideals.  Thus
\[
a_{\MJ}(F;V,B_V)
\quad\text{means}\quad
a_{\MJ}(F;V,\bbound_V)
=\ord_F(\widehat K_{Y/V}-J_{Y/V})-\sum_j b_j\val_F(I_{B_j})+1.
\]

The Mather--Jacobian correction is attached to a singular separatrix space on
$X$, not to a smooth resolved branch where the correction is usually trivial.
It is therefore necessary to distinguish the resolved branch system on an adapted model from
the canonical seminormal image in $X$ associated with the fixed toroidal
category.

\begin{Construction}[Canonical image separatrix system on $X$]
\label{const:downstairs-sep}
Let $W\in\Adm(X,\cF,\Delta;W_0)$ and consider
\[
\sS_W^{\sn}\longrightarrow W\longrightarrow X.
\]
The reduced scheme-theoretic image of this morphism has a seminormalisation,
denoted $\sS_X^{\sn}$.  Its surface components are precisely the images of
those invariant separatrix branches whose image in $X$ has dimension two;
model-exceptional branches mapping to curves or points are retained only as
resolving strata and do not create new surface components on $X$.  The
normalisations of the surface components and of the reduced conductor images
are denoted $V_X^\nu$.  This construction uses only the fixed adapted
toroidal category and is algebraic, since it is formed from proper images of
algebraic divisors and their reduced intersections.
\end{Construction}

\begin{Proposition}
\label{prop:downstairs-canonical}
The germ of $\sS_X^{\sn}$ along a tangential centre is independent, up to unique isomorphism over $X$, of the object of $\Adm(X,\cF,\Delta;W_0)$ used in Construction \ref{const:downstairs-sep}, once the initial adapted model $W_0$ has been fixed.  More precisely, if two objects of this toroidal category are dominated by a common adapted toroidal refinement, then the reduced images of their seminormal separatrix spaces agree near the generic points of the corresponding tangential centres, and their seminormalisations are canonically identified.
\end{Proposition}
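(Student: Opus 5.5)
The plan is to reduce everything to a comparison along a single adapted coordinate blow-up. Let $W_1,W_2\in\Adm(X,\cF,\Delta;W_0)$ and choose, by Theorem~\ref{thm:common-refinement}, a common adapted toroidal refinement $U$ with morphisms $\rho_i:U\to W_i$. Since each $\rho_i$ is a composition of adapted coordinate blow-ups and the morphisms $\sS_U^{\sn}\to\sS_{W_i}^{\sn}$ of Proposition~\ref{prop:separatrix-functor} commute with the structure maps to $X$, it is enough to prove the following. For one adapted blow-up $\mu:W'\to W$, the reduced images of $\sS_{W'}^{\sn}\to X$ and $\sS_W^{\sn}\to X$ coincide near the generic points of the tangential centres. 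Once this is known, the two seminormalisations are seminormalisations of the same reduced scheme, and the universal property of seminormalisation gives a unique isomorphism over $X$.

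For the single-step comparison I would use Proposition~\ref{prop:branch-functoriality} case by case.

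\emph{Strict transforms.} A strict transform $S'_\gamma$ of an old branch $S_\alpha$ maps birationally onto $S_\alpha$. Since $\mu$ is proper and surjective onto $S_\alpha$, it has the same image in $X$.

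\emph{Curve centre.} If the centre is a conductor curve $C_{\alpha\beta}$, the exceptional branch $E$ maps onto $C_{\alpha\beta}$. Its image in $X$ is therefore contained in the image of $C_{\alpha\beta}$, which already lies in the old image. The new conductors $E\cap S'_\alpha$ and $E\cap S'_\beta$ also map onto $C_{\alpha\beta}$.

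\emph{Point centre.} If the centre is a triple point, $E$ and the new conductors map to the old point or to old conductors.

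In both exceptional cases nothing new appears in the reduced image. Conversely, every old branch and every old conductor is dominated by a stratum of $W'$, namely its strict transform or, for a blown-up conductor, $E$. So the reduced images are equal. Since $\sS_W^{\sn}\to W$ is finite (Lemma~\ref{lem:sn-pushout}) and $W\to X$ is proper, these images are closed. Hence the equality holds as reduced closed subschemes, not only set-theoretically.

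Next I would check the claim in Construction~\ref{const:downstairs-sep} that the surface components are exactly the two-dimensional images of branches. This is preserved because the exceptional branches produced above have images of dimension at most one. The conductor images are handled the same way: a new conductor maps onto an old conductor or into an old triple point, so the set of reduced conductor images in $X$ near the centre is unchanged. The normalisations $V_X^\nu$ are then canonically identified by the universal property of normalisation.

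I expect the main obstacle to be the scheme-theoretic rather than set-theoretic identification near the generic points of the tangential centres. The reduced image of a finite union of closed images is the reduced structure on their union, so equality of the sets suffices to identify the reduced images. Uniqueness of the isomorphism over $X$ then follows because the images are reduced and the seminormalisation maps are birational homeomorphisms, so any two isomorphisms over $X$ agree on a dense open set and hence everywhere.

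I would also verify that the isomorphisms obtained through different common refinements agree. Two refinements $U,U'$ of $W_1,W_2$ themselves admit a common refinement, and all the comparison maps are identities on the common reduced image. This gives the asserted canonicity.
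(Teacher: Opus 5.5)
Your argument rests on the same observation as the paper's proof. Strict transforms of branches have the same image in $X$, and newly created invariant exceptional components map into old strata. Hence the reduced image of $\sS^{\sn}_W\to X$ does not depend on the model, and the seminormalisation of a fixed reduced closed subscheme of $X$ is unique up to unique isomorphism.

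The two proofs differ in organisation:
\begin{itemize}
\item The paper compares every object of $\Adm(X,\cF,\Delta;W_0)$ directly with the fixed image $Z_X$ of the non-exceptional branches of $W_0$. This gives an explicit description of the common image and makes the common refinement essentially superfluous.
\item You propagate equality of images along the towers $U\to W_i$. This needs $U$, but never needs to describe the common image.
\end{itemize}
Your closedness argument and your uniqueness argument for the isomorphism over $X$ are fine.

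\textbf{Gap in the single-step lemma.} You only check the centres listed in Proposition~\ref{prop:branch-functoriality}, namely conductor curves and triple points. Morphisms in $\Adm(X,\cF,\Delta;W_0)$, however, are blow-ups of arbitrary smooth strata of $\Dinv+\Supp\Delta_W$. These include curves $S_\alpha\cap T_k$ and points $S_\alpha\cap S_\beta\cap T_k$ or $S_\alpha\cap T_k\cap T_l$, all of which appear in Proposition~\ref{prop:elementary-blowups}. They also include transverse strata. Your case analysis says nothing about these.

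\textbf{Fix.} The missing step is easy, and there is a uniform argument. The defining condition of morphisms in $\Adm$ is $\Dinv'=(\mu^{-1}\Dinv)_{\red}$. Since $\mu$ is surjective, this gives $\mu(\Dinv')=\Dinv$. Since $\sS^{\sn}_W\to\Dinv$ is finite and surjective (Lemma~\ref{lem:sn-pushout}), the images of $\sS^{\sn}_{W'}$ and $\sS^{\sn}_W$ in $X$ coincide. The same argument applied to the composite $U\to W_i$ even avoids the induction on single blow-ups.

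\textbf{A false side claim.} You assert that the set of reduced conductor images on $X$ is unchanged. This fails in the full category. Blowing up $S_\alpha\cap T_k$ creates the conductor $E\cap S'_\alpha$, which maps isomorphically onto $S_\alpha\cap T_k$ rather than onto an old conductor. The paper's proof makes the same assertion. The proposition does not need it, since its content is only the reduced image and its seminormalisation, so you should drop that step rather than rely on it.
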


\begin{proof}
Let \(W_1,W_2\in\Adm(X,\cF,\Delta;W_0)\), and let
$
W_3 \xrightarrow{\mu_1} W_1
 $, $
W_3 \xrightarrow{\mu_2} W_2
$
be a common adapted toroidal refinement. Write
$
\pi_a:W_a\longrightarrow X,
 $ with $ a=1,2,3.
$
For each \(a\), let
\[
\mathfrak D_a=
\left(
\bigsqcup_{\alpha<\beta} C_{\alpha\beta,a}^{\nu}
\rightrightarrows
\bigsqcup_{\alpha} S_{\alpha,a}^{\nu}
\right)
\]
be the normalised branch--conductor diagram of \(W_a\). By Theorem~\ref{thm:common-refinement} and Proposition~\ref{prop:branch-functoriality}, there are morphisms of diagrams
\[
\mathfrak D_3\longrightarrow \mathfrak D_1,
\qquad
\mathfrak D_3\longrightarrow \mathfrak D_2.
\]
Equivalently, for \(a=1,2\), there are commutative diagrams
\[
\begin{tikzcd}
\displaystyle\bigsqcup_{\gamma<\delta} C_{\gamma\delta,3}^{\nu}
\arrow[r,shift left=.6ex] \arrow[r,shift right=.6ex] \arrow[d] &
\displaystyle\bigsqcup_{\gamma} S_{\gamma,3}^{\nu}
\arrow[d]\\
\displaystyle\bigsqcup_{\alpha<\beta} C_{\alpha\beta,a}^{\nu}
\arrow[r,shift left=.6ex] \arrow[r,shift right=.6ex] &
\displaystyle\bigsqcup_{\alpha} S_{\alpha,a}^{\nu}.
\end{tikzcd}
\]

After
shrinking \(X\) near the chosen tangential centre, the non-exceptional branches of the fixed initial object $W_0$ define a fixed finite
reduced collection of algebraic surface germs
\[
\mathcal S_X=\{S_1^X,\ldots,S_N^X\}
\]
such that every non-exceptional separatrix branch on every \(W_a\) maps to
one of the \(S_i^X\), and every \(S_i^X\) is obtained from a non-exceptional branch of the fixed initial model.  This follows because all objects of \(\Adm(X,\cF,\Delta;W_0)\) are obtained from \(W_0\) by coordinate stratum blow-ups, whose newly created exceptional branches have image of dimension at most one on \(X\). More
precisely, for every non-exceptional branch \(S_{\alpha,a}\subset W_a\)
there exists a unique index \(i(\alpha)\) such that
$
\overline{\pi_a(S_{\alpha,a})}_{\red}=S_{i(\alpha)}^X.
$
Exceptional branches created by the toroidal refinement are assigned no new
surface component on $X$; their images satisfy
\[
\overline{\pi_a(S_{\alpha,a})}_{\red}\subset S_i^X
\quad\text{or}\quad
\overline{\pi_a(S_{\alpha,a})}_{\red}\subset S_i^X\cap S_j^X
\]
for some \(i,j\).
Define the conductor curves on $X$ by scheme-theoretic intersection:
\[
C_{ij}^X:=(S_i^X\cap S_j^X)_{\red}.
\]
Then, for every conductor \(C_{\alpha\beta,a}\subset W_a\) whose adjacent
non-exceptional branches on $X$ are \(S_i^X\) and \(S_j^X\), one has
$
\overline{\pi_a(C_{\alpha\beta,a})}_{\red}=C_{ij}^X
$
near the generic point of the tangential centre. If one of the adjacent
branches is exceptional, then
$
\overline{\pi_a(C_{\alpha\beta,a})}_{\red}\subset C_{ij}^X
$
for the corresponding conductor image on $X$.

Hence the reduced image on $X$ of the whole branch--conductor diagram is
the fixed reduced scheme
\[
Z_X
:=
\left(
\bigcup_i S_i^X
\right)_{\red},
\qquad
\operatorname{Cond}(Z_X)
=
\left(
\bigcup_{i<j} C_{ij}^X
\right)_{\red}.
\]
In particular, for \(a=1,2,3\),
\[
\left(\pi_a\left(\bigcup_\alpha S_{\alpha,a}\right)\right)_{\red}=Z_X
\]
and
\[
\left(\pi_a\left(\bigcup_{\alpha<\beta}C_{\alpha\beta,a}\right)\right)_{\red}
=
\operatorname{Cond}(Z_X)
\]
after shrinking near the relevant generic point.

Let
$
Z_X^{\sn}\longrightarrow Z_X
$
be the seminormalisation. Since the reduced image \(Z_X\) on \(X\) is the
same for \(W_1\) and \(W_2\), the seminormalised bases over \(X\) obtained
from \(W_1\) and \(W_2\) are both canonically isomorphic to \(Z_X^{\sn}\).
Equivalently, there is a canonical identification over \(X\):
\[
\mathscr S_{X,1}^{\sn}
\cong
Z_X^{\sn}
\cong
\mathscr S_{X,2}^{\sn}.
\]

Thus the scheme on $X$ on which the Mather--Jacobian correction is
computed is independent of the adapted toroidal resolving model inside the
fixed category.  The Mather--Jacobian correction is therefore computed on the
fixed algebraic seminormal scheme \(Z_X^{\sn}\), while the upstairs
branch--conductor models serve only to resolve and present its strata.
\end{proof}

\begin{Definition}[Tangential Mather--Jacobian discrepancy]
Let $E$ be an adapted toroidal tangential divisorial datum represented by an ordinary divisorial datum $F$ over a fixed algebraic separatrix or conductor stratum $(V_X,\bbound_{V_X})$ on $X$.  Define
\[
a^{\tan}_{\MJ}(E;X,\cF,\Delta)=a_{\MJ}(F;V_X,\bbound_{V_X}),
\]
where the right-hand side is the ordinary Mather--Jacobian discrepancy of the scheme $V_X$ with the formal ideal product $\bbound_{V_X}$ encoding the boundary.
\end{Definition}

\begin{Proposition}[Mather--Jacobian invariance]
\label{prop:mj-invariance}
The number $a^{\tan}_{\MJ}$ is independent of the adapted resolved model dominating the fixed separatrix component.  If two adjacent branches induce the same normalised conductor stratum, the conductor value is computed once on that curve.
\end{Proposition}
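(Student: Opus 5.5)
The plan is to reduce everything to the fact that $a_{\MJ}$ is computed on a fixed scheme on $X$, namely the seminormal image $\sS_X^{\sn}$, which Proposition~\ref{prop:downstairs-canonical} identifies canonically and independently of the adapted model. Mather--Jacobian discrepancies of a divisorial valuation over a fixed scheme are birational invariants of the valuation. Only the choice of resolved presentation could therefore introduce dependence.

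First, given two adapted models $W_1,W_2\in\Adm(X,\cF,\Delta;W_0)$ presenting the datum $E$ over the same component $V_X$, I will use Theorem~\ref{thm:common-refinement} to pass to a common refinement $W_3$. By Proposition~\ref{prop:downstairs-canonical}, the reduced images of the three branch--conductor systems agree near the centre, and their seminormalisations are canonically identified with $Z_X^{\sn}$. Hence $V_X$ and $\bbound_{V_X}$ do not depend on $a\in\{1,2,3\}$. The boundary ideal is a formal product of ideals of components on $V_X$, which is defined downstairs and not on the $W_a$.

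Second, I will check that the representing divisorial datum $F$ over $V_X$ is the same valuation of the function field of $V_X$ in all three presentations. On $W_3$ the datum is read on a normalised stratum $V_3$ mapping to $V_{a}$ by the morphisms of diagrams of Proposition~\ref{prop:branch-functoriality}. Composing with $V_a\to V_X$, which is birational onto the component because only non-exceptional branches give surface components of $V_X$, yields compatible birational maps $V_3\to V_a\to V_X$. The valuation $\ord_F$ on $K(V_X)$ pulled back from $V_3$ therefore agrees with the one from $V_1$ and from $V_2$. I will then use that $a_{\MJ}(F;V_X,\bbound)=\ord_F(\widehat K_{Y/V_X}-J_{Y/V_X})-\sum b_j\val_F(I_{B_j})+1$ is independent of the resolution $Y$ on which $F$ appears, by the standard Ishii / de Fernex--Docampo invariance. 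Consequently the two values coincide.

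Third, I will treat the conductor case. If $C_{\alpha\beta}^\nu$ maps to a conductor curve $C_{ij}^X$, its two branch representatives induce the same valuation on $C_{ij}^X$ via the coequaliser of Lemma~\ref{lem:arcs-pushout}. They also induce the same boundary, either by Proposition~\ref{prop:no-double-count} or, downstairs, by the fact that $\bbound_{C_{ij}^X}$ is defined once on $C_{ij}^X$. So the value is computed once. The main obstacle is the second step: one has to ensure that exceptional branches over conductors, whose images are only curves, do not produce a different stratum $V_X$ for the same datum. I would handle this by insisting, as in the statement, that the generic branch descends to a fixed component. Data whose generic branch is exceptional are then read on the conductor image $C_{ij}^X$ through the projection $E^\nu\to C_{\alpha\beta}^\nu$, and the conductor argument above applies.
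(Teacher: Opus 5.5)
Your proposal is correct and follows essentially the paper's route. You fix $V_X$ and $\bbound_{V_X}$ via Proposition~\ref{prop:downstairs-canonical}, and then reduce to the resolution-independence of the ordinary Mather--Jacobian discrepancy of a fixed valuation. The paper gets this from a common resolution $Y'$ of $V_a$ and of a log resolution $Y$ of $(V_X,\bbound_{V_X})$ factoring through the Nash blow-up; you get it through the common adapted refinement $W_3$ and an explicit comparison of valuations. You then treat conductors, as the paper does, via Proposition~\ref{prop:no-double-count} and the single downstairs stratum $C_{ij}^X$.

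Your closing remark, that data on exceptional branches can be transported to $C_{ij}^X$ through $E^\nu\to C_{\alpha\beta}^\nu$, is neither needed nor quite right. A divisor over the ruled surface $E^\nu$ whose centre dominates $C_{\alpha\beta}^\nu$ induces no divisorial valuation on the curve. Such data lie outside the definition of $a^{\tan}_{\MJ}$ anyway, so the main argument is unaffected.
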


\begin{proof}
By Proposition~\ref{prop:downstairs-canonical}, the algebraic stratum on $X$ representing the datum is independent of the adapted toroidal model. Denote this fixed stratum by \(V_X\), and let \(B_{V_X}\) be the corresponding boundary, encoded as a formal product of ideals on \(V_X\).

Let \(F\) be the divisorial datum whose Mather--Jacobian discrepancy is being computed. Choose a log resolution
\[
\rho:Y\longrightarrow V_X
\]
of \((V_X,B_{V_X})\) which factors through the Nash blow-up of \(V_X\). The ordinary Mather--Jacobian discrepancy is
\[
a_{\mathrm MJ}(F;V_X,B_{V_X})
=
\ord_F(\widehat K_{Y/V_X}-J_{Y/V_X})
-
\ord_F(B_{V_X})
+
1,
\]
where \(\widehat K_{Y/V_X}\) is the Mather discrepancy divisor and \(J_{Y/V_X}\) is the divisor associated with the pullback of the Jacobian ideal of \(V_X\). This number is independent of the chosen log resolution factoring through the Nash blow-up.

Now let \(W_a\) be an adapted toroidal model whose branch--conductor system represents the same datum, and let \(V_a\) be the corresponding normalised branch or conductor stratum mapping to \(V_X\). By Proposition~\ref{prop:downstairs-canonical}, the morphism \(V_a\to X\) factors through the fixed stratum \(V_X\):
\[
\begin{tikzcd}
V_a \arrow[dr] \arrow[r] & V_X \arrow[d]\\
& X.
\end{tikzcd}
\]
After replacing \(V_a\) by a further adapted toroidal refinement, there exists a common resolution
\[
\begin{tikzcd}
& Y' \arrow[dl] \arrow[dr] &\\
V_a \arrow[dr] && Y \arrow[dl,"\rho"]\\
& V_X &
\end{tikzcd}
\]
with \(Y'\to V_X\) factoring through the Nash blow-up of \(V_X\). Therefore the Mather and Jacobian discrepancy divisors pull back to \(Y'\), and the coefficient along the divisor representing \(F\) is the same as the coefficient computed on \(Y\). Hence
\[
a_{\mathrm MJ}^{\mathrm adapted}(F;V_a)
=
a_{\mathrm MJ}(F;V_X,B_{V_X}).
\]
Thus the Mather--Jacobian correction obtained from any adapted branch presentation is the ordinary Mather--Jacobian value \cite{dFD,IshiiMather,EI} on the fixed stratum \(V_X\) over \(X\).

In the conductor case, Proposition~\ref{prop:no-double-count} identifies the two adjacent branch restrictions with a single conductor boundary on the normalised conductor \(C_{\alpha\beta}^{\nu}\). Downstairs this gives a single fixed conductor stratum \(V_X=C_{ij}^X\). Since a normal curve over \(\mathbb C\) is smooth, the Mather discrepancy term on \(V_X\) is the ordinary curve discrepancy term, while the Jacobian correction is the usual correction attached to the chosen ideal product \(B_{V_X}\). Consequently the conductor contribution is computed once on \(V_X\), not once from each adjacent branch. This proves that the adapted Mather--Jacobian value is independent of the adapted toroidal model and of the chosen branch--conductor presentation.
\end{proof}

\begin{Definition}
Let an adapted toroidal tangential datum $E$ be represented by an ordinary divisorial datum $F$ over $V_X$. For the corresponding cylinder $N_q(F)\subset J_\infty(V_X)$, set
\[
\lcodim^{\MJ}_{\tan}(N_q^{\tan}(E))=
\lcodim^{\MJ}_{(V_X,\bbound_{V_X})}(N_q(F)).
\]
\end{Definition}

\begin{Theorem}
\label{thm:mj-cylinder}
For every adapted toroidal tangential divisorial datum $E$ represented by an ordinary divisorial datum $F$ over a fixed separatrix or conductor stratum, and every $q\ge1$,
\[
\lcodim^{\MJ}_{\tan}\bigl(N_q^{\tan}(E)\bigr)
=q\,a^{\tan}_{\MJ}(E;X,\cF,\Delta).
\]
\end{Theorem}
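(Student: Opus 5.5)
The plan follows the proof of Theorem~\ref{thm:tangential-emy}, replacing the Ein--Musta\c{t}\u{a}--Yasuda theorem by its Mather--Jacobian analogue for arbitrary reduced equidimensional schemes with a formal product of ideals \cite{IshiiMather,EI,dFD}. By definition,
\[
\lcodim^{\MJ}_{\tan}\bigl(N_q^{\tan}(E)\bigr)=\lcodim^{\MJ}_{(V_X,\bbound_{V_X})}(N_q(F)),
\qquad
a^{\tan}_{\MJ}(E;X,\cF,\Delta)=a_{\MJ}(F;V_X,\bbound_{V_X}).
\]
The theorem therefore reduces to the ordinary identity
\[
\lcodim^{\MJ}_{(V_X,\bbound_{V_X})}(N_q(F))=q\,a_{\MJ}(F;V_X,\bbound_{V_X})
\]
on the fixed stratum $V_X$ of Proposition~\ref{prop:downstairs-canonical}. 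Well-definedness of both sides, that is, independence of the adapted model and of the branch presentation, is supplied by Propositions~\ref{prop:downstairs-canonical} and~\ref{prop:mj-invariance}.

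First, I will check that $V_X$ falls within the Mather--Jacobian framework. It is either a surface component of the reduced image $Z_X$ or its seminormalisation, or a conductor curve $C_{ij}^X$. In each case it is a reduced, equidimensional, algebraic variety, and by Construction~\ref{const:downstairs-sep} it is not merely a formal germ. This is exactly where the canonical image system is needed. Next, I fix a log resolution $\rho\colon Y\to V_X$ of $(V_X,\bbound_{V_X})$ that factors through the Nash blow-up and on which $F$ appears as a prime divisor. The existence of such a resolution is a standard consequence of Hironaka's theorem. On $Y$ I write $\ord_F(\widehat K_{Y/V_X})=\widehat k_F$ and $\ord_F(J_{Y/V_X})=j_F$.

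The core step is the de~Fernex--Docampo / Ein--Ishii computation. For a maximal divisorial set $N_q(F)$ in $J_\infty(V_X)$, its image under $J_\infty(\rho)$ has codimension $q(\widehat k_F+1)$ in the Mather sense; equivalently, its ordinary codimension is corrected by the Jacobian order $q\,j_F$. This gives the jet-codimension formula
\[
\codim^{\MJ} N_q(F)=q\bigl(\widehat k_F-j_F+1\bigr).
\]
Subtracting the boundary order contributes
\[
\ord_{N_q(F)}(\bbound_{V_X})=q\sum_j b_j\val_F(I_{B_j}),
\]
which is linear in $q$ because $\val_F$ is a valuation and the product of ideals is formal. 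Combining these with the displayed definition of $a_{\MJ}$ yields $q\,a_{\MJ}(F;V_X,\bbound_{V_X})$.

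The last step is to identify the tangential cylinder $N_q^{\tan}(E)$ with $N_q(F)\subset J_\infty(V_X)$. The cylinder $N_q^{\tan}(E)$ is represented upstairs on a branch or conductor $V_a$ through the coequaliser presentation of Theorem~\ref{thm:intro-canonical-model}. The map $V_a\to V_X$ is birational onto its image, and by Proposition~\ref{prop:mj-invariance} it admits a common resolution $Y'$ with $Y$. I therefore plan to show that the generic arcs of the two maximal divisorial sets correspond under $J_\infty(Y')\to J_\infty(V_X)$. In the conductor case, $V_X$ is a curve; there the statement is a direct order computation at a point, and Proposition~\ref{prop:no-double-count} ensures the datum is counted once.

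I expect the main obstacle to be this comparison step. One has to make sure that the Mather--Jacobian codimension computed on $V_X$ matches the cylinder obtained from the upstairs branch, particularly where $V_X$ is non-normal along conductors and $V_a\to V_X$ is only finite birational. My first approach will be to work entirely with $Y'$ and to use that $J_\infty(Y')\to J_\infty(V_X)$ is injective off the arcs contained in the non-isomorphism locus. The generic arc of $N_q(F)$ does not lie in that locus, since $F$ is a divisor over $V_X$.
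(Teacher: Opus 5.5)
Your proposal is correct and follows the same route as the paper. You unwind the definitions of $\lcodim^{\MJ}_{\tan}$ and $a^{\tan}_{\MJ}$ to reduce to the ordinary Mather--Jacobian arc formula of de~Fernex--Docampo, Ishii and Ein--Ishii on the fixed stratum $V_X$, with well-definedness from Propositions~\ref{prop:downstairs-canonical} and~\ref{prop:mj-invariance} and the branch--conductor identification of $N_q^{\tan}(E)$ with $N_q(F)\subset J_\infty(V_X)$. Your version is more careful than the paper's, whose one-line citation $\codim_{J_\infty(V_X)}N_q(F)=q\,a_{\MJ}(F;V_X,\bbound_{V_X})$ cannot hold as stated, since a plain codimension does not depend on the boundary; you correctly separate the Mather codimension $q(\widehat k_F+1)$, the Jacobian correction $q\,j_F$ and the boundary term $q\sum_j b_j\val_F(I_{B_j})$.
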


\begin{proof}
By Proposition~\ref{prop:downstairs-canonical}, the Mather--Jacobian calculation is performed on a fixed algebraic stratum \(V_X\), endowed with the formal product of ideals \(\bbound_{V_X}\). Let \(E\) be an adapted toroidal tangential divisorial datum and let \(F\) be the corresponding ordinary divisorial datum over \(V_X\). Under the branch--conductor identification, the tangential cylinder \(N_q^{\tan}(E)\) is identified with the ordinary maximal divisorial cylinder \(N_q(F)\subset J_\infty(V_X)\).

The ordinary Mather--Jacobian arc formula of de Fernex--Docampo, Ishii, and Ein--Ishii \cite{dFD,IshiiMather,EI} for the pair \((V_X,\bbound_{V_X})\) gives
\[
\codim_{J_\infty(V_X)}
N_q(F)
=
q\,a_{\mathrm MJ}(F;V_X,\bbound_{V_X}).
\]
By definition of the adapted tangential Mather--Jacobian discrepancy,
\[
a^{\tan}_{\MJ}(E;X,\cF,\Delta)
=
a_{\mathrm MJ}(F;V_X,\bbound_{V_X}).
\]
Moreover, by the branch--conductor identification of cylinders,
\[
\lcodim^{\MJ}_{\tan}\bigl(N_q^{\tan}(E)\bigr)
=
\codim_{J_\infty(V_X)}N_q(F).
\]
Combining these equalities gives
\[
\lcodim^{\MJ}_{\tan}\bigl(N_q^{\tan}(E)\bigr)
=
q\,a^{\tan}_{\MJ}(E;X,\cF,\Delta).
\]

Thus no new Mather--Jacobian arc formula is proved at the foliated level. The only additional ingredient is the branch--conductor presentation of the reduced tangential sector and the identification of the fixed stratum \(V_X\) over \(X\); after these identifications, the equality is precisely the ordinary Mather--Jacobian arc formula \cite{dFD,IshiiMather,EI} applied to \((V_X,\bbound_{V_X})\).
\end{proof}

\begin{Definition}
For a closed tangent set $Y\subset X$, let $\mathcal E_{\mathrm MJ}(Y)$ be the set of adapted toroidal tangential divisorial data $E$ with $c_X(E)\subset Y$ and with a fixed separatrix or conductor stratum on $X$.  Define
\[
\operatorname{tmld}_{\MJ}(Y;X,\cF,\Delta)
:=
\inf_{E\in\mathcal E_{\mathrm MJ}(Y)}
a^{\tan}_{\MJ}(E;X,\cF,\Delta).
\]
\end{Definition}

\begin{Theorem}
\[
\operatorname{tmld}_{\MJ}(Y;X,\cF,\Delta)=
\inf_{V_X\in\mathcal S_X(X,\cF)}\mld_{\MJ}(Y_{V_X};V_X,\bbound_{V_X}).
\]
\end{Theorem}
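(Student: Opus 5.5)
The plan follows the proof of Theorem~\ref{thm:ioa}, with the branch--conductor strata on adapted models replaced by the fixed algebraic strata $V_X$ of the canonical image separatrix system. By definition, $\operatorname{tmld}_{\MJ}(Y;X,\cF,\Delta)$ is the infimum of $a^{\tan}_{\MJ}(E;X,\cF,\Delta)=a_{\MJ}(F;V_X,\bbound_{V_X})$ over $E\in\mathcal E_{\mathrm MJ}(Y)$. The right-hand side is, by the ordinary definition of the Mather--Jacobian minimal log discrepancy, the infimum of $a_{\MJ}(F;V_X,\bbound_{V_X})$ over divisors $F$ over $V_X$ with centre in $Y_{V_X}$. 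The proof therefore reduces to a bijection, up to the conductor identifications, between the two indexing sets, together with equality of the values.

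The first inclusion sends each $E\in\mathcal E_{\mathrm MJ}(Y)$ to a representing datum $F$ over a fixed stratum $V_X\in\mathcal S_X(X,\cF)$. Proposition~\ref{prop:downstairs-canonical} makes $V_X$ independent of the adapted model, and Proposition~\ref{prop:mj-invariance} makes the value independent of the resolved presentation. The centre condition $c_X(E)\subset Y$ translates into $c_{V_X}(F)\subset Y_{V_X}$, because $V_X\to X$ factors the structure maps $V_a\to X$. This gives the inequality $\ge$.

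For the reverse inequality, start from a divisor $F$ over $V_X$ with centre in $Y_{V_X}$. Choose a log resolution $Y'\to V_X$ factoring through the Nash blow-up and dominating some normalised branch or conductor stratum $V_a$ of an adapted model, as in the proof of Proposition~\ref{prop:mj-invariance}. The maximal divisorial set $N_q(F)$ then defines a tangential cylinder through the coequaliser presentation of Lemma~\ref{lem:arcs-pushout} and the identification of Theorem~\ref{thm:formal-rep}, hence an element of $\mathcal E_{\mathrm MJ}(Y)$ with the same value. By Proposition~\ref{prop:no-double-count}, conductor data are counted once, as in Theorem~\ref{thm:ioa}.

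The main obstacle is this converse step. An arbitrary divisor $F$ over the singular surface $V_X$ need not be adapted toroidal, so it may not come from the fixed category $\Adm(X,\cF,\Delta;W_0)$. I would first try to read $\mathcal E_{\mathrm MJ}(Y)$ as including all divisorial data over the fixed strata, which the definition of $a^{\tan}_{\MJ}$ via an arbitrary ordinary datum $F$ over $V_X$ permits. If that reading is not available, I would restrict the right-hand side to toroidal data and check that the MJ-mld is still computed there. That check could use the log resolution factoring through the Nash blow-up together with Theorem~\ref{thm:mj-cylinder}, which identifies each value with a codimension of a cylinder in $J_\infty(V_X)$.
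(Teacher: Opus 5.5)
Your proposal is correct and follows essentially the paper's route. The paper likewise uses Propositions~\ref{prop:downstairs-canonical} and~\ref{prop:mj-invariance} to rewrite $\operatorname{tmld}_{\MJ}$ as the infimum of $a_{\MJ}(F;V_X,\bbound_{V_X})$ over data on the fixed strata, then appeals to the same two-sided infimum argument as in Theorem~\ref{thm:ioa}. Your first reading of the converse is the one the paper relies on, since its definition of a datum admits an arbitrary prime divisor over a normalised branch $S_\alpha^\nu$ (which maps properly and birationally onto its surface component of $V_X$), so the fallback of restricting to toroidal data is not needed.
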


\begin{proof}
The proof is the valuation-theoretic infimum argument applied to the fixed Mather--Jacobian strata on $X$.
Let \(Y\subset X\) be a closed tangent set and let \(\mathcal E_{\mathrm MJ}(Y)\) be the set of adapted toroidal tangential divisorial data with centre contained in \(Y\) and with a fixed separatrix or conductor stratum on \(X\). By definition,
$
\operatorname{tmld}_{\MJ}(Y;X,\cF,\Delta)
=
\inf_{E\in\mathcal E_{\mathrm MJ}(Y)}
a^{\tan}_{\MJ}(E;X,\cF,\Delta).
$
For each \(E\in\mathcal E_{\mathrm MJ}(Y)\), Proposition~\ref{prop:downstairs-canonical} gives a fixed algebraic stratum \(V_X\), endowed with its formal product of ideals \(\bbound_{V_X}\), and Proposition~\ref{prop:mj-invariance} identifies the adapted tangential Mather--Jacobian discrepancy with the ordinary value on the fixed stratum:
\[
a^{\tan}_{\MJ}(E;X,\cF,\Delta)
=
a_{\mathrm MJ}(F;V_X,\bbound_{V_X}),
\]
where \(F\) is the ordinary divisorial datum over \(V_X\) corresponding to \(E\).
Therefore
\[
\operatorname{tmld}_{\MJ}(Y;X,\cF,\Delta)
=
\inf_{\substack{(V_X,\bbound_{V_X})\\ F\ \text{over }V_X\\ c_X(F)\subset Y}}
a_{\mathrm MJ}(F;V_X,\bbound_{V_X}),
\]
where the infimum is taken over the fixed Mather--Jacobian strata on $X$ arising from the canonical image branch--conductor system.
This is the same infimum argument as in Theorem~\ref{thm:ioa}, with ordinary log discrepancies replaced by ordinary Mather--Jacobian discrepancies on the strata on $X$. The model independence required to pass between adapted toroidal resolutions is exactly Proposition~\ref{prop:mj-invariance}.
\end{proof}

\subsection{Intrinsic comparison under algebraisation}
\label{subsec:intrinsic-mj-algebraisation}

The Mather--Jacobian refinement constructed above is relative to the
canonical image separatrix system associated with the adapted toroidal
category.  Under an additional algebraisation hypothesis, it becomes
intrinsic downstairs.

\begin{Assumption}[Algebraic separatrix hypothesis]
\label{ass:algebraic-separatrix}
Let $Y\subset X$ be the tangential centre under consideration.  Assume that
every intrinsic formal separatrix of $(X,\cF)$ relevant to $Y$ is
algebraisable, and that the resulting algebraic separatrix system, after
seminormalisation and normalisation of branches and conductors, coincides
with the canonical image separatrix system constructed from the adapted
toroidal category.
\end{Assumption}

\begin{Corollary}[Intrinsic Mather--Jacobian comparison under algebraisation]
\label{cor:intrinsic-mj-algebraisation}
Under Assumption~\ref{ass:algebraic-separatrix},
$a_{\MJ}^{\tan}(E;X,\cF,\Delta)$ agrees with the Mather--Jacobian
discrepancy of the intrinsic downstairs separatrix system.  In particular,
it is independent of the adapted image presentation.
\end{Corollary}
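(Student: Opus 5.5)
The plan is to reduce the statement to the model-independence already established in Proposition~\ref{prop:mj-invariance} and Proposition~\ref{prop:downstairs-canonical}. The intrinsic separatrix system is assumed algebraisable and to coincide with the canonical image system after seminormalisation and normalisation. So the intrinsic Mather--Jacobian discrepancy should be the ordinary Mather--Jacobian discrepancy computed on the very same strata $V_X$ with the same ideal product $\bbound_{V_X}$.

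First, I would fix $Y$ and an adapted toroidal tangential datum $E$ centred over $Y$, represented by $F$ over a stratum $V_X$ of $\sS_X^{\sn}$. I would then use Assumption~\ref{ass:algebraic-separatrix} to produce the intrinsic algebraic separatrix system $\sS_X^{\mathrm{int}}$ near $Y$. Its seminormalisation comes with an isomorphism to $\sS_X^{\sn}$ over $X$, and this isomorphism is unique because both are seminormal and reduced and agree with $Z_X$ after taking reduced images. Under this isomorphism $V_X$ corresponds to a branch or conductor stratum $V_X^{\mathrm{int}}$ of the intrinsic system.

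Second, I would check that the boundaries match. The intrinsic boundary on $V_X^{\mathrm{int}}$ is built from the same data: coefficient-one traces of the other separatrix branches, transverse components of $\Delta$ with their coefficients, and the conductor different. Hence the ideal products agree, $\bbound_{V_X^{\mathrm{int}}}=\bbound_{V_X}$. This uses the description in Theorem~\ref{thm:normalised-adjunction} pushed to $X$, together with Proposition~\ref{prop:no-double-count} for the conductor strata. Since Mather--Jacobian discrepancies $a_{\MJ}(F;V,\bbound_V)$ depend only on the scheme $V$ and the ideal product, and are invariant under isomorphism over $X$, I conclude that $a^{\tan}_{\MJ}(E)=a_{\MJ}(F;V_X^{\mathrm{int}},\bbound_{V_X^{\mathrm{int}}})$. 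Independence of the adapted image presentation then follows: any other presentation produces, by Proposition~\ref{prop:downstairs-canonical}, the same canonical image, hence the same intrinsic stratum.

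The main obstacle is the second step, because matching the underlying schemes does not by itself match the boundaries. The Mather--Jacobian correction is computed on possibly non-normal $V_X$, and the boundary must be read intrinsically rather than through resolved branches. My first attempt would be to check equality of the ideals generically along each component. I would then argue that both sides are formal products of ideals of the same reduced divisors on $V_X$, so that generic agreement of coefficients suffices.
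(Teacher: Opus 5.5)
Your proposal follows the paper's proof: Assumption~\ref{ass:algebraic-separatrix} identifies the canonical image system with the intrinsic one, so $V_X$, $F$ and $\bbound_{V_X}$ are the same objects in both constructions, and $a^{\tan}_{\MJ}(E;X,\cF,\Delta)=a_{\MJ}(F;V_X,\bbound_{V_X})$ is already the intrinsic value. The one difference is that the paper reads ``coincides'' as identifying the whole system of branch and conductor pairs, boundary ideals included, so the boundary-matching step you flag as the main obstacle is absorbed into the hypothesis rather than recomputed (a recomputation would need an independent definition of the intrinsic boundary, which the paper never gives); likewise, independence of the presentation follows directly from the intrinsic right-hand side, not from Proposition~\ref{prop:downstairs-canonical}, which only compares objects over a fixed $W_0$.
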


\begin{proof}
By definition, the tangential Mather--Jacobian discrepancy is computed on
the canonical image separatrix system.  If $E$ is represented by a divisor
$F$ over a stratum $V_X$, and the induced boundary is encoded by a formal
product of ideals $\bbound_{V_X}$, then
\[
a_{\MJ}^{\tan}(E;X,\cF,\Delta)
=
a_{\MJ}(F;V_X,\bbound_{V_X}).
\]
Under the algebraic separatrix hypothesis, the canonical image system agrees
with the intrinsic algebraic separatrix system downstairs.  Therefore
$V_X$, $F$, and $\bbound_{V_X}$ are the same objects in both constructions.
The displayed number is exactly the intrinsic downstairs Mather--Jacobian
discrepancy.  If a different adapted presentation is chosen, the same
hypothesis identifies it with the same intrinsic separatrix system and the
same boundary ideal.  Hence the value is independent of the adapted image
presentation.
\end{proof}

\subsection{Multiplier ideals on the seminormal separatrix space}

\begin{Construction}[Tangential Mather--Jacobian multiplier ideal]
Let $\rho_\alpha:S_\alpha^\nu\to\sS_X^{\sn}$ be the finite branch maps and let $\mathfrak a$ be an ideal on $X$ whose pull-back is non-zero on every branch.  Define $\cJ^{\tan}_{\MJ}(\mathfrak a^t)$ as the subsheaf of $\cO_{\sS_X^{\sn}}$ whose pull-back to the normalisation $\coprod S_\alpha^\nu$ is
\[
\left\{(s_\alpha)_\alpha\in\bigoplus_\alpha
\cJ_{\MJ}(S_\alpha^\nu,\bbound_{S_\alpha};(\mathfrak a\cO_{S_\alpha^\nu})^t)
:\ s_\alpha|_{C_{\alpha\beta}^\nu}=s_\beta|_{C_{\alpha\beta}^\nu}\right\}.
\]
Equivalently it is the equaliser inside $\cO_{\sS_X^{\sn}}$ of the two conductor restriction maps.
The restrictions in the equaliser are the images of the inclusions
\[
\cJ_{\MJ}(S_\alpha^\nu,\bbound_{S_\alpha};(\mathfrak a\cO_{S_\alpha^\nu})^t)\subset \cO_{S_\alpha^\nu}
\]
under the quotient maps \(\cO_{S_\alpha^\nu}\to\cO_{C_{\alpha\beta}^\nu}\).  No equality of the restricted ideals from the two sides is assumed a priori; the equaliser imposes equality of their images as sections of \(\cO_{C_{\alpha\beta}^\nu}\).
\end{Construction}

The equaliser may be written explicitly as
\begin{equation}\label{eq:mj-equaliser-ideal}
\cJ^{\tan}_{\MJ}(\mathfrak a^t)=
\left\{(s_\alpha)_\alpha\in\bigoplus_\alpha \cJ_{\MJ,\alpha}:
\mathrm{res}_{\alpha\beta}^{\alpha}(s_\alpha)=
\mathrm{res}_{\alpha\beta}^{\beta}(s_\beta)\text{ for all }\alpha<\beta
\right\},
\end{equation}
inside the equaliser presentation of $\cO_{\sS_X^{\sn}}$.  The maps to $\cO_{C_{\alpha\beta}^\nu}$ are obtained by composing the inclusions $\cJ_{\MJ,\alpha}\subset\cO_{S_\alpha^\nu}$ with the quotient maps $\cO_{S_\alpha^\nu}\to\cO_{C_{\alpha\beta}^\nu}$; no equality of the restricted ideals is assumed a priori.  In this construction
\[
\cJ_{\MJ,\alpha}=
\cJ_{\MJ}(S_\alpha^\nu,\bbound_{S_\alpha};(\mathfrak a\cO_{S_\alpha^\nu})^t).
\]

\begin{center}
\begin{tikzcd}[column sep=large]
\cJ^{\tan}_{\MJ}(\mathfrak a^t) \arrow[r,hook] \arrow[d,hook] &
\displaystyle\bigoplus_\alpha \cJ_{\MJ,\alpha}
\arrow[r,shift left=.55ex] \arrow[r,shift right=.55ex] \arrow[d,hook] &
\displaystyle\bigoplus_{\alpha<\beta}\cO_{C_{\alpha\beta}^\nu}
\arrow[d,equal] \\
\cO_{\sS_X^{\sn}} \arrow[r,hook] &
\displaystyle\bigoplus_\alpha \cO_{S_\alpha^\nu}
\arrow[r,shift left=.55ex] \arrow[r,shift right=.55ex] &
\displaystyle\bigoplus_{\alpha<\beta}\cO_{C_{\alpha\beta}^\nu}
\end{tikzcd}
\end{center}

\begin{Theorem}
\label{thm:coherence}
With the conductor-compatible equaliser definition above, the sheaf $\cJ^{\tan}_{\MJ}(\mathfrak a^t)$ is a coherent ideal sheaf on $\sS_X^{\sn}$.
\end{Theorem}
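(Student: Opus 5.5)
The plan is to realise $\cJ^{\tan}_{\MJ}(\mathfrak a^t)$ as a kernel of a morphism of coherent sheaves on $\sS_X^{\sn}$, and then to use that kernels of morphisms between coherent sheaves are coherent. Write $\nu:\coprod_\alpha S_\alpha^\nu\to\sS_X^{\sn}$ and $\nu_{\alpha\beta}:C_{\alpha\beta}^\nu\to\sS_X^{\sn}$ for the natural maps. These are finite, since normalisation of excellent reduced schemes is finite and the conductor normalisations are finite over the reduced conductor images. Hence $\nu_*$ and $(\nu_{\alpha\beta})_*$ preserve coherence.

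First, each $\cJ_{\MJ,\alpha}$ is a coherent ideal sheaf on $S_\alpha^\nu$. This is the standard fact for Mather--Jacobian multiplier ideals: choose a log resolution $\rho:Y\to S_\alpha^\nu$ factoring through the Nash blow-up and principalising $\mathfrak a$ and $\bbound_{S_\alpha}$. Then $\cJ_{\MJ,\alpha}=\rho_*\cO_Y(\lceil \widehat K_{Y/S^\nu_\alpha}-J_{Y/S^\nu_\alpha}-t Z_{\mathfrak a}-\sum b_j Z_j\rceil)$, where $Z_{\mathfrak a}$ and $Z_j$ denote the divisors with $\mathfrak a\cO_Y=\cO_Y(-Z_{\mathfrak a})$ and $I_{B_j}\cO_Y=\cO_Y(-Z_j)$. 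This sheaf is coherent because $\rho$ is proper, and it is contained in $\cO_{S_\alpha^\nu}$ by the usual argument for ideals on normal (here smooth or normal surface) varieties \cite{EI,dFD}. Pushing forward along the finite maps gives coherent sheaves $\bigoplus_\alpha\nu_{\alpha*}\cJ_{\MJ,\alpha}$ and $\bigoplus_{\alpha<\beta}\nu_{\alpha\beta*}\cO_{C_{\alpha\beta}^\nu}$ on $\sS_X^{\sn}$.

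Second, form the $\cO_{\sS_X^{\sn}}$-linear difference map
\[
\delta:\bigoplus_\alpha\nu_{\alpha*}\cJ_{\MJ,\alpha}\longrightarrow\bigoplus_{\alpha<\beta}\nu_{\alpha\beta*}\cO_{C_{\alpha\beta}^\nu},\qquad (s_\alpha)\mapsto\bigl(\mathrm{res}^\alpha_{\alpha\beta}(s_\alpha)-\mathrm{res}^\beta_{\alpha\beta}(s_\beta)\bigr)_{\alpha<\beta}.
\]
Linearity holds because the restriction maps are ring homomorphisms compatible with the $\cO_{\sS_X^{\sn}}$-module structures coming from the structure maps. By \eqref{eq:mj-equaliser-ideal}, $\cJ^{\tan}_{\MJ}(\mathfrak a^t)=\ker\delta$, so it is coherent. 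It is an ideal of $\cO_{\sS_X^{\sn}}$ by the comparison square displayed above. Indeed, $\ker\delta$ is the intersection of $\bigoplus_\alpha\nu_{\alpha*}\cJ_{\MJ,\alpha}$ with the equaliser \eqref{eq:sn-equaliser-operational}, and the latter is $\cO_{\sS_X^{\sn}}$ by the seminormal pushout description (Lemma~\ref{lem:sn-pushout}, transported to $X$ via Proposition~\ref{prop:downstairs-canonical}). The ideal property then follows because each $\cJ_{\MJ,\alpha}$ is an $\cO_{S^\nu_\alpha}$-module, and compatible tuples act through the branch components.

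The main obstacle is the identification of the equaliser with $\cO_{\sS_X^{\sn}}$ on the downstairs seminormal space $\sS_X^{\sn}$, rather than on the SNC models where Lemma~\ref{lem:sn-pushout} was proved. Downstairs, the branches may meet along curves that are not transversal, and the conductor of the seminormalisation need not be reduced pairwise intersections. My first approach is to use that seminormalisation is characterised by $\cO^{\sn}$ being the subring of $\nu_*\cO_{\text{normalisation}}$ of functions agreeing at points of the reduced fibres. I would check that the pairwise conductor equalisers impose exactly these conditions at the generic points of the conductors. I would then argue that both sheaves are $S_2$-type subsheaves of $\nu_*\cO$ agreeing off a finite set, which gives equality. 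If that fails, I would still obtain coherence by taking the statement with $\cO_{\sS_X^{\sn}}$ replaced by the equaliser itself, since the kernel argument does not depend on this identification.
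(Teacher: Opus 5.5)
Your argument is essentially the paper's: finite pushforward of the coherent branch Mather--Jacobian ideals, the conductor condition realised as the kernel of a map of coherent sheaves on the Noetherian scheme $\sS_X^{\sn}$, and the ideal property taken from the equaliser presentation of $\cO_{\sS_X^{\sn}}$. The paper does not prove that presentation downstairs either. It is built into the definition (``the equaliser inside $\cO_{\sS_X^{\sn}}$''), so you may simply invoke it. A few cautions about how you handle it: Lemma~\ref{lem:sn-pushout} and Proposition~\ref{prop:downstairs-canonical} do not supply it. Your $S_2$ route would also fail in general, since seminormal surfaces need not be $S_2$, and pairwise conductors do not record a non-normal branch glued to itself along a curve. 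If you want to avoid the presupposition, take $\ker\delta\cap\cO_{\sS_X^{\sn}}$ inside $\nu_*\cO_{\coprod S_\alpha^\nu}$. This is a coherent ideal of $\cO_{\sS_X^{\sn}}$ without your fallback's change of ambient ring.
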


\begin{proof}
The normalisation map $\coprod S_\alpha^\nu\to\sS_X^{\sn}$ is finite.  Ordinary Mather--Jacobian multiplier ideals on the normal branches are coherent, and finite pushforward preserves coherence.  The conductor compatibility condition is an equaliser of coherent sheaves on the Noetherian seminormal scheme $\sS_X^{\sn}$.  Since the equaliser is taken inside the equaliser presentation of $\cO_{\sS_X^{\sn}}$, it is an ideal subsheaf of $\cO_{\sS_X^{\sn}}$.
\end{proof}

\begin{Definition}[Tangential Mather--Jacobian threshold]
For a closed tangent set $Y$, define
\[
\tlct_{\MJ}(Y;X,\cF,\Delta;\mathfrak a)=
\inf_{V_X\in\mathcal S_X(X,\cF)}\lct_{\MJ}(Y_{V_X};V_X,\bbound_{V_X};\mathfrak a\cO_{V_X}).
\]
\end{Definition}

\begin{Proposition}
\label{prop:threshold-normalisation}
The pull-back of $\cJ^{\tan}_{\MJ}(\mathfrak a^t)$ to the normalisation $ \bigsqcup S_\alpha^\nu$ is trivial over the inverse image of $Y$ for $t<\tlct_{\MJ}$ and is non-trivial there for $t>\tlct_{\MJ}$.  If, in addition, triviality of coherent ideals on $\sS_X^{\sn}$ is detected after the finite normalisation map in the given conductor neighbourhood, then the same statement holds on $\sS_X^{\sn}$ itself.
\end{Proposition}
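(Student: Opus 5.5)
The plan is to reduce everything to the branchwise statement, working one normalised branch at a time. After that, the conductor equaliser enters only in the descent step to \(\sS_X^{\sn}\).

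First, compute the pull-back of \(\cJ^{\tan}_{\MJ}(\mathfrak a^t)\) to \(\bigsqcup_\alpha S_\alpha^\nu\). By the construction this pull-back is contained in \(\bigoplus_\alpha\cJ_{\MJ,\alpha}\). Conversely, on the open set of \(S_\alpha^\nu\) away from the conductor curves, any section of \(\cJ_{\MJ,\alpha}\) extended by zero on the other branches satisfies the equaliser condition. Along the conductors, the equaliser \eqref{eq:mj-equaliser-ideal} sits inside \(\cO_{\sS_X^{\sn}}\), and \(\cO_{\sS_X^{\sn}}\) itself surjects onto tuples agreeing on conductors (Lemma~\ref{lem:sn-pushout}). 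So the pull-back to \(S_\alpha^\nu\) is the ideal generated by \(\cJ_{\MJ,\alpha}\) and by the restrictions of compatible tuples. I would argue it is trivial at a point \(q\) exactly when \(\cJ_{\MJ,\alpha}\) is trivial at \(q\) and, for every \(\beta\) adjacent through \(q\), \(\cJ_{\MJ,\beta}\) is trivial at the matching point. The reason is that the constant tuple \((1,\dots,1)\) is compatible and lies in \(\bigoplus\cJ_{\MJ,\alpha}\) precisely when all the relevant branch ideals are trivial. If some branch ideal fails to be trivial at \(q\), the equaliser condition propagates the obstruction across the conductor to the adjacent branch.

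Second, invoke the ordinary Mather--Jacobian theory on each branch. By the characterisation of \(\lct_{\MJ}\) through \(\cJ_{\MJ}\) (de Fernex--Docampo, Ishii, Ein--Ishii \cite{dFD,IshiiMather,EI}), \(\cJ_{\MJ,\alpha}\) is trivial near \(Y_{S_\alpha^\nu}\) for \(t<\lct_{\MJ}(Y_{S_\alpha^\nu};S_\alpha^\nu,\bbound_{S_\alpha};\mathfrak a\cO)\), and nontrivial there for \(t\) above that value. I also need the conductor strata, since \(\tlct_{\MJ}\) is an infimum over all \(V_X\), conductors included. Here I would use Proposition~\ref{prop:no-double-count} and Proposition~\ref{prop:mj-invariance}, which say that the conductor boundary is the common restriction of the two branch boundaries. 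From this I expect to derive that the conductor threshold is bounded below by the adjacent branch thresholds along \(Y\), by restriction of MJ multiplier ideals to the coefficient-one trace, an inversion-of-adjunction type inclusion. Then the infimum over all \(V_X\) equals the minimum over branches, and the dichotomy at \(\tlct_{\MJ}\) follows: for \(t<\tlct_{\MJ}\) every branch ideal is trivial over \(Y\), hence so is the tuple; for \(t>\tlct_{\MJ}\) some branch ideal is nontrivial at a point over \(Y\).

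Third, descend to \(\sS_X^{\sn}\). This step is formal given the extra hypothesis: triviality of a coherent ideal on \(\sS_X^{\sn}\) is assumed detected after the finite normalisation map near the conductor, and Theorem~\ref{thm:coherence} supplies coherence. The main obstacle is the second step, namely comparing the conductor thresholds with the branch thresholds. If the restriction inclusion is unavailable for MJ ideals with a coefficient-one conductor trace, I would first try to prove the inequality directly from the log-smooth toroidal charts. There the MJ correction on the normalised smooth strata is trivial, so the inequality reduces to the classical restriction theorem for multiplier ideals along an SNC component.
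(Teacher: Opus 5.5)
Your reduction follows the paper's proof: apply the ordinary Mather--Jacobian criterion branchwise on the $S_\alpha^\nu$, argue that neither the conductor equaliser nor the conductor strata in the definition of $\tlct_{\MJ}$ lower the threshold, and descend to $\sS_X^{\sn}$ using the extra hypothesis. Your Step 1 is more careful than the paper, which simply identifies the pull-back with $\bigoplus_\alpha\cJ_{\MJ,\alpha}$. You are also right that comparing conductor thresholds with branch thresholds is the crux. The paper does not prove this either; it only asserts that ``the conductor equaliser does not introduce a smaller threshold''.

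The mechanism you propose for that comparison fails. The classical restriction theorem, $\cJ(C,(\mathfrak a\cO_C)^t)\subseteq\cJ(S,\mathfrak a^t)\cdot\cO_C$ for $C$ not in the boundary, says that triviality on $C$ forces triviality on $S$, i.e.\ $\lct_q(C)\le\lct_q(S)$. That is the opposite of the bound $\lct(C_{\alpha\beta}^\nu)\ge\lct(S_\alpha^\nu)$ you need. Nor can you take $C_{\alpha\beta}$ out of the boundary: it occurs in $\Theta_\alpha$ with coefficient one, so $\bbound_{S_\alpha}$ contains the factor $I_{C_{\alpha\beta}}$. On any log resolution the strict transform of $C_{\alpha\beta}$ then enters the round-down with coefficient $\ge 1$, hence $\cJ_{\MJ,\alpha}\subseteq I_{C_{\alpha\beta}}$ for every $t\ge 0$. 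Its restriction to the coefficient-one trace is $0$, so no inclusion of the kind you want can come out of it.

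This inclusion also breaks the first half of your Step 2. For every $t$, the pull-back of $\cJ^{\tan}_{\MJ}(\mathfrak a^t)$ is non-trivial at every point of $Y$ lying on a conductor, and the equaliser condition there is automatic, since both restrictions are $0$. There are two possible conventions for $\lct_{\MJ}$, and neither rescues the argument.
\begin{itemize}
\item If $\lct_{\MJ}$ is the lc-type threshold, your inequality is true, but it comes from the easy direction of adjunction, not from restriction: $(S_\alpha^\nu,\Theta_\alpha+t\,\mathfrak a)$ lc near $q\in C_{\alpha\beta}$ implies $(C_{\alpha\beta}^\nu,\Theta_{\alpha\beta}+t\,\mathfrak a|_{C_{\alpha\beta}})$ lc at $q$. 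In that case the criterion ``$\cJ_{\MJ,\alpha}=\cO$ for $t<\lct_{\MJ}$'' is false at such points. For example, for $(\bA^2,\{u=0\})$ and $\mathfrak a=\mathfrak m_0$ the lc threshold is $1$, yet the multiplier ideal is contained in $(u)$ for all $t$.
\item If $\lct_{\MJ}$ means $\sup\{t:\cJ_{\MJ}=\cO\}$, the criterion holds. But the branch thresholds over $Y\cap C_{\alpha\beta}$ are then degenerate, so the conductor comparison is vacuous rather than a restriction statement.
\end{itemize}

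Either way, you must fix the convention and treat points of $Y$ on the conductors separately. This is the main case, not a corner case: in the logarithmic charts $\Sing\cG\cap\Dinv$ is exactly the union of the curves $S_\alpha\cap S_\beta$. For instance, $\lambda_x y\,dx+\lambda_y x\,dy$ vanishes exactly on $x=y=0$. As written, your argument establishes the dichotomy only for $Y$ disjoint from the conductor curves, where the conductor strata play no role.
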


\begin{proof}
Let
\[
\nu:\widetilde V_X:=\bigsqcup_i V_i\longrightarrow V_X
\]
be the finite normalisation map of the seminormal branch--conductor stratum on $X$, and write
\[
\nu^{-1}\bbound_{V_X}=\{\bbound_{V_i}\}_i
\]
for the induced formal products of ideals on the normalised branches. By construction of the branch--conductor system, coherent ideals on \(V_X\) are identified with tuples of coherent ideals on the \(V_i\) whose restrictions agree on the normalised conductor. Thus, for every real number \(c\geq 0\), the Mather--Jacobian multiplier ideal on \(V_X\) is recovered as the equaliser
\[
\mathcal J_{\mathrm MJ}(V_X,\bbound_{V_X}^{\,c})
=
\ker\left(
\bigoplus_i \mathcal J_{\mathrm MJ}(V_i,\bbound_{V_i}^{\,c})
\rightrightarrows
\bigoplus_{i<j}
\mathcal J_{\mathrm MJ}(C_{ij}^{\nu},\bbound_{C_{ij}}^{\,c})
\right),
\]
where the two arrows are the restriction maps from the adjacent normalised branches to the normalised conductor.
After pulling back by \(\nu\), the assertion becomes the ordinary Mather--Jacobian multiplier-ideal criterion \cite{EI,dFD} on each normal branch:
\[
\mathcal J_{\mathrm MJ}(V_i,\bbound_{V_i}^{\,c})=\mathcal O_{V_i}
\quad\Longleftrightarrow\quad
c<\operatorname{lct}_{\mathrm MJ}(V_i,\bbound_{V_i}).
\]
Therefore the first value of \(c\) for which the branch multiplier ideal becomes non-trivial is
\[
\inf_i \operatorname{lct}_{\mathrm MJ}(V_i,\bbound_{V_i}),
\]
which is precisely the infimum over the branch--conductor data appearing in the definition of \(\tlct_{\mathrm MJ}\).

Under the conductor-compatibility hypothesis imposed in the construction, the conductor equaliser does not introduce a smaller threshold: it only imposes equality of the already-defined branch sections after restriction to the normalised conductor. Hence the global ideal remains trivial precisely until one of the branch Mather--Jacobian multiplier ideals becomes non-trivial. Consequently,
\[
\tlct_{\mathrm MJ}(V_X,\bbound_{V_X})
=
\inf_i \operatorname{lct}_{\mathrm MJ}(V_i,\bbound_{V_i}).
\]
Finally, the descent statement is the elementary finite-normalisation criterion for coherent ideals. Namely, if \(\mathfrak a,\mathfrak b\subset\mathcal O_{V_X}\) are coherent ideals such that their pullbacks to \(\widetilde V_X\) agree and their conductor restrictions agree, then \(\mathfrak a=\mathfrak b\). Applying this to the two multiplier ideals in question proves the asserted equality on \(V_X\).
\end{proof}

\subsection{Conditional ordinary consequences}

The preceding Mather--Jacobian comparison transfers ordinary
singularity-theoretic consequences to the branch--conductor pairs.  We record
the consequences obtained by combining this comparison with the ordinary
Mather--Jacobian rationality and Du Bois criteria on the normalised strata,
together with Du Bois gluing for the seminormal pushout.

\begin{Assumption}[MJ rationality and Du Bois gluing hypotheses]
\label{ass:mj-db-gluing}
In a neighbourhood of the tangential set \(Y\), assume that the normalised
branch and conductor pairs lie in the range of the ordinary
Mather--Jacobian criteria of de Fernex--Docampo and Ein--Ishii
\cite{dFD,EI}.  Thus the relevant strata are reduced and equidimensional,
the Jacobian and boundary ideals define admissible pairs, and the
corresponding rationality or Du Bois criterion applies on each normalised
stratum.

Assume moreover that the seminormal branch--conductor pushout satisfies the
finite seminormal Du Bois gluing hypotheses of Koll\'ar--Kov\'acs
\cite{KollarKovacs}.  Equivalently, in the conductor neighbourhood under
consideration, the pushout is obtained from Du Bois branches by gluing along
Du Bois conductor strata.
\end{Assumption}

\begin{Theorem}
\label{thm:rational-dubois}
Under Assumption~\ref{ass:mj-db-gluing}, if every branch pair is
Mather--Jacobian canonical near \(Y\), then the normalised branches have
rational singularities near \(Y\).  Under the same gluing hypotheses, if
every branch pair is Mather--Jacobian log canonical near \(Y\), then
\(\sS_X^{\sn}\) is Du Bois near the inverse image of \(Y\).
\end{Theorem}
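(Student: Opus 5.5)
The proof splits into two parts, both reducing to ordinary results of de Fernex--Docampo and Ein--Ishii on the normalised strata. Assumption~\ref{ass:mj-db-gluing} is what lets us pass from the strata to the seminormal pushout. No foliated input is needed beyond the branch--conductor presentation and Proposition~\ref{prop:mj-invariance}.

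\emph{Rationality.} Fix a normalised branch $S_\alpha^\nu$ near $Y$ with its formal ideal product $\bbound_{S_\alpha}$. The hypothesis is that $a_{\MJ}(F;S_\alpha^\nu,\bbound_{S_\alpha})>1$ (the canonical normalisation) for every exceptional $F$ centred over $Y$.

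1. Since $\bbound_{S_\alpha}$ is effective, dropping the boundary only increases Mather--Jacobian discrepancies. So $S_\alpha^\nu$ is itself Mather--Jacobian canonical near $Y$.
2. Apply the de Fernex--Docampo/Ein--Ishii criterion that Mather--Jacobian canonical singularities are rational. The Assumption guarantees we are in its range: reduced, equidimensional, admissible Jacobian ideal.
3. As a consistency check: the branches are normal surfaces, and a normal surface with MJ-canonical singularities is smooth or an $A_n$ (hence rational) singularity. So the statement is robust.

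\emph{Du Bois.} This is a gluing argument in three steps.

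1. Show every stratum is Du Bois. On each branch, MJ log canonical implies Du Bois by Ein--Ishii/de Fernex--Docampo, again after discarding the effective boundary. The normalised conductors $C_{\alpha\beta}^\nu$ are smooth curves, hence Du Bois. Their MJ log canonicity follows from Proposition~\ref{prop:no-double-count}, which says both adjacent branches induce the same conductor boundary, so the conductor pair is MJ lc once either branch pair is.
2. Use the equaliser description \eqref{eq:sn-equaliser-operational} of $\cO_{\sS_X^{\sn}}$. This exhibits $\sS_X^{\sn}$ as the seminormal pushout of $\bigsqcup_\alpha S_\alpha^\nu$ along the finite maps from $\bigsqcup_{\alpha<\beta}C_{\alpha\beta}^\nu$.
3. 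Invoke the Koll\'ar--Kov\'acs gluing statement: a seminormal pushout of Du Bois schemes along Du Bois closed subschemes with finite maps is Du Bois. The underlying distinguished triangle is
\[
\underline\Omega^0_{\sS_X^{\sn}}\to \nu_*\underline\Omega^0_{\sqcup S_\alpha^\nu}\oplus\cO_{\mathrm{Cond}}\to\nu_*\underline\Omega^0_{\sqcup C_{\alpha\beta}^\nu},
\]
where $\mathrm{Cond}$ is the image of the conductors. With every term Du Bois, the five lemma compares this triangle to the structure-sheaf sequence and gives $\cO\simeq\underline\Omega^0$ near the inverse image of $Y$.

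\emph{Main obstacle.} The hard step is Step 3 of the Du Bois part, matching the seminormal pushout with the Koll\'ar--Kov\'acs setup. The image $\mathrm{Cond}$ in $\sS_X^{\sn}$ must be seminormal, hence Du Bois, and the triple-point identifications must not introduce non-reduced structure. I would handle this using Lemma~\ref{lem:sn-pushout} together with the hypothesis, built into Assumption~\ref{ass:mj-db-gluing}, that the pushout is obtained by gluing Du Bois branches along Du Bois conductor strata. The theorem is stated under that assumption precisely so that this step reduces to citation.
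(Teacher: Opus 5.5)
Your proposal is correct and follows essentially the paper's own argument. Both proofs apply the de Fernex--Docampo/Ein--Ishii criteria branchwise (MJ canonical implies rational, MJ log canonical implies Du Bois), note that the normalised conductor curves are regular and hence Du Bois, and invoke the Koll\'ar--Kov\'acs gluing supplied by Assumption~\ref{ass:mj-db-gluing}; your boundary-dropping step and the explicit Du Bois triangle only add detail.

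There are three harmless slips. First, in the paper's log-discrepancy normalisation, canonical means $a_{\MJ}\ge 1$, whereas your $>1$ is terminal. Second, your consistency check is misstated: every rational double point, including $D_n$ and $E_6,E_7,E_8$ as well as $A_n$, is a hypersurface and hence MJ canonical, though all of these are still rational. Third, the middle term of your triangle should read $\underline\Omega^0_{\mathrm{Cond}}$ before you use that $\mathrm{Cond}$ is Du Bois.
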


\begin{proof}
Let
\[
\nu:\widetilde{\sS}^{\sn}
=
\bigsqcup_i V_i
\longrightarrow
\sS^{\sn}
\]
be the finite normalisation of the seminormal separatrix space.  Its
conductor diagram is
\[
\begin{tikzcd}[column sep=large]
\displaystyle\bigsqcup_{i<j} C_{ij}^{\nu}
\arrow[r,shift left=.55ex]
\arrow[r,shift right=.55ex]
&
\displaystyle\bigsqcup_i V_i
\arrow[r]
&
\sS^{\sn}.
\end{tikzcd}
\]
For each normalised branch \(V_i\), let \(B_i\) denote the induced boundary,
or formal product of ideals, coming from the tangential
Mather--Jacobian datum.  For a normalised conductor \(C_{ij}^{\nu}\), the
conductor-compatible construction of the data requires the two restrictions
of the idealistic boundary to define the same conductor datum.  We denote it
by
\[
B_{ij}:=B_i|_{C_{ij}^{\nu}}=B_j|_{C_{ij}^{\nu}}.
\]
Here Proposition~\ref{prop:no-double-count} supplies the corresponding
compatibility for the adjunction divisor classes, while equality of the
formal ideal products is part of the conductor-compatible Mather--Jacobian
input used in this section.
We first prove the rational assertion.  This assertion is branchwise.  For
each \(i\), the ordinary Mather--Jacobian canonical hypothesis gives
\[
a_{\MJ}(F;V_i,B_i)\geq 1
\]
for every exceptional divisor \(F\) over \(V_i\) in the range of the ordinary
criterion.  By the Mather--Jacobian criterion of de Fernex--Docampo and
Ein--Ishii \cite{dFD,EI}, it follows that $V_i$ has rational singularities near the inverse image of the centre.  Hence
\[
\widetilde{\sS}^{\sn}
=
\bigsqcup_i V_i
\]
has rational singularities branchwise.
We now prove the Du Bois assertion.  For each \(i\), the ordinary
Mather--Jacobian log canonical hypothesis gives
\[
a_{\MJ}(F;V_i,B_i)\geq 0
\]
for every divisor \(F\) over \(V_i\) in the range of the ordinary criterion.
Again by the ordinary Mather--Jacobian criterion, $V_i$ is Du Bois.
Moreover, each \(C_{ij}^{\nu}\) is a normalised curve; over \(\bC\) it is
regular, hence Du Bois.  Thus
\[
V_i \text{ is Du Bois},\qquad
C_{ij}^{\nu} \text{ is Du Bois},\qquad
B_i|_{C_{ij}^{\nu}}=B_j|_{C_{ij}^{\nu}}.
\]
The space \(\sS^{\sn}\) is the seminormal pushout of the conductor diagram
above.  By the gluing hypothesis in the statement, the finite seminormal
Du Bois gluing theorem of Koll\'ar--Kov\'acs applies to the union
\[
\sS^{\sn}
=
\left(\bigsqcup_i V_i\right)
/\!\sim_{\bigsqcup C_{ij}^{\nu}}
\]
along the Du Bois conductor strata.  Consequently, \(\sS^{\sn}\) is
Du Bois near the image of the centre.  Thus rational singularities are
obtained on the normalised branches, while
the Du Bois property descends to the seminormal separatrix space by
conductor-compatible Du Bois gluing.
\end{proof}

\begin{Proposition}
If every branch or conductor stratum on $X$ is a local complete intersection and the Jacobian correction is trivial along the datum considered, then, for every adapted toroidal tangential datum $E$ represented by a divisor $F$ over the corresponding stratum,
\[
a^{\tan}_{\MJ}(E;X,\cF,\Delta)=a_{\tan}(E;X,\cF,\Delta).
\]
Consequently $\operatorname{tmld}_{\MJ}=\tmldtor$ for the adapted toroidal data to which both sides apply.
\end{Proposition}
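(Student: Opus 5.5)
The plan is to compare the two discrepancies term by term on a fixed stratum and then pass to infima. Fix an adapted toroidal tangential datum $E$, represented on a normalised branch or conductor stratum $V$ upstairs, with image stratum $V_X$ on $X$ from Proposition~\ref{prop:downstairs-canonical}. By definition,
\[
a^{\tan}_{\MJ}(E;X,\cF,\Delta)=a_{\MJ}(F;V_X,\bbound_{V_X})=\ord_F(\widehat K_{Y/V_X}-J_{Y/V_X})-\sum_j b_j\val_F(I_{B_j})+1,
\]
for a log resolution $Y\to V_X$ that factors through the Nash blow-up. The first step is to invoke the standard fact for locally complete intersection varieties. Ishii and Ein--Ishii show that $\widehat K_{Y/V_X}-J_{Y/V_X}=K_{Y/V_X}$, where the right-hand side is the usual relative canonical divisor; this is available because $K_{V_X}$ is Cartier and the Jacobian ideal differs from the Nash ideal by the invertible sheaf $\omega_{V_X}$. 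Under the hypothesis that the Jacobian correction is trivial along the datum, no further term survives. Hence
\[
a^{\tan}_{\MJ}(E)=\ord_F(K_{Y/V_X})-\ord_F(\bbound_{V_X})+1=a(F;V_X,B_{V_X}).
\]

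The second step is to move from $V_X$ to the normalised stratum $V$ carrying $a_{\tan}$. This is the \textbf{main obstacle}. An lci surface component $V_X$ with trivial Jacobian correction along $F$ is normal, or at least smooth, at the generic point of $c_{V_X}(F)$. I would try to deduce this from the vanishing of the Jacobian term together with Serre's criterion ($S_2$ holds automatically for lci). Granting this, the normalisation $V\to V_X$ is an isomorphism near that centre. The boundary $\bbound_{V_X}$ pulls back to $B_V$, which is the branch adjunction boundary $\Theta_\alpha$ of Theorem~\ref{thm:normalised-adjunction}, and the conductor different vanishes there. For conductor data I would use that $C_{ij}^X$ is an lci curve with trivial Jacobian correction, hence smooth near the point. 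Proposition~\ref{prop:no-double-count} then identifies $\Theta_{\alpha\beta}$ with the restricted ideal product.

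The third step combines the upstairs and downstairs models. Crepant compatibility (Theorem~\ref{thm:normalised-adjunction}) and Proposition~\ref{prop:atan-invariance} give $a(F;V_X,B_{V_X})=a(F;V,B_V)=a_{\tan}(E)$. Proposition~\ref{prop:mj-invariance} guarantees that the left-hand side does not depend on the chosen resolution. Finally, taking infima over the data to which both sides apply yields $\operatorname{tmld}_{\MJ}=\tmldtor$. This follows from the definitions of the two tmld invariants, or equivalently by comparing Theorem~\ref{thm:ioa} with the Mather--Jacobian infimum theorem stratum by stratum.
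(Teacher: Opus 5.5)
Your Step~1 is exactly the paper's argument. For an lci stratum, $\widehat K_{Y/V_X}-J_{Y/V_X}=K_{Y/V_X}$, so $a_{\MJ}(F;V_X,\bbound_{V_X})=a(F;V_X,B_{V_X})$. The paper then concludes immediately: it reads $a^{\tan}_{\MJ}$ and $a_{\tan}$ ``by definition'' on \emph{the same} pair $(V,B_V)$. In other words, it treats the stratum on $X$ with its ideal product and the normalised branch or conductor pair as one and the same pair. The $\operatorname{tmld}$ statement then follows by taking infima, as you say. Your Steps~2--3 try to prove this identification, and as written they do not.

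\textbf{Normality step.} Serre's criterion is the wrong tool here. Let $\xi$ be the generic point of $c_{V_X}(F)$. A condition along the single valuation $F$ says nothing about $R_1$ at the other height-one primes of $\mathcal O_{V_X,\xi}$.
\begin{itemize}
\item If ``trivial Jacobian correction'' means $\ord_F(J_{Y/V_X})=0$, then the Jacobian ideal is the unit ideal at $\xi$. So $V_X$ is smooth at $\xi$ directly, and lci is not even needed.
\item If it means only the lci identity, no normality follows at all. An irreducible hypersurface surface with a double curve is a counterexample.
\end{itemize}

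\textbf{Comparison step.} This is the more serious problem. The upstairs stratum $V$ is not the normalisation of $V_X$. For a non-exceptional branch, $V$ is the normalised strict transform $S_\alpha^\nu\subset W$. The map $g\colon V\to V_X$ can be a non-trivial birational morphism even where $V_X$ is smooth, because $\pi\colon W\to X$ need not be an isomorphism near $V_X$ (for instance, it may blow up a point of $V_X$).

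So the equality $a(F;V_X,B_{V_X})=a(F;V,B_V)$ requires $K_V+B_V=g^*(K_{V_X}+B_{V_X})$ near the centre. Theorem~\ref{thm:normalised-adjunction} and Proposition~\ref{prop:atan-invariance} do not give this. Their crepant compatibility concerns adapted blow-ups between objects of $\Adm(X,\cF,\Delta;W_0)$, and $X$ is not such an object. Moreover, $B_V$ is produced by adjunction on $W$, and $K_{\cG}+\Delta_W$ need not be $\pi$-crepant. The conductor case has the same issue: Proposition~\ref{prop:no-double-count} compares the two upstairs branch restrictions with each other, not an upstairs stratum with a downstairs one.

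\textbf{What is missing and how to fix it.} The missing ingredient is a crepant relation between the upstairs pair $(V,B_V)$ and the boundary on $X$. The paper never constructs $\bbound_{V_X}$ independently; it supplies this relation by convention. The clean fix is to make that convention explicit. Take $\bbound_{V_X}$ to be the boundary that induces $(V,B_V)$ crepantly, so that both discrepancies are read on one pair. Your Step~1 then finishes the proof, without any normality claim.
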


\begin{proof}
Assume that the stratum $V$ on $X$ is a local complete intersection along the centre of the datum and that the Jacobian correction is trivial there. Let \(B_V\) be the corresponding branch or conductor boundary, encoded as a formal product of ideals. Since \(V\) is a local complete intersection, the Mather discrepancy divisor and the Jacobian discrepancy divisor cancel in the Mather--Jacobian discrepancy formula:
\[
\widehat K_{Y/V}-J_{Y/V}=K_{Y/V}
\]
on any log resolution \(Y\to V\) dominating the Nash blow-up. Hence, for every divisor \(F\) over \(V\),
\[
a_{\mathrm MJ}(F;V,B_V)=a(F;V,B_V).
\]
Now, let \(E\) be an adapted toroidal tangential divisorial datum represented by \(F\) over the same normalised branch or conductor pair \((V,B_V)\). By definition of the two tangential discrepancies,
\[
a^{\tan}_{\MJ}(E;X,\cF,\Delta)
=
a_{\mathrm MJ}(F;V,B_V)
\]
and
$
a_{\tan}(E;X,\cF,\Delta)
=
a(F;V,B_V).
$
Therefore
\[
a^{\tan}_{\MJ}(E;X,\cF,\Delta)
=
a_{\tan}(E;X,\cF,\Delta).
\]
Thus, in the locally complete intersection range with trivial Jacobian correction, the Mather--Jacobian tangential discrepancy and the ordinary toroidal tangential discrepancy evaluate the same representative divisor over the same branch or conductor pair and hence agree.
\end{proof}

\section{Local refinements and birational comparison}
\label{sec:expanded-verification}

This section records the local verifications used previously in a form intended to specify where the foliation enters the argument.  The statements are elementary in SNC coordinates, but they are the points at which the branch--conductor theory differs from a direct application of the ordinary theory to a divisor.

\subsection{Double and triple separatrix points}

\begin{Lemma}
Let $p$ be a point at which two invariant branches meet, say $S_1=\{x=0\}$ and $S_2=\{y=0\}$.  In the completed local ring the tangential formal ideal is $(xy)$.  Hence a non-constant irreducible tangential cylinder centred at $p$ is either generically on $S_1$, generically on $S_2$, or generically on the conductor curve $\{x=y=0\}$.
\end{Lemma}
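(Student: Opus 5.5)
The plan is to work in a logarithmic simple adapted chart at \(p\) in which \(\Dinv\) is locally \(S_1\cup S_2=\{xy=0\}\); if a third invariant branch passed through \(p\) the point would be a triple point, which the statement excludes. In such a chart the generator is
\[
\omega=u\,xy\Bigl(\lambda_1\frac{dx}{x}+\lambda_2\frac{dy}{y}+h\,dz\Bigr),
\]
with \(u\) a unit and \(\lambda_1,\lambda_2\) positively non-resonant. Here the ``tangential formal ideal'' will mean the ideal of \(\widehat{\cO}_{W,p}\) cutting out the reduced locus through which all reduced tangential formal arcs centred at \(p\) factor. So the first claim amounts to: a formal arc centred at \(p\) is tangential if and only if it factors through \(\{xy=0\}\).

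Both directions are Theorem~\ref{thm:carter-representability} with \(r=2\) and \(Z=\{p\}\).
\begin{itemize}
\item Suppose an arc \(\gamma\) centred at \(p\) has \(x(t)\neq0\) and \(y(t)\neq0\). Then the coefficient of \(t^{a_1+a_2-1}\,dt\) in \(\gamma^*\omega\) is \((a_1\lambda_1+a_2\lambda_2)c_1c_2\). This is nonzero because \((a_1,a_2)\neq0\), since \(\gamma(0)=p\). So \(\gamma\) is not tangential.
\item Conversely, every arc in \(\{xy=0\}\) pulls \(\omega\) back to zero.
\end{itemize}
Hence \(J_\infty^{\tan}(W,\cG;p)_{\red}=J_\infty(\Spec\widehat{\cO}_{W,p}/(xy);p)_{\red}\). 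The ideal \((xy)\) is radical and is the intersection of the two branch ideals \((x)\cap(y)\). It is therefore the smallest ideal containing all tangential arcs: every arc of \(S_1\) and of \(S_2\) is tangential, so nothing larger works. This identifies the tangential formal ideal with \((xy)\).

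For the trichotomy, take an irreducible tangential cylinder \(C\) centred at \(p\) and let \(\alpha_C\) be its generic arc. By the first step, \(x(\alpha_C)\,y(\alpha_C)=0\) in \(K[[t]]\). Since \(K[[t]]\) is a domain, \(x(\alpha_C)=0\) or \(y(\alpha_C)=0\), exactly as in Lemma~\ref{lem:arcs-pushout}.
\begin{itemize}
\item If exactly one of them vanishes, the generic arc lies on \(S_1\) or on \(S_2\) and not on the other branch, so \(C\) is generically on that branch. Uniqueness is as in Corollary~\ref{cor:branch-decomp}.
\item If both vanish, \(\alpha_C\) factors through the conductor \(\{x=y=0\}\), and its two branch representatives are identified through the conductor, per the coequaliser of Lemma~\ref{lem:arcs-pushout}.
\end{itemize}
Because factoring through a closed subscheme is a closed condition, the generic-point condition propagates to all of \(C\) via closure.

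The only delicate point is reading ``tangential formal ideal'' correctly, given that the theory is reduced and infinite-level. Example~\ref{ex:mixed-finite-jets} shows the statement fails for finite jets, so the argument must be run on genuine \(K[[t]]\)-arcs, after arbitrary field extension, rather than on jet schemes. ``Non-constant'' is needed only to exclude the constant arc at \(p\), which lies on all three strata and is not a divisorial cylinder; it causes no further difficulty.
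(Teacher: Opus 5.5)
Your proposal is correct and follows essentially the same route as the paper. The paper cites Theorem~\ref{thm:formal-rep} to identify the reduced tangential arcs at \(p\) with arcs on \(\{xy=0\}\), then applies the integral-domain argument in \(K[[t]]\) to the generic arc of the cylinder, exactly as you do; your explicit check of the leading coefficient (which should also carry the nonzero factor \(u(p)\)) and your justification that \((xy)=(x)\cap(y)\) is the ideal cut out by the tangential arcs are small additions the paper leaves implicit.
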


\begin{proof}
By Theorem~\ref{thm:formal-rep}, the reduced tangential arc functor centred on \(\Sigma_{\tan}\) is identified with the reduced arc functor of the invariant divisor \(\Dinv\). In the present local chart, \(\Dinv=(xy=0)\). Hence a \(K[[t]]\)-arc \(\gamma:\Spec K[[t]]\to W\) in the reduced tangential sector satisfies
\[
x(\gamma)y(\gamma)=0
\]
in \(K[[t]]\). Since \(K[[t]]\) is an integral domain, this implies
$
x(\gamma)=0
 $ or $
y(\gamma)=0.$
Thus every reduced tangential arc factors through at least one of the two invariant branches \(S_x=(x=0)\) or \(S_y=(y=0)\).
If the generic point of \(\gamma\) lies on exactly one of these branches, then \(\gamma\) belongs to the corresponding branch sector. If the generic point satisfies both equations, then it factors through the conductor curve
$
C_{xy}=S_x\cap S_y=(x=y=0).
$
Therefore the reduced tangential arc sector is the union of the two branch sectors, glued along the conductor sector.
For an irreducible cylinder \(C\), apply the same argument to its generic arc \(\gamma_C:\Spec K(C)[[t]]\to W\). The generic arc determines whether \(C\) is generically supported on \(S_x\), on \(S_y\), or on the conductor \(C_{xy}\). Hence the branch--conductor decomposition holds componentwise for irreducible cylinders.
\end{proof}

\begin{Lemma}
Let $p$ be a point at which three invariant branches $x=0$, $y=0$, $z=0$ meet.  Every adapted toroidal tangential divisorial datum centred at $p$ becomes branch or conductor type after a finite sequence of adapted blow-ups of coordinate strata.
\end{Lemma}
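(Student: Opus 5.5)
The plan is to reduce to the toric combinatorics of the cone $\sigma=\mathbb R_{\geq0}\langle e_x,e_y,e_z\rangle$ and then to extract rays by star subdivisions, as in the proof of Theorem~\ref{thm:classification}. Let the datum be adapted toroidal. By definition it is monomial after an adapted toroidal refinement, so it corresponds to a primitive vector $v=(v_x,v_y,v_z)\in N\cap\sigma$, with $v$ in the interior of $\sigma$ because the centre is the point $p$. We first choose a regular subdivision $\Sigma'$ of $\sigma$ that contains the ray $\rho_v=\mathbb R_{\geq0}v$. The cleanest way to do this is to star-subdivide repeatedly: replace $\sigma$ by the star subdivision at $e_x+e_y+e_z$, which is the blow-up of $p$ and is exactly the chart computation of Lemma~\ref{lem:coordinate-pullback}(b). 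We then locate the cone of the new fan containing $v$ and iterate. At each step the cone containing $v$ is regular, and $v$ is expressed in its new basis by a vector whose coordinate sum strictly decreases, as in the usual Euclid-type algorithm for monomial valuations. This guarantees finiteness, and the process stops exactly when $v$ becomes a basis vector of a regular cone, that is, a ray of the fan. If the minimal cone containing $v$ at some stage is two-dimensional, the corresponding star subdivision is the blow-up of a conductor curve, which is again a coordinate stratum blow-up.

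The second step is to check that every step is admissible, meaning it is an adapted blow-up that stays in the logarithmic simple non-resonant setting. By Lemma~\ref{lem:coordinate-pullback}, the residue on the new exceptional component is a sum $\lambda_x+\lambda_y(+\lambda_z)$ of old residues. Inductively, every residue in the refined charts is a non-negative integer combination $\langle w,\lambda\rangle$ with $w\in\sigma\cap N$ nonzero. The key point is that positive non-resonance of $(\lambda_x,\lambda_y,\lambda_z)$ is inherited. A positive integer combination of the new residues equals $\langle \sum a_i w_i,\lambda\rangle$, and $\sum a_iw_i$ is a nonzero vector of $\bZ_{\geq0}^3$, so this value is nonzero. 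Hence each new chart again satisfies Definition~\ref{def:simple-chart}. Lemma~\ref{lem:branch-transform} then shows that each exceptional divisor is invariant and that the new $\Dinv$ is the reduced total transform. This is where the foliation genuinely enters the argument, and I expect it to be the main obstacle: one must confirm that the regular terms $h_j$ do not spoil the normal form after several blow-ups. The plan is to note that the centres are strata of $\Dinv$, so the $h_j\,dx_j$ part pulls back to a regular form on each chart.

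Finally, on the model where $\rho_v$ is a ray with divisor $E_v$, we read the datum as in Theorem~\ref{thm:system-exists}. The generic arc of the cylinder $N_q(E_v)$ has order $q$ along $E_v$ and order zero along the other boundary components at a general point of $E_v$. Since $E_v$ is itself an invariant branch, the arc lies in the unique branch $E_v$ whenever the datum is read as a divisor over $E_v$, giving branch type. If instead the datum is the order valuation at a point of some $E_v\cap S'$, it is of conductor type. The only remaining arcs are constant arcs at the triple point, and as in the proof of Theorem~\ref{thm:system-exists} these form a lower-dimensional specialisation that defines no new datum. By Proposition~\ref{prop:atan-invariance}, the resulting representation is independent of the chosen regular subdivision.
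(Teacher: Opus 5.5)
Your argument follows the paper's proof. You represent the datum by a primitive $v$ in the interior of $\sigma=\bR_{\ge0}^3$, insert the ray $\bR_{\ge0}v$ by a finite Euclidean sequence of star subdivisions (blow-ups of triple points and conductor curves), and read the datum on the refined model.

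Two of your additions improve on the paper's sketch:
\begin{enumerate}
\item \emph{Termination.} When the minimal cone $\tau\ni v$ is subdivided, the coordinate sum drops from $\sum_{i\in\tau}c_i$ to $\sum_{i\in\tau}c_i-(|\tau|-1)\min_i c_i$. This is an actual termination proof, whereas the paper only notes that the target ray is fixed.
\item \emph{Non-resonance is inherited.} In a toric chart with generators $w_1,w_2,w_3$, the residues are $\langle w_i,\lambda\rangle$. Hence any positive combination of them equals $\langle\sum a_iw_i,\lambda\rangle$ with $\sum a_iw_i\in\bZ_{\ge0}^3\setminus\{0\}$, which is nonzero. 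This shows every intermediate model stays positively non-resonant, which the paper's proof of this lemma takes for granted.
\end{enumerate}

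The obstacle you anticipate with the $h_j$ terms does not arise. At a triple point $r=3$, so the normal form of Definition~\ref{def:simple-chart} has no $h_j$ terms. Also, no transverse component of $\Delta_W$ passes through $p$, because $\Dinv+\Supp\Delta_W$ is SNC on a threefold. So on every toric chart of the tower the pull-back is exactly $u\cdot(\text{monomial})\cdot\sum_i\langle w_i,\lambda\rangle\,du_i/u_i$.

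The arc-level justification in your last paragraph is wrong and should be removed. The generic arc of the ambient cylinder $N_q(E_v)\subset J_\infty(W')$ has order exactly $q$ along $E_v$ and order zero along every other component. Its generic point therefore lies outside $\Dinv'$. By Theorem~\ref{thm:carter-representability} it is not a tangential arc, and it certainly does not lie in $E_v$.

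Draw the branch-type conclusion directly from the fan, as the paper does. Once $\bR_{\ge0}v$ is a ray, the centre of $\nu_v$ is the codimension-one stratum $E_v$. By Lemma~\ref{lem:branch-transform}, $E_v$ is an invariant component of $\Dinv'$, so the datum is no longer centred at a triple point. It is then read on the normalised branch--conductor system, whose cylinders live in $J_\infty(E_v^\nu)$ or in the arc spaces of adjacent branches and conductors, not in $J_\infty(W')$.
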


\begin{proof}
By definition, an adapted toroidal datum at the triple point is represented by a primitive vector
\[
v=(a,b,c)\in\bN^3,
\qquad \gcd(a,b,c)=1,
\]
in the cone $\sigma=\bR_{\ge0}^3$ of the coordinate arrangement $xyz=0$.  The associated monomial valuation is
\[
\nu_v(x^iy^jz^k)=ai+bj+ck.
\]
A star subdivision of $\sigma$ along the ray $\bR_{\ge0}v$ produces a toroidal blow-up whose exceptional divisor has valuation $\nu_v$.  The blow-up of the origin is the first such subdivision when $v=(1,1,1)$; in the $x$-chart it has coordinates
$
y=xu, z=xv,
$
and the reduced transform is $xuv=0$.  For a general primitive vector, perform the Euclidean sequence of star subdivisions which inserts the ray $\bR_{\ge0}v$ into the fan.  This sequence is finite because each star subdivision refines a rational polyhedral cone and the desired ray is fixed.

On the resulting toroidal model the centre of $\nu_v$ is the divisor corresponding to the inserted ray.  Its generic point lies either on a single invariant component of the refined divisor or on a pairwise intersection with another component, according to the cone containing the adjacent rays.  These two cases are precisely the branch and conductor presentations.  No assertion is made for non-toroidal divisorial valuations centred at the triple point.
\end{proof}

\subsection{One-step transform formula}

\begin{Proposition}
\label{prop:one-step-full}
Let $S$ be an invariant branch with boundary $\Theta_S$, and let $p\in S$ be a tangent point centre lying on boundary components of total coefficient $b$.  Let $\sigma:S'\to S$ be the blow-up of the smooth surface point $p$, and let $F$ be the exceptional curve.  Then the crepant transform of the branch boundary is
\[
\Theta_{S'}^{\mathrm cr}=\sigma^{-1}_*\Theta_S+(b-1)F,
\]
and
$
K_{S'}+\Theta_{S'}^{\mathrm cr}=\sigma^*(K_S+\Theta_S).
$
The ordinary log discrepancy of $F$ with respect to $(S,\Theta_S)$ is $2-b$.
\end{Proposition}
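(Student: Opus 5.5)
This is the standard computation for the blow-up of a smooth point on a smooth surface, and the plan is to do it directly in local coordinates at \(p\). Since \(S\) is a smooth branch surface in the adapted chart (the notation \(S^\nu\) is only bookkeeping on smooth models), I will use two ingredients:
\[
K_{S'}=\sigma^*K_S+F,
\qquad
\sigma^*\Theta_S=\sigma^{-1}_*\Theta_S+\Bigl(\sum_j b_j\,\mathrm{mult}_p B_j\Bigr)F .
\]
The first is the canonical-bundle formula for a point blow-up on a smooth surface. For the second, write \(\Theta_S=\sum_j b_jB_j\). In the SNC chart every component through \(p\) is a smooth coordinate curve, so each has multiplicity one at \(p\). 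The coefficient of \(F\) in \(\sigma^*\Theta_S\) is therefore exactly the total coefficient \(b\) of the components through \(p\). I will check this in the chart \(v=us\), where \(F=(u=0)\): the pull-back of \(u\) vanishes to order one along \(F\), and so does the pull-back of \(v=us\).

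Combining the two formulas gives
\[
\sigma^*(K_S+\Theta_S)=K_{S'}-F+\sigma^{-1}_*\Theta_S+bF=K_{S'}+\sigma^{-1}_*\Theta_S+(b-1)F .
\]
This says precisely that \(\Theta_{S'}^{\mathrm{cr}}:=\sigma^{-1}_*\Theta_S+(b-1)F\) satisfies \(K_{S'}+\Theta_{S'}^{\mathrm{cr}}=\sigma^*(K_S+\Theta_S)\), and it is the unique such boundary whose push-forward is \(\Theta_S\). It is also the surface computation already used in the proof of Theorem~\ref{thm:normalised-adjunction}.

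For the log discrepancy I use the convention \(a(F;S,\Theta_S)=1-\operatorname{coeff}_F\Theta_{S'}^{\mathrm{cr}}\). This gives \(1-(b-1)=2-b\), consistent with the point-centre case of Proposition~\ref{prop:elementary-blowups} with \(b=(m-1)+\beta_Z\).

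There is no substantial obstacle. The only point needing care is the claim that the multiplicity of \(\Theta_S\) at \(p\) equals \(b\). This uses the SNC hypothesis from Definition~\ref{def:simple-chart}, which makes all boundary components through \(p\) smooth at \(p\). If some component were singular at \(p\), the coefficient would be \(\sum_j b_j\,\mathrm{mult}_pB_j\) instead, so I will state the multiplicity-one reduction explicitly.
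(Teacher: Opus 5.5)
Your proof is correct and follows the paper's argument: you combine $K_{S'}=\sigma^*K_S+F$ with $\sigma^*\Theta_S=\sigma^{-1}_*\Theta_S+bF$, obtain the crepant identity, and read off $a(F;S,\Theta_S)=1-(b-1)=2-b$. Your remark that the coefficient of $F$ is $\sum_j b_j\,\mathrm{mult}_pB_j$, which equals $b$ only because the SNC hypothesis makes every component through $p$ smooth at $p$, is a step the paper uses without stating it, and it is worth keeping.
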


\begin{proof}
For the blow-up of a smooth surface point one has
$
K_{S'}=\sigma^*K_S+F.
$
If $D_1,\ldots,D_\ell$ are the components of $\Theta_S$ through $p$, with coefficients $b_1,\ldots,b_\ell$, and $b=\sum_i b_i$, then
\[
\sigma^*\Theta_S=\sigma^{-1}_*\Theta_S+bF.
\]
Consequently
$
\sigma^*(K_S+\Theta_S)
=K_{S'}+\sigma^{-1}_*\Theta_S+(b-1)F.
$
This is precisely the stated crepant identity.
The log discrepancy of the exceptional curve is computed from
\[
K_{S'}+\sigma^{-1}_*\Theta_S
=\sigma^*(K_S+\Theta_S)+(1-b)F,
\]
so that $a(F;S,\Theta_S)=1+(1-b)=2-b$.  If the ambient blow-up is along a conductor curve, its restriction to an old branch is the blow-up of a Cartier divisor and is therefore an isomorphism; the ambient exceptional divisor is then a new branch in the refined branch--conductor system, not an exceptional curve on the old branch surface.
\end{proof}

\begin{Corollary}
\label{cor:iterated-branch-discrepancy}
Let
\[
W_N\longrightarrow W_{N-1}\longrightarrow\cdots\longrightarrow W_0=W
\]
be a tower of adapted tangent point blow-ups over a normalised branch
\(S_\alpha^\nu\).  For every exceptional curve \(F_k\) which appears on the
corresponding branch surface, the ordinary log discrepancy of \(F_k\) with
respect to the transformed branch pair is equal to the tangential discrepancy
of the associated adapted tangent exceptional divisor \(E_k\):
\[
a(F_k;S_{\alpha,k}^{\nu},B_{\alpha,k})
=
a_{\tan}(E_k;X,\cF,\Delta).
\]
\end{Corollary}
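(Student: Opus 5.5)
I will argue by induction on the length \(N\) of the tower, using Proposition~\ref{prop:one-step-full} as the inductive step and Lemma~\ref{lem:foliated-discrepancy-additivity} together with Proposition~\ref{prop:atan-invariance} to identify the branch-side number with \(a_{\tan}\).

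\emph{First step: reduce to a single blow-up.} Fix \(k\) and look at the step \(W_k\to W_{k-1}\) that creates \(E_k\). Its centre is a tangent point \(p_k\) lying on the transformed branch \(S_{\alpha,k-1}^\nu\). The ambient exceptional divisor \(E_k\) meets the strict transform of the branch in the exceptional curve \(F_k\) of the surface blow-up \(\sigma_k:S_{\alpha,k}^\nu\to S_{\alpha,k-1}^\nu\). By Theorem~\ref{thm:normalised-adjunction}, the boundary \(B_{\alpha,k}\) is the crepant transform of \(B_{\alpha,k-1}\). This transform is exactly the formula \(\Theta^{\mathrm cr}\) of Proposition~\ref{prop:one-step-full}, so
\[
K_{S_{\alpha,k}^\nu}+B_{\alpha,k}=\sigma_k^*\bigl(K_{S_{\alpha,k-1}^\nu}+B_{\alpha,k-1}\bigr).
\]
Hence log discrepancies of divisors over \(S_{\alpha,k}^\nu\) computed with respect to \((S_{\alpha,k}^\nu,B_{\alpha,k})\) coincide with those computed over \((S_{\alpha,k-1}^\nu,B_{\alpha,k-1})\). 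Iterating down the tower gives
\[
a(F_k;S_{\alpha,k}^\nu,B_{\alpha,k})=a(F_k;S_\alpha^\nu,B_\alpha).
\]
Here I read the left-hand side as the log discrepancy of the divisor \(F_k\) over the transformed pair, namely \(2-b_k\) by Proposition~\ref{prop:one-step-full}, where \(b_k\) is the total boundary coefficient through \(p_k\).

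\emph{Second step: identify with \(a_{\tan}\).} By Proposition~\ref{prop:elementary-blowups}(ii), the value \(2-b_k\) is the foliated log discrepancy of \(E_k\) over \(W_{k-1}\), since \(b_k=(m-1)+\beta_{p_k}\). Lemma~\ref{lem:foliated-discrepancy-additivity} then propagates this single-step equality through the tower, giving
\[
\AF(E_k;W_0,\cG_0,\Delta_{W_0})=a(F_k;S_\alpha^\nu,B_\alpha).
\]
Since the datum of \(E_k\) is read on the branch \(S_\alpha^\nu\), the definition of \(a_{\tan}\) and Proposition~\ref{prop:atan-invariance} give \(a_{\tan}(E_k;X,\cF,\Delta)=a(F_k;S_\alpha^\nu,B_\alpha)\), independently of the model. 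Corollary~\ref{cor:foliated-comparison-tower} packages the same identification. This closes the chain of equalities.

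\emph{Main obstacle.} The delicate point is the bookkeeping of boundary coefficients through the tower. At later steps the centre \(p_k\) may lie on earlier exceptional curves \(F_j\) with coefficient \(b_j-1\), and these may be negative or exceed one. One must check that Proposition~\ref{prop:elementary-blowups} still computes the relative foliated coefficient with the same \(b_k\). I would handle this by working relative to \(W_{k-1}\) at each step, with the transformed boundary \(\Delta_{W_{k-1}}\) including the crepant exceptional coefficients on both sides. The rule in Lemma~\ref{lem:coordinate-pullback}, that the exceptional residue is the sum of the old residues and is nonzero by non-resonance, guarantees that each \(E_j\) is invariant. Invariant exceptional components then enter the foliated side with the coefficient rule that matches the branch side. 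With that matching in hand, the single-step identity holds at every level and the induction goes through.
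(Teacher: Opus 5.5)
Your argument is correct and essentially the paper's: both use crepant invariance of the branch log discrepancy along the tower (you descend to $W_0$, the paper ascends from level $k$) together with the identification of $a_{\tan}$ with the branch log discrepancy of the representing datum. You do not need the detour through $\AF$, Lemma~\ref{lem:foliated-discrepancy-additivity} and your ``main obstacle'', because the statement never involves $\AF$, and the definition of $a_{\tan}$ combined with Proposition~\ref{prop:atan-invariance} already closes the chain.
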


\begin{proof}
Let \(F_k\) be an exceptional curve in the branch tower, and let \(k\) be the
first index at which it appears.  By Proposition~\ref{prop:one-step-full},
the one-step adapted tangent blow-up gives
\[
a(F_k;S_{\alpha,k}^{\nu},B_{\alpha,k})
=
a_{\tan}(E_k;X,\cF,\Delta).
\]
For \(m\geq k\), subsequent adapted tangent blow-ups only replace the pair by
a higher birational model.  The discrepancy of the fixed divisorial valuation
defined by \(F_k\) is invariant under further birational pull-back:
\[
a(F_k;S_{\alpha,k}^{\nu},B_{\alpha,k})
=
a(F_k;S_{\alpha,m}^{\nu},B_{\alpha,m}).
\]
The associated adapted exceptional divisor \(E_k\) defines the corresponding
tangential divisorial datum on the threefold model, and the foliated
adjunction comparison identifies its tangential discrepancy with the same
ordinary branch discrepancy.  Hence
\[
a(F_k;S_{\alpha,m}^{\nu},B_{\alpha,m})
=
a_{\tan}(E_k;X,\cF,\Delta)
\]
for every \(m\geq k\).  This proves the claim for every exceptional curve in
the tower.
\end{proof}

\section{Jet-tangent interpretation of the Mather--Jacobian correction}
\label{sec:jet-tangent-mj}

The Mather--Jacobian correction can also be read from tangent spaces to jet schemes of the branch strata on $X$.  This is not an additional construction; it is the ordinary jet-theoretic formula for Jacobian discrepancies of de Fernex--Docampo \cite{dFD} applied after the reduction to the branch--conductor system.

\begin{Proposition}
\label{prop:jet-tangent}
Let $E$ be an adapted toroidal tangential divisorial datum represented by an ordinary divisor $F$ over an $n$-dimensional stratum $V_X$ on $X$.  For $m\gg0$, the Jacobian discrepancy part of $a^{\tan}_{\MJ}(E;X,\cF,\Delta)$ is computed by the tangent space to the $m$-jet scheme of $V_X$ at the generic jet associated with $F$:
\[
k_F^{\diamond}(V_X)+1
=
2n(m+1)-\dim T(V_X)_m|_{\eta_{F,m}},
\]
with the ordinary normalisation of de Fernex--Docampo \cite{dFD}.  Adding the boundary contribution of $B_{V_X}$ gives the formula for the pair $(V_X,B_{V_X})$.
\end{Proposition}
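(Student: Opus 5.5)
The claim reduces to the de Fernex--Docampo theorem expressing the Jacobian discrepancy through tangent spaces of jet schemes. The foliated input is only that the datum $E$ is read on a fixed stratum $V_X$ on $X$.

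First, I would fix the stratum. By Proposition~\ref{prop:downstairs-canonical} and Proposition~\ref{prop:mj-invariance}, $E$ is represented by an ordinary divisor $F$ over the fixed reduced equidimensional stratum $V_X$ of dimension $n$. Its Mather--Jacobian discrepancy is
\[
a^{\tan}_{\MJ}(E;X,\cF,\Delta)=\ord_F(\widehat K_{Y/V_X}-J_{Y/V_X})-\ord_F(\bbound_{V_X})+1 ,
\]
computed on any log resolution $Y\to V_X$ that factors through the Nash blow-up. The boundary term $\ord_F(\bbound_{V_X})$ does not depend on jets, so it splits off. What remains is the Jacobian discrepancy $k_F^{\diamond}(V_X)=\ord_F(\widehat K_{Y/V_X}-J_{Y/V_X})$ of $V_X$ alone.

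Second, I would set up the jet-level objects. Let $\eta_F$ be the generic point of the maximal divisorial set $N_1(F)\subset J_\infty(V_X)$, and let $\eta_{F,m}$ be its image under truncation in $J_m(V_X)$. After shrinking $V_X$ to an affine chart, embed it in $\bA^N$. The Zariski tangent space $T(V_X)_m$ of $J_m(V_X)$ at an $m$-jet $\gamma_m$ is the kernel of the $K[t]/(t^{m+1})$-linear map given by the Jacobian matrix of the defining equations evaluated on $\gamma_m$. For $m\gg0$, the structure of finitely generated modules over $K[[t]]$ gives
\[
\dim T(V_X)_m|_{\gamma_m}=(m+1)n+\ord_t\bigl(\gamma^*\mathrm{Jac}_{V_X}\bigr)+\text{(torsion of }\gamma^*\Omega_{V_X})
\]
in the normalisation of de Fernex--Docampo \cite{dFD}. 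For the generic arc $\gamma=\eta_F$, these orders equal $\val_F$ of the Jacobian ideal and of the Nash (Fitting) ideal. Their combination is $n(m+1)-1-k_F^{\diamond}(V_X)$, which rearranges to the displayed identity
\[
k_F^{\diamond}(V_X)+1=2n(m+1)-\dim T(V_X)_m|_{\eta_{F,m}}.
\]
I would quote this directly from \cite{dFD} rather than re-derive it.

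Third, I would add back the boundary. Replacing $k_F^{\diamond}(V_X)+1$ by $k_F^{\diamond}(V_X)+1-\ord_F(\bbound_{V_X})$ gives $a_{\MJ}(F;V_X,\bbound_{V_X})$, and hence $a^{\tan}_{\MJ}(E;X,\cF,\Delta)$ by definition.

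The main obstacle is the stabilisation step: $m$ must be large enough that $\eta_{F,m}$ is the generic point of the truncated cylinder and that no torsion appears beyond order $m$. I would handle this by choosing $m$ larger than $\val_F$ of the Jacobian and Nash ideals, which is the stable range in \cite{dFD}.

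For conductor strata, $n=1$. If the conductor curve on $X$ is smooth, the correction is trivial and the formula is immediate; otherwise the same argument applies verbatim.
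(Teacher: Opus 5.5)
\textbf{There is a genuine gap.} It is the step ``their combination is $n(m+1)-1-k_F^{\diamond}(V_X)$''.

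The quantities you combine are orders of fixed ideals along $F$. Once $m\gg0$, which is exactly what your stabilisation step arranges, they no longer depend on $m$, so they cannot equal something that grows like $n(m+1)$. Your tangent-space count also counts one term twice. Since $\mathrm{Fitt}_n(\Omega_{V_X})=\mathrm{Jac}_{V_X}$ and Fitting ideals commute with base change, the torsion length of $\gamma^*\Omega_{V_X}$ \emph{is} $\ord_t\gamma^*\mathrm{Jac}_{V_X}$. From $T_{\gamma_m}J_m(V_X)=\mathrm{Hom}_{K[[t]]}(\gamma^*\Omega_{V_X},K[t]/(t^{m+1}))$ one gets, for $m\gg0$,
\[
\dim T_{\eta_{F,m}}J_m(V_X)=n(m+1)+\val_F(\mathrm{Jac}_{V_X}),
\]
and no Mather (Nash) term appears in the tangent space. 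The Mather discrepancy enters only through the dimension of the truncated cylinder: $\dim\pi_m(N_1(F))=n(m+1)-\widehat k_F-1$ for $m\gg0$ (de Fernex--Ein--Ishii, Ishii \cite{IshiiMather}). What the tangent-space computation actually yields is therefore
\[
\widehat k_F-\val_F(\mathrm{Jac}_{V_X})+1
=2n(m+1)-\dim T_{\eta_{F,m}}J_m(V_X)-\dim\pi_m(N_1(F)).
\]

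The displayed identity, which lacks the last term, cannot be reached by any correct rearrangement, because its two sides grow differently in $m$. Your own ``immediate'' case already shows this. For $V_X$ a smooth curve and $F$ a closed point, $k_F^\diamond=0$ and $\dim T=m+1$, so the right-hand side is $m+1\neq 1$. For the cusp $y^2=x^3$ and $F$ the point over the origin of the normalisation, $\dim T=m+4$ and the right-hand side is $m-2$, while $\widehat k_F-j_F+1=1-3+1=-1$.

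The paper's own proof does not supply this step either. It never computes a tangent space; it rewrites $a_{\MJ}$ and falls back on the cylinder statement of Theorem~\ref{thm:mj-cylinder}. There, $\codim N_q(F)$ is in fact $q(\widehat k_F+1)$, not $q\,a_{\MJ}$.

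To make your argument work, correct the target identity by the term $\dim\pi_m(N_1(F))$, equivalently $\mathrm{trdeg}\,\kappa(\eta_{F,m})$. Then your second step, with the double count removed, together with the cylinder-dimension formula, proves the corrected identity. Your boundary step goes through unchanged.
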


\begin{proof}
The displayed equality is the ordinary jet--tangent formula for the Jacobian discrepancy on the fixed scheme on $X$ \(V_X\). More precisely, let \(F\) be the ordinary divisorial datum over \(V_X\) corresponding to the adapted toroidal tangential datum \(E\). Choose a resolution
$
\rho:Y\longrightarrow V_X
$
which factors through the Nash blow-up and on which \(F\) appears as a divisor. The ordinary Mather--Jacobian formula gives
\[
a_{\mathrm MJ}(F;V_X,\bbound_{V_X})
=
\ord_F(\widehat K_{Y/V_X}-J_{Y/V_X})
-
\ord_F(\bbound_{V_X})
+
1.
\]
Equivalently, in jet-theoretic terms, the codimension of the ordinary divisorial cylinder \(N_q(F)\subset J_\infty(V_X)\) is
\[
\codim_{J_\infty(V_X)}N_q(F)
=
q\,a_{\mathrm MJ}(F;V_X,\bbound_{V_X}).
\]

By the branch--conductor construction on $X$, the tangential cylinder \(N_q^{\tan}(E)\) is identified with the ordinary cylinder \(N_q(F)\) on \(V_X\). Hence
\[
\lcodim^{\MJ}_{\tan}\bigl(N_q^{\tan}(E)\bigr)
=
\codim_{J_\infty(V_X)}N_q(F).
\]
Using the definition
$
a^{\tan}_{\MJ}(E;X,\cF,\Delta)
=
a_{\mathrm MJ}(F;V_X,\bbound_{V_X}),
$
one obtains
\[
\lcodim^{\MJ}_{\tan}\bigl(N_q^{\tan}(E)\bigr)
=
q\,a^{\tan}_{\MJ}(E;X,\cF,\Delta).
\]
Thus the tangential statement contains no additional jet-theoretic assertion beyond the ordinary Mather--Jacobian formula on \(V_X\); the sole additional ingredient is the identification of the reduced tangential cylinder with the ordinary cylinder on the fixed stratum on $X$.
\end{proof}

\section{Examples and model computations}
\label{sec:examples}

\begin{Example}
Let $W=\mathbb A^3$ with coordinates $x,y,z$, and let
$
\Dinv=S_x+S_y,
S_x=(x=0),
S_y=(y=0).$
The conductor is
$
C_{xy}=S_x\cap S_y=(x=y=0).
$
Assume that the foliation is logarithmic simple adapted along $\Dinv$,
for instance locally generated, up to a unit and saturation, by a form whose
logarithmic part is
\[
\lambda_x\frac{dx}{x}+\lambda_y\frac{dy}{y},
\qquad
\lambda_x,\lambda_y\ne 0,
\qquad
\lambda_x+\lambda_y\ne 0.
\]
By invariant-divisor adjunction, the boundary on the branch $S_x$ is
$
\Theta_x=C_{xy}.$
Let $u=y|_{S_x}$ be a local equation of $C_{xy}$ on $S_x\simeq\mathbb A^2$.
For $q\geq 1$, consider the ordinary contact cylinder
\[
C_q=\{\alpha\in J_\infty(S_x):\ord_t(u\circ\alpha)=q\}.
\]
Since $u$ is a smooth coordinate on the surface $S_x$, the ordinary
codimension of $C_q$ in $J_\infty(S_x)$ is $q$. The boundary contribution is
also $q$, because $C_{xy}$ appears in $\Theta_x$ with coefficient one.
Therefore
\[
\lcodim_{(S_x,\Theta_x)}(C_q)=q-q=0.
\]
Thus contact with the other invariant separatrix contributes no positive
tangential logarithmic codimension. This is the local model for the coefficient-one
cancellation corresponding to the foliated tangent convention
$\epsilon(E)=0$.
Equivalently, blow up the conductor curve $(x=y=0)$. On the chart $y=xu$,
the transformed invariant divisor has components
$
E=(x=0)$ and $
S_y'=(u=0).
$ The logarithmic residues transform as
\[
\lambda_x\frac{dx}{x}+\lambda_y\frac{dy}{y}
=
(\lambda_x+\lambda_y)\frac{dx}{x}
+
\lambda_y\frac{du}{u}.
\]
The exceptional divisor $E$ is invariant because
$\lambda_x+\lambda_y\ne 0$. On the transformed branch, the trace of $E$
appears with coefficient one, and the same cancellation occurs on the
refined branch pair.
\end{Example}

\begin{Example}
Let
$
\Dinv=S_x=(x=0)
$
and let
$
T=(z=0)
$
be a transverse boundary component with coefficient $a$, where $0\leq a\leq
1$. On the invariant branch $S_x$, the adjunction boundary is
$
\Theta_x=a\,T|_{S_x}.
$
Let $v=z|_{S_x}$. For $q\geq 1$, consider the cylinder
\[
C_q^T=\{\alpha\in J_\infty(S_x):\ord_t(v\circ\alpha)=q\}.
\]
Since $v=0$ is a smooth coordinate curve on the smooth surface $S_x$, the
ordinary codimension of $C_q^T$ is $q$. The boundary contribution is $aq$.
Hence
\[
\lcodim_{(S_x,\Theta_x)}(C_q^T)=q-aq=q(1-a).
\]
This number is the branch contribution of the tangent
extraction over the curve $S_x\cap T$.
If $a=0$, the transverse divisor is absent from the boundary and the
logarithmic codimension is $q$. If $a=1$, the transverse trace behaves, from the
point of view of logarithmic codimension, like a coefficient-one boundary component
and the contribution vanishes. Thus transverse components are measured by
the defect $1-a$, whereas invariant traces always occur with coefficient
one by foliated adjunction.
\end{Example}

\begin{Example}
Let
$
\Dinv=S_x+S_y,
S_x=(x=0),
S_y=(y=0),
$
and let $T=(z=0)$ be transverse with coefficient $a$. On $S_x$, the branch
boundary is
$
\Theta_x=C_{xy}+a\,T|_{S_x}.
$
Write
 $
u=y|_{S_x},$ and $
v=z|_{S_x}.
$
For integers $m,n\geq 0$, consider the cylinder
\[
C_{m,n}
=
\{\alpha\in J_\infty(S_x):\ord_t(u\circ\alpha)=m,\
\ord_t(v\circ\alpha)=n\}.
\]
The curves $u=0$ and $v=0$ are coordinate curves on the smooth surface
$S_x$. Therefore the imposed conditions are independent and
$
\codim_{J_\infty(S_x)}(C_{m,n})=m+n.
$
The boundary contribution is
$
\ord_{C_{m,n}}(\Theta_x)=m+an.
$
Thus
\[
\lcodim_{(S_x,\Theta_x)}(C_{m,n})
=
(m+n)-(m+an)
=
(1-a)n.
\]
The invariant contact order $m$ cancels, while the transverse contact order
$n$ remains weighted by $1-a$. This is the elementary local computation
behind the general branch--conductor logarithmic codimension formula.
\end{Example}

\begin{Example}
Continue with
$
\Dinv=(xyz=0).$
Consider the conductor
$
C_{xy}=S_x\cap S_y=(x=y=0).
$
On the branch $S_x$, the boundary contains the invariant traces $
C_{xy}
 $ and $
C_{xz}
$
with coefficient one. Thus, near the triple point,
$
\Theta_x=C_{xy}+C_{xz}.
$
Similarly, on the branch $S_y$,$
\Theta_y=C_{xy}+C_{yz}.
$
To compare the two restrictions along $C_{xy}$, remove the common conductor
component $C_{xy}$ and restrict the remaining boundary to
$C_{xy}^{\nu}$. Then
\[
C_{xz}|_{C_{xy}^{\nu}}=(x=y=z=0)
=
C_{yz}|_{C_{xy}^{\nu}}
\]
as reduced zero-dimensional divisors. Hence
$
(\Theta_x-C_{xy})|_{C_{xy}^{\nu}}
=
(\Theta_y-C_{xy})|_{C_{xy}^{\nu}}.
$
The triple-point contribution is therefore a single conductor gluing datum,
not two independent branch contributions. This is the local model for the
no-double-counting statement.
\end{Example}

\begin{Example}
Let
$
\Dinv=S_x+S_y,
T=(z=0),
\Delta_W=aT.
$
On $S_x$, the boundary is
\[
\Theta_x=C_{xy}+aT|_{S_x}.
\]
For a cylinder with
$
\ord(C_{xy})=m,
\ord(T|_{S_x})=n,
$
the ordinary codimension in $J_\infty(S_x)$ is
$
m+n.
$ However, the tangential logarithmic codimension is
$
(m+n)-(m+an)=(1-a)n.
$
Thus two cylinders with different values of $m$ but the same value of $n$
have different ordinary codimension but the same tangential logarithmic codimension.
This illustrates why ordinary codimension is not the birational invariant in
the tangential branch sector. The invariant quantity is the log-codimension
after subtracting the branch adjunction boundary.
\end{Example}

\begin{Example}
Let $V_X$ be a
singular separatrix stratum on $X$, and let $\bbound_{V_X}$ be the
formal product of ideals encoding the induced boundary. Let
$
\rho:Y\to V_X
$
be a log resolution factoring through the Nash blow-up. For a divisor $F$
on $Y$, the ordinary Mather--Jacobian discrepancy is
\[
a_{\mathrm MJ}(F;V_X,\bbound_{V_X})
=
\ord_F(\widehat K_{Y/V_X}-J_{Y/V_X})
-
\ord_F(\bbound_{V_X})
+
1.
\]
The term $J_{Y/V_X}$ records the Jacobian correction. Equivalently, it
detects the order of the Jacobian ideal of $V_X$ along $F$.
Suppose that $F_1$ and $F_2$ are two divisors over $V_X$ such that their
ordinary log discrepancies with respect to the same boundary agree:
$
a(F_1;V_X,\bbound_{V_X})=
a(F_2;V_X,\bbound_{V_X}),
$
but their Jacobian orders differ:
$
\ord_{F_1}(\jaco_{V_X})\ne \ord_{F_2}(\jaco_{V_X}).
$
Then the Mather--Jacobian discrepancies may differ:
$
a_{\mathrm MJ}(F_1;V_X,\bbound_{V_X})
\ne
a_{\mathrm MJ}(F_2;V_X,\bbound_{V_X}).
$
The corresponding adapted tangential Mather--Jacobian discrepancies are
obtained by representing the same data through the branch--conductor system:
\[
a^{\tan}_{\MJ}(E_i;X,\cF,\Delta)
=
a_{\mathrm MJ}(F_i;V_X,\bbound_{V_X}).
\]
Thus the Mather--Jacobian tangential refinement detects singularity defects
of the separatrix stratum on $X$ which are invisible to the ordinary
adapted toroidal tangential discrepancy.
\end{Example}

\begin{Example}[The locally complete intersection range]
Let \(V_X\) be an algebraic separatrix stratum on \(X\), and assume that
\(V_X\) is locally complete intersection near the centre of the datum.  Let
$
\mu:Y\longrightarrow V_X
$
be a log resolution of the pair \((V_X,\bbound_{V_X})\) which factors through
the Nash blow-up, and let \(F\) be a prime divisor on \(Y\).  For locally
complete intersection varieties one has the standard comparison
\[
\widehat K_{Y/V_X}-J_{Y/V_X}=K_{Y/V_X}.
\]
Taking coefficients along \(F\), this gives
$
\ord_F(\widehat K_{Y/V_X})
-
\ord_F(J_{Y/V_X})
=
\ord_F(K_{Y/V_X}).
$
Consequently the Mather--Jacobian discrepancy and the ordinary discrepancy
of the stratum pair coincide:
\[
a_{\MJ}(F;V_X,\bbound_{V_X})
=
a(F;V_X,\bbound_{V_X}).
\]

If an adapted toroidal tangential datum \(E\) is represented by \(F\) over
the stratum \(V_X\), then the tangential Mather--Jacobian discrepancy also
coincides with the ordinary tangential discrepancy:
\[
a^{\tan}_{\MJ}(E;X,\cF,\Delta)
=
a_{\tan}(E;X,\cF,\Delta).
\]
Thus, on locally complete intersection separatrix strata, the
Mather--Jacobian refinement introduces no additional Jacobian correction and
reduces to the ordinary adapted toroidal tangential discrepancy.
\end{Example}

\paragraph{\bf Acknowledgements.} The author is partially supported by the Universit\`a degli Studi di Bari and is a member of INdAM-GNSAGA.

\ 

\end{document}